\tikzset{
	xi/.style={circle,fill=blue!10,draw=black,inner sep=0pt,minimum size=1.2mm},
	xib/.style={circle,fill=blue!10,draw=black,inner sep=0pt,minimum size=1.6mm},
	not/.style={circle,fill=black,draw=black,inner sep=0pt,minimum size=0.5mm},
	>=stealth,
	}
\def\DeclareSymbol#1#2#3{\expandafter\gdef\csname MH@symb@#1\endcsname{\tikz[baseline=#2,scale=0.15]{#3}}}
\def\<#1>{\csname MH@symb@#1\endcsname}
\theoremstyle{plain}
\newtheorem{lemma}{Lemma}[section]
\newtheorem{theorem}[lemma]{Theorem}
\newtheorem{corollary}[lemma]{Corollary}
\newtheorem{proposition}[lemma]{Proposition}
\theoremstyle{definition}
\newtheorem*{notation*}{Notation}
\newtheorem*{plan*}{Plan of the paper}
\newtheorem*{ackno*}{Acknowledgements}
\newtheorem{definition}[lemma]{Definition}
\theoremstyle{remark}
\newtheorem{remark}[lemma]{Remark}
\newcommand{\VERT}{\vert\kern-0.25ex\vert\kern-0.25ex\vert}
\newcommand{\R}{\mathbb{R}}
\newcommand{\Z}{\mathbb{Z}}
\newcommand{\N}{\mathbb{N}}
\newcommand{\dd}{\mathrm{d}}
\newcommand{\D}{\mathcal{D}}
\newcommand{\T}{\mathbb{T}}
\newcommand{\B}{\mathcal{B}}
\newcommand{\X}{\mathcal{X}}
\newcommand{\CC}{\mathcal{C}}
\newcommand{\TS}{\mathcal{T}}
\newcommand{\I}{\mathcal{I}}
\newcommand{\J}{\mathcal{J}}
\newcommand{\U}{\mathcal{U}}
\newcommand{\W}{\mathcal{W}}
\newcommand{\SC}{\mathscr{S}}
\newcommand{\M}{\mathscr{M}}
\newcommand{\eps}{\varepsilon}
\newcommand{\1}{\mathbbm{1}}
\newcommand{\F}{\mathcal{F}}
\newcommand{\E}{\mathbb{E}}
\newcommand{\PR}{\mathbb{P}}
\newcommand{\G}{\mathcal{G}}
\newcommand{\y}{\tilde{y}}
\newcommand{\RS}{\mathcal{R}}
\newcommand{\PK}{\mathcal{P}}
\newcommand{\Sol}{\mathcal{S}}
\newcommand{\Hi}{\mathcal{H}}
\newcommand{\DD}{\mathbb{D}}
\newcommand{\loc}{\mathrm{loc}}
\DeclareRobustCommand{\cev}[1]{%
  \mathpalette\do@cev{#1}%
}
\newcommand{\do@cev}[2]{%
  \fix@cev{#1}{+}%
  \reflectbox{$\m@th#1\vec{\reflectbox{$\fix@cev{#1}{-}\m@th#1#2\fix@cev{#1}{+}$}}$}%
  \fix@cev{#1}{-}%
}
\newcommand{\fix@cev}[2]{%
  \ifx#1\displaystyle
    \mkern#23mu
  \else
    \ifx#1\textstyle
      \mkern#23mu
    \else
      \ifx#1\scriptstyle
        \mkern#22mu
      \else
        \mkern#22mu
      \fi
    \fi
  \fi
}
\numberwithin{equation}{section}
\begin{document}
\title{Malliavin calculus for regularity structures: the case of gPAM}

\date{}
%
%
\author[1]{G. Cannizzaro}
\author[1,2]{P.K. Friz}
\author[3]{P. Gassiat}
{\tiny
\affil[1]{{Institut f\"ur Mathematik, Technische Universit\"at Berlin}}
\affil[2]{{Weierstra\ss --Institut f\"ur Angewandte Analysis und Stochastik, Berlin}}
\affil[3]{{CEREMADE, Universit\'e Paris-Dauphine}}
}
\maketitle

\begin{abstract}
Malliavin calculus is implemented in the context of [M. Hairer, A theory of regularity structures, Invent. Math. 2014]. This involves some constructions of independent interest, notably an extension of the structure
which accomodates a robust, and purely deterministic, translation operator, in $L^2$-directions, between ``models". In the concrete context of the generalized parabolic Anderson model in 2D - one of the singular SPDEs
discussed in the afore-mentioned article - we establish existence of a density at positive times.
\end{abstract}

\tableofcontents

\section{Introduction}

Malliavin calculus \cite{Ma97} is a classical tool for the analysis of stochastic (partial) differential equations, e.g. \cite{Nua, SaSo} and the references therein. The aim of this paper is to explore Malliavin calculus in the context of Hairer's {\it regularity structures} \cite{Hai}, a theory designed to provide a solution theory for certain ill-posed stochastic partial differential equations (SPDEs) typically driven by Gaussian (white) noise. By now, there is an impressive list of such equations that can be handled in this framework, many well-known from the (non-rigorous) physics literature: KPZ, parabolic Anderson model, stochastic quantization equation, stochastic Navier--Stokes, ...

At this moment, and despite a body of general results and a general d\'emarche, each equation still needs some tailor-made analysis, especially when it comes to renormalization \cite[Sec. 8,9]{Hai} and convergence of renormalized approximations \cite[Sec.10]{Hai}, in the context of Gaussian white noise. For this reason, we focus on one standard example of the theory - the generalized parabolic Anderson model (gPAM) - although an effort is made throughout, with regard to future adaptions to other equations, to emphasize the main governing principles of our results. To be specific, recall that gPAM is given (formally!) by the following non-linear SPDE 
\begin{equation}\label{eq:gPAM}
(\partial_t - \Delta) u = g(u) \xi,\qquad u(0,\cdot)=u_0(\cdot).
\end{equation}
for $t\geq 0$, $g$ sufficiently smooth, spatial white noise $\xi=\xi(x,\omega)$ and fixed initial data  $u_0$. Assuming periodic boundary conditions, write $x \in \T^d$, the $d$-dimensional torus. Now a.s. the noise is a Gaussian random distribution, of H\"older regularity $\alpha<-d/2$. Standard reasoning suggests that $u$ (and hence $g(u)$) has regularity $\alpha+2$, due to the regularization of the heat-flow. But the product of two such 
H\"older distributions is only well-defined, if the sum of the regularities is strictly positive - which is the case in dimension $d=1$ but not when $d=2$. Hence we focus on gPAM in dimension $d=2$, along \cite{Hai} and also Gubinelli et al. \cite{gip} in a different (paracontrolled) framework.

A necessary first step in employing Malliavin calculus in this context is an understanding of the perturbed equation, formally given by
\begin{equation}\label{eq:gPAMh}
(\partial_t - \Delta) u^h = g(u^h) (\xi+h),\qquad u(0,\cdot)=u_0^h(\cdot)
\end{equation}
where $h \in \mathcal{H}$, the Cameron--Martin space, nothing but $L^2$ in the Gaussian (white) noise case. Proceeding on this formal level, setting $v^h = \tfrac{\partial}{\partial \eps} \{ u^{\eps h}  \} | _{\eps = 0}$ leads us to the following {\it tangent equation}  

\begin{equation}\label{eq:DergPAMInt}
(\partial_t-\Delta) v^h=g(u)h+v^hg'(u)\xi,\qquad v^h_0(\cdot)=0.
\end{equation}

Readers familiar with Malliavin calculus will suspect (correctly) that $v^h = \langle Du,h \rangle_{\mathcal{H}}$, where $Du$ is the Malliavin derivative (better: $\mathcal{H}$-derivative) of  $u$, solution to gPAM as given in (\ref{eq:gPAM}). Once in possession of a Malliavin differentiable random variable, such as $u=u(t,x; \omega)$ for a fixed $(t,x)$, non-degeneracy of $\langle Du, Du \rangle_{\mathcal{H}}$ will guarantee existence of a density. This paper is devoted to implementing all this rigorously  in the 
context of regularity structures. We have, loosely stated,

\begin{theorem} In spatial dimension $d=2$, equations (\ref{eq:gPAM}),(\ref{eq:gPAMh}),(\ref{eq:DergPAMInt}) can be solved in a consistent, renormalized sense (as reconstruction of modelled distributions, on a suitably extended regularity structure). If the solution $u$ to (\ref{eq:gPAM}) exists on $[0,T)$, for some explosion time $T=T(u_0; \omega)$, then so does then $v^h$, for any $h \in L^2$, and $v^h$ is indeed the $\mathcal{H}$-derivative of $u$ in direction $h$. At last, for suitably non-degenerate $g(.)$, conditional on $0<t<T$, and for fixed $x \in \T^2$, the solution $u=u(t,x;\omega)$ to gPAM admits a density.
\end{theorem}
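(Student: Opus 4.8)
The plan is to invoke the Bouleau--Hirsch criterion: a random variable lying in (a local version of) $\mathbb{D}^{1,2}$ whose Malliavin derivative is a.s.\ non-zero in $\mathcal H$ has a law absolutely continuous with respect to Lebesgue measure. Fix $x\in\T^2$ and $t>0$ and work on the event $\{t<T\}$. The first two parts of the theorem supply the analytic input for free: there, on $\{t<T\}$, $u(t,x)$ is shown to be $\mathcal H$-differentiable ($\mathcal H=L^2(\T^2)$), with $\langle Du(t,x),h\rangle_{\mathcal H}=v^h(t,x)$ for every $h$, $v^h$ the reconstructed solution of the renormalized tangent equation \eqref{eq:DergPAMInt}. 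Since $T=T(u_0;\omega)$ is a genuine explosion time, $u(t,x)$ is not a global $\mathbb{D}^{1,2}$ variable, so I would first pass, via the localized absolute-continuity criterion (see \cite{Nua}), to an increasing family of events $\Omega_n\uparrow\{t<T\}$ — e.g.\ $\Omega_n=\{$the solution together with the relevant model norm stays $\le n$ on $[0,t]\}$ — on each of which $u(t,x)$ coincides with a bona fide element of $\mathbb{D}^{1,2}$; it then suffices to prove $\|Du(t,x)\|_{\mathcal H}>0$ a.s.\ on $\{t<T\}$.

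Next I would extract a workable representation of $Du$. Testing the tangent equation against $h=\delta_y$ and solving by Duhamel's formula against the renormalized linearized operator $L^u=\partial_t-\Delta-g'(u)\xi$ (interpreted, like the equation itself, through reconstruction on the extended structure, equivalently as the limit of the classical renormalized mollified problems) yields, for a.e.\ $y\in\T^2$,
\[
D_y u(t,x)=\int_0^t G^u(t,x;s,y)\,g(u(s,y))\,\dd s,
\]
where $G^u$ is the Green's function of $L^u$. The decisive — and, I expect, hardest — step is a positivity statement for $G^u$, in fact a quantitative lower bound $G^u(t,x;s,y)\gtrsim p_{c(t-s)}(x-y)$ for $s$ in a left-neighbourhood of $t$, uniformly on each $\Omega_n$, with $p$ the heat kernel. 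For each fixed mollification scale $\eps$, $G^{u_\eps}$ solves a \emph{classical} parabolic equation with smooth potential and is therefore strictly positive by Feynman--Kac / the parabolic maximum principle, and $G^{u_\eps}\to G^u$ off the diagonal along the convergence of renormalized models, so that $G^u\ge0$ is already in reach. The genuine difficulty lies in the \emph{uniformity}: the Feynman--Kac weight $\exp\!\big(\int_s^t W_\eps(B_r)\,\dd r\big)$ of the mollified linearized operator is not bounded in $\eps$ (its potential carries the diverging renormalization constant), so pointwise bounds cannot simply be passed to the limit; one must instead exploit that over a short time span the potential term is only a small perturbation of the heat flow in the ambient H\"older / model norms — equivalently, a short-time expansion of the modelled-distribution fixed point governing $G^u-p$.

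Granting such a lower bound, non-degeneracy follows from the standing non-degeneracy assumption on $g$ (for concreteness, say $g$ nowhere vanishes). Since for $t>0$ the solution $u$ is an honest continuous function and $g$, being continuous and nowhere zero, keeps a constant sign and stays bounded away from $0$ on the range of $u$ over $[0,t]\times\T^2$, for a.e.\ $\omega$ we may pick $\delta>0$ with $|g(u(s,y))|\ge c>0$ of fixed sign for $(s,y)\in[t-\delta,t]\times\T^2$. Then, for $y$ near $x$,
\[
|D_y u(t,x)|\ \ge\ \Big|\int_{t-\delta}^t G^u(t,x;s,y)\,g(u(s,y))\,\dd s\Big|\ \gtrsim\ \int_{t-\delta}^t p_{c(t-s)}(x-y)\,\dd s\ \gtrsim\ \log\frac1{|x-y|},
\]
which is strictly positive on a set of $y$ of positive Lebesgue measure; hence $\|Du(t,x)\|_{\mathcal H}^2=\int_{\T^2}|D_y u(t,x)|^2\,\dd y>0$ a.s.\ on $\{t<T\}$. (Under a weaker hypothesis such as $g(u_0)\not\equiv0$ one argues instead near a point $x_0$ with $g(u_0(x_0))\ne0$, using continuity of $u$ down to $t=0$ together with strict positivity of $G^u(t,x;s,y)$ for $s<t$; alternatively, if $g>0$, integrating the display in $y$ gives directly $v^{\1}(t,x)=\langle Du(t,x),\1\rangle_{\mathcal H}>0$.) Combined with the localization of the first paragraph, the Bouleau--Hirsch criterion then gives that the law of $u(t,x)$, conditionally on $\{0<t<T\}$, is absolutely continuous with respect to Lebesgue measure — i.e.\ $u(t,x)$ admits a density. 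Apart from the positivity bound, the remaining points are routine bookkeeping: measurability of the $\Omega_n$ with respect to the correct $\sigma$-field, that the patched derivative is the genuine one, and that the Duhamel identity for $D_yu$ holds at the level of reconstructed modelled distributions rather than merely formally.
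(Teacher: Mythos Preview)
Your overall strategy --- Bouleau--Hirsch, localization to handle the explosion time, and non-degeneracy of $Du(t,x)$ via positivity of the linearized flow --- is exactly the paper's. The Duhamel representation $v^h(t,\cdot)=\int_0^t P_{s,t}[g(u_s)h]\,\dd s$ is also what the paper uses (Theorem~\ref{thm:density}). The substantive difference lies in how positivity is obtained.

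You aim for a quantitative Gaussian lower bound $G^u(t,x;s,y)\gtrsim p_{c(t-s)}(x-y)$, correctly identify that Feynman--Kac at level~$\eps$ gives no uniform bound because the potential carries the diverging constant~$C_\eps$, and then gesture at a ``short-time expansion of the modelled-distribution fixed point'' as the fix. This last hint is right, but you are asking for more than is needed and not quite executing it. The paper (Theorem~\ref{thm:Positive}) proves only a \emph{strong maximum principle} --- strict positivity of $P_{s,t}$ on nonnegative, nontrivial data --- and does so entirely at the level of the limiting model, with no $\eps$-uniformity issue. The argument is: the weak maximum principle ($v^{\hom}\ge 0$) is inherited from the approximations; then, writing $v^{\hom}=\bar K v_0^{\hom}+w$ with $w=\RS\PK(\tilde\Xi V^{\hom})$, the abstract Schauder estimate~\eqref{bound:Schauder} gives $|w(t,\cdot)|\le C_w t^\theta$ with $C_w$ depending continuously on the \emph{limiting} model norm and on $\|v_0^{\hom}\|_\eta$. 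Hence for short time~$h$ the heat part dominates, and one checks by hand that $\bar K$ spreads a lower bound from $B(x,\delta)$ to $B(x,\delta+h\rho)$. Iterating (linearity keeps $C_w$ fixed across steps) fills the whole torus. This is your ``short-time perturbation'' idea made precise, but it sidesteps the pointwise heat-kernel lower bound entirely.

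Two smaller points. First, for non-degeneracy the paper does not need $g$ nowhere vanishing: under $g\ge 0$ and $g(u_0)\not\equiv 0$, taking $h\equiv 1$ gives $g(u)h\ge 0$ nontrivial, and the strong maximum principle for $P_{s,t}$ yields $v^{\1}(t,x)>0$ directly --- no local sign analysis of $g(u)$ near~$t$ is needed. Second, your localization $\Omega_n=\{\text{solution and model norm}\le n\}$ is the right idea but the paper is more careful (Lemma~\ref{lem:Xn}): it builds $\CC^1_{\Hi}$ cutoffs $X_n$ as smooth functions of a wavelet-based model distance $\llbracket\hat\Pi-\Pi^n\rrbracket$, so that $X_nF\in\CC^1_{\Hi-\loc}$ and $\{X_n=1,\,DX_n=0\}$ exhausts $\{t<T_\infty\}$. (An alternative, noted in the paper, is to replace $g$ by compactly supported $g_n$, for which global existence holds by comparison; this is closer in spirit to your suggestion.)
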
 

Let us highlight some of the technical difficulties and key aspects of this work.

\newpage

\begin{itemize}

\item All equations under consideration are ill-posed. Solutions $u,v^h$ to (\ref{eq:gPAM}), (\ref{eq:DergPAMInt}) can be understood as limit of mollified, renormalized equations, based on, for suitable (divergent) constants $C_\eps$, 
\begin{equation}\label{eq:rengPAMintro}
\partial_t \tilde{u}_\eps=\Delta \tilde{u}_\eps+g(\tilde{u}_\eps)(\xi_\eps-C_{\eps}g'(\tilde{u}_\eps)), 
\end{equation}
and 
\begin{equation} 
\partial_t \tilde{v}_\eps^{h}=\Delta \tilde{v}_\eps^{h}+g(\tilde u_\eps)h+\tilde{v}_\eps^{h}\Big(g'(\tilde u_\eps)\xi_\eps-C_{\eps}\Big( (g'(\tilde u_\eps))^2+g''(\tilde u_\eps)g(\tilde u_\eps)\Big)\Big), 
\end{equation}
respectively.\footnote{Throughout the text, upper tilde ($\sim$) indicates renormalization.} That said, following \cite{Hai} solutions are really constructed as fixed points to abstract equations.

\item While one may expect that $u(\omega+h) = u^h (\omega)$, our analysis relies on the ability to perform this translation in a purely analytical manner. In particular, writing $K\xi \in C^{\alpha+2}$ (think: $C^{1^-}$) for the solution of the linearized problem ($g\equiv1$), one clearly has to handle products such as $(K\xi)h$, where $h \in L^2 \subset C^{-1}$. Unfortunately, as product of H\"older distributions this is not well-defined, there is no easy way out, for Hairer's theory is very much written in a H\"older
 setting.\footnote{To wit, a model on the polynomial regularity structure represents precisely a H\"older function; a model on the tensor (Hopf) algebra represents precisely a H\"older rough path, cf. \cite{FH14}.} On the other hand, classical harmonic analysis tells us that the product  
 \begin{equation}  \label{equ:BesovProduct}
        \CC^{\alpha+2} \times H^\beta \to \CC^\gamma
\end{equation}
is well-defined provided that $\alpha+\beta+2>0$ and $\gamma=\min\{\beta-\frac{d}{2},\alpha+2+\beta-\frac{d}{2}\}$ (see Theorems 2.82, 2.85 and Proposition 2.71 in~\cite{BCD}), but one has to step outside the Besov-$\infty$ (i.e. H\"older) scale. A key technical aspect of our work is to develop the necessary estimates for H\"older models in gPAM, when paired with $h \in L^2 \equiv H^0$, which in turn requires some delicat wavelet analysis. (Remark that we could have considered perturbation $h \in H^\beta$ for some $\beta <0$, 
which en passant shows that the effective tangent space to gPAM is larger than the Cameron-Martin space.\footnote{A similar remark for SDEs is due to Kusuoka \cite{Ku93}, revisited by rough path methods in  \cite{FV06}.}) 

\item In order to provide an abstract formulation of (\ref{eq:gPAMh}),(\ref{eq:DergPAMInt}) in the spirit of Hairer, one cannot use the standard gPAM structure as given in \cite{Hai}. Indeed, the very presence of a perturbation $h \in L^2$ forces us to introduce 
a new symbol $\color{blue}H$, which in turn induces several more symbols, such as ${\color{blue}\I(\Xi)H}$, corresponding to $(K\xi)h$. Key notions such as structure group and renormalization group have to be revisited for the enlarged structure. In particular, it is seen that renormalization commutes with (abstract) translation ${\color{blue}\Xi} \mapsto{\color{blue}\Xi+H}.$

\item Non-degeneracy of $\langle Du, Du \rangle_{\mathcal{H}}$ is established by a novel {\it strong maximum principle} for solutions to linear equations -- on the level of modelled distributions -- which may be of independent interest.
Indeed, the argument (of Theorem \ref{thm:Positive}), despite written in the context of gPAM, adapts immediately to other situations, such as the linear multiplicative stochastic heat equation in dimension $d=1$ (cf. \cite{HP}) where we recover Mueller's work, \cite{Mueller}, and to the linear PAM equation in dimensions $d=2,3$ for which the result appears to be new.  Remark that maximum principles have played no role so far in the study of singular SPDE \`a la Hairer (or Gubinelli et al.) - presumably for the simple reason that a maximum principle hings on the second order nature of a PDE, whereas the local solution theory of singular SPDEs is mainly concerned with the regularization properties of convolution with singular kernels (or Fourier multipliers) making no second order assumptions whatsoever.

\item  We have to deal with the fact \cite{Hai} that solutions are only constructed locally in time. This entails  a number of technical localization arguments such as Lemma  \ref{lem:Xn}, written in a way that is amenable to adaptions to equations other then gPAM. In specific case of (non-linear) gPAM, however, explosion can only happen in $L^\infty$ (cf. Proposition \ref{prop:solgPAM} with $\eta=0$, based on \cite{Hai,HP}, first observed in \cite{gip}.)  Appealing again to a maximum principle, we observe non-explosion for a rich class of non-linear $g$ {\it with sufficiently large zeros}. In particular, the reader may safely assume explosion time $T(\omega) = + \infty$ at first reading. 

\end{itemize}

We briefly comment on previous related works, alternative approaches and possible extensions.
\newline

{\bf From rough paths to regularity structures.} As is well-known, the theory of regularity structures was inspired by rough path theory, with many precise correspondences:  rough path $\to$ {\it model}, controlled rough path $\to$ {\it modelled distribution}, rough integration $\to$ {\it reconstruction map} etc. In the same spirit, our investigation of Malliavin calculus within regularity structures builds on previously obtained insights in the context of rough differential equations (RDEs) driven by Gaussian rough paths  \cite{CFV09,CF10,CF11,CHLT15, FGGR}. In this context, the natural tangent space of $p$-rough paths consists of paths of finite $q$-variation and (\ref{equ:BesovProduct}) may be regarded as a form of Young's inequality, valid provided $1/p+1/q>1$. (For general Gaussian rough paths it then remains to understand when $\mathcal{H} \subset C^{q-\text{var}}$, cf. \cite{FGGR}.) In a sense, in the SPDE setting, Besov-$\infty$ (resp. -$2$) spaces provide a reasonable replacement for $p$ (resp. $q$) variation spaces. A point of departure between rough paths and regularity structure concerns $\langle Du,h \rangle_{\mathcal{H}}$, where the explicit representation in terms of the Jacobian of the flow, much used in the SDE/RDE context, has no good correspondence and different arguments are needed.

{\bf Paracontrolled distributions versus regularity structures and beyond gPAM.}  The renormalized solution to gPAM was also obtained in the paracontrolled framework of Gubinelli et al. \cite{gip}. There is no doubt that Malliavin differentiabiliy of $u$ could have also been obtained in this setting. However, it is widely agreed that regularity structures are ultimately more powerful (think: multiplicative stochastic heat equation \cite{HP} or the generalized KPZ equation,  forthcoming work by Bruned et al.) so that setting up Malliavin calculus in this framework, for a first time, seems to be an important task in its own right. Moreover, we insist that many of the concepts introduced in this work (incl. extended structure and models, translation operators, $\mathcal{H}$-regularity of solutions ...), and in fact our general d\'emarche, will provide a roadmap for dealing with (singular, subcritical) SPDEs other than gPAM. 

\begin{plan*} 
Section~\ref{sec:Mall} and~\ref{sec:Frame} are somewhat introductory. In the first, we briefly recall the tools of Malliavin Calculus we will need. In the second, we will initially build a common ground in which to define the previous equations, constructing a regularity structure that contains all the processes that appear in the description of $u$, $u^h$ and $v^h$ as well as the ones needed to solve~\eqref{eq:gPAM}, ~\eqref{eq:gPAMh} and~\eqref{eq:DergPAMInt}. Then we will review the main ingredients of the theory of Regularity Structures and the solution theory for the aforementioned equations. In Section~\ref{sec:MallDiff}, we prove that $v^h$ is indeed the Malliavin derivative of $u$ evaluated at $h$ and list its main properties. Finally, in Section~\ref{sec:Dens}, after providing the strong maximum principle \textit{\`a la} Mueller announced before, we show that $u$ admits a density with respect to the Lebesgue measure conditioned to non-exploding. 
\end{plan*}

\begin{ackno*}
PKF acknowledges partial funding from the European Research Council through StG-258237 / CoG-683164 and DFG research unit FOR2402. 
PG, affiliated to TU Berlin while this work was commenced, received partial funding through StG-258237. GC acknowledges a stipend from DFG research training group 1845.
\end{ackno*}

\begin{notation*}

We collect here some definitions and notations we will use throughout the paper. 
Let $\alpha\in(0,1]$. We say that a function $f$ belongs to $\CC^\alpha(\R^d)$ if for any compact subset $D$ of $\R^d$
\[
\|f\|_{\CC^\alpha}:=\sup_{x\in D}|f(x)| +\sup_{\substack{x\neq y\in D\\ |x-y|\leq 1}} \frac{|f(y)-f(x)|}{|x-y|^\alpha}<\infty\,.
\]
In case $\alpha=0$, we identify $\CC^0$ with the space of $L^\infty_{\loc}$ functions. 
Since we will be working on parabolic equations, in which time and space play different roles and, consequently, have different scaling, we introduce the parabolic norm on $\R^{1+d}$, which, abusing notation, is given by
\[
|z-w|:=\sqrt{|t-s|}+|x-y|
\]
for  $z=(t,x)$ and $w=(s,y)\in\R^{1+d}$. Now, for $\alpha\in(0,1)$ and $\eta\geq 0$, we define the weighted space of (locally) H\"older continuous functions $\CC^{\alpha,\eta}(\R^+\times\R^d)$ as the set of those $f$ such that for any compact $D\subset (0,\infty)\times\R^d$
\begin{equation}\label{def:HN}
\|f\|_{\CC^{\alpha,\eta}}:=\sup_{z=(t,x)\in D} 
|f(z)|+\sup_{\substack{z=(t,x),w=(s,y)\in D\\|z-w|\leq 1}}(t\wedge s)^{\frac{\alpha-\eta}{2}\vee0 }\frac{|f(z)-f(w)|}{|z-w|^\alpha}<\infty
\end{equation}
where, again, the generic points $z,w\in\R^+ \times \R^2$ have to be understood as $z=(t,x)$ and $(s,y)$.
\newline

Let $\SC'(\R^d)$ be the space of Schwartz distributions, $\alpha<0$ and $r:=-\lfloor \alpha\rfloor$. Then, we say that  $f\in\SC'(\R^d)$ belongs to $\CC^\alpha(\R^d)$ (locally) if it belongs to the dual of $\CC^r$ and for every compact set $D\subset\R^d$
\[
\|f\|_{\alpha,D}:=\sup_{z\in D}\sup_{\varphi\in\B^r_1}\sup_{\lambda\in(0,1]} \lambda^{-\alpha}|\langle f,\varphi_z^\lambda\rangle|<\infty
\]
where $\B_1^r$ is the space of $\CC^r$ functions compactly supported in the unit ball whose $\CC^r$-norm is less or equal to 1 and $\varphi^\lambda_x$ is the rescaled version of $\varphi\in\B_1^r$ centered at $x\in\R^d$, i.e. $\varphi^\lambda_x(y)=\lambda^{-d}\varphi(\lambda^{-1}(y-x))$. 

The parabolic version of $\CC^\alpha$ on $\R^{1+d}$ is obtained by simply replacing the rescaled function in the following way 
\begin{equation}\label{eq:ParScaling}
\varphi_z^\lambda(w):=\lambda^{-2-d}\varphi(\lambda^{-2}(t-s),\lambda^{-1}(x-y)).
\end{equation}
where $z=(t,x)$ and $w=(s,y)\in\R^{1+d}$. 
\newline

We will denote by $\N$ the set of non-negative integers.

\end{notation*}

\section{Malliavin Calculus in a nutshell}\label{sec:Mall}

In this section, we recall tools and notations of Malliavin calculus that we will need in the rest of the paper (for a thourogh introduction see for example~\cite{Nua}). 

Let $(\Omega,\Hi,\PR)$ be an abstract Wiener space, i.e. $\Omega$ is a separable Banach space, $\PR$ a zero-mean Gaussian probability measure with full support on $\Omega$ and $\Hi$ the associated Cameron-Martin space, and $\F$ the completion of the Borel $\sigma$-algebra on $\Omega$ with respect to $\PR$. We know that each element $y\in\Omega^\star$ defines a Gaussian random variable that can be denoted by $W(y)$. Then, $W$ can be extended to $\Hi$ and is an isonormal Gaussian process on $(\Omega, \F,\PR)$ according to Definition 1.1.1 in~\cite{Nua}.

Now, we say that a random variable $F$ on $\Omega$ is smooth if it can be written as $F=f(W(h_1),\dots,W(h_n))$, where $h_1,\dots,h_n\in\Hi$ and $f$ is an infinitely continuously differentiable function such that $f$ and all its partial derivatives have (at most) polynomial growth. For a smooth random variable, we can define its Malliavin derivative as (Definition 1.2.1 in~\cite{Nua})
\[
DF:=\sum_{i=1}^n\partial_if(W(h_1),\dots,W(h_n))h_i
\]
and, since $D$ is closable from $L^p(\Omega)$ to $L^p(\Omega;\Hi)$ for all $p\geq 1$, we can set  $\DD^{1,p}$ to be the closure of the set of smooth random variables under the norm
\[
\|F\|_{1,p}:=\left(\E[|F|^p]+\E[\|DF\|_\Hi^p]\right)^{\frac{1}{p}}
\]
where $\|\cdot\|_\Hi$ is the norm on $\Hi$. Thanks to the local properties of the Derivative operator, we can now localize the definition of $\DD^{1,p}$. A random variable $F\in\DD^{1,p}_{\loc}$ if there exists a sequence $(\Omega_n,F_n)_{n\geq 1}\subset \F\times\DD^{1,p}$ such that $\Omega_n\uparrow\Omega$, and $F_n=F$ almost surely on $\Omega_n$. 

We aim at proving that the solution to~\eqref{eq:gPAM} admits a density with respect to the Lebesgue measure, a classical tool in this context is provided by a criterion due to Bouleau and Hirsch~\cite{BH} that we now recall (the formulation below is borrowed by~\cite{Nua} Theorems 2.1.2 and 2.1.3)

\begin{theorem}\label{thm:BH}
Let $F$ be a real-valued random variable on $(\Omega, \F,\PR)$. If $F\in\DD^{1,p}_{\loc}$ for $p\geq 1$, then $F$, conditioned by the set $\|DF\|_\Hi>0$, is absolutely continuous with respect to the Lebesgue measure. 
\end{theorem}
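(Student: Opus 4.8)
Since this is the Bouleau--Hirsch criterion, a recalled classical fact, the plan is only to indicate the standard route, in three stages: a localization reduction, a soft measure-theoretic lemma, and a regularized integration-by-parts estimate feeding that lemma. First I would pass from $\DD^{1,p}_{\loc}$ to $\DD^{1,p}$. Given a localizing sequence $(\Omega_n,F_n)$ with $\Omega_n\uparrow\Omega$, $F_n\in\DD^{1,p}$ and $F=F_n$ a.s.\ on $\Omega_n$, locality of $D$ gives $DF=DF_n$ a.s.\ on $\Omega_n$, so $\{\|DF\|_\Hi>0\}$ equals, up to a $\PR$-null set, $\bigcup_n\big(\Omega_n\cap\{\|DF_n\|_\Hi>0\}\big)$, and $F=F_n$ on each piece. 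As a countable union of absolutely continuous measures is absolutely continuous, it then suffices to treat $G\in\DD^{1,p}$ and to show that the finite measure $\mu(B):=\PR(\{G\in B\}\cap\{\|DG\|_\Hi>0\})$ is absolutely continuous with respect to Lebesgue measure on $\R$.

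The measure-theoretic input I would use is: a finite positive Borel measure $\mu$ on $\R$ with $\big|\int_\R\phi'\,\dd\mu\big|\le C\|\phi\|_\infty$ for all $\phi\in C_c^\infty(\R)$ is absolutely continuous (its distributional derivative is then a finite signed measure $\nu$, whence $\mu=(x\mapsto\nu((-\infty,x])+c)\,\dd x$). So the target is the bound $\big|\E[\phi'(G)\,\1_{\{\|DG\|_\Hi>0\}}]\big|\le C\|\phi\|_\infty$. The formal mechanism is the chain rule $D(\phi(G))=\phi'(G)\,DG$ in $\DD^{1,p}$: pairing with $DG$ and regularizing, with $u_\eps:=DG/(\|DG\|_\Hi^2+\eps)$, one would write
\[
\E\!\left[\phi'(G)\,\frac{\|DG\|_\Hi^2}{\|DG\|_\Hi^2+\eps}\right]=\E\big[\langle D(\phi(G)),u_\eps\rangle_\Hi\big]=\E[\phi(G)\,\delta(u_\eps)]\le\|\phi\|_\infty\,\E|\delta(u_\eps)|,
\]
valid as soon as $u_\eps\in\mathrm{Dom}\,\delta$, and then let $\eps\downarrow0$: the left-hand side converges to $\int_\R\phi'\,\dd\mu$, so one is done provided $\E|\delta(u_\eps)|$ stays bounded uniformly in $\eps$.

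The hard part is precisely that $u_\eps\in\mathrm{Dom}\,\delta$ is \emph{not} granted by $G\in\DD^{1,p}$ alone (it would follow from $G\in\DD^{2,\cdot}$ plus integrability of $\|DG\|_\Hi^{-1}$, which is a far stronger hypothesis). The way around it, following Bouleau and Hirsch, is to first establish the claim for smooth cylinder functionals $G=g(W(h_1),\dots,W(h_n))$ --- where it collapses to the finite-dimensional fact that a $C^1$ (indeed $W^{1,1}_{\loc}$) map $\R^n\to\R$ pushes the Gaussian measure, restricted to $\{\nabla g\ne0\}$, forward to an absolutely continuous measure, via the implicit function theorem / coarea formula --- and then to pass to the limit along an approximating sequence $G_k\to G$ in $\DD^{1,p}$. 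Guaranteeing that absolute continuity survives this infinite-dimensional limit, \emph{precisely} on the non-degenerate set $\{\|DG\|_\Hi>0\}$, is the delicate step, carried out by an approximation argument together with capacities / quasi-sure analysis (equivalently, the energy-image-density property of the Ornstein--Uhlenbeck Dirichlet form). In the paper itself one simply cites \cite{Nua}; the discussion above is what explains why it is exactly the first-order Sobolev regularity $F\in\DD^{1,p}_{\loc}$ that the statement requires.
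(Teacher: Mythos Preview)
The paper does not prove this theorem at all: it is merely recalled as a classical fact, with the attribution ``the formulation below is borrowed by~\cite{Nua} Theorems 2.1.2 and 2.1.3'', and no proof environment follows the statement. You are clearly aware of this, since you write yourself that ``In the paper itself one simply cites \cite{Nua}''. So there is nothing to compare: your proposal is not a competing proof but an expository sketch of the standard Bouleau--Hirsch argument (localization, reduction to smooth cylinder functionals via the finite-dimensional coarea/implicit-function fact, and passage to the limit using the energy-image-density property of the Ornstein--Uhlenbeck Dirichlet form), which is accurate as an outline and correctly flags where the genuine difficulty lies.
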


The analysis of the above mentioned equation we will perform in the upcoming sections, is~\textit{pathwise}, hence we need a notion of differentiability better adapted to this construction. 
Moreover we lack a global well-posedness result for~\eqref{eq:gPAM} in the sense that we cannot prevent a priori an explosion of the $L^\infty$ norm of its solution, hence the definition of $\Hi$-differentiability given by Definition 4.1.1 in~\cite{Nua} is too strong in our context and we instead resolve to use Definition 3.3.1 (c) in~\cite{UZ}. 
 
\begin{definition}[Local $\Hi$-Differentiability]\label{def:MallDiff}
Let $(\Omega,\Hi,\PR)$ be an abstract Wiener space. We will say that a random variable $F$ is locally $\Hi$-differentiable if there exists an almost surely positive random variable $q$ such that $h\mapsto F(\omega+h)$ is continuously differentiable on $\{h\in\Hi:\|h\|_\Hi<q(\omega)\}$. The set of full measure $Q=\{q>0\}$ will be called the set of differentiability of $F$. Finally we will indicate by $\CC^1_{\Hi-\loc}$ the set of all locally $\Hi$-differentiable functions. 
\end{definition}

\begin{remark}~\label{rmk:MallDiff}
Of course, if for $F\in\CC^1_{\Hi-\loc}$, $q$ can be taken to be equal to $+\infty$ almost surely, then $F\in\CC^1_\Hi$ according to Definition 4.1.1 in~\cite{Nua}.  
\end{remark}

As pointed out before, the definition above represents the local version of the usual notion of $\Hi$-differentiability. We need to verify that it is not \textit{too} weak, in the sense that the Bouleau and Hirsch's criterion (Theorem~\ref{thm:BH}) can still be applied. This is indeed the case as we will see in the following proposition, whose proof is completely analogous to Proposition 4.1.3 in~\cite{Nua} (or to the one of Proposition 3.4.1 in~\cite{UZ}) and therefore we limit ourselves to indicate the differences. 

\begin{proposition}\label{prop:locH-MallDiff}
We have $\CC^1_{\Hi-\loc}\subset\DD^{1,2}_{\loc}$. 
\end{proposition}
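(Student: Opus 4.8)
The plan is to adapt the classical localization argument (Proposition 4.1.3 in \cite{Nua}, or 3.4.1 in \cite{UZ}) to the present notion of \emph{local} $\Hi$-differentiability from Definition~\ref{def:MallDiff}, where the radius of differentiability $q$ is merely an a.s.\ positive random variable rather than $+\infty$. Fix $F \in \CC^1_{\Hi-\loc}$ with differentiability set $Q = \{q > 0\}$ of full measure. The first step is to produce an exhausting sequence $\Omega_n \uparrow \Omega$ on which $F$ agrees a.s.\ with a genuine $\DD^{1,2}$ random variable. The natural choice is $\Omega_n := \{q > 1/n\}$ (intersected with a set controlling the size of $F$ and $DF$ if needed), which increases to $Q$, hence to a set of full measure. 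On $\Omega_n$ the map $h \mapsto F(\omega+h)$ is $\CC^1$ on the ball of radius $1/n$, so the $\Hi$-derivative $DF$ exists there in the strong sense.

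The second step is to construct, for each $n$, a global random variable $F_n \in \DD^{1,2}$ that coincides with $F$ on $\Omega_n$. This is the point where one cannot simply invoke the cited propositions verbatim: in the classical setting $F$ is globally $\Hi$-$\CC^1$, whereas here differentiability only holds in a random ball. One remedy is to use a smooth cutoff: pick $\psi_n \in \CC_c^\infty(\Hi)$ (or a composition with $\|\cdot\|_\Hi$) supported in the ball of radius $1/n$ and equal to $1$ near $0$, and argue — via the Cameron--Martin quasi-invariance of $\PR$ and the $\Hi$-$\CC^1$ regularity of $F$ on $\Omega_n$ — that an appropriately cut-off/truncated version of $F$ lies in $\DD^{1,2}$, with the truncation chosen so as not to alter $F$ on $\Omega_n$. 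Alternatively, and more in the spirit of \cite{UZ}, one composes $F$ with a smooth truncation $\varphi_n(F)$ and localizes, using that $\Hi$-$\CC^1$-ness on a ball plus at-most-polynomial (here, controlled on $\Omega_n$) growth of $F$ and $DF$ yields membership in $\DD^{1,2}$ after the cutoff; the local property of $D$ then gives $DF_n = DF$ a.s.\ on $\Omega_n$. One must also remember to intersect $\Omega_n$ with $\{|F| \le n\} \cap \{\|DF\|_\Hi \le n\}$ to secure the required integrability of $F_n$ and $DF_n$; since $F$ is a.s.\ finite and $DF$ a.s.\ in $\Hi$, these still increase to full measure.

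The third step is purely bookkeeping: having exhibited $(\Omega_n, F_n)_{n \ge 1} \subset \F \times \DD^{1,2}$ with $\Omega_n \uparrow \Omega$ (up to a null set) and $F_n = F$ a.s.\ on $\Omega_n$, the definition of $\DD^{1,2}_{\loc}$ recalled in Section~\ref{sec:Mall} is satisfied verbatim, so $F \in \DD^{1,2}_{\loc}$, which is the claim.

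The main obstacle is the second step: ensuring that the cut-off/truncated random variable genuinely lands in $\DD^{1,2}$ despite $F$ being differentiable only in a random $\Hi$-ball. The classical proofs exploit global $\Hi$-$\CC^1$ regularity, so the argument has to be re-examined at the point where one passes from the pointwise (in $\omega$) strong derivative to closedness of $D$ and $L^2$-integrability; the quasi-invariance of $\PR$ under translations by $\Hi$ and a careful choice of the localizing sequence $\Omega_n$ (so that the cutoff radius $1/n$ sits strictly below $q$) are exactly what make this work. Everything else — monotone convergence of the $\Omega_n$ to a full-measure set, the local property of $D$ — is routine, which is why, following the authors' stated intention, the write-up need only highlight these differences from \cite{Nua, UZ} rather than reproduce the full argument.
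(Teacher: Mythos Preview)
Your high-level strategy matches the paper's exactly: exhaust $\Omega$ by sets on which $q$ is bounded below and $F,\,DF$ are bounded, then manufacture $F_n\in\DD^{1,2}$ agreeing with $F$ there. Where your sketch falls short is precisely the step you flag as the main obstacle --- the actual construction of $F_n$ --- and the devices you suggest do not quite work.

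First, your localizing sets are slightly too weak. The paper takes
\[
A_n=\Bigl\{q\geq \tfrac{4}{n},\ \sup_{\|h\|_\Hi<2/n}|F(\cdot+h)|\leq n,\ \sup_{\|h\|_\Hi<2/n}\|DF(\cdot+h)\|_\Hi\leq n\Bigr\},
\]
i.e.\ bounds on $F$ and $DF$ \emph{uniformly over an $\Hi$-ball}, not just pointwise as you propose. This matters because the cutoff below will be nonzero on an $\Hi$-neighbourhood of $A_n$, and one needs $F,DF$ to remain controlled there.

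Second, and more importantly, neither of your two candidate cutoffs does the job. A function $\psi_n\in\CC_c^\infty(\Hi)$ of the shift variable $h$ is not a random variable on $\Omega$, so it cannot localize in $\omega$; and truncating the \emph{values} of $F$ via $\varphi_n(F)$ does nothing to cure the lack of differentiability off $\Omega_n$. The correct object is the $\Hi$-distance function
\[
\rho_A(\omega):=\inf\{\|h\|_\Hi:\omega+h\in A\},
\]
which is $1$-Lipschitz along $\Hi$ and hence (after replacing $A_n$ by a $\sigma$-compact $G_n\subset A_n$ of equal measure, to secure measurability --- this is Corollary~8.3.10 in \cite{Str}) belongs to $\DD^{1,2}$ with $\|D\rho_{G_n}\|_\Hi\leq 1$. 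One then sets $F_n:=\phi(n\rho_{G_n})\,F$ for a smooth $\phi$ with $\phi\equiv 1$ near $0$ and $\phi\equiv 0$ past $2/3$. The point of the uniform-in-ball bounds in $A_n$, together with $q\geq 4/n$ there, is that whenever $\phi(n\rho_{G_n}(\omega))\neq 0$ one can write $\omega=\omega'-h$ with $\omega'\in G_n$ and $\|h\|<2/n$, so $F$ is $\CC^1_\Hi$ at $\omega$ with $|F(\omega)|\leq n$ and $\|DF(\omega)\|_\Hi\leq n$; thus $F_n$ is globally bounded with bounded $\Hi$-derivative and lies in $\DD^{1,2}$. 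This $\rho$-based cutoff is the missing mechanism in your Step~2; once you have it, your Step~3 goes through as written.
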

\begin{proof}
Let $F\in\CC^1_{\Hi-\loc}$ and $q$ the strictly positive random variable introduced in Definition~\ref{def:MallDiff}. For $n\in\N$, let $A_n\subset\Omega$ be given by
\[
A_n=\left\{\omega\in\Omega: q(\omega)\geq \frac{4}{n},\,\,\sup_{\|h\|_\Hi<\frac{2}{n}}|F(\omega+h)|\leq n,\,\,\sup_{\|h\|_\Hi<\frac{2}{n}}\|DF(\omega+h)\|_\Hi\leq n\right\}
\]
then, since $F$ is locally $\Hi$-differentiable, $\Omega=\bigcup_n A_n$ a.s.. Moreover, thanks to Corollary 8.3.10 in~\cite{Str},  for all $n\in\N$ there exists a $\sigma$-compact set $G_n\subset A_n$ such that $\PR(G_n)=\PR(A_n)$. For $A\in\F$, set 
\[
\rho_A(\omega):=\inf\{\|h\|_{\Hi}:\omega+h\in A\}
\]
and let $\phi$ be a non-negative, $\CC^\infty(\R)$ compactly supported function such that $|\phi(t)|\leq 1$, $|\phi'(t)|\leq 4$ for all $t$ and $\phi(t)=1$ for $|t|\leq \frac{1}{3}$ and $0$ for $|t|>\frac{2}{3}$.  

Then, it is easy to show (exploiting essentially the properties of $\rho$, see pg. 230 in~\cite{Nua} or Lemma 3.4.2 in~\cite{UZ}, and Exercise 1.2.9 in~\cite{Nua}) that $F_n:=\phi(n\rho_{G_n})F$ is the localizing sequence required in the definition of $\DD^{1,2}_\loc$. 
\end{proof}

In the rest of this paper we  will always consider the case of white noise on the two-dimensional torus, i.e. $\mathcal{H}=L^2(\T^2)$, and we will want to apply the above results to $F=u(t,x)$ (actually, to slightly different random variables because of technicalities due to the explosion time).

\section{The framework}\label{sec:Frame}

\subsection{The Regularity Structure for gPAM}\label{section:RS}

Recall that a regularity structure is a triplet $\TS=(A,T,G)$ in which, $A\subset\R$ is a locally finite bounded from below set of homogeneities,  $T=\bigoplus_{\alpha\in A}T_\alpha$, the \textit{model space}, is a graded normed vector space, and $G$, the \textit{structure group}, is a set of linear transformations on $T$ such that for every $\Gamma\in G$ and $\tau\in T_\alpha$, $\Gamma \tau-\tau\in\bigoplus_{\beta<\alpha}T_\beta$. 

The construction of the regularity structure $\TS_g=(A_g,T_g, G_g)$ needed to solve~\eqref{eq:gPAM} was already carried out in several papers (see \cite{Hai, HP, HL2}), hence we limit ourselves to recalling its main ingredients. At first, one defines two sets, $\U$, containing all the symbols appearing in the description of the solution to (gPAM), and $\mathcal{W}$, containing the ones needed to make sense of the right-hand side of the equation, as the smallest collections such that {\color{blue} $1$}, {\color{blue} $X_i^k$}, for $k\in\N$ and $i=1,2$, belong to $\U$ and, whenever ${\color{blue}\tau}\in\U$ then ${\color{blue}\tau\Xi}\in\W$, while  for every ${\color{blue}\tau}\in\W$, ${\color{blue}\I(\tau)}\in\U$. In the previous, {\color{blue} $X^k$} are the symbols corresponding to the polynomials, {\color{blue} $\Xi$} to the noise and $\I$ is the abstract integration map. The model space $T$ is then given by the set of finite linear combinations of elements in $\W\cup\U$ and can be nicely decomposed as the direct sum of $\langle\W\rangle$ and  $\langle \U\rangle$.

To each of the symbols so constructed we can then associate a homogeneity, $|{\color{blue} 1}|=0$, $|{\color{blue} X_j}|=1$, $|{\color{blue}\Xi}|=\alpha_{\min}$, where 
$$\alpha_{\min}=-1-\kappa$$
 for $\kappa>0$ small enough, and then recursively, for every $\tau$, $\bar{\tau}$
\[
|\tau\bar{\tau}|=|\tau|+|\bar{\tau}|,\qquad |\I(\tau)|=|\tau|+2
\]
so that the graded structure on $T$ is simply obtained by defining $T_\beta$ as the set of finite linear combinations of those symbols in $T$ with homogeneity equal to $\beta$. 

It turns out that, to solve the equation~\eqref{eq:gPAM}, we will not need the whole model space as previously constructed but it will be sufficient to consider those elements in $\U$ and $\W$ with homogeneity respectively less than a fixed threshold $\gamma$, slightly greater than $-\alpha_{\min}$, and $\gamma+\alpha_{\min}$. We will indicate the union of this restricted sets of symbols by $\F$. Hence, the model space we will use from now on is 
\[
T_g=\langle \F\rangle=T_\W\oplus T_\U:=\langle{\color{blue}\Xi},{\color{blue}\I(\Xi)\Xi},{\color{blue}X_i\Xi}:\,i=1,2\rangle\oplus\langle {\color{blue}1},{\color{blue}\I(\Xi)},{\color{blue}X_i}:\,i=1,2\rangle
\]
and its associated set of homogeneities is $$A_g=\{\alpha_{\min},2\alpha_{\min}+2,\alpha_{\min}+1,0,\alpha_{\min}+2,1\}.$$ 

In order to describe the structure group we first introduce the vector space $T_g^+$, whose basis vectors are symbols of the form
\[
X^k \prod \J_l(\tau_l),\qquad \tau_l\in T_g
\]
where $k\in\N^2$ and factors $\J_l(\tau_l)$ are omitted whenever  $|\tau_l|+2-|l|\leq 0$. Analogously to what done before, we can assign to each of these symbols a homogeneity, $|1|=0$, $|X_i|=1$ and then recursively, for $\tau\in T_g$ and $\tau_1,\,\tau_2\in T_g^+$, $|\J_l(\tau)|=|\tau|+2-|l|$ and $|\tau_1\tau_2|=|\tau_1|+|\tau_2|$.  We then define the linear map $\Delta:T_g\to T_g\otimes T_g^+$ via 
\[
\Delta {\color{blue}1}={\color{blue}1}\otimes 1,\quad\Delta {\color{blue}X_i}={\color{blue}X_i}\otimes 1+{\color{blue}1}\otimes X_i,\quad \Delta{\color{blue}\Xi}={\color{blue}\Xi}\otimes 1
\]
and then recursively, for $\tau$, $\bar{\tau}\in T_g$
\begin{equation}\label{eq:Delta}
\Delta\tau\bar{\tau}=(\Delta\tau)(\Delta\bar{\tau}),\quad \Delta \I(\tau)=(\I\otimes \text{Id})\Delta\tau+\sum_{l,k}\frac{1}{k!\,l!}{\color{blue}X^k}\otimes X^l\J_{k+l}(\tau)
\end{equation}
where Id is the identity and the sum runs over finitely many $k,l$ since $\J_{k+l}(\tau)=0$ if $|\tau|+2-|l|-|k|\leq 0$. Now, let $\mathcal{G}_+$ be the family of linear functionals $f: T_g^+\to \R$, such that, for any $\tau$, $\bar{\tau}\in T_g^+$, $f(\tau\bar{\tau})=f(\tau)f(\bar{\tau})$, the structure group $G_g$ will be then composed by the maps $\Gamma_f$ given by 
\begin{equation}\label{eq:Gamma}
\Gamma_f\tau=(\text{Id}\otimes f)\Delta \tau
\end{equation}
According to Proposition 8.21 in~\cite{Hai}, $G_g$ is a group and, thanks to Theorem 8.24, it satisfies the structure group relation $\Gamma_f\tau-\tau\in\bigoplus_{\beta<\alpha} T_\beta$ for every $\tau\in T_\alpha$. If we now specialize to the case at study, it is immediate to verify that, for a given $f\in\G_+$,  the action of $\Gamma_f$ on the elements of $T_g$, can be represented as the matrix
\begin{equation}\label{eq:matrix}
\Gamma_f=
 \left(
    \begin{array}{r@{}c|c@{}l}
  &    \begin{smallmatrix}
        1 & f(\J(\Xi)) & f(X_1) & f(X_2) \\
          0 & 1 & 0 &0\\
	  0 & 0 & 1&0\\
        0 & 0 & 0&1\rule[-1ex]{0pt}{2ex}
      \end{smallmatrix} & \mbox{\huge$\mathbb{O}$} \\\hline
  &    \mbox{\huge $\mathbb{O}$} &  
       \begin{smallmatrix}\rule{0pt}{2ex}
        1 &f(\J(\Xi)) &f(X_1) &f(X_2) \\
        0 &1 &0 &0\\
        0 &0 &1 &0\\
	0 &0 &0 &1
      \end{smallmatrix}    
    \end{array} 
\right)
\end{equation}
Thanks to the previous, the group structure of $G_g$ becomes explicit. More specifically, for $f_1$, $f_2\in\G_+$, one can directly show that $\Gamma_{f_1}\Gamma_{f_2}=\Gamma_{f_1+f_2}$, $\Gamma_{f_1}^{-1}=\Gamma_{-f_1}$ and that the identity matrix is the identity of the group.   
Moreover, $T_\W$ and $T_\U$ are invariant under the action of the structure group and therefore, according to Definition 2.5 in~\cite{Hai}, are sectors of $T$ of regularity $\alpha_{\min}$ and $0$, respectively.  

\subsection{Enlarging $\TS_g$}\label{section:RSh}

In order to be able to formulate~\eqref{eq:gPAMh} at the abstract level we need to suitably enlarge the regularity structure just constructed without altering its characterizing features. To do so, we will add a symbol {\color{blue} $H$} that will play the same algebraic role as the symbol for the noise, ${\color{blue}\Xi}$, but whose \textit{realization} through the model will possess further properties that we will specify later on. Let us then define two sets $\U^H$ and $\W^H$ such that $\U\subset\U^H$, $\W\subset\W^H$ and, as before,
\[
{\color{blue}\tau}\in\U^H\Longleftrightarrow{\color{blue}\tau\Xi},\,{\color{blue}\tau H}\in\W^H\qquad{\color{blue}\tau}\in\W^H\Longrightarrow{\color{blue}\I(\tau)}\in\U^H
\]
The vector space $T^H$ will be simply given by the set of finite linear combinations of elements in $\U^H\cup\W^H$ and, also in this case, it can be conveniently decomposed as the direct sum of $\langle\W^H\rangle$ and  $\langle \U^H\rangle$. We can assign a homogeneity to each element following the same rules prescribed in the previous section, imposing though $|{\color{blue}H}|=\alpha_{\min}$ (this is consistent with $\mathcal{H}=L^2(\T^2) \subset C^{-1}$, and no better, and recall $\alpha_{\min}=-1-\kappa$, $\kappa>0$). 

Once again, we define $\F^H$ as the set of symbols in $\W^H$ and $\U^H$ of homogeneity less than $\gamma+\alpha_{\min}$ and $\gamma$, respectively, and set 
\begin{multline*}
T_g^H=\langle \F^H\rangle=T_{\W^H}\oplus T_{\U^H}:=\\
\langle{\color{blue}\Xi},{\color{blue}H},{\color{blue}\I(\Xi)\Xi},{\color{blue}\I(\Xi)H},{\color{blue}\I(H)\Xi},{\color{blue}\I(H)H},{\color{blue}X_i\Xi},{\color{blue}X_iH}:\,i=1,2\rangle\oplus\langle {\color{blue}1},{\color{blue}\I(\Xi)},{\color{blue}\I(H)},{\color{blue}X_i}:\,i=1,2\rangle
\end{multline*}
whose associated set of homogeneities is the same as before, i.e. $A^H_g=\{\alpha_{\min},2\alpha_{\min}+2,\alpha_{\min}+1,0,\alpha_{\min}+2,1\}$. 

Concerning the definition of the structure group, we impose the symbol ${\color{blue} H}$ (and all the ones containing it) to behave, at the algebraic level, analogously to ${\color{blue}\Xi}$. More specifically, let $(T_g^H)^+$ be the vector space, whose basis vectors are  
\[
\Big\{X^k \prod \J_l(\tau_l):  \tau_l\in T_g^H\,\,\text{and}\,\, |\tau_l|+2-|l|>0\Big\}
\]
and assign to each of its elements a homogeneity according to the same rules as for the elements in $T_g^+$. Then, we extend $\Delta$ to a map $\Delta^H:T_g^H\to T_g^H \otimes (T_g^H)^+$ in such a way that 
\[
\Delta^H {\color{blue}H}={\color{blue}H}\otimes 1,\qquad \Delta^H\tau=\Delta\tau\,\,\text{for all $\tau\in T_g$}
\]
and the relations in~\eqref{eq:Delta} still hold. Finally, defining $\mathcal{G}_+^H$ as the set of linear functionals $f: (T_g^H)^+\to \R$ such that $f(\tau\bar{\tau})=f(\tau)f(\bar{\tau})$ for all $\tau$, $\bar{\tau}\in (T_g^H)^+$, the structure group will be given by those $\Gamma_f$, $f\in\mathcal{G}_+^H$, acting on $T_g^H$ as in~\eqref{eq:Gamma} but with $\Delta$ substituted by $\Delta^H$. 

At last, we notice that, when restricted to $T_g^H$, the action of $\Gamma_f$ can be expressed as
\begin{equation}\label{eq:matrixh}
\Gamma_f=
 \left(
    \begin{array}{r@{}c|c@{}l}
  &    \begin{smallmatrix}
        1 & 0 &f(\J(\Xi)) & 0 &f(\J(H)) &0& f(X_1) & 0&f(X_2) & 0 \\
        0 & 1 & 0 & f(\J(\Xi)) & 0 &f(\J(H))&0 &f(X_1)&0& f(X_2)\\
        0 & 0 & 1 & 0 & 0 & 0&0 &0&0 &0\\
	0 & 0 & 0 & 1& 0& 0&0 &0&0 &0\\
	0 & 0 & 0 & 0& 1& 0 &0 &0&0 &0\\
	0 & 0 & 0 & 0& 0& 1 &0 &0&0 &0\\
	0 & 0 & 0 & 0& 0& 0 &1 &0&0 &0\\
	0 & 0 & 0 & 0& 0& 0 &0 &1&0 &0\\
	0 & 0 & 0 & 0& 0& 0 &0 &0&1 &0\\
	0 & 0 & 0 & 0& 0& 0 &0 &0 &0&1\rule[-1ex]{0pt}{2ex}
      \end{smallmatrix} & \mbox{\huge$\mathbb{O}$} \\\hline
  &    \mbox{\huge $\mathbb{O}$} &  
       \begin{smallmatrix}\rule{0pt}{2ex}
        1 &f(\J(\Xi)) &f(\J(H))&f(X_1) &f(X_2) \\
        0 &1 &0 &0 &0\\
        0 &0 &1 &0 &0\\
	0 &0 &0 &1 &0\\
	0 &0 &0 &0 &1
      \end{smallmatrix}    
    \end{array} 
\right)
\end{equation}
Once more, the same conclusions established before hold in this case as well, i.e. the group structure of $G^H_g$ is explicit and the subspaces $T_{\W^H}$ and $T_{\U^H}$ are sectors of regularity $\alpha_{\min}$ and $0$, respectively. 

Finally, we point out that, by construction, $\TS_g\subset\TS_g^H$ in the sense of Section 2.1 of~\cite{Hai}, where $\TS_g^H=(A^H_g,T^H_g,G^H_g)$.

\begin{remark}
The construction just carried out is a straight-forward generalization of the one in Section 8.1 in~\cite{Hai} and corresponds to the case in which we have more than one symbol for the noise (see for example~\cite{ZZ}, where this is done in the context of the Navier-Stokes equation). Nevertheless, we underline once more that the symbol ${\color{blue} H}$ has a completely different meaning and has been introduced at the sole purpose of keeping track of the translation of the noise at the abstract level. In particular, the homogeneity of these symbols is somewhat artificial as we will see in the following section. 
\end{remark}

\subsection{Admissible Models}\label{subsection:Amodel}

The objects constructed in the previous section are, for the moment, pure symbols to which we want to associate a suitable family of distributions. To this aim, Hairer introduces the notion of model for a \textit{general} regularity structure $\TS=(A,T,G)$, i.e. a pair of maps $Z=(\Pi,\Gamma)$, where $\Pi:\R^3\to\mathcal{L}(T,\SC'(\R^3))$ (with $\mathcal{L}(X,Y)$ the set of linear functionals from $X$ to $Y$) and $\Gamma:\R^3\times\R^3\to G$, such that, for every $x,y,z\in\R^3$, $\Pi_x\Gamma_{xy}=\Pi_y$ and $\Gamma_{xy}\Gamma_{yz}=\Gamma_{xz}$. Moreover, for every $\gamma>0$ and compact set $D$ there exists a constant $C_{\gamma,D}$ such that
\begin{equation}\label{eq:ModelAna}
|\langle \Pi_x\tau,\varphi_x^\lambda\rangle|\leq C_{\gamma,D}\lambda^{|\tau|}\qquad\text{and}\qquad\|\Gamma_{xy}\tau\|_{m}\leq C_{\gamma,D}|x-y|^{|\tau|-m}
\end{equation}
uniformly over symbols $\tau\in T$\footnote{Following \cite{Hai}, $\| . \|_\alpha$ denotes the norm on $T_\alpha$, freely generated by symbols of homogeneity $\alpha$ which in turn are assumed to have unit norm.} 
  with homogeneity strictly smaller than $\gamma$,  $\varphi\in\B_1^r$ for $r$ the smallest integer strictly greater that $\min A$, $x,y$ in a compact and $m\in A$ less than $|\tau|$ (here and later, the rescaling of $\varphi$ has to be understood in parabolic sense, see~\eqref{eq:ParScaling}, unless otherwise stated). We endow the family of models with the system of seminorms $\VERT Z\VERT_{\gamma;D}:=\|\Pi\|_{\gamma;D}+\|\Gamma\|_{\gamma;D}$ defined as the smallest constant $C_{\gamma,D}$ such that the bounds in~\eqref{eq:ModelAna} hold. 

Among the set of models, we need to identify a suitable subset embracing the main constituents of our equations. To begin with, following Section 5 in~\cite{Hai}, we rewrite the heat kernel $\bar{K}$ in spatial dimension $d=2$ as the sum of two terms, a ``singular" part $K$ (represented in our regularity structure by the symbol $\I$) and a smooth remainder $R$ (that, being smooth, can simply be ``lifted" through the polynomials), in such a way that 
\begin{enumerate}
\item $K$ is compactly supported in $\{|x|^2+t\leq 1\}$, is symmetric in the spatial variable $x$ and is $0$ for $t\leq 0$,
\item for $(t,x)$ such that $|x|^2+t<\frac{1}{2}$ and $t>0$, one has 
\[
K(t,x)=\frac{1}{|4\pi t|}e^{-\frac{|x|^2}{4t}}
\]
and it is smooth on $\{|x|^2+t\geq \frac{1}{4}\}$,
\item $K$ annihilates every polynomial of parabolic degree less than $3$. 
\end{enumerate} 
\begin{remark}
We are allowed to split the heat kernel thanks to Lemma 5.5 in~\cite{Hai}. Indeed, it precisely satisfies the scaling condition there required. 
\end{remark}

At this point all the elements are in place and we can define the family of so called \textit{admissible models} for either of the regularity structures constructed before. 

\begin{definition}\label{def:Amodel}
A model $(\Pi,\Gamma)=:Z$ on $\TS_g$ (resp. $\TS_g^H$) is said to be \textit{admissible} if, for every $x,y\in\R^3$, the following conditions hold
\begin{enumerate}
\item $\Pi_x {\color{blue}1}(y)=1$ and for every multiindex $k\in\N^2$ and $\tau\in \W$ (resp. $\W^H$) such that ${\color{blue}\tau X^k}\in T_g$ (resp. $T_g^H$)  
\begin{subequations}\label{eq:Amodel}
\begin{equation}\label{eq:AmodelPol}
\Pi_x {\color{blue}\tau X^k}(\cdot)=(\cdot-x)^k\Pi_x\tau 
\end{equation}
and for every $\tau\in \W$ (resp. $\W^H$) such that $\I(\tau)\in T_g$ (resp. $T_g^H$),
\begin{equation}\label{eq:AmodelInt1}
\Pi_x {\color{blue}\I(\tau)}(y)=\langle \Pi_x {\color{blue}\tau},K(y-\cdot)\rangle +\sum_{0\leq|l|<|\tau|+2}\frac{(y-x)^l}{l!}f_x(\J_l(\tau))
\end{equation}
\end{subequations}
where $f_x$ is an element in $\mathcal{G}_+$ (resp. $\mathcal{G}^H_+$), characterized by\footnote{Due to multiplicativity of such elements, this determines $f_x$ on $T_g^+$ (resp. 
$T_{g+}^H$).}
\begin{equation}\label{eq:AmodelPol1}
f_x(X_i)=-x_i
\end{equation}
and, again for every $\tau\in \W$ (resp. $\W^H$) such that $\I(\tau)\in T_g$ (resp. $T_g^H$),
\begin{equation}\label{eq:AmodelInt}
f_x(\J_l(\tau))=-\langle \Pi_x\tau,D^{(l)}K(x-\cdot)\rangle,\qquad |l|<|\tau|+2
\end{equation}
\item The map $\Gamma$ is given via the relation
\begin{equation}\label{eq:AmodelAlg1}
\Gamma_{xy}=(\Gamma_{f_x})^{-1}\Gamma_{f_y}.
\end{equation}
\end{enumerate}
\end{definition}
The existence of such admissible models is non-trivial. Nevertheless, it turns out that it is always possible to lift a smooth function (say a mollification of the noise) $\xi_\eps$ to an admissible model imposing~\eqref{eq:Amodel},~\eqref{eq:AmodelAlg1}, in addition to
\begin{equation}\label{eq:CanModel}
\Pi^\eps_x{\color{blue}\Xi}(y)=\xi_\eps(y)\qquad\text{and}\qquad \Pi^\eps_x\tau\bar{\tau}(y)=\Pi^\eps_x\tau(y)\Pi^\eps_x\bar{\tau}(y)
\end{equation}
as it was shown in~\cite{Hai}, Proposition 8.27. We will refer to such a model as the \textit{canonical model}. Moreover, for $T=T_g$ or $T_g^H$ (and similar for $\TS$), we say that an admissible model is \textit{smooth} if $\Pi:\R^3\to\mathcal{L}(T,\CC^\infty(\R^3))$ is a smooth function (cf. \cite{HW}; note that every canonical model is smooth).
We then define $\M(\TS)$, a {\it separable} space of admissible models, as the closure of the set of smooth admissible models under the family of semidistances  
\begin{equation}\label{def:Distance}
\VERT \Pi-\bar{\Pi}\VERT_D:=\sup_{x\in D}\sup_{\varphi\in\B^r_1}\sup_{\lambda\in(0,1]}\sup_{\tau\in T} \lambda^{-|\tau|}|\langle \Pi_x\tau-\bar{\Pi}_x\tau,\varphi_x^\lambda\rangle|
\end{equation} 
where $D$ is a compact subset of $\R^3$ and $r:=-\lfloor \alpha_{\min}\rfloor$. 

\begin{remark} 
Let $(\Pi,\Gamma)$ be an admissible model. The algebraic properties of the model and~\eqref{eq:AmodelAlg1} in the previous definition convey an extremely important fact, i.e. the quantity $\Pi_x(\Gamma_{f_x})^{-1}$ is independent of the base point $x$, or in other words, for every $x,y$ we have 
\begin{equation}\label{eq:AmodelAlg}
\Pi_x(\Gamma_{f_x})^{-1}=\Pi_y(\Gamma_{f_y})^{-1}
\end{equation}
The reason why we are stating it explicitly is that a partial inverse hold. Indeed, if for a pair of maps $(\Pi, \Gamma)$,~\eqref{eq:AmodelAlg1} and~\eqref{eq:AmodelAlg} hold, then the algebraic relations characterizing a model are immediately satisfied, and this will be crucial in what follows.  
\end{remark}
\begin{remark}\label{remark:1}
It might appear weird that in the definition of the semidistance~\eqref{def:Distance} there is no reference to the map $\Gamma$. As already pointed out in Remark 2.4 of~\cite{HW} and Remark 3.5 of~\cite{HP},  if $(\Pi,\Gamma)$ is a pair of maps for which the first analytical bound in~\eqref{eq:ModelAna} and the equalities~\eqref{eq:Amodel} and~\eqref{eq:AmodelAlg1} hold then also the second analytical bound in~\eqref{eq:ModelAna} is automatically satisfied thanks to Theorem 5.14 in~\cite{Hai}. For the reader's convenience, we give a short and self-contained proof of this in our setting in Appendix~\ref{remark}.
\end{remark}
\begin{remark}
We point out that as a consequence of Definition~\ref{def:Amodel}, the relation~\eqref{eq:Gamma} and the previous remark, for an admissible model $(\Pi,\Gamma)$, the action of the map $\Gamma$ is fully determined by the map
$$   f: x \mapsto f_x $$
introduced in Definition \ref{def:Amodel} above, hence we will denote an admissible model either by $(\Pi,\Gamma)$ or by $(\Pi,f)$ without further notice. 
\end{remark}
\begin{remark}\label{remark:periodicM}
As a final remark, we will consider only models \textit{adapted} to the action of a subgroup of the translations, according to Definition 3.33 in~\cite{Hai}. More specifically, this means that, if $e_1$ and $e_2$ are the canonical basis vectors on $\R^2$ and $T_i:\R^3\to\R^3$ is given by $T_i(t,x)=(t,x+2\pi e_i)$, we require that for all $z\in\R^3$ and $\varphi\in\SC$, $\langle\Pi_{T_iz}\tau,\varphi(T_i^{-1}\cdot)\rangle=\langle\Pi_{x}\tau,\varphi\rangle$ and $\Gamma_{f_{T_iz}}=\Gamma_{f_x}$. In this way, for $I\subset\R$ an interval, the domain $D$ appearing in~\eqref{def:Distance} can be simply taken to be $I\times\T^2$, and we will simply omit it.
\end{remark}

Because of the stringent conditions imposed in the previous definition, we would like to have a way to check if a model is indeed admissible given the minimal possible amount of information. To this purpose, following what done in Section 2.4 of~\cite{HW}, we introduce the notion of \textit{minimal admissible model}. 
\begin{definition}\label{def:minModel}
Let $\TS=(A,T,G)$ be either $\TS_g$ or $\TS_g^H$, and $T_-$ the subspace of $T$ generated by the symbols with negative homogeneity. A pair of maps $\Pi: \R^3\to\mathcal{L}(T_-,\SC'(\R^d))$, $\Gamma:\R^3\times\R^3\to G$ is said to form a \textit{minimal admissible model} for $\TS$ if for all $\tau\in T_-$, $\Pi$ satisfies the first bound in~\eqref{eq:ModelAna}, as well as the relations~\eqref{eq:AmodelPol},~\eqref{eq:AmodelInt},~\eqref{eq:AmodelAlg1} and~\eqref{eq:AmodelAlg}. We indicate by $\M_{\_}(\TS)$ the closure of the family of all such smooth pairs under the semidistance given in~\eqref{def:Distance}, but where the last supremum is taken only over the elements $\tau\in T_{-}$. 
\end{definition}

The previous definition is, as a matter of fact, meaningful, since it gives just enough information to define the action of $\Gamma_{xy}$ on all the terms of negative homogeneity of either $T_g$ or $T_g^H$. Indeed, it is sufficient to verify that, for any $x$, this is true for $\Gamma_{f_x}$. We have an explicit expression for the latter,~\eqref{eq:matrix} and~\eqref{eq:matrixh} respectively, out of which we deduce that we only need to check if the expressions $f_x(\J(\Xi))$ and $f_x(\J(H))$ can be obtained and this is guaranteed by~\eqref{eq:AmodelInt} and the fact that $\Pi_x{\color{blue}\Xi}$ and $\Pi_x{\color{blue}H}$ are, by assumption, given.

The importance of the the space $\M_{\_}(\TS)$ is clarified by the following theorem (see Theorem 2.10 in~\cite{HW} for the analogous statement in the context of the stochastic quantization equation).  

\begin{theorem}\label{thm:minModel}
Let $\TS=(A,T,G)$ be either $\TS_g$ or $\TS_g^H$. For every $(\Pi,\Gamma)\in\M_{\_}(\TS)$ there exists a unique admissible model $(\tilde{\Pi},\tilde{\Gamma})\in\M(\TS)$ such that, for every element $\tau\in\TS$ with negative homogeneity and $x\in\R^3$, $\Pi_x\tau=\tilde{\Pi}_x\tau$. Moreover, the map that assigns $\M_{\_}(\TS)\ni(\Pi,\Gamma)\mapsto(\tilde{\Pi},\tilde{\Gamma})\in\M(\TS)$ is locally Lipschitz continuous. 
\end{theorem}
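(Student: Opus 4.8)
The plan is to proceed exactly as in the proof of Theorem 2.10 in \cite{HW}, adapting the argument to the two regularity structures $\TS_g$ and $\TS_g^H$ at hand; since the homogeneities and the explicit matrix form of $\Gamma_{f_x}$ are available in \eqref{eq:matrix}, \eqref{eq:matrixh}, the construction is essentially algorithmic. The key point is that a minimal admissible model already specifies $\Pi_x\tau$ for all $\tau$ of negative homogeneity, and one must reconstruct $\Pi_x\sigma$ for the remaining symbols (here only $\sigma\in\{\color{blue}1\},\{\color{blue}X_i\}$ and $\{\color{blue}\I(\Xi)\}, \{\color{blue}\I(H)\}$, which have homogeneity $0$ or $1$) in the unique admissible way. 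First I would \emph{define} $\tilde\Pi$ on these extra symbols by the only formulae consistent with admissibility: $\tilde\Pi_x{\color{blue}1}\equiv 1$, $\tilde\Pi_x{\color{blue}X_i}(y)=(y-x)_i$, and for the integration symbols
\[
\tilde\Pi_x{\color{blue}\I(\tau)}(y)=\langle \Pi_x{\color{blue}\tau},K(y-\cdot)\rangle+\sum_{|l|<|\tau|+2}\frac{(y-x)^l}{l!}f_x(\J_l(\tau)),
\]
with $f_x(\J_l(\tau))=-\langle \Pi_x\tau,D^{(l)}K(x-\cdot)\rangle$ as in \eqref{eq:AmodelInt}; here $\tau\in\{\color{blue}\Xi,H\}$, both of which have negative homogeneity, so the right-hand side only involves data already given by the minimal model. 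I would then set $\tilde f_x$ to be the multiplicative extension determined by $\tilde f_x(X_i)=-x_i$ and the $\tilde f_x(\J_l(\tau))$ just defined, and $\tilde\Gamma_{xy}:=(\Gamma_{\tilde f_x})^{-1}\Gamma_{\tilde f_y}$, which is forced by \eqref{eq:AmodelAlg1}. Uniqueness is then immediate: any admissible model agreeing with $\Pi$ on negative-homogeneity symbols must satisfy exactly these identities.

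The second step is to verify that $(\tilde\Pi,\tilde\Gamma)$ is genuinely an admissible model, i.e. that it satisfies the analytic bounds \eqref{eq:ModelAna} and the algebraic identity $\tilde\Pi_x\tilde\Gamma_{xy}=\tilde\Pi_y$. For the algebraic identity I would use the partial-inverse observation recorded in the Remark after Definition~\ref{def:Amodel}: it suffices to check $\tilde\Pi_x(\Gamma_{\tilde f_x})^{-1}=\tilde\Pi_y(\Gamma_{\tilde f_y})^{-1}$, which one verifies symbol by symbol, the only nontrivial cases being ${\color{blue}\I(\Xi)}$ and ${\color{blue}\I(H)}$; there the base-point-independence follows from the defining formula for $\tilde\Pi_x{\color{blue}\I(\tau)}$ together with the relation \eqref{eq:AmodelAlg} assumed on $\Pi$ for $\tau\in T_-$ and a telescoping of the Taylor-polynomial correction terms (this is the standard computation behind \cite[Thm.~5.14]{Hai}). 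For the analytic bounds: the bound on $\tilde\Gamma$ is automatic once the bound on $\tilde\Pi$ is known, by Remark~\ref{remark:1} (Appendix~\ref{remark}); and the bound on $\langle\tilde\Pi_x{\color{blue}\I(\tau)},\varphi_x^\lambda\rangle$ of order $\lambda^{|\tau|+2}$ is precisely the content of the Schauder-type estimate of \cite[Thm.~5.14]{Hai} applied to the singular kernel $K$, which is legitimate because $K$ has the scaling and polynomial-annihilation properties (1)--(3) listed in Section~\ref{subsection:Amodel}.

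Finally, for the local Lipschitz continuity of the extension map, I would run the same estimates on differences: given two minimal admissible models $(\Pi,\Gamma),(\bar\Pi,\bar\Gamma)$, the difference $\tilde\Pi_x{\color{blue}\I(\tau)}-\tilde{\bar\Pi}_x{\color{blue}\I(\tau)}$ is expressed through $\langle(\Pi_x-\bar\Pi_x){\color{blue}\tau},K(y-\cdot)\rangle$ and the differences of the coefficients $f_x(\J_l(\tau))-\bar f_x(\J_l(\tau))$, each of which is controlled, via \cite[Thm.~5.14]{Hai} again, by $\VERT\Pi-\bar\Pi\VERT$ restricted to $T_-$, uniformly on compacts (and, by the periodicity of Remark~\ref{remark:periodicM}, on the strips $I\times\T^2$). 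The constant depends polynomially on $\VERT\Pi\VERT_{T_-}+\VERT\bar\Pi\VERT_{T_-}$, giving local Lipschitz continuity; passing from smooth pairs to the closures $\M_{\_}(\TS)$ and $\M(\TS)$ is then a routine density argument. \emph{The main obstacle} is really just the base-point-independence verification for the integration symbols together with the reconstruction/Schauder estimate of \cite[Thm.~5.14]{Hai}: nothing conceptually new happens beyond \cite{HW}, but one has to be careful that the enlarged structure $\TS_g^H$ introduces no symbols of negative homogeneity obtained by applying $\I$ (indeed ${\color{blue}\I(\cdot)}$ always raises homogeneity by $2$, so the only negative-homogeneity symbols are products of ${\color{blue}\Xi}$ and ${\color{blue}H}$ with polynomials), so that the minimal model genuinely determines everything.
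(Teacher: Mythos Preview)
Your proposal is correct and follows essentially the same route as the paper's proof: uniqueness is forced by the admissibility relations~\eqref{eq:Amodel}, the algebraic identities are checked via base-point independence, and the analytic bounds (and Lipschitz continuity) are delegated to Hairer's Schauder machinery. The only cosmetic difference is that the paper invokes Lemmas~5.19 and~5.21 of~\cite{Hai} (the technical core of Theorem~5.14) rather than Theorem~5.14 itself, and cites Proposition~3.31 of~\cite{Hai} for the general extension principle; your more explicit symbol-by-symbol unpacking is a faithful expansion of what the paper leaves as ``an easy computation''.
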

\begin{proof}
As mentioned in~\cite{HW}, the proof is a straightforward concequence of Proposition 3.31 and Theorem 5.14 in~\cite{Hai}. Nevertheless we point out that, since we require the extended model  $(\tilde{\Pi},\tilde{\Gamma})$ to be admissible, we have no choice. Indeed, it is already specified by $(\Pi,\Gamma)$ on the elements of negative homogeneity, and on the others (${\color{blue} \I(\tau)}$, $\tau\in\{{\color{blue} \Xi}, {\color{blue} H}\}$, and ${\color{blue} X_i}$, $i=1,2$), relations~\eqref{eq:AmodelPol} and~\eqref{eq:AmodelInt1} leave no alternatives. At this point one would have to show that the algebraic relations are indeed satisfied and that the analytical bounds hold. While the latter follow from Lemmas 5.19 and 5.21 in~\cite{Hai}, the first is an easy computation.  
\end{proof}

\subsection{Extension and Translation of Admissible Models}

So far we have completely ignored the specific role the symbol {\color{blue}$H$} is supposed to play. Indeed, such symbol should represent the abstract counterpart of an element in the Cameron-Martin space and, therefore, we would at least need to impose that $\Pi_x {\color{blue}H}$ corresponds to an $L^2$ function. To incorporate this condition, instead of modifying Definition~\ref{def:Amodel}, we will show that, given an admissible model for $\TS_g$, one can always uniquely extend it to a suitable admissible model for the whole of $\TS_g^H$. 

\begin{proposition} \label{prop:ContShiftModel}
Let $Z=(\Pi,f)$ be an admissible model for $\TS_g$. Given $h\in L^2(\T^2)$, there exists a unique admissible model $Z^{e_h}=(\Pi^{e_h},\Gamma^{e_h})$ on $\TS_g^H$ such that
\begin{enumerate}
\item for all $\tau\in T_g$, $\bar{\tau}\in T_+$ and $x\in\R^3$, $\Pi_x^{e_h}\tau=\Pi_x\tau$ and $f_x^{e_h}\bar{\tau}=f_x\bar{\tau}$, 
\item $\Pi^{e_h}_x{\color{blue} H}=h$ for all $x$ and for every $\tau\in\F^H\setminus\F$, $|\tau|<0$, that can be written as $\tau_1\tau_2$ for $\tau_1\in\U^H$ and $\tau_2\in\{{\color{blue}\Xi},{\color{blue} H}\}$, 
\begin{equation}\label{eq:defTildePi}
\Pi^{e_h}_x \tau = \big(\Pi^{e_h}_x \tau_1\big)\,\big(\Pi^{e_h}_x\tau_2\big)
\end{equation}
\item for all $x\in\R^3$, $Z^{e_h}$ satisfies~\eqref{eq:Amodel} on $\TS_g^H$.
\end{enumerate}
Moreover, the map $E$ that assigns to $(h,Z)\in L^2(\T^2)\times\M(\TS_g)$, $E_hZ:=Z^{e_h}\in\M(\TS_g^H)$ is jointly locally Lipschitz continuous. For a given model $Z$ and $L^2$-function $h$ we call $E_h Z=Z^{e_h}$, the extension of $Z$ in the $h$-direction. 
\end{proposition}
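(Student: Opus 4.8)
\textbf{Proof proposal for Proposition~\ref{prop:ContShiftModel}.}

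The plan is to build $Z^{e_h}$ in the guise of a \emph{minimal} admissible model and then invoke Theorem~\ref{thm:minModel} to promote it to a full admissible model on $\TS_g^H$. Since $\TS_g^H$ differs from $\TS_g$ only through the new symbols $\color{blue}H$ and those built from it, and the set of homogeneities is unchanged, the subspace $(T_g^H)_-$ of negative-homogeneity symbols is spanned by $\{{\color{blue}\Xi},{\color{blue}H},{\color{blue}\I(\Xi)\Xi},{\color{blue}\I(\Xi)H},{\color{blue}\I(H)\Xi},{\color{blue}\I(H)H},{\color{blue}X_i\Xi},{\color{blue}X_iH}\}$. On the ``old'' part ${\color{blue}\Xi},{\color{blue}\I(\Xi)\Xi},{\color{blue}X_i\Xi}$ we simply keep $\Pi^{e_h}=\Pi$ and $f^{e_h}=f$; this forces item~(1) and, since $Z$ is admissible, the bounds~\eqref{eq:ModelAna}, \eqref{eq:AmodelPol}, \eqref{eq:AmodelInt}, \eqref{eq:AmodelAlg1}, \eqref{eq:AmodelAlg} hold on those symbols. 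For the new symbols I would proceed by increasing homogeneity: set $\Pi^{e_h}_x{\color{blue}H}:=h$ (base-point independent, as required), then use~\eqref{eq:AmodelInt} to \emph{define} $f_x(\J_l({\color{blue}H}))=-\langle h,D^{(l)}K(x-\cdot)\rangle$, which by~\eqref{eq:matrixh} completely specifies $\Gamma_{f_x}$ on $(T_g^H)_-$; and finally define $\Pi^{e_h}_x({\color{blue}X_iH})=(\cdot-x_i)h$ and $\Pi^{e_h}_x({\color{blue}\I(\sigma)H})=(\Pi^{e_h}_x{\color{blue}\I(\sigma)})\,h$ for $\sigma\in\{{\color{blue}\Xi},{\color{blue}H}\}$, i.e. item~(2). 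Uniqueness is then automatic: any admissible model satisfying (1)--(3) must agree with this one on $(T_g^H)_-$ by the same forced-choice argument as in the proof of Theorem~\ref{thm:minModel}, and the extension to all of $T_g^H$ is unique by that theorem.

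The two substantive points are the analytic bound on the new symbols and the algebraic identity~\eqref{eq:AmodelAlg}. For the algebra, one checks that $(\Pi^{e_h}_x,\Gamma_{f_x})$ satisfies $\Pi^{e_h}_x(\Gamma_{f_x})^{-1}=\Pi^{e_h}_y(\Gamma_{f_y})^{-1}$ on the new symbols; on $\color{blue}H$ this is trivial since $\Gamma_{f_x}{\color{blue}H}={\color{blue}H}$ and $\Pi^{e_h}_x{\color{blue}H}=h$ is $x$-independent, and on ${\color{blue}\I(\Xi)H},{\color{blue}\I(H)H}$ etc. it follows by the same computation Hairer uses for products, using that $\color{blue}H$ plays the algebraic role of $\color{blue}\Xi$ and that the identity already holds on $\color{blue}\Xi$ and on $\color{blue}\I(\Xi), \color{blue}\I(H)$ (the latter because $\J_l$ was defined through~\eqref{eq:AmodelInt}). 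Then Theorem~\ref{thm:minModel}, or equivalently Remark~\ref{remark:1}, takes care of the $\Gamma$-bound automatically. The genuinely new analytic estimate is
\[
|\langle \Pi^{e_h}_x{\color{blue}\I(\Xi)H},\varphi_x^\lambda\rangle|\lesssim \lambda^{|{\color{blue}\I(\Xi)H}|},\qquad |{\color{blue}\I(\Xi)H}|=\alpha_{\min}+2+\alpha_{\min}=2\alpha_{\min}+2<0,
\]
and the analogues for ${\color{blue}\I(H)\Xi},{\color{blue}\I(H)H},{\color{blue}X_iH}$. Here the point is that $\Pi^{e_h}_x{\color{blue}\I(\sigma)}$ is a genuine $\CC^{|\sigma|+2}$-function (more precisely controlled by the model norm) while $h\in L^2=H^0$, so the product lands in a negative Besov space by~\eqref{equ:BesovProduct}; the claimed exponent $2\alpha_{\min}+2$ is \emph{not} the Besov regularity $-d/2=-1$ of the product but the homogeneity we assigned, and it is smaller ($2\alpha_{\min}+2<-1$ for $\kappa$ small), so the bound is in fact generous — one just needs to test against $\varphi_x^\lambda$ and lose a factor $\lambda^{-d/2}=\lambda^{-1}$ from the $L^2$ factor by Cauchy--Schwarz against $\|\varphi_x^\lambda\|_{L^2}\lesssim\lambda^{-d/2}$, gaining $\lambda^{|\sigma|+2}$ from the Hölder factor. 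This is exactly the delicate wavelet/Besov analysis alluded to in the introduction, and I expect it to be the main obstacle: making the product estimate uniform in the model and in $x$ (hence local on $D=I\times\T^2$) and checking it survives the closure defining $\M(\TS_g^H)$.

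Finally, joint local Lipschitz continuity of $E:(h,Z)\mapsto Z^{e_h}$ follows by running the same construction for differences. The assignment is affine in $h$ for fixed $Z$ (each new $\Pi^{e_h}_x\tau$ and each new $f_x(\J_l({\color{blue}H}))$ is linear in $h$) and locally Lipschitz in $Z$ on the old symbols by definition of $\M(\TS_g)$; the only products to control are of the form $(\Pi^{e_h}_x{\color{blue}\I(\sigma)})h$, bilinear in $(Z,h)$, so a difference splits as $(\Pi^{e_h}-\bar\Pi^{e_{\bar h}})$-type terms times a bounded factor plus a bounded factor times $(h-\bar h)$, and the product estimate above applied to each piece gives the Lipschitz bound on compacts. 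Local Lipschitz continuity of the minimal-to-full extension (Theorem~\ref{thm:minModel}) then transfers the estimate to $\M(\TS_g^H)$, completing the proof.
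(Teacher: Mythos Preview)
Your overall strategy coincides with the paper's: build a minimal admissible model on the new symbols, verify \eqref{eq:AmodelAlg}, and invoke Theorem~\ref{thm:minModel}; the algebraic check and the Lipschitz-by-bilinearity argument are also exactly what the paper does. The gap is in the analytic bound for one specific symbol. Your Cauchy--Schwarz trick---pair $h\in L^2$ with $\varphi_x^\lambda$ (losing $\lambda^{-d/2}$) and use the pointwise H\"older control $|\Pi_x^{e_h}{\color{blue}\I(\sigma)}(y)|\lesssim|y-x|^{|\sigma|+2}$---does deliver the required $\lambda^{2\alpha_{\min}+2}$ for ${\color{blue}H}$, ${\color{blue}X_iH}$, ${\color{blue}\I(\Xi)H}$ and ${\color{blue}\I(H)H}$, because in each of these the distributional factor is $h\in L^2$. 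It does \emph{not} apply to ${\color{blue}\I(H)\Xi}$: there
\[
\Pi_x^{e_h}{\color{blue}\I(H)\Xi}=\big((N*h)(\cdot)-(N*h)(x)\big)\,\Pi_x{\color{blue}\Xi},
\]
so the distributional factor is $\Pi_x{\color{blue}\Xi}$, which for a generic $Z\in\M(\TS_g)$ lies only in $\CC^{\alpha_{\min}}$ and need not be in $L^2$, while the function factor $N*h\in H^2\hookrightarrow\CC^{1-\varepsilon}$ is not in $\CC^r$ for any $r\ge 2$, so you cannot absorb it into the test function and invoke the model bound on ${\color{blue}\Xi}$ either. Splitting off the constant term only yields $(N*h)(x)\langle\Pi_x{\color{blue}\Xi},\varphi_x^\lambda\rangle\sim\lambda^{\alpha_{\min}}$, which is strictly larger than $\lambda^{2\alpha_{\min}+2}$ since $\alpha_{\min}>-2$.

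This is precisely the case that forces the paper's wavelet analysis in Lemma~\ref{lem:newlemApp}: one expands both $N*h$ (controlled in $H^2$, cf.\ \eqref{eq:SobolevEst}--\eqref{eq:PiIH1}) and $\Pi_x{\color{blue}\Xi}$ in a wavelet basis, applies the almost-orthogonality estimate of Lemma~\ref{lem:WavDecorr}, and sums over scales. So the difficulty for ${\color{blue}\I(H)\Xi}$ is not merely ``uniformity in the model and in $x$'' as you suggest, but the very estimate itself; without that step neither the analytic bound nor the ensuing Lipschitz control for this symbol is available.
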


\begin{proof} Let $(\Pi,f)$ be an admissible model for $\TS_g$. We now construct $(\Pi^{e_h},f^{e_h})$ as follows. At first, we set $\Pi_x^{e_h}\tau=\Pi_x\tau$ and $f_x^{e_h}\bar{\tau}=f_x\bar{\tau}$, for all $\tau\in T_g$, $\bar{\tau}\in T_g^+$ and $x\in\R^3$ (condition 1 in the statement). We then extend it \textit{recursively} on the rest of $\F^H$ setting $\Pi_x^{e_h}{\color{blue}H}(y):=h(y)$, defining $f_x^{e_h}$ by~\eqref{eq:AmodelPol1},~\eqref{eq:AmodelInt} and the requirement of being multiplicative (notice that for the elements in $T_+$ this is already the case since, on those, $f^{e_h}\equiv f$), and finally imposing~\eqref{eq:defTildePi} and~\eqref{eq:Amodel}, noting that the products in \eqref{eq:defTildePi} are well-defined due to \eqref{equ:BesovProduct} (analytic bounds necessary for the sequel are established in Appendix A). To be fully explicit, from (\ref{eq:AmodelInt}) knowledge of $\Pi_x^{e_h}{\color{blue}H}$ implies that $f_x^{e_h} \J_l (H)$ is determined. This in turn gives us $\Pi_x^{e_h}{\color{blue} \I (H)}$, thanks to (\ref{eq:AmodelInt1}). The realization $\Pi_x^{e_h}$ on all other symbols in $\F^H\setminus\F$, $|\tau|<0$ is then obtained from (\ref{eq:defTildePi}). \footnote{Strictly speaking, given that ${\color{blue} \I (H)}$ is the only symbol in $\F^H\setminus\F$ of positive homogeneity, the realization map is already defined in all of $\F^H$. The (here unnecessary) use of the minimal admissible model is justified with regard to adaptions of our argument to more complicated regularity structures}

At this point all we need to show is that $(\Pi^{e_h},f^{e_h})$ is a minimal admissible model according to Definition~\ref{def:minModel}, so that Theorem~\ref{thm:minModel} will directly lead to the conclusion. By construction, the image through $\Pi_x^{e_h}$ of the elements of negative homogeneity is fully determined, hence the first bound in~\eqref{eq:ModelAna} follows by Lemma \ref{lem:newlemApp} in Appendix~\ref{app:comp}. 

We then define, for every $x,y$, $\Gamma_{xy}^{e_h}=(\Gamma_{f_x^{e_h}})^{-1} \Gamma_{f_y^{e_h}}$, so that it only remains to verify the validity of~\eqref{eq:AmodelAlg}. It is definitely true for ${\color{blue}H}$ since $\Gamma_{f_x^{e_h}}{\color{blue}H}={\color{blue}H}$ and $\Pi_x^{e_h}{\color{blue}H}$ is independent of $x$. All the other terms in $\tau\in\F^H\setminus\F$, $|\tau|<0$ can be rewritten as $\tau_1\tau_2$ for $\tau_1\in\U^H$ and $\tau_2\in\{{\color{blue}\Xi},{\color{blue} H}\}$. Since, by construction $\Gamma_{f_x^h}$ is multiplicative, we have
\[
\Pi_x^{e_h}(\Gamma_{f_x^{e_h}})^{-1}\tau= \Big(\Pi_x^{e_h}(\Gamma_{f_x^{e_h}})^{-1}\tau_1\Big)\Big(\Pi_x^{e_h}(\Gamma_{f_x^{e_h}})^{-1}\tau_2\Big)
\]
and we already pointed out that the second factor is independent of the base point $x$, for $\tau_2\in  \{{\color{blue}\Xi},{\color{blue} H}\}$. For the first, it suffices to consider $\tau_1={\color{blue}\I(H)}$ since for the other elements it follows by the fact that $(\Pi^{e_h},f^{e_h})$ coincides with $(\Pi,f)$ on $\TS_g$ and the latter is admissible. Now, the matrix in~\eqref{eq:matrixh} conveys that
\[
(\Gamma_{f_x^{e_h}})^{-1}{\color{blue}\I(H)}={\color{blue}\I(H)}-f_x^{e_h}(\J(H)) {\color{blue}1}
\]
hence, applying $\Pi_x^{e_h}$ to both sides and recalling that $\Pi_x^{e_h}$ satisfies~\eqref{eq:AmodelInt1}, we get
\[
\Pi_x^{e_h}(\Gamma_{f_x^{e_h}})^{-1}{\color{blue}\I(H)}(y)=\langle \Pi_x^{e_h} {\color{blue}H},K(y-\cdot)\rangle +f_x^{e_h}(\J(H))-f_x^{e_h}(\J(H))=\langle \Pi_x^{e_h} {\color{blue}H},K(y-\cdot)\rangle
\]
and the last member of the previous chain of equalities does not depend on $x$ since $\Pi_x^{e_h}{\color{blue}H}$ does not. 
\newline

Concerning the local Lipschitz continuity, let $M>0$. Then, the same bounds obtained in Lemma~\ref{lem:newlemApp} immediately imply
\[
\VERT{\Pi}^{e_h} - \tilde{\Pi}^{e_{\tilde h}} \VERT \lesssim \|h-\tilde h\|_{L^2}+\VERT \Pi-\tilde{\Pi}\VERT
\] 
uniformly over $h$, $\tilde h\in L^2(\T^2)$ and $Z=(\Pi,f)$, $\tilde{Z}=(\tilde \Pi,\tilde f)\in\M(\TS_g)$ such that $\|h\|_{L^2}$, $\|\tilde h\|_{L^2}$, $\VERT \Pi\VERT$, $\VERT \tilde \Pi\VERT\leq M$, where $E_hZ=(\Pi^{e_h},f^{e_h})$ and $E_{\tilde h}\tilde Z=(\tilde{\Pi}^{e_{\tilde h}},\tilde f^{e_{\tilde h}})$ are defined as above, and the implicit constant in the previous inequality depends only on $M$. 
\end{proof}

The previous proposition gives a canonical way to \textit{extend} an admissible model for $\TS_g$ to an admissible model for a bigger regularity structure, $\TS_g^H$, and uniquely specifies the action of such an extended model on the new symbols. However, it is important to remember that, in a way, we aim at \textit{translating} the model in the Cameron-Martin directions. To do so, we propose an abstract procedure that allows to encode such an operation on the space of admissible models. 

Let $\TS_g=(A_g,T_g,G_g)$ and $\TS_g^H=(A_g^H,T_g^H,G_g^H)$ be the regularity structures constructed in sections~\ref{section:RS} and~\ref{section:RSh}. We introduce two linear maps $\tau_H:T_g\to T_g^H$, the \textit{abstract translation map}, and $\tau_H^+: T_g^+\to (T_g^H)^+$, where $T_g^+$ and $(T_g^H)^+$ are nothing but the sets of ``coefficients" introduced in the above mentioned sections, and define them recursively by
\[
\tau_H({\color{blue}\Xi})={\color{blue}\Xi}+{\color{blue}H},\qquad \tau_H({\color{blue}X^k})={\color{blue}X^k}\,\,\forall k\,\,\text{multi-index}
\]
and further imposing $\tau_H$ to be multiplicative and to commute with the abstract integration map $\I$. Concerning $\tau_H^+$, we again require it to leave the polynomials invariant, to be multiplicative and to satisfy the following relation
\begin{equation}\label{eq:tauI}
\tau_H^+(\J_l(\tau))=\J_l(\tau_H(\tau))
\end{equation}
for all $\tau\in T_g$ such that $|\tau|+2-|l|>0$. 
\begin{remark}\label{rem:Invariance}
Since the homogeneity of ${\color{blue}\Xi}$ and the one of ${\color{blue}H}$ are the same by construction, a straightforward induction argument shows that if $\tau\in (T_g)_\alpha$, $\alpha\in A_g$, then $\tau_H(\tau)\in(T_g^H)_\alpha$. Indeed, it is trivially true for ${\color{blue}\Xi}$ and the polynomials. Given the fact that $\tau_H$ is multiplicative, if it holds for $\tau$ and $\bar{\tau}$ then it holds for $\tau\bar{\tau}$, and since it commutes with the abstract integration map, if it is satisfied by $\tau$, it is also satisfied by $\I(\tau)$.  
\end{remark}
Thanks to the two maps $\tau_H$ and $\tau_H^+$, we are ready to clarify what it means to \textit{translate} an admissible model. Let $Z=(\Pi, f)\in\M(\TS_g)$ and $h\in L^2$, we then set $T_h Z=(\Pi^h,f^h):=(\Pi^{e_h}\tau_H,f^{e_h}\tau_H^+)$, where $(\Pi^{e_h},f^{e_h})$ is the extended model defined in Proposition~\ref{prop:ContShiftModel}. The purpose of the next proposition is then to show that the \textit{translated} model $T_hZ$ is again an admissible model and prove some continuity properties of the map $T$. 

\begin{proposition}\label{prop:TransModel}
Let $\TS_g$ and $\TS_g^H$ be the regularity structures constructed in Sections~\ref{section:RS} and~\ref{section:RSh} respectively, and $\tau_H$ and $\tau_H^+$ be the maps defined above. Given $Z\in\M(\TS_g)$ and $h\in L^2(\T^2)$, $T_hZ=(\Pi^h,f^h):=(\Pi^{e_h}\tau_H,f^{e_h}\tau_H^+)$ is still an admissible model on $\TS_g$ and the map $T$ that assigns to $(h,Z)\in L^2(\T^2)\times\M(\TS_g)$, $T_hZ=Z^h\in\M(\TS_g)$ is jointly locally Lipschitz continuous. Finally, for a given model $Z$ and $L^2$-function $h$ we call $Z^h$, the translation of $Z$ in the $h$-direction.
\end{proposition}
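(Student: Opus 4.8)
The plan is to deduce every required property of $T_hZ=(\Pi^h,f^h)$ from the corresponding property of the extended model $E_hZ=(\Pi^{e_h},f^{e_h})\in\M(\TS_g^H)$ supplied by Proposition~\ref{prop:ContShiftModel}, the bridge being a single algebraic fact: the abstract translation \emph{intertwines} the two coproducts. Concretely, I would first prove that
\[
\Delta^H\circ\tau_H=(\tau_H\otimes\tau_H^+)\circ\Delta\qquad\text{on }T_g,
\]
by the recursion that defines $\tau_H,\tau_H^+,\Delta$ (and $\Delta^H$). It holds on ${\color{blue}\Xi}$, where both sides equal $({\color{blue}\Xi}+{\color{blue}H})\otimes1$, and on the ${\color{blue}X_i}$; it is stable under products because $\tau_H$ and $\tau_H^+$ are multiplicative, so $\tau_H\otimes\tau_H^+$ respects the componentwise product in \eqref{eq:Delta}; and for ${\color{blue}\I}$ one matches the two summands in \eqref{eq:Delta} term by term, using that $\tau_H$ commutes with ${\color{blue}\I}$ for the first and the defining relation \eqref{eq:tauI} of $\tau_H^+$ (together with $\tau_H^+$ fixing the polynomials) for the sum over $k,l$ — whose range is unchanged since $|\tau_H(\tau)|=|\tau|$ by Remark~\ref{rem:Invariance}. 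Substituting this identity and $f^h_x=f^{e_h}_x\circ\tau_H^+$ into the definition \eqref{eq:Gamma} of $\Gamma_f$ yields the operator identity $\tau_H\circ\Gamma_{f^h_x}=\Gamma_{f^{e_h}_x}\circ\tau_H$, hence also $\tau_H\circ(\Gamma_{f^h_x})^{-1}=(\Gamma_{f^{e_h}_x})^{-1}\circ\tau_H$.

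Granting this, checking that $T_hZ$ is admissible on $\TS_g$ is bookkeeping. Since $\tau_H^+$ and $f^{e_h}_x$ are multiplicative, so is $f^h_x=f^{e_h}_x\circ\tau_H^+$, whence $\Gamma_{f^h_x}\in G_g$; one sets $\Gamma^h_{xy}:=(\Gamma_{f^h_x})^{-1}\Gamma_{f^h_y}$, which is \eqref{eq:AmodelAlg1}. The first analytic bound of \eqref{eq:ModelAna} for $\Pi^h$ is inherited from that of $\Pi^{e_h}$ on $\TS_g^H$ together with $|\tau_H(\tau)|=|\tau|$ and $\|\tau_H(\tau)\|\lesssim\|\tau\|$. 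Writing $\Pi^h_x\tau=\Pi^{e_h}_x\tau_H(\tau)$ (well defined by the construction of $\F^H$) and invoking admissibility of $E_hZ$, the relations \eqref{eq:Amodel} follow: multiplicativity of $\tau_H$ and its fixing the polynomials give \eqref{eq:AmodelPol} and \eqref{eq:AmodelPol1}; for \eqref{eq:AmodelInt1} and \eqref{eq:AmodelInt} one uses that $\tau_H$ commutes with ${\color{blue}\I}$, that $f^h_x(\J_l(\tau))=f^{e_h}_x(\tau_H^+(\J_l(\tau)))=f^{e_h}_x(\J_l(\tau_H(\tau)))$ by \eqref{eq:tauI}, and once more $|\tau_H(\tau)|=|\tau|$. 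The base-point independence \eqref{eq:AmodelAlg} is where the intertwining pays off:
\[
\Pi^h_x(\Gamma_{f^h_x})^{-1}=\Pi^{e_h}_x\,\tau_H\,(\Gamma_{f^h_x})^{-1}=\Pi^{e_h}_x(\Gamma_{f^{e_h}_x})^{-1}\,\tau_H,
\]
and $\Pi^{e_h}_x(\Gamma_{f^{e_h}_x})^{-1}$ does not depend on $x$ because $E_hZ$ is admissible. By the remark following Definition~\ref{def:Amodel}, \eqref{eq:AmodelAlg1} and \eqref{eq:AmodelAlg} yield the algebraic model identities, and then Remark~\ref{remark:1} supplies the second bound in \eqref{eq:ModelAna} for free; adaptedness to the periodic translations (Remark~\ref{remark:periodicM}) is preserved because $\tau_H,\tau_H^+$ ignore the base point and $h\in L^2(\T^2)$ is periodic. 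Hence $T_hZ\in\M(\TS_g)$. (Equivalently, and more robustly for larger structures, one checks the above only on the symbols of negative homogeneity and quotes Theorem~\ref{thm:minModel}.)

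For the continuity statement, note that $T=L\circ E$, where $E$ is jointly locally Lipschitz by Proposition~\ref{prop:ContShiftModel} and $L:\M(\TS_g^H)\to\M(\TS_g)$ is precomposition of the realization map by the fixed, homogeneity-preserving linear map $\tau_H$. For homogeneous $\tau$ one has $\tau_H(\tau)=\sum_i c_i\sigma_i$ with $|\sigma_i|=|\tau|$ and $\sum_i|c_i|\lesssim\|\tau\|$, so the semidistance \eqref{def:Distance} satisfies $\VERT\Pi^{e_h}\tau_H-\tilde\Pi^{e_{\tilde h}}\tau_H\VERT\lesssim\VERT\Pi^{e_h}-\tilde\Pi^{e_{\tilde h}}\VERT$, i.e.\ $L$ is (globally) Lipschitz; composing gives the joint local Lipschitz continuity of $T$.

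The main obstacle is precisely the intertwining identity $\Delta^H\circ\tau_H=(\tau_H\otimes\tau_H^+)\circ\Delta$ and its consequence $\tau_H\circ\Gamma_{f^h_x}=\Gamma_{f^{e_h}_x}\circ\tau_H$: this is the exact sense in which the structure-group action commutes with abstract translation, and it is what converts the base-point independence of the extended model into that of the translated one. Once it is established, everything else is a direct unwinding of Definition~\ref{def:Amodel} and of the defining properties of $\tau_H$ and $\tau_H^+$.
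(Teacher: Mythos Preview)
Your proposal is correct and follows essentially the same approach as the paper: the intertwining identity $\Delta^H\circ\tau_H=(\tau_H\otimes\tau_H^+)\circ\Delta$ you prove is precisely the content of Lemma~\ref{lemma:RelCon}, and the paper uses it in the same way to transfer the base-point independence \eqref{eq:AmodelAlg} from $E_hZ$ to $T_hZ$, with the remaining admissibility conditions and the analytic bounds handled via homogeneity preservation (Remark~\ref{rem:Invariance}) and Remark~\ref{remark:1}, and the Lipschitz continuity obtained from that of $E$.
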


\noindent In the proof of the proposition we will need the following lemma

\begin{lemma}\label{lemma:RelCon}
In the same context as in Proposition~\ref{prop:TransModel}, for every $\tau\in T_g$ the following relation holds
\begin{equation}\label{eq:RelCon}
(\tau_H\otimes\tau_H^+)\Delta\tau=\Delta^H\tau_H(\tau)
\end{equation} 
\end{lemma}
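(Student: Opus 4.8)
The plan is to prove \eqref{eq:RelCon} by induction on the structure of $\tau \in T_g$, mirroring the recursive definitions of $\tau_H$, $\tau_H^+$, $\Delta$ and $\Delta^H$. The base cases are the generators: for $\tau = {\color{blue}1}$ both sides equal ${\color{blue}1}\otimes 1$; for $\tau = {\color{blue}X_i}$ we check $(\tau_H\otimes\tau_H^+)({\color{blue}X_i}\otimes 1 + {\color{blue}1}\otimes X_i) = {\color{blue}X_i}\otimes 1 + {\color{blue}1}\otimes X_i = \Delta^H{\color{blue}X_i} = \Delta^H\tau_H({\color{blue}X_i})$, using that both maps fix polynomials; and for $\tau = {\color{blue}\Xi}$ we need $(\tau_H\otimes\tau_H^+)({\color{blue}\Xi}\otimes 1) = ({\color{blue}\Xi}+{\color{blue}H})\otimes 1$, which must be compared with $\Delta^H\tau_H({\color{blue}\Xi}) = \Delta^H({\color{blue}\Xi}+{\color{blue}H}) = {\color{blue}\Xi}\otimes 1 + {\color{blue}H}\otimes 1$ — these agree by the defining relations $\Delta^H{\color{blue}\Xi}=\Delta{\color{blue}\Xi}={\color{blue}\Xi}\otimes 1$ and $\Delta^H{\color{blue}H}={\color{blue}H}\otimes 1$. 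This base case is where the "miracle" happens: the extra term ${\color{blue}H}$ produced by $\tau_H$ on the left is exactly absorbed by the fact that $\Delta^H$ treats ${\color{blue}H}$ algebraically like ${\color{blue}\Xi}$ on the right.

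For the inductive step there are two cases. \emph{Product:} suppose \eqref{eq:RelCon} holds for $\tau$ and $\bar\tau$. Then, using multiplicativity of $\tau_H$, $\tau_H^+$, $\Delta$ and $\Delta^H$ (the latter from the requirement that the relations in \eqref{eq:Delta} still hold for $\Delta^H$),
\[
(\tau_H\otimes\tau_H^+)\Delta(\tau\bar\tau) = (\tau_H\otimes\tau_H^+)\big((\Delta\tau)(\Delta\bar\tau)\big) = \big((\tau_H\otimes\tau_H^+)\Delta\tau\big)\big((\tau_H\otimes\tau_H^+)\Delta\bar\tau\big),
\]
which by the induction hypothesis equals $\big(\Delta^H\tau_H(\tau)\big)\big(\Delta^H\tau_H(\bar\tau)\big) = \Delta^H\big(\tau_H(\tau)\tau_H(\bar\tau)\big) = \Delta^H\tau_H(\tau\bar\tau)$, using multiplicativity of $\tau_H$ in the last step. \emph{Integration:} suppose \eqref{eq:RelCon} holds for $\tau$. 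Apply $\tau_H\otimes\tau_H^+$ to the formula \eqref{eq:Delta} for $\Delta\I(\tau)$. The first term $(\I\otimes\mathrm{Id})\Delta\tau$ maps to $(\I\otimes\mathrm{Id})(\tau_H\otimes\tau_H^+)\Delta\tau$ because $\tau_H$ commutes with $\I$ by definition; by induction this is $(\I\otimes\mathrm{Id})\Delta^H\tau_H(\tau)$, the first term of $\Delta^H\I(\tau_H(\tau))$. For the second term, $\sum_{l,k}\tfrac{1}{k!\,l!}{\color{blue}X^k}\otimes X^l\J_{k+l}(\tau)$ maps under $\tau_H\otimes\tau_H^+$ to $\sum_{l,k}\tfrac{1}{k!\,l!}{\color{blue}X^k}\otimes X^l\tau_H^+(\J_{k+l}(\tau))$ (polynomials fixed, $\tau_H^+$ multiplicative), which by \eqref{eq:tauI} equals $\sum_{l,k}\tfrac{1}{k!\,l!}{\color{blue}X^k}\otimes X^l\J_{k+l}(\tau_H(\tau))$ — exactly the second term of $\Delta^H\I(\tau_H(\tau))$. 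Hence $(\tau_H\otimes\tau_H^+)\Delta\I(\tau) = \Delta^H\I(\tau_H(\tau)) = \Delta^H\tau_H(\I(\tau))$, again using that $\tau_H$ commutes with $\I$.

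Since $T_g$ is spanned by the symbols obtained from ${\color{blue}1}, {\color{blue}X_i}, {\color{blue}\Xi}$ by finitely many products and applications of $\I$, and both sides of \eqref{eq:RelCon} are linear in $\tau$, the induction covers all of $T_g$ and the lemma follows. One should briefly note that the sum over $l,k$ in \eqref{eq:Delta} stays finite under the map because $\tau_H$ preserves homogeneity (Remark \ref{rem:Invariance}), so $\J_{k+l}(\tau_H(\tau))=0$ exactly when $\J_{k+l}(\tau)=0$, and no truncation issues arise. I do not anticipate a genuine obstacle here — the statement is an algebraic identity and the only thing to get right is bookkeeping the generators and checking that $\Delta^H$'s action on ${\color{blue}H}$ is precisely what makes the ${\color{blue}\Xi}\mapsto{\color{blue}\Xi}+{\color{blue}H}$ shift compatible with the coproducts; the mild subtlety worth a sentence is that on $T_g^+$ the map $\tau_H^+$ is \emph{defined} by \eqref{eq:tauI} rather than by a shift of a generator, so one must make sure this definition is consistent (i.e. that distinct factorizations of a $T_g^+$-basis element give the same value), which is immediate from multiplicativity together with $\tau_H^+$ fixing polynomials and \eqref{eq:tauI} being imposed for every admissible $(l,\tau)$.
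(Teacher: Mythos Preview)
Your proof is correct and follows essentially the same inductive argument as the paper's own proof: base cases on the generators, then the product and integration steps using multiplicativity, the commutation of $\tau_H$ with $\I$, and the defining relation \eqref{eq:tauI}. Your write-up is slightly more explicit in the base cases and adds the useful observation about homogeneity preservation keeping the truncation in \eqref{eq:Delta} consistent, but the structure is the same.
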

\begin{proof}
The proof of this lemma proceeds by induction. It is definitely true for ${\color{blue}\Xi}$ and the polynomials. Assume it holds for $\tau$ and $\bar{\tau}$, then it also holds for $\tau\bar{\tau}$ since all the maps involved are multiplicative. Concerning $\I(\tau)$, by~\eqref{eq:Delta}, we have
\begin{align*}
(\tau_H\otimes\tau_H^+)\Delta\I(\tau)&=(\tau_H\otimes\tau_H^+)(\I\otimes \text{Id})\Delta\tau+\sum_{l,k}\frac{1}{k!\,l!}(\tau_H\otimes\tau_H^+){\color{blue}X^k}\otimes X^l\J_{k+l}(\tau)\\
&=(\I\otimes \text{Id})(\tau_H\otimes\tau_H^+)\Delta\tau +\sum_{l,k}\frac{1}{k!\,l!}{\color{blue}X^k}\otimes X^l\J_{k+l}(\tau_H(\tau))\\
&=(\I\otimes \text{Id})\Delta^H\tau_H(\tau) +\sum_{l,k}\frac{1}{k!\,l!}{\color{blue}X^k}\otimes X^l\J_{k+l}(\tau_H(\tau))=\Delta^H\I(\tau_H(\tau))=\Delta^H\tau_H(\I(\tau))
\end{align*}
where the second equality is due to the facts that by construction, $\tau_H$ and $\I$ commute, both $\tau_H$ and $\tau_H^+$ leave the polynomials invariant and are multiplicative, and relation~\eqref{eq:tauI}, while the third follows by the induction hypothesis. 
\end{proof}
\begin{proof}[Proof of Proposition~\ref{prop:TransModel}]
Let us begin by showing that, given a model $Z\in\M(\TS_g)$ and a function $h\in L^2$, $Z^h$ is still an admissible model on $\TS_g$. To do so, we will first prove that the algebraic relations~\eqref{eq:Amodel},~\eqref{eq:AmodelAlg1} and~\eqref{eq:AmodelAlg} are matched, and then verify that the analytical bounds are satisfied. Recall that, thanks to Proposition~\ref{prop:ContShiftModel}, $E_hZ=Z^{e_h}=(\Pi^{e_h},f^{e_h})$ belongs to $\M(\TS_g^H)$, hence~\eqref{eq:Amodel},~\eqref{eq:AmodelAlg1} hold for it. Since moreover the maps $\tau_H$ and $\tau_H^+$ are multiplicative, leave the polynomials invariant and satisfy the relation~\eqref{eq:tauI}, a straightforward computation shows that~\eqref{eq:Amodel} also holds for $Z^h$. While the definition of the maps $\Gamma^h_{xy}$ are implied by the definition of $f^h$ so that~\eqref{eq:AmodelAlg1} is trivially satisfied, the proof of~\eqref{eq:AmodelAlg} is more subtle. By definition we have
\begin{align*}
\Pi_x^h (\Gamma^h_{f_x^{-1}})&=\Pi_x^{e_h}\tau_H(\Gamma_{f_x^{e_h}\tau_H^+})^{-1}=\Pi_x^{e_h}\tau_H \left((\text{Id}\otimes (f_x^{e_h})^{-1}\tau_H^+)\Delta\right)\\
&=\Pi_x^{e_h}\left(\tau_H\otimes(f_x^{e_h})^{-1}\tau_H^+\right)\Delta=\Pi_x^{e_h}(\text{Id}\otimes(f_x^{e_h})^{-1})(\tau_H\otimes\tau_H^+)\Delta
\end{align*}
Now, by Lemma~\ref{lemma:RelCon}, it follows that the right-hand side of the previous equals
\[
\Pi_x^{e_h}(\text{Id}\otimes(f_x^{e_h})^{-1})\Delta^H\tau_H=\Pi_x^{e_h}(\Gamma_{f_x^{e_h}})^{-1}\tau_H= \Pi_y^{e_h}(\Gamma_{f_y^{e_h}})^{-1}\tau_H=\Pi_y^h (\Gamma^h_{f_y^{-1}})
\]
where the second equality is due to the fact that $Z^{e_h}$ is an admissible model for $\TS_g^H$. 

At this point we can focus on the analytical bounds. By Remark~\ref{remark:1} we only have to verify that~\eqref{eq:ModelAna} is satisfied for $\Pi^h=\Pi^{e_h}\tau_H$ but this is immediate since, by Remark~\ref{rem:Invariance}, $\tau_H$ leaves the homogeneities invariant and we already know~\eqref{eq:ModelAna} holds for $\Pi^{e_h}$, since it is a model. The same argument, joint with the results in Proposition~\ref{prop:ContShiftModel}, guarantees the local Lipschitz continuity of the map $T$ in its arguments, so that the proof is concluded.
\end{proof}

\begin{remark}\label{rem:TransCanMod}
Let $\eps>0$, $\xi_\eps=\xi\ast\varrho_\eps$ and $h_\eps=h\ast\varrho_\eps$, where $\xi$ is a distribution, $h$ and $L^2$-function and $\varrho_\eps$ a rescaled mollifier. As we saw in Section~\ref{subsection:Amodel}, we can lift $\xi_\eps$ to the canonical model $Z_\eps$ on $T_g$ by imposing~\eqref{eq:Amodel},~\eqref{eq:AmodelAlg1} and~\eqref{eq:CanModel}. Following the same procedure, but setting also $\tilde{\Pi}_x{\color{blue}H}(y)=h_\eps(y)$, we can construct the canonical model $\bar{Z}_\eps$ on $T_g^H$. Then it is straightforward to prove that $\bar{Z}_\eps=E_{h_\eps}Z_\eps$. 

One might wonder why instead of the construction carried out above, in order to define the translated model on $T_g$, we did not simply follow once more the same procedure (the one to construct the canonical model), requiring though in~\eqref{eq:CanModel},
\[
\tilde{\Pi}_x^{\eps}{\color{blue}\Xi}(y)=\xi_\eps(y) +h_\eps(y)
\]
and obtaining $\tilde{Z}_\eps=(\tilde{\Pi}^\eps,\tilde{\Gamma}^\eps)$. (It is immediate to show that $\tilde{Z}_\eps=T_{h_\eps}Z_\eps$.) Admissibility of $\tilde{Z}$ would then follow from Proposition 8.27 in \cite{Hai}. The problem with this approach is that it gives no estimates in terms of $h \in L^2$, which will be crucial in the sequel. Furthermore, any direct probabilistic construction of (renormalized) model associated to $\xi_\eps +h_\eps$ would lead to $h$-dependent null-sets, opposing any chance to establish $\Hi$-regularity of the Wiener functionals at hand (solutions to gPAM in our case). Finally, another advantage of the construction above is that it applies in a systematic way to any admissible model, without any need for (a converging sequence of) smooth models.
\end{remark}
\begin{remark}
In the context of rough paths, the extension operator defined in this section is reminiscent of the Young pairing or $(p,q)$-Lyons lift introduced in Definition 9.25 of~\cite{FrizVictoir}. 
\end{remark}

\subsection{Extending the Renormalization Group}

The parabolic Anderson equation~\eqref{eq:gPAM} is ill-posed, since the product between the expected solution and the noise cannot be classically defined. One of the main advantages of the theory of Regularity Structures is that such an issue can be overcome thanks to a suitable renormalization procedure. In general, one would like to define a family of maps $M$ (the so called Renormalization Group, $\Re$) acting on $\M$ such that for every $Z\in\M$, $M Z\in\M$ and there exists a sequence $M_\eps\subset\Re$ with the property $\lim_{\eps\to0}M_\eps Z_\eps$ exists, where $Z_\eps$ is the canonical model defined above. We will give only a sketch of the procedure, addressing the reader interested in the general construction of the renormalization group associated to a given regularity structure to Section 8.3 in~\cite{Hai}. 

In the specific context of~\eqref{eq:gPAM}, it turns out that we only need to deal with a one-dimensional subgroup of $\Re$, $\Re_0$ isomorphic to $\R$, that can be explicitly described as follows. Let $M$ be a map acting on the subspace $T_0$ of $T_g$, given by $T_\W\oplus\langle {\color{blue}1}\rangle$, as $M({\color{blue}\I(\Xi)\Xi})={\color{blue}\I(\Xi)\Xi}-C\,{\color{blue}1}$, where $C\in\R$, and $M(\tau)=\tau$ for all the others $\tau\in T_0$. Notice that $M = M(C)$ can be represented by the matrix
\begin{equation}\label{def:M}
M=
\begin{pmatrix}
1 & 0 & 0 & 0 &0\\
0 & 1 & 0 & 0&0\\
0 & 0 & 1 & 0&0\\
0 & -C & 0 & 1&0\\
0 &0&0&0&1
\end{pmatrix}
\end{equation}
where $C$ is a real number. 
\begin{remark}
It is immediate to see that the set $\{M(C): C\in\R\}$ forms a one-dimensional group with respect to the usual matrix product. 
\end{remark}
At this point we want to use these $M$'s to characterize the elements of $\Re_0$. More specifically, for $Z=(\Pi,\Gamma)\in\M(\TS_g)$, with a slight abuse of notation, we define the action of $M$ on $Z$ as $M(Z):=Z^M$, where $Z^M=(\Pi^M,\Gamma^M)$ is defined on $T_0$ by
\begin{equation}\label{eq:RenModel}
\Pi_x^M\tau=\Pi_x M\tau,\qquad \Gamma_{xy}^M\tau=\Gamma_{xy}\tau
\end{equation}
For $M$ to be an element of the renormalization group one has to verify that $Z^M$ can be extended to an admissible model and that the family of $M$'s forms indeed a group under composition. Even if such a result is a consequence of the abstract construction carried out in the Sections 8.3 and 9.1 of~\cite{Hai}, we can exploit our explicit definitions to give a more direct, but more specific, proof. 
\begin{proposition}\label{prop:RenGroup}
Let $\TS_g$ be the regularity structure defined above and $\M(\TS_g)$ the family of admissible models associated to it. For any $Z\in\M(\TS_g)$ and $M$ defined as in~\eqref{def:M}, $M(Z)=Z^M$ given by~\eqref{eq:RenModel} can be uniquely extended to an element of $\M(\TS_g)$ and the family of maps $\Re_0:=\{M: C\in\R\}$ forms a group under composition. 
\end{proposition}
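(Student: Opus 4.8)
The plan is to follow the explicit matrix picture set up in the paper, but recast the renormalization of the model in the ``partial inverse'' form of the Remark following Definition~\ref{def:Amodel}, so that admissibility reduces to checking the two algebraic identities \eqref{eq:AmodelAlg1} and \eqref{eq:AmodelAlg} (the analytic bounds are then automatic by Remark~\ref{remark:1}).

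\medskip

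\noindent\textbf{Step 1: The renormalized model extends to an admissible model.} Given $Z=(\Pi,f)\in\M(\TS_g)$ and $M=M(C)$ as in \eqref{def:M}, I would first observe that $M$ acts only on $T_0=T_\W\oplus\langle{\color{blue}1}\rangle$, sending ${\color{blue}\I(\Xi)\Xi}\mapsto{\color{blue}\I(\Xi)\Xi}-C{\color{blue}1}$ and fixing ${\color{blue}\Xi}$, ${\color{blue}X_i\Xi}$, ${\color{blue}1}$. I set $\Pi_x^M\tau:=\Pi_x M\tau$ on these symbols; on the remaining symbols of $\F$ (namely ${\color{blue}\I(\Xi)}$, ${\color{blue}X_i}$, which are exactly the ones on which admissibility leaves no freedom) I define $\Pi_x^M$ by forcing \eqref{eq:Amodel}, i.e.\ by \eqref{eq:AmodelInt1} together with \eqref{eq:AmodelPol}, and I define $f_x^M$ by \eqref{eq:AmodelPol1} and \eqref{eq:AmodelInt} applied to $\Pi^M$. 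Since $M$ commutes with the polynomial structure and $MX^k\tau = X^k M\tau$, one checks \eqref{eq:AmodelPol} directly. For \eqref{eq:AmodelAlg} — that $\Pi_x^M(\Gamma_{f_x^M})^{-1}$ is independent of $x$ — the cleanest route is the ``twisted antipode / Hairer renormalization'' identity $M$ commutes with $\Delta$ in the appropriate sense: concretely, using the matrix \eqref{eq:matrix} one sees $(\Gamma_{f_x^M})^{-1}$ acts on ${\color{blue}\I(\Xi)\Xi}$ and on ${\color{blue}\I(\Xi)}$ exactly as $(\Gamma_{f_x})^{-1}$ up to terms proportional to ${\color{blue}1}$, and since $f_x^M(\J(\Xi))$ differs from $f_x(\J(\Xi))$ in a base-point-independent way (the renormalization constant $C$ interacts with \eqref{eq:AmodelInt} only through the smooth-kernel pairing, which is the same for all $x$ up to the polynomial correction already built in), the combination $\Pi^M_x(\Gamma_{f^M_x})^{-1}$ is seen to equal $\Pi_x(\Gamma_{f_x})^{-1}$ minus a fixed ($x$-independent) constant multiple of $1$ on the symbol ${\color{blue}\I(\Xi)\Xi}$, hence still $x$-independent. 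Then Theorem~\ref{thm:minModel} (or directly Remark~\ref{remark:1}) upgrades $(\Pi^M,f^M)$ to a genuine admissible model, and uniqueness of the extension is exactly the uniqueness in Theorem~\ref{thm:minModel}. One also checks that if $Z$ is smooth (e.g.\ a canonical model) then so is $Z^M$, so the operation preserves $\M(\TS_g)$ after passing to the closure.

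\medskip

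\noindent\textbf{Step 2: The group property.} Here I would simply compute with the matrices. For $C_1,C_2\in\R$, $M(C_1)M(C_2)=M(C_1+C_2)$ by \eqref{def:M} (the matrices are unipotent, lower-triangular with a single off-diagonal entry $-C$), and $M(0)=\mathrm{Id}$. It then remains to see that the \emph{action on models} respects composition, i.e.\ $(Z^{M_1})^{M_2}=Z^{M_1 M_2}=Z^{M_2 M_1}$. On $T_0$ this is immediate from \eqref{eq:RenModel}: $\Pi^{M_2}_x(M_1\tau)=\Pi_x^{M_1M_2}\tau$ since the $\Gamma$ part is untouched. On the rest of $\F$, both $(Z^{M_1})^{M_2}$ and $Z^{M_1M_2}$ are the \emph{unique} admissible extensions of their common restriction to $T_0$ (by Step~1 / Theorem~\ref{thm:minModel}), hence they coincide; therefore $\Re_0=\{M(C):C\in\R\}$ acts as a group on $\M(\TS_g)$ isomorphic to $(\R,+)$.

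\medskip

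\noindent\textbf{Main obstacle.} The routine part is the group law; the one genuinely delicate point is the verification of \eqref{eq:AmodelAlg} for $Z^M$, i.e.\ that renormalization does not spoil the consistency relation $\Pi_x(\Gamma_{f_x})^{-1}=\Pi_y(\Gamma_{f_y})^{-1}$. This is where Hairer's abstract theory invokes the compatibility of the renormalization map with the coproduct $\Delta$ (Section~8.3 of~\cite{Hai}); in the present low-complexity structure I expect to be able to do it by hand from \eqref{eq:matrix} and \eqref{eq:AmodelInt}, tracking precisely which terms are base-point dependent and confirming that the correction $-C{\color{blue}1}$ in $M({\color{blue}\I(\Xi)\Xi})$ is absorbed uniformly in $x$ — the key being that ${\color{blue}1}$ sits at the bottom of the structure and is fixed by every $\Gamma_f$, so a constant shift there cannot introduce $x$-dependence. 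I would make this the technical heart of the proof and keep everything else brief.
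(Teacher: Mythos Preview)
Your overall route---define $Z^M$ on $T_0$, check it forms a minimal admissible model, and invoke Theorem~\ref{thm:minModel} for the unique extension---is exactly the paper's, and your treatment of the group law is fine. But you are making the central algebraic step harder than it is, and in doing so you introduce a confusion. By definition \eqref{eq:RenModel} one has $\Gamma_{xy}^M=\Gamma_{xy}$, i.e.\ $f_x^M=f_x$ on the nose: the only nontrivial value is $f_x(\J(\Xi))=-\langle\Pi_x{\color{blue}\Xi},K(x-\cdot)\rangle$, and since $M{\color{blue}\Xi}={\color{blue}\Xi}$ this is untouched by renormalization. So your sentence ``$f_x^M(\J(\Xi))$ differs from $f_x(\J(\Xi))$ in a base-point-independent way'' is off---they do not differ at all. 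Once you accept $f_x^M=f_x$, the verification of \eqref{eq:AmodelAlg} collapses to the single observation that $M$ and $\Gamma_{f_x}$ commute as matrices (check it on ${\color{blue}\I(\Xi)\Xi}$: both orders give ${\color{blue}\I(\Xi)\Xi}+f_x(\J(\Xi)){\color{blue}\Xi}-C{\color{blue}1}$), whence
\[
\Pi_x^M(\Gamma_{f_x^M})^{-1}=\Pi_x M(\Gamma_{f_x})^{-1}=\Pi_x(\Gamma_{f_x})^{-1}M=\Pi_y(\Gamma_{f_y})^{-1}M=\Pi_y^M(\Gamma_{f_y^M})^{-1}.
\]
This is the paper's argument, and it replaces your whole discussion of ``which terms are base-point dependent''.

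One more point: your opening claim that ``the analytic bounds are then automatic by Remark~\ref{remark:1}'' is not quite right. That remark gives the $\Gamma$-bound from the first $\Pi$-bound in \eqref{eq:ModelAna}, not both from nothing; you still owe the first bound for $\Pi^M$ on the symbols of negative homogeneity. The only nontrivial case is $\tau={\color{blue}\I(\Xi)\Xi}$, where $|\langle\Pi_x^M\tau,\varphi_x^\lambda\rangle|\le|\langle\Pi_x{\color{blue}\I(\Xi)\Xi},\varphi_x^\lambda\rangle|+|C|\,|\langle\Pi_x{\color{blue}1},\varphi_x^\lambda\rangle|\lesssim\lambda^{2\alpha_{\min}+2}$ since $2\alpha_{\min}+2<0$. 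Add that one line and your Step~1 is complete.
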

\begin{proof}
While, due to~\eqref{def:M} and~\eqref{eq:RenModel}, it is immediate to show that $\Re_0$ forms a group under composition, in order to verify that $Z^M$ can be uniquely extended to an element of $\M(\TS_g)$, thanks to Theorem~\ref{thm:minModel}, it suffices to prove  that $Z^M$ is a minimal admissible model according to Definition~\ref{def:minModel}. By~\eqref{def:M} and since, by assumption, $Z$ is an admissible model, the analytical bounds straightforwardly hold for every $\tau\in\W$ different from ${\color{blue}\I(\Xi)\Xi}$. For the latter, notice that the action of $M$ consists of adding a counterterm of strictly greater homogeneity, and, by linearity and~\eqref{eq:ModelAna}, we have
\[
|\langle \Pi_x^M{\color{blue}\I(\Xi)\Xi},\varphi_x^\lambda\rangle|=|\langle \Pi_x M{\color{blue}\I(\Xi)\Xi},\varphi_x^\lambda\rangle|\leq|\langle \Pi_x{\color{blue}\I(\Xi)\Xi},\varphi_x^\lambda\rangle|+|C\langle\Pi_x{\color{blue}1},\varphi_x^\lambda\rangle|\lesssim \lambda^{2\alpha_{\min}+2}+|C| \lesssim \lambda^{2\alpha_{\min}+2}
\] 
where the latter holds since $2\alpha_{\min}+2<0$. Concerning~\eqref{eq:AmodelPol} ,~\eqref{eq:AmodelInt},~\eqref{eq:AmodelAlg1} and~\eqref{eq:AmodelAlg}, since $\Gamma_{xy}^M=\Gamma_{xy}$, the only one whose validity is not obvious is the latter. In other words, we have to show that $\Pi_x^M(\Gamma_{f_x}^M)^{-1}=\Pi_y^M(\Gamma_{f_y}^M)^{-1}$. Due to~\eqref{eq:RenModel}, if $M\Gamma_{f_x}=\Gamma_{f_x}M$ for all $x$ we are done, indeed
\[
\Pi_x^M(\Gamma_{f_x}^M)^{-1}\tau=\Pi_x M(\Gamma_{f_x})^{-1}\tau=\Pi_x(\Gamma_{f_x})^{-1}M\tau=\Pi_y(\Gamma_{f_y})^{-1}M\tau=\Pi_y M(\Gamma_{f_y})^{-1}\tau=\Pi_y^M(\Gamma_{f_y}^M)^{-1}\tau
\]
But, the explicit expressions for $M$ and $\Gamma_{f_x}$, guarantee that the necessary commutation equality can be proved through a direct computation consisting in multiplying the corresponding matrices. 
\end{proof}

At this point we can turn our attention to~\eqref{eq:gPAMh} and see what changes have to be performed in order to to be able to renormalize this equation. To this aim, we would like to suitably extend the maps $M$ to the admissible models on $\TS_g^H$. Notice that, thanks to Proposition~\ref{prop:ContShiftModel}, all the terms belonging to $T_g^H\setminus T_g$ are well-defined independently of the specific realization of the noise. Therefore, it is natural to impose that the renormalization procedure leaves those terms invariant. 

We define $T_0^H:=T_{\W^H}\oplus\langle {\color{blue}1}\rangle$ and, given $M$ as in~\eqref{def:M}, we set $M^H:T_0^H\to T_0^H$ as $\left.M^H\right|_{T_0}=M$ and the identity on the orthogonal complement of $T_0$ in $T_0^H$. As before, $M^H$ admits an obvious matrix representation, and we can prescribe the action of $M^H$ on $Z=(\Pi,\Gamma)\in\M(\TS_g^H)$ as $M^H(Z)=Z^{M^H}$ where
\begin{equation}\label{eq:RenModelh}
\Pi_x^{M^H}\tau=\Pi_x M^H\tau,\qquad \Gamma_{xy}^{M^H}\tau=\Gamma_{xy}\tau
\end{equation}
At this point, not only a result analogous to Proposition~\ref{prop:RenGroup} holds, but more is true. 
\begin{proposition}\label{prop:RenGrouph}
Let $h\in L^2(\T^2)$, $\TS_g$ and $\TS_g^H$ be the regularity structures defined above. For any $Z\in\M(\TS_g^H)$ and $M^H$ defined as stated above, $M^H(Z)=Z^{M^H}$ given by~\eqref{eq:RenModelh} can be uniquely extended to an element of $\M(\TS_g^H)$ and the family of maps $\Re_0^H:=\{M^H: C\in\R\}$ forms a group under composition. Moreover, given $M\in\Re_0$ and $Z=(\Pi,\Gamma)\in\M(\TS_g)$, $M^H(E_hZ)$ is an admissible model on $\TS_g^H$ and the following equality holds
\begin{equation}\label{eq:RenExt}
M^H(E_hZ)=E_hM(Z).
\end{equation}
In other words, the operations of extension and renormalization commute. 
\end{proposition}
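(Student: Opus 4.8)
The plan is to mirror the structure of Proposition~\ref{prop:RenGroup}: first establish that $Z^{M^H}$ is a minimal admissible model and hence, via Theorem~\ref{thm:minModel}, extends uniquely to $\M(\TS_g^H)$; then check the group property; and finally prove the commutation identity~\eqref{eq:RenExt} by a direct comparison of the two constructions on negative-homogeneity symbols.

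For the first part, I would argue exactly as in the proof of Proposition~\ref{prop:RenGroup}. Since $M^H$ acts as the identity outside $\langle{\color{blue}\I(\Xi)\Xi}\rangle$, the first analytical bound in~\eqref{eq:ModelAna} is inherited from $Z$ for every symbol in $\W^H$ except ${\color{blue}\I(\Xi)\Xi}$, and for the latter the estimate
\[
|\langle \Pi_x^{M^H}{\color{blue}\I(\Xi)\Xi},\varphi_x^\lambda\rangle|\leq|\langle \Pi_x{\color{blue}\I(\Xi)\Xi},\varphi_x^\lambda\rangle|+|C\langle\Pi_x{\color{blue}1},\varphi_x^\lambda\rangle|\lesssim \lambda^{2\alpha_{\min}+2}
\]
goes through verbatim because $2\alpha_{\min}+2<0$. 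The identities~\eqref{eq:AmodelPol},~\eqref{eq:AmodelInt},~\eqref{eq:AmodelAlg1} follow as before since $\Gamma^{M^H}_{xy}=\Gamma_{xy}$; the only delicate point is~\eqref{eq:AmodelAlg}, which reduces to the commutation $M^H\Gamma_{f_x}=\Gamma_{f_x}M^H$ for every $x$. This can be verified by multiplying the explicit matrices~\eqref{eq:matrixh} and the (block-diagonal) matrix of $M^H$: the counterterm $-C\,{\color{blue}1}$ sits in the $\W^H$-block, which under $\Gamma_{f_x}$ only mixes the ${\color{blue}\I(\Xi)\Xi}$-row into the ${\color{blue}\Xi}$, ${\color{blue}H}$, ${\color{blue}X_i}$ entries that $M^H$ leaves untouched, so the matrices commute. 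The group property $\Re_0^H\cong\R$ under composition is then immediate from the matrix representation, just as for $\Re_0$.

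For the commutation identity~\eqref{eq:RenExt}, both $M^H(E_hZ)$ and $E_hM(Z)$ are admissible models on $\TS_g^H$ by the parts just proved (using Proposition~\ref{prop:ContShiftModel} for the extensions and Proposition~\ref{prop:RenGroup} for $M(Z)$), so by the uniqueness in Theorem~\ref{thm:minModel} it suffices to check that they agree on all symbols of negative homogeneity. On symbols of $\F^H\setminus\F$ (those involving ${\color{blue}H}$), $M^H$ acts as the identity and $M$ is not even defined there, so both models equal the common extension prescribed by Proposition~\ref{prop:ContShiftModel}, and in particular $\Pi^{\bullet}_x{\color{blue}H}=h$ on both sides while the remaining $\F^H\setminus\F$ symbols are determined by~\eqref{eq:defTildePi}, which is unaffected by a counterterm supported on ${\color{blue}\I(\Xi)\Xi}$. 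On symbols of $\F$ (those built only from ${\color{blue}\Xi}$), the extension $E_h$ acts as the identity on the $\Pi$-realization by construction (condition~1 of Proposition~\ref{prop:ContShiftModel}), so $\Pi^{M^H(E_hZ)}_x\tau=\Pi_x^{E_hZ}M^H\tau=\Pi_x M\tau$ and $\Pi^{E_hM(Z)}_x\tau=\Pi^{M(Z)}_x\tau=\Pi_x M\tau$ coincide. The same bookkeeping shows the ``$+$"-parts $f^{\bullet}_x$ agree, since $M^H$ and $M$ are trivial on $T^+$ and $E_h$ is compatible with $f$ on $T_g^+$ by Proposition~\ref{prop:ContShiftModel}. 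Matching on negative homogeneities then upgrades to full equality by Theorem~\ref{thm:minModel}.

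The main obstacle I anticipate is the matrix commutation $M^H\Gamma_{f_x}=\Gamma_{f_x}M^H$ underlying~\eqref{eq:AmodelAlg}: one must be careful that enlarging the structure with ${\color{blue}H}$ does not introduce a new symbol that $\Gamma_{f_x}$ maps \emph{into} ${\color{blue}\I(\Xi)\Xi}$ — inspection of~\eqref{eq:matrixh} shows ${\color{blue}\I(\Xi)\Xi}$ is only ever a \emph{source} of lower-order terms under the structure group, never a target, so the commutation survives the enlargement; but this is the step that genuinely uses the specific combinatorics of the extended structure and deserves an explicit (if short) matrix check rather than a wave of the hand. Everything else is essentially a rerun of Propositions~\ref{prop:RenGroup} and~\ref{prop:ContShiftModel} combined with the uniqueness from Theorem~\ref{thm:minModel}.
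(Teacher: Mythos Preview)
Your proposal is correct and follows essentially the same strategy as the paper. The first part (that $Z^{M^H}$ extends to an admissible model and $\Re_0^H$ is a group) is handled identically, by rerunning Proposition~\ref{prop:RenGroup}; for the commutation~\eqref{eq:RenExt} the paper invokes the uniqueness part of Proposition~\ref{prop:ContShiftModel} (showing $M^H(E_hZ)$ satisfies properties 1--3 characterizing $E_h(M(Z))$) rather than Theorem~\ref{thm:minModel}, but the symbol-by-symbol checks you perform are the same ones the paper carries out, so the difference is only in which uniqueness statement is cited.
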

\begin{proof}
The proof of the first part of the statement proceeds along the same lines as the proof of Proposition~\ref{prop:RenGroup}, hence we will focus on the equality~\eqref{eq:RenExt}. 

By the aforementioned proposition, we know that, given $Z=(\Pi,\Gamma)\in\M(\TS_g)$, $M(Z)=Z^M$ is still an admissible model. Thanks to Proposition~\ref{prop:ContShiftModel}, there exists a unique extension of $Z$ to $T_g^H$, $E_hZ=Z^{e_h}$, and a unique extension of $Z^M$ to $T_g^H$, given by $E_h Z^M$, such that both $E_hZ$ and $E_hZ^M$ satisfy properties 1,2 and 3 there stated. Since by the first part of the statement we are proving, $M^H(E_hZ)$ is again admissible, we only need to show that $M^H(E_hZ)$ enjoys the same properties. The third, that is, validity of (\ref{eq:Amodel}), is obvious, since it is an admissible model on $T_g^H$. For the first, let $\tau\in T_g$. Notice that
\[
(\Pi_x^{e_h})^{M^H}\tau= \Pi_x^{e_h} M^H\tau= \Pi_x^{e_h} M\tau=\Pi_x M\tau=\Pi^M_x\tau=  (\Pi^M_x)^h\tau
\]
where the second equality follows by the fact that $M^H\tau=M\tau$ for every $\tau\in T_g$ and the last by the fact that $(\Pi^M_x)^h$ satisfies condition 1 in Proposition~\ref{prop:ContShiftModel}.

Finally, take $\F^H\setminus\F\ni\tau=\tau_1\tau_2$, $\tau_1\in\U^H$ and $\tau_2\in\{{\color{blue}\Xi},{\color{blue}H}\}$, then
\[
(\Pi_x^{e_h})^{M^H}\tau= \Pi_x^{e_h} M^H\tau=\Pi_x^{e_h}\tau=\Pi_x^{e_h}\tau_1\,\Pi_x^{e_h}\tau_2=(\Pi_x^{e_h})^{M^H}\tau_1\,(\Pi_x^{e_h})^{M^H}\tau_2
\]
where the first equality holds since $\tau \notin \F$ implies $\tau\neq \Xi \I(\Xi)$, hence $M^H \tau = \tau$, and the last equality holds since both the models $Z^{e_h}$ and $(Z^{e_h})^{M^H}$ are admissible (hence their action on $\U^H$ is the same and determined by~\eqref{eq:Amodel}) and the map $M^H$ leaves ${\color{blue}\Xi}$ and ${\color{blue}H}$ invariant.
\end{proof}
As a corollary of the previous result we can also show that a relation analogous to~\eqref{eq:RenExt} holds when substituting the translation operator to the extension one. 

\begin{corollary}\label{cor:RenGroupTh}
Let $h\in L^2(\T^2)$ and $\TS_g$ be the regularity structure defined above. Given $M\in\Re_0$ and $Z=(\Pi,\Gamma)\in\M(\TS_g)$, we have
\begin{equation}\label{eq:RenTrans}
M(T_hZ)=T_hM(Z).
\end{equation}
In other words, the operations of translation and renormalization commute. 
\end{corollary}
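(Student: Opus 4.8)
The plan is to deduce Corollary \ref{cor:RenGroupTh} from Proposition \ref{prop:RenGrouph} by composing with the abstract translation map. Recall that by definition $T_h Z = (\Pi^{e_h}\tau_H, f^{e_h}\tau_H^+)$, i.e. the translated model is obtained from the extended model $E_h Z$ by pre-composing the realization with $\tau_H\colon T_g \to T_g^H$ (and $f$ with $\tau_H^+$). The renormalization maps $M$ and $M^H$ act on the \emph{model} side, so the two operations ought to interact cleanly provided one checks that $M^H$ and $\tau_H$ commute as linear maps on $T_g$ (more precisely, that $M^H \tau_H = \tau_H M$ as maps $T_g \to T_g^H$).

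First I would verify this algebraic commutation. Both $M$ and $M^H$ act as the identity on all basis vectors except ${\color{blue}\I(\Xi)\Xi}$, on which $M$ subtracts $C\,{\color{blue}1}$ and likewise $M^H$ on $T_0 \subset T_0^H$. Now $\tau_H({\color{blue}\I(\Xi)\Xi}) = ({\color{blue}\I(\Xi)}+{\color{blue}\I(H)})({\color{blue}\Xi}+{\color{blue}H}) = {\color{blue}\I(\Xi)\Xi} + {\color{blue}\I(\Xi)H} + {\color{blue}\I(H)\Xi} + {\color{blue}\I(H)H}$, and by definition $M^H$ leaves the three new terms invariant (they lie in $T_g^H\setminus T_g$, on which $M^H$ is the identity) while acting on ${\color{blue}\I(\Xi)\Xi}$ as $M$; hence $M^H\tau_H({\color{blue}\I(\Xi)\Xi}) = {\color{blue}\I(\Xi)\Xi} - C{\color{blue}1} + ({\color{blue}\I(\Xi)H}+{\color{blue}\I(H)\Xi}+{\color{blue}\I(H)H}) = \tau_H({\color{blue}\I(\Xi)\Xi} - C{\color{blue}1}) = \tau_H M({\color{blue}\I(\Xi)\Xi})$, using $\tau_H({\color{blue}1})={\color{blue}1}$. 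On all other basis vectors of $T_g$ both sides trivially agree, so $M^H\tau_H = \tau_H M$ on all of $T_g$. Similarly one checks at the level of the ``positive'' structures that the renormalization acts trivially there, so no issue arises with the $f$-component.

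Next I would chain the identities. For $Z\in\M(\TS_g)$ and $h\in L^2(\T^2)$, Proposition \ref{prop:RenGrouph} gives that $M^H(E_h Z)$ is an admissible model on $\TS_g^H$ and $M^H(E_h Z) = E_h M(Z)$. Write $\Pi^{M^H(E_h Z)}_x = \Pi^{E_h Z}_x M^H$ by \eqref{eq:RenModelh}. Then for $\tau \in T_g$,
\[
\Pi^{T_h M(Z)}_x \tau = \Pi^{E_h M(Z)}_x \tau_H(\tau) = \Pi^{M^H(E_h Z)}_x \tau_H(\tau) = \Pi^{E_h Z}_x M^H \tau_H(\tau) = \Pi^{E_h Z}_x \tau_H M(\tau) = \Pi^{T_h Z}_x M(\tau) = \Pi^{M(T_h Z)}_x \tau,
\]
where the first equality is the definition of the translation operator, the second is \eqref{eq:RenExt}, the third is \eqref{eq:RenModelh}, the fourth is the commutation established above, the fifth is again the definition of $T_h$, and the last is \eqref{eq:RenModel}. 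The identical computation with $\tau_H^+$, $f^{e_h}$ in place of $\tau_H$, $\Pi^{e_h}$ (noting $M^H$ and $M$ act as the identity on the coefficient algebras, so the commutation there is immediate) shows that the $f$-components agree as well, whence $M(T_h Z) = T_h M(Z)$ as pairs of maps. Both sides are admissible models on $\TS_g$ by Propositions \ref{prop:TransModel} and \ref{prop:RenGroup}, so the equality is meaningful.

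I do not expect any serious obstacle here: the statement is essentially a diagram chase, and all the hard analytic content (admissibility of the extended, translated, and renormalized models, and the key commutation \eqref{eq:RenExt}) has already been established in Propositions \ref{prop:ContShiftModel}, \ref{prop:TransModel}, \ref{prop:RenGroup} and \ref{prop:RenGrouph}. The only point requiring genuine (if routine) verification is the algebraic identity $M^H \tau_H = \tau_H M$ on $T_g$, which as sketched above reduces to inspecting the single symbol ${\color{blue}\I(\Xi)\Xi}$ and using that $\tau_H$ is multiplicative, commutes with $\I$, and that $M^H$ is the identity on $T_g^H \setminus T_g$.
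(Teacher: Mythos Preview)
Your proof is correct and follows essentially the same route as the paper: establish the algebraic commutation $M^H\tau_H=\tau_H M$ on $T_g$ (the paper calls this ``immediate by direct computation'' while you spell it out for ${\color{blue}\I(\Xi)\Xi}$), then chain through the definitions of translation and renormalization together with \eqref{eq:RenExt}. The only cosmetic difference is that the paper restricts the chain to $\tau\in T_0$ and then invokes the uniqueness part of Theorem~\ref{thm:minModel} to extend to all of $T_g$, whereas you run it for general $\tau\in T_g$, implicitly using that the formulas \eqref{eq:RenModel} and \eqref{eq:RenModelh} extend trivially to the positive-homogeneity symbols (on which $M$ and $M^H$ act as the identity and the admissible extension is forced).
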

\begin{proof}
Since we know that, in our context, the renormalization map does not affect the map $\Gamma$, we only have to show that $T_h(\Pi^M)=(T_h\Pi)^M$. It is immediate to verify, by a direct computation, that, for $M\in\Re_0$ and $M^H$ defined as above, $\tau_H(M\tau)=M^H\tau_H(\tau)$ therefore, recalling the definition of $T_h$ given in Proposition~\ref{prop:TransModel}, for $\tau\in T_0$, we have
\[
(T_h\Pi)_x^M\tau=\Pi_x^hM\tau=\Pi_x^{e_h}\tau_H(M\tau)=\Pi_x^{e_h}M^H\tau_H(\tau)= (\Pi^M)_x^{e_h}\tau_H(\tau)=T_h(\Pi^M)_x\tau
\]
where the fourth equality follows by~\eqref{eq:RenTrans}. Now, since $T_h(\Pi^M)$ and $(T_h\Pi)^M$ coincide on $T_0$, hence in particular on the elements of $T_g$ of negative homogeneity, the uniqueness part of Theorem~\ref{thm:minModel} implies the result. 
\end{proof}

\subsection{Convergence of the Renormalized Models}

Let $(\Omega,\mathscr{F},\PR)$ be a probability space and $\xi$ a spatial white noise on the two dimensional torus, i.e. a Gaussian process taking values in the space of distributions $\SC'(\T^2)$ whose covariance function is given by
\[
\E[\langle\xi,\varphi\rangle\langle\xi,\psi\rangle]=\langle\varphi,\psi\rangle
\]
for any $\varphi,\psi\in L^2(\T^2)$. As is well-known (e.g. Lemma 10.2 of~\cite{Hai}) as a distribution, $\xi$ belongs almost surely to $\CC^{\alpha}$ for every $\alpha<-1$. We want to understand, on one side, how to consistently lift the white noise to an admissible model for $\TS_g$ and $\TS^H_g$ and, on the other, what is the relation between the two. To do so, we begin by mollifying the noise via setting $\xi_\eps:=\xi\ast \varrho_\eps$, where $\varrho$ is a  compactly supported smooth function integrating to 1 and $\varrho_\eps$ its rescaled version. 
Starting with $\xi_\eps$ we define the canonical model $Z_\eps\in\M(\TS_g)$ with the procedure outlined in Section~\ref{subsection:Amodel} and, for a given $h\in L^2(\T^2)$, we then ``extend" it through the map $E_h$ given in Proposition~\ref{prop:ContShiftModel} and ``translate" it through the map $T_h$ given in Proposition~\ref{prop:TransModel}, obtaining $E_hZ_\eps$ and $T_hZ_\eps$ respectively. The problem is that, since the model $Z_\eps$ does not converge there is simply no hope that neither $E_hZ_\eps$ nor $T_hZ_\eps$ do. This is precisely the point in which we need to exploit the renormalization techniques introduced above. 
Thanks to Theorem 10.19 in~\cite{Hai}, we already know that there exists a choice of $M_\eps\in\Re_0$ such that the sequence $M_\eps Z_\eps$ converges in probability, hence passing to a subsequence, almost surely. Since Propositions~\ref{prop:ContShiftModel} and~\ref{prop:TransModel} guarantee the joint local Lipschitz continuity of $E$ and $T$ with respect to both $h$ and the model and Proposition~\ref{prop:RenGrouph} and Corollary~\ref{cor:RenGroupTh} ensure the these maps commute with $M^H$ and $M$ respectively, we immediately deduce that also $M_\eps^H E_hZ_\eps$ and $M_\eps T_hZ_\eps$ converge almost surely along this subsequence. 

\begin{lemma}\label{lemma:TransModel}
Let $\hat{Z}\in\M(\TS_g)$ be the Gaussian model constructed in Theorem 10.19 in~\cite{Hai}. Then there exists a set of measure zero, $N$, such that for every $\omega\in N^c$ and every $h\in L^2(\T^2)$
\begin{equation}\label{eq:Trans}
\hat{Z}(\omega+h)=T_h\hat{Z}(\omega)
\end{equation}
\end{lemma}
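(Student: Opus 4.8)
The plan is to establish \eqref{eq:Trans} first at the level of the mollified (canonical) models, and then pass to the limit, using the almost-sure convergence along a common subsequence already noted before the statement. More precisely, fix $\eps>0$ and consider the canonical model $Z_\eps(\omega)$ built from $\xi_\eps(\omega)=\xi\ast\varrho_\eps$. For a fixed $h\in L^2$, set $h_\eps=h\ast\varrho_\eps$. On the one hand, $Z_\eps(\omega+h)$ is the canonical model built from $(\xi+h)_\eps=\xi_\eps+h_\eps$, since mollification is linear and $\langle \xi(\omega+h),\varphi\rangle=\langle\xi(\omega),\varphi\rangle+\langle h,\varphi\rangle$ by the Cameron--Martin shift. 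On the other hand, by Remark~\ref{rem:TransCanMod}, the model $\tilde Z_\eps$ built from $\xi_\eps+h_\eps$ coincides with $T_{h_\eps}Z_\eps(\omega)$. Hence $Z_\eps(\omega+h)=T_{h_\eps}Z_\eps(\omega)$ holds \emph{for every} $\omega$ and every $h$, with no null set involved — this is the whole point of having introduced the purely analytic translation operator $T$. Applying the renormalization map $M_\eps\in\Re_0$ of Theorem 10.19 in~\cite{Hai} and invoking Corollary~\ref{cor:RenGroupTh} (renormalization commutes with translation), we get
\[
M_\eps Z_\eps(\omega+h)=M_\eps T_{h_\eps}Z_\eps(\omega)=T_{h_\eps}M_\eps Z_\eps(\omega),
\]
again for every $\omega$ and every $h$.

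Next I would take limits as $\eps\to 0$ (along the subsequence of Theorem 10.19 for which convergence is almost sure). By that theorem there is a null set $N_0$ such that for $\omega\notin N_0$, $M_\eps Z_\eps(\omega)\to\hat Z(\omega)$ in $\M(\TS_g)$. The left-hand side above is $M_\eps Z_\eps$ evaluated at the shifted noise; since the law of $\omega\mapsto\omega+h$ is equivalent to $\PR$ for $h\in\Hi$ (Cameron--Martin), the null set $N_0$ is also $\PR$-null after the shift, so for $\omega\notin N_0-h$ we likewise have $M_\eps Z_\eps(\omega+h)\to\hat Z(\omega+h)$. For the right-hand side, I use the joint local Lipschitz continuity of $T$ from Proposition~\ref{prop:TransModel}: since $h_\eps\to h$ in $L^2$ and $M_\eps Z_\eps(\omega)\to\hat Z(\omega)$ in $\M(\TS_g)$ (and both sequences stay in a bounded set), we get $T_{h_\eps}M_\eps Z_\eps(\omega)\to T_h\hat Z(\omega)$. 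Therefore $\hat Z(\omega+h)=T_h\hat Z(\omega)$ for all $\omega$ outside a null set depending on the fixed $h$.

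The remaining issue is that this null set a priori depends on $h$, whereas the statement asks for a single null set $N$ valid simultaneously for all $h\in L^2$. To remove the dependence on $h$ I would use a separability/continuity argument: both sides of \eqref{eq:Trans}, for fixed $\omega$, are continuous in $h$ — the left side because $h\mapsto\hat Z(\omega+h)$ is continuous by the very construction of $\M$ as a separable space together with continuity of the Cameron--Martin shift on models (this is where one must be slightly careful, as $\hat Z$ is only defined as an a.s.\ limit; one shows continuity on the event of full measure where the limit exists), and the right side by Proposition~\ref{prop:TransModel}. Fix a countable dense set $\{h_n\}\subset L^2$ and let $N=N_0\cup\bigcup_n (N_{h_n}\cup(N_{h_n}-h_n))$, a countable union of null sets. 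For $\omega\notin N$ the identity \eqref{eq:Trans} holds for every $h_n$; by density of $\{h_n\}$ and continuity in $h$ of both sides it extends to all $h\in L^2$.

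The main obstacle I anticipate is precisely this last point: making rigorous the continuity of $h\mapsto\hat Z(\omega+h)$ for a.e.\ fixed $\omega$. The translated side is unproblematic (Proposition~\ref{prop:TransModel}), but the shifted side is the abstract Gaussian model evaluated at a translated noise, and one has to argue that this inherits continuity — essentially by noting that, thanks to the identity $\hat Z(\omega+h)=T_h\hat Z(\omega)$ already obtained for each individual $h$ outside a null set, the two sides agree on a dense set of $h$'s for a.e.\ $\omega$, and then the \emph{right} side's continuity does all the work: it shows $T_h\hat Z(\omega)$ is the unique continuous extension, so one only needs the left side to be, say, measurable in $h$ (or continuous along the dense sequence), which follows from the convergence of $M_\eps Z_\eps(\omega+h_n)$. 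Everything else is bookkeeping with Cameron--Martin quasi-invariance and the Lipschitz estimates already in hand.
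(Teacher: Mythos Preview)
Your approach matches the paper's up to the identity $M_\eps Z_\eps(\omega+h) = T_{h_\eps} M_\eps Z_\eps(\omega)$ and the convergence of the right-hand side via the local Lipschitz continuity of $T$. Where you diverge is in handling the left-hand side: you invoke Cameron--Martin quasi-invariance to get $M_\eps Z_\eps(\omega+h)\to\hat Z(\omega+h)$ on the shifted null set $N_0-h$, which depends on $h$, and then try to unify these null sets by a density-plus-continuity argument. That argument does not close as written: to pass from the countable dense set $\{h_n\}$ to all $h$ you need $h\mapsto\hat Z(\omega+h)$ to be continuous, but that continuity is exactly what you are trying to establish. Knowing that $\hat Z(\omega+h_{n_k})=T_{h_{n_k}}\hat Z(\omega)\to T_h\hat Z(\omega)$ tells you the sequence $\hat Z(\omega+h_{n_k})$ converges, but gives no reason why its limit should equal $\hat Z(\omega+h)$ for the given version of $\hat Z$; neither ``measurability in $h$'' nor ``continuity along the dense sequence'' bridges this gap.

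The paper avoids the detour altogether. Take $N_1$ to be the set where $\hat Z_\eps(\omega):=M_\eps Z_\eps(\omega)$ fails to converge, and fix the version of $\hat Z$ given by the pointwise limit on $N_1^c$. For $\omega\in N_1^c\cap N_2^c$ (with $N_2$ the $h$-independent null set on which the canonical identity $Z_\eps(\omega+h)=T_{h_\eps}Z_\eps(\omega)$ holds for all $h$), the chain
\[
\hat Z_\eps(\omega+h)=T_{h_\eps}\hat Z_\eps(\omega)\longrightarrow T_h\hat Z(\omega)
\]
shows that $\hat Z_\eps(\omega+h)$ \emph{does} converge. Hence $\omega+h\in N_1^c$ automatically, and $\hat Z(\omega+h)$ is by definition this limit, namely $T_h\hat Z(\omega)$. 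No Cameron--Martin shift of null sets, no density argument: the single null set $N=N_1\cup N_2$ works for every $h\in L^2(\T^2)$ from the outset.
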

\begin{proof}
Let $Z_\eps$ be the canonical model on $\TS_g$ and $M_\eps$ a sequence of renormalization maps such that $\hat{Z}_\eps:=M_\eps Z_\eps$ converges to $\hat{Z}\in\M(\TS_g)$ almost surely. Fix a null set $N_1$ so that, for every $\omega\in N_1^c$, $\hat{Z}_\eps(\omega)\to\hat{Z}(\omega)$. As a consequence of Remark~\ref{rem:TransCanMod}, it is immediate to convince oneself that for all $h\in L^2(\T^2)$
\[
Z_\eps(\omega+h)=T_{h_\eps}Z_\eps(\omega)
\]
outside of some null set $N_2$, where $h_\eps=h\ast\varrho_\eps$. Then, by the local Lipschitz continuity of the map $T$ and Corollary~\ref{cor:RenGroupTh}, for $\omega\in N_1^c\cap N_2^c$, we have
\[
\hat{Z}(\omega+h)=\lim_{\eps\to 0}\hat{Z}_\eps(\omega+h)=\lim_{\eps\to 0}M_\eps T_{h_\eps}Z_\eps(\omega)=\lim_{\eps\to 0} T_{h_\eps}M_\eps Z_\eps(\omega)=T_h\hat{Z}(\omega)
\]
which concludes the proof. 
\end{proof}

\subsection{Modelled Distributions and Fixed Point argument}\label{sec:FPA}

In the previous sections we achieved two goals. On the one side, we built a family of objects that represent the building blocks we need in order to ``lift" the equations~\eqref{eq:gPAM} and~\eqref{eq:gPAMh}. On the other, we gave to each of these objects a precise sense and showed how to coherently construct them starting from a Gaussian noise. It remains to define the spaces in which our equations will be solved at the ``abstract" level and how to concretely interpret them. To this purpose, Hairer defines the space of \textit{modelled distributions}, the model dependent counterpart of the space of H\"older functions. Given a regularity structure $\TS$ and a model $Z=(\Pi,\Gamma)$ on it, we say that $U:\R^+ \times \R^2\to\oplus_{\beta<\gamma}T_{\beta}$ belongs to $\D^{\gamma,\eta}(\Gamma)$ if for every compact domain $D\subset\R^+ \times \R^2 $ 
\begin{equation}\label{eq:ModDistNorm}
\VERT U\VERT_{\gamma,\eta;D}:=\sup_{z\in D}\sup_{\beta<\gamma}|t|^{\frac{\beta-\eta}{2}\vee0}\|U(z)\|_\beta+\sup_{\substack{z,w\in D\\|z-\bar{z}|\leq 1}}\sup_{\beta<\gamma}(|t|\wedge|\bar{t}|)^{\frac{\gamma-\eta}{2}\vee 0}\frac{\|U(z)-\Gamma_{z\bar{z}}U(\bar{z})\|_{\beta}}{|z-\bar{z}|^{\gamma-\beta}}
\end{equation}
is finite, where the generic points $z,\bar{z}\in\R^+ \times \R^2$ have to be understood as $z=(t,x)$ and $\bar{z}=(\bar{t},\bar{x})$, and recall that, by $\|\tau\|_\beta$ we mean the norm of the projection of $\tau$ on $T_\beta$. In order to study the continuity of the solution map with respect to the underlying model, we will need to compare modelled distributions belonging to the space $\D^{\gamma,\eta}$, but based on different models. Let $Z=(\Pi,\Gamma)$, $\bar{Z}=(\bar{\Pi},\bar{\Gamma})$ be two models on $\TS$, and $U\in\D^{\gamma,\eta}(\Gamma)$, $\bar{U}\in\D^{\gamma,\eta}(\bar{\Gamma})$ two modelled distributions, then a natural notion of distance between them can be obtained by~\eqref{eq:ModDistNorm}, via replacing $U(z)-\bar{U}(z)$ by $U(z)$ in the first summand and 
\[
U(z)-\bar{U}(z)-\Gamma_{z\bar{z}}U(\bar{z})+\bar{\Gamma}_{z\bar{z}}\bar{U}(\bar{z})
\]
to $U(z)-\Gamma_{z\bar{z}}U(\bar{z})$ in the second. We indicate the result by $\VERT U;\bar{U}\VERT_{\gamma,\eta;D}$, this notation being due to the fact that, as a distance, $\VERT \cdot;\cdot\VERT_{\gamma,\eta;D}$ is not a function of $U-\bar{U}$. 

\begin{remark}\label{remark:periodicMD}
Since we aim at solving our equations with periodic boundary conditions, we will only consider \textit{symmetric} modelled distributions according to Definition 3.33 in~\cite{Hai}. In other words, let $e_1$ and $e_2$ be as in Remark~\ref{remark:periodicM}, then $U\in\D^{\gamma,\eta}$ is said to be symmetric if for any $(t,x)\in\R^+\times\R^2$, $U(t,x+2\pi e_i)=U(t,x)$. Hence, for any $T>0$, the domain $D$ appearing in~\eqref{eq:ModDistNorm} can be simply taken to be $(0,T]\times\T^2$, and will therefore be omitted.  
\end{remark}

If the model has the role of assigning to each abstract symbol a specific distribution, we also need to understand how to attribute to a modelled distribution a concrete meaning. This is precisely what the \textit{reconstruction operator}, $\RS$, does. In general, $\RS$ is a map from $\D^{\gamma,\eta}(\Gamma)$ to $\SC'(\R^3)$, but in the case in which the model is composed of smooth functions (think, for example, of the canonical model) then $\RS U$ is a continuous function, explicitly given by
\[
\RS U(z)=\left(\Pi_zU(z)\right)(z)
\]
Thanks to Theorem 3.10 in~\cite{Hai} we know much more, indeed the latter states that, as soon as $\gamma>0$ then the map $(Z, U)\mapsto\RS U\in\SC'$ is jointly locally Lipschitz continuous, allowing to define $\RS U$ also in the case in which the previous relation is nonsensical. 

For reasons that will be clarified in what follows, we will abstractly solve equations~\eqref{eq:gPAM} and~\eqref{eq:gPAMh} in the spaces $\D_\U^{\gamma,\eta}$ and $\D_{\U^H}^{\gamma,\eta}$ respectively, consisting of those modelled distributions taking values in $\TS_\U$ and $\TS_{\U^H}$. An element $U\in\D_{\U}^{\gamma,\eta}$ (resp. $\D_{\U^H}^{\gamma,\eta}$), for $\gamma>1$  can be conveniently decomposed as
\begin{equation}\label{eq:dec}
U(z)=\varphi_1(z){\color{blue}1}+\varphi_{\I(\Xi)}(z){\color{blue}\I(\Xi)}+\varphi_X(z){\color{blue}X}
\end{equation}
then Proposition 3.28 in~\cite{Hai} implies that $\RS U=\varphi_1$ and belongs to $\CC^{\alpha_{\min}+2,\eta}$. 
\newline

Let us consider a smooth function $\xi_\eps$ and $h\in L^2(\T^2)$. We rewrite~\eqref{eq:gPAM} and~\eqref{eq:gPAMh} in their mild formulation, i.e.
\begin{equation}\label{eq:gPAMmild}
u=\bar{K}\ast(g(u)\xi_\eps)+\bar{K}u_0,\qquad u^h=\bar{K}\ast(g(u^h)(\xi_\eps+h))+\bar{K}u_0  
\end{equation}
where $\bar{K}$ denotes the heat kernel, $\ast$ the space-time convolution and $\bar{K}u_0$ the solution to the heat equation with $u_0$ as initial condition. We want to transpose such a representation and rephrase it in terms of modelled distributions. To do so, we need to understand how to compose a smooth function with an element of $\D_{\U}^{\gamma,\eta}$ (resp. $\D_{\U^H}^{\gamma,\eta}$), how to define the product of two modelled distributions and what is the abstract counterpart of the convolution with a suitably defined abstract heat kernel. 

Let $U\in\D_{\U}^{\gamma,\eta}$ (resp. $\D_{\U^H}^{\gamma,\eta}$), $\gamma>1$ and $g:\R\to\R$ be a smooth function (actually, for later purposes, $g\in\CC^\chi$ with $\chi\geq \frac{10}{3}$ would be sufficient). Thanks to the fact that $U$ admits the decomposition~\eqref{eq:dec}, we can follow the recipe described in Section 4.2 in~\cite{Hai} and write
\begin{equation} \label{equ:G}
(G_\gamma(U))(z)= g(\varphi_1(z)){\color{blue}1}+g'(\varphi_1(z))\varphi_{\I(\Xi)}(z){\color{blue}\I(\Xi)}+g'(\varphi_1(z))\varphi_X(z){\color{blue}X}
\end{equation}
then Proposition 6.13 in~\cite{Hai} guarantees that $G_\gamma$ as a function from $\D_{\U}^{\gamma,\eta}$ (resp. $\D_{\U^H}^{\gamma,\eta}$) to itself, is locally Lipschitz continuous provided that $\gamma>0$ and $\eta\in[0,\gamma]$. Moreover, in~\cite{HP} a stronger result is shown, namely Proposition 3.11 allows us to compare $G_\gamma$ when evaluated at modelled distributions based at different models, yielding the local Lipschitz continuity of $G_\gamma$ also with respect to the models. 

Concerning the convolution with the heat kernel, it is possible to summarize the content of Theorem 5.12, Proposition 6.16 and Theorem 7.1 in~\cite{Hai} simply saying that, provided that ${\gamma}<\bar{\gamma}+2$, $\eta<\alpha_{\min}\wedge\bar{\eta}+2$ and $\bar{\eta}>-2$, there exists a linear operator $\PK:\D^{\bar{\gamma},\bar{\eta}}\to\D_{\U}^{\gamma,\eta}$ such that
\begin{enumerate}
\item one has the identity $\RS \PK U=\bar{K}\RS U$,
\item $\PK U=\I U + \tilde{\PK} U$, where $\tilde{\PK} U$ takes value in the polynomial structure and depends on the model and the reconstruction operator associated to it,
\item there exists $\theta>0$ such that 
\begin{equation}\label{bound:Schauder}
\VERT \PK U\VERT_{\gamma,\eta}\lesssim T^\theta\VERT U \VERT_{\bar{\gamma},\bar{\eta}}
\end{equation}
where the norms are taken over $[0,T]\times\R^2$ (or equivalently $[0,T]\times\T^2$, by periodicity).
\end{enumerate}
Before writing the abstract version of~\eqref{eq:gPAMmild}, we collect in the following Lemma a number of trivial consistency relations between modelled distributions based at an admissible model and its extended and translated counterpart. In particular, it explains how translation and extension behave with respect to the operations just described. 

\begin{lemma}\label{lemma:consistency} 
Let $\TS_g$ and $\TS_g^H$ be the regularity structures introduced in sections~\ref{section:RS} and~\ref{section:RSh}, $\tau_H$ the abstract translation operator and $h\in L^2(\T^2)$. Let $Z=(\Pi,\Gamma)\in\M(\TS_g)$, $Z^h=(\Pi^h,\Gamma^h)\in\M(\TS_g)$ its translated version, $Z^{e_h}=(\Pi^{e_h},\Gamma^{e_h})\in\M(\TS_g^H)$ its extended one, and $\RS$, $\RS^h$ and $\RS^{e_h}$ their respective reconstruction operators. Then, for $\gamma>0$ and $\eta\in[0,\gamma]$, and every  $U\in\D^{\gamma,\eta}(\Gamma)$ and $U^h\in\D^{\gamma,\eta}(\Gamma^h)$, we have
\begin{enumerate}
\item $\tau_H(U^h)\in\D^{\gamma,\eta}(\Gamma^{e_h})$ and $\tau_H$ commutes with the operations of composition with smooth functions (for $U^h$ taking values in $\TS_\U$) and product between modelled distributions;
\item $\RS U=\RS^{e_h} U$ and $\RS^{e_h} \tau_H(U^h)=\RS^h U^h$, where, a priori, the previous equalities have to be understood in the sense of distributions;
\item $\PK^{e_h}(U)=\PK(U)$ and $\PK^{e_h}\tau_H(U^h)=\tau_H(\PK^{h}(U^h))$, where $\PK$, $\PK^{h}$ and $\PK^{e_h}$ are the abstract convolution kernels associated to $Z$, $Z^h$ and $Z^{e_h}$ respectively. 
\end{enumerate}
\end{lemma}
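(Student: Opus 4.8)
\textbf{Proof plan for Lemma~\ref{lemma:consistency}.} The plan is to verify each of the three claims separately, exploiting throughout the fact that the only genuinely new algebraic object is the abstract translation map $\tau_H$, which is multiplicative, commutes with $\I$, leaves the polynomials invariant and preserves homogeneities (Remark~\ref{rem:Invariance}), together with Lemma~\ref{lemma:RelCon}, namely $(\tau_H\otimes\tau_H^+)\Delta = \Delta^H\tau_H$. All the analytical content is already encoded in the definitions of the norms~\eqref{eq:ModDistNorm} and in the properties of the extended/translated models established in Propositions~\ref{prop:ContShiftModel},~\ref{prop:TransModel}; so each step is essentially a matter of pushing $\tau_H$ through the relevant identity.

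\emph{Claim 1.} First I would check that $\tau_H(U^h)\in\D^{\gamma,\eta}(\Gamma^{e_h})$. Since $\tau_H$ is linear and homogeneity-preserving, $\|\tau_H(U^h)(z)\|_\beta = \|U^h(z)\|_\beta$ for every $\beta$, so the first summand in~\eqref{eq:ModDistNorm} is unchanged. For the second summand one needs $\tau_H(U^h)(z) - \Gamma^{e_h}_{z\bar z}\tau_H(U^h)(\bar z) = \tau_H\big(U^h(z)-\Gamma^h_{z\bar z}U^h(\bar z)\big)$, which follows once we know that $\Gamma^{e_h}_{z\bar z}\circ\tau_H = \tau_H\circ\Gamma^h_{z\bar z}$ on $T_g$; and this intertwining is exactly what Lemma~\ref{lemma:RelCon} gives, because $\Gamma^h_{f} = (\mathrm{Id}\otimes f\tau_H^+)\Delta$ and $\Gamma^{e_h}_{f} = (\mathrm{Id}\otimes f)\Delta^H$, so applying $\tau_H$ to the first and using $(\tau_H\otimes\tau_H^+)\Delta=\Delta^H\tau_H$ produces the second. (This is in fact the same computation as in the proof of~\eqref{eq:AmodelAlg} inside the proof of Proposition~\ref{prop:TransModel}.) Homogeneity-preservation then gives the bound $\VERT\tau_H(U^h)\VERT_{\gamma,\eta}\le\VERT U^h\VERT_{\gamma,\eta}$, so $\tau_H(U^h)\in\D^{\gamma,\eta}(\Gamma^{e_h})$. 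Commutation with composition by a smooth $g$ is immediate from the explicit formula~\eqref{equ:G}: the coefficients $\varphi_1,\varphi_{\I(\Xi)},\varphi_X$ of $U^h$ are the same as those of $\tau_H(U^h)$ (as $\tau_H$ fixes ${\color{blue}1}$, ${\color{blue}X}$ and maps ${\color{blue}\I(\Xi)}$ to ${\color{blue}\I(\Xi+H)}$), one checks that $G_\gamma$ on the extended structure reduces to the same expression, and $\tau_H(G_\gamma(U^h)) = G_\gamma(\tau_H(U^h))$ term by term. Commutation with the product is just multiplicativity of $\tau_H$, truncated at homogeneity $\gamma$ — the truncation is compatible because $\tau_H$ preserves homogeneities.

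\emph{Claim 2.} The equality $\RS U = \RS^{e_h}U$ for $U\in\D^{\gamma,\eta}(\Gamma)$ holds because, by condition~1 of Proposition~\ref{prop:ContShiftModel}, $\Pi^{e_h}_x$ agrees with $\Pi_x$ on all of $T_g$, so for a smooth model $(\Pi_x U(x))(x) = (\Pi^{e_h}_x U(x))(x)$, and the general case follows by density and the local Lipschitz continuity of the reconstruction map (Theorem 3.10 in~\cite{Hai}), using uniqueness of the reconstruction. For $\RS^{e_h}\tau_H(U^h) = \RS^h U^h$, I would argue that $\RS^h U^h$ is by definition the (unique) distribution $f$ with $\langle f - \Pi^h_x U^h(x),\varphi_x^\lambda\rangle = O(\lambda^{\gamma})$ locally uniformly; applying $\tau_H$ and using $\Pi^h = \Pi^{e_h}\tau_H$ one gets $\Pi^h_x U^h(x) = \Pi^{e_h}_x \tau_H(U^h)(x)$ as honest distributions, so the same $f$ also reconstructs $\tau_H(U^h)$ against the model $Z^{e_h}$, and by uniqueness $\RS^{e_h}\tau_H(U^h) = \RS^h U^h$. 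Again one first does this for smooth models and passes to the limit by the joint continuity statements in Propositions~\ref{prop:ContShiftModel} and~\ref{prop:TransModel}.

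\emph{Claim 3.} Here I would unpack $\PK = \I\,(\cdot) + \tilde\PK(\cdot)$, where $\tilde\PK$ takes values in the polynomial sector and is built from $\RS$ and the kernel expansions $f_x(\J_l(\cdot))$, $\langle\Pi_x\tau,D^{(l)}K(x-\cdot)\rangle$ — see~\eqref{eq:AmodelInt},~\eqref{eq:AmodelInt1}. For $\PK^{e_h}(U) = \PK(U)$ with $U\in\D^{\gamma,\eta}(\Gamma)$: the $\I$-part is literally the same symbol, and the polynomial part depends only on $\Pi_x$ restricted to $T_g$ (the kernel expansions only ever involve $K$ applied to $\Pi_x$ of symbols already in $T_g$), which is unchanged under extension; so the two agree. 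For $\PK^{e_h}\tau_H(U^h) = \tau_H\PK^h(U^h)$: the $\I$-parts match because $\tau_H$ commutes with $\I$ by construction; the polynomial remainders match because the defining kernel-convolution data transform correctly — concretely, $f^{e_h}_x(\J_l(\tau_H\sigma)) = f^{e_h}_x(\tau_H^+\J_l(\sigma)) = f^h_x(\J_l(\sigma))$ using~\eqref{eq:tauI} and $f^h = f^{e_h}\tau_H^+$, while the relevant reconstructions agree by Claim~2; since $\tau_H$ fixes the polynomial sector these coefficients are carried through unchanged, giving $\tau_H(\tilde\PK^h U^h) = \tilde\PK^{e_h}\tau_H(U^h)$.

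\emph{Main obstacle.} None of the three claims is deep, but the one requiring the most care is Claim~3, specifically verifying that the polynomial remainder $\tilde\PK$ transforms correctly under $\tau_H$: one has to keep track of exactly which Taylor coefficients appear in the definition of $\PK$ (the sums over $|l|<|\tau|+2$ in~\eqref{eq:AmodelInt1} and the analogous reconstruction-based terms in Theorem 7.1 of~\cite{Hai}), confirm that each such coefficient on $\TS_g^H$ is the $\tau_H^+$-image (resp.\ $\RS$-image under Claim~2) of the corresponding coefficient on $\TS_g$, and check that the truncation at $\bar\gamma<\gamma+2$ is respected because $\tau_H$ preserves homogeneities (Remark~\ref{rem:Invariance}). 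The remaining subtlety, shared by all three parts, is the passage from smooth models — where every formula can be checked pointwise — to general admissible (in particular the renormalized Gaussian) models, which is handled in each case by invoking the joint local Lipschitz continuity of $E_h$, $T_h$ (Propositions~\ref{prop:ContShiftModel},~\ref{prop:TransModel}), of the reconstruction map (Theorem 3.10 of~\cite{Hai}) and of $\PK$ (the Schauder bound~\eqref{bound:Schauder} together with Theorem 7.1 of~\cite{Hai}), together with the density of smooth admissible models in $\M(\TS_g)$ and $\M(\TS_g^H)$.
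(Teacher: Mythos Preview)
Your proposal is correct and follows essentially the same route as the paper: Claim~1 via homogeneity-preservation and the intertwining $\Gamma^{e_h}\tau_H=\tau_H\Gamma^h$ (which the paper leaves implicit), Claim~2 via $\Pi^h=\Pi^{e_h}\tau_H$ and uniqueness of the reconstruction, and Claim~3 by splitting $\PK=\I+\tilde\PK$ and matching the polynomial coefficients using Claim~2 and the kernel identities. The only minor difference is that you route Claims~2 and~3 through a density-of-smooth-models argument, whereas the paper invokes the characterizing bound~(3.3) of the reconstruction theorem directly; the latter is slightly cleaner since uniqueness of $\RS$ for $\gamma>0$ already holds for arbitrary admissible models, so no limiting step is needed.
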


\begin{proof}
See Appendix~\ref{remark}.
\end{proof}

The last ingredient we need in order to be able to rewrite the equations in~\eqref{eq:gPAMmild} in our abstract context, is the initial condition. 
Given $u_0 \in\CC^\eta (\T^2)$, $\eta \ge 0$ (recall $\CC^0 \equiv L^\infty$) it is well-known that $\bar{K} u_0 \in \CC^{\gamma,\eta}$, the (parabolic) H\"older space whose norm was defined in~\eqref{def:HN}), for any $\gamma>\eta \ge 0$, where $\eta$ accounts for the behaviour at time zero. In particular, then
the (parabolic) jet of order $\gamma$,
\[
(\mathscr{T}_\gamma \bar{K} u_0) (z)=\sum_{|k|<\gamma}\frac{{\color{blue}X^k}}{k!}(D^k (\bar{K} u_0))(z).
\]
is well-defined and yields an element in $\D_{\U}^{\gamma,\eta}$ on $[0,T]$ for every fixed $T>0$, and hence in $\D_{\U}^{\gamma}$ on $(0,T]$. Then, we can write
\begin{gather}
U=\PK^{e_h} (G_\gamma(U){\color{blue}\Xi})+\mathscr{T}_\gamma\bar{K}u_0,\label{eq:gPAMabs}\\
 U^H=\PK^{e_h} (G_\gamma(U^H)({\color{blue}\Xi}+{\color{blue}H}))+\mathscr{T}_\gamma\bar{K}u_0\label{eq:gPAMhabs}
\end{gather}
where we are indicating with the same symbol the two abstract convolution kernels since, thanks to Lemma~\ref{lemma:consistency}, there is no possibility of confusion. 

In the next proposition we recall the solution theory for the previous equations, essentially given in Corollary 9.3 and Proposition 9.4 in~\cite{Hai}. 

\begin{proposition}\label{prop:solgPAM}
Let $\alpha_{\min}\in(-\frac{4}{3},-1)$, $\gamma\in(|\alpha_{\min}|,\frac{4}{3})$ and $\eta\in[0,\alpha_{\min}+2)$. Then for every admissible model $Z\in\M(\TS_g)$ and initial condition $u_0\in\CC^\eta$, the equation~\eqref{eq:gPAMabs} admits a unique solution in $\D_{\U}^{\gamma}$ on $(0,T)$ for $T>0$ small enough. Setting $T_\infty:=T_\infty(u_0,Z)$ to be the supremum of the times $T$ such that~\eqref{eq:gPAMabs} admits a unique fixed point, one has either $T_\infty=\infty$ or $\lim_{t\to T_\infty}\|\RS U(t,\cdot)\|_\eta=\infty$. Furthermore, the map $\Sol$ that assignes to $(u_0,Z)\in\CC^\eta\times\M(\TS_g)$ the solution $U=\Sol(u_0,Z)$, is jointly locally Lipschitz continuous and, as a consequence, $T_\infty$ is lower-semicontinuous as a function of $(u_0,Z)$. 

Let $Z_\eps\in\M(\TS_g)$ be the canonical model, associated to smooth $\xi_\eps$, then $u_\eps=\RS \Sol(u_0,Z_\eps)$ solves
\[
\partial_t u_\eps =  \Delta u_\eps+  g(u_\eps) \xi_\eps,\qquad u_\eps(0,\cdot)=u_0(\cdot).
\]
On the other hand, for $M=M(C) \in\Re_0$,  $\tilde{u}_\eps=\RS^M \Sol(u_0,MZ_\eps)$ solves
\begin{equation}\label{eq:rengPAM}
\partial_t \tilde{u}_\eps=\Delta \tilde{u}_\eps+g(\tilde{u}_\eps)(\xi_\eps-Cg'(\tilde{u}_\eps)),\qquad \tilde u_\eps(0,\cdot)=u_0(\cdot).
\end{equation}
\end{proposition}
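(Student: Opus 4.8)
The plan is to prove this essentially by reducing to already-established results in \cite{Hai}, combined with the consistency lemmas proved above. The statement splits into three parts: (i) local well-posedness of \eqref{eq:gPAMabs} in $\D_\U^\gamma$ with the stated explosion dichotomy and Lipschitz continuity of the solution map; (ii) identification of the reconstructed solution $\RS\Sol(u_0,Z_\eps)$ with the classical PDE solution when $\xi_\eps$ is smooth; (iii) identification of $\RS^M\Sol(u_0,MZ_\eps)$ with the renormalized PDE \eqref{eq:rengPAM}.

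For (i), the strategy is a Banach fixed point argument on a small time interval $(0,T]$, exactly as in Section~9 of \cite{Hai} (Corollary 9.3 and Proposition 9.4). First I would check the hypotheses on the exponents: with $\alpha_{\min}\in(-\tfrac43,-1)$, $\gamma\in(|\alpha_{\min}|,\tfrac43)$ and $\eta\in[0,\alpha_{\min}+2)$, one verifies that the product $G_\gamma(U){\color{blue}\Xi}$ lands in $\D^{\bar\gamma,\bar\eta}$ for $\bar\gamma=\gamma+\alpha_{\min}>0$ and $\bar\eta=\eta+\alpha_{\min}$ (using Proposition 6.12 in \cite{Hai} on products of modelled distributions, with the constraint that the lowest homogeneity of ${\color{blue}\Xi}$ is $\alpha_{\min}$), and that then the exponent conditions $\bar\gamma<\gamma+2$, $\eta<(\alpha_{\min}\wedge\bar\eta)+2$, $\bar\eta>-2$ needed for the abstract Schauder estimate \eqref{bound:Schauder} are satisfied; this is where the numerical ranges come from. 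The map $U\mapsto \PK^{e_h}(G_\gamma(U){\color{blue}\Xi})+\mathscr{T}_\gamma\bar K u_0$ then maps a ball in $\D_\U^{\gamma,\eta}$ to itself and is a contraction for $T$ small, because \eqref{bound:Schauder} provides the factor $T^\theta$ and $G_\gamma$ is locally Lipschitz (Proposition 6.13 in \cite{Hai} / Proposition 3.11 in \cite{HP}); note that the output lands in the sector $\TS_\U$ by property 2 of $\PK$ plus the structure of $\mathscr{T}_\gamma\bar K u_0$. Passing from $(0,T]$ to the maximal interval and obtaining the blow-up alternative $\lim_{t\to T_\infty}\|\RS U(t,\cdot)\|_\eta=\infty$ is the standard continuation argument: on a bounded solution one can restart the fixed point with a uniform time step, using that $\RS U=\varphi_1\in\CC^{\alpha_{\min}+2,\eta}$ controls a suitable restart datum (here Proposition 3.28 of \cite{Hai} and the analysis of \cite{HP} are used, as the paper indicates). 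Joint local Lipschitz continuity of $\Sol$ in $(u_0,Z)$ follows from the continuity of each building block ($G_\gamma$, $\PK$, $\mathscr{T}_\gamma\bar K u_0$) together with a standard fixed-point-depends-continuously-on-parameters argument, and lower semicontinuity of $T_\infty$ is then immediate.

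For (ii) and (iii), the key point is that for a smooth (canonical, or renormalized) model the reconstruction operator is the pointwise evaluation $\RS U(z)=(\Pi_z U(z))(z)$, and the abstract operations $\PK$, $G_\gamma$, product and $\mathscr{T}_\gamma\bar K u_0$ are all compatible with their classical analogues: applying $\RS$ (resp. $\RS^M$) to \eqref{eq:gPAMabs} and using $\RS\PK U=\bar K*\RS U$ (property 1 of $\PK$), together with the fact that for the canonical model $\RS(G_\gamma(U){\color{blue}\Xi})=g(\RS U)\xi_\eps$ (since $\Pi^\eps$ is multiplicative on products of symbols, \eqref{eq:CanModel}, and $G_\gamma$ reconstructs to $g\circ(\cdot)$), yields exactly the mild equation \eqref{eq:gPAMmild}, whose unique solution is the classical PDE solution. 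For the renormalized model $MZ_\eps$ with $M=M(C)$ as in \eqref{def:M}, the only changed symbol is ${\color{blue}\I(\Xi)\Xi}$, for which $\Pi^{M}_x{\color{blue}\I(\Xi)\Xi}=\Pi_x^\eps{\color{blue}\I(\Xi)\Xi}-C$; tracking this extra constant through the reconstruction of $G_\gamma(U){\color{blue}\Xi}$ — concretely, the term $g'(\varphi_1)\varphi_{\I(\Xi)}{\color{blue}\I(\Xi)\Xi}$ produces, besides $g'(\tilde u_\eps)$ times $(K\xi_\eps)\xi_\eps$, an extra $-C\,g'(\tilde u_\eps)\varphi_{\I(\Xi)}$, and since the fixed-point equation forces $\varphi_{\I(\Xi)}=g(\varphi_1)=g(\tilde u_\eps)$, one gets the counterterm $-C\,g'(\tilde u_\eps)g(\tilde u_\eps)$ — gives precisely \eqref{eq:rengPAM}. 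This is exactly the computation behind Proposition 9.4 and Section~10 of \cite{Hai}.

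The main obstacle is really part (i), and within it the blow-up alternative and continuation argument: one must argue that the only obstruction to extending the local solution is blow-up of $\|\RS U(t,\cdot)\|_\eta$, which requires showing that the length of the guaranteed existence interval in the fixed point depends on the data only through $\|\RS U\|_{\CC^{\alpha_{\min}+2,\eta}}$ (and on fixed data like the model norm on a compact and $\|u_0\|_\eta$), rather than through the full $\D_\U^{\gamma,\eta}$-norm of $U$ — this is the non-routine input, and it is supplied by the arguments of \cite{Hai} (Proposition 7.11 / Section 9) as refined in \cite{HP}, which I would invoke rather than reprove. Everything else is bookkeeping: checking the exponent inequalities, and then chaining together the continuity statements already recorded in the excerpt.
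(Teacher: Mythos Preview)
Your proposal is correct and takes essentially the same approach as the paper: both reduce everything to Corollary~9.3 and Proposition~9.4 of \cite{Hai}, with the local Lipschitz continuity of $\Sol$ supplied by Proposition~3.11 of \cite{HP}. The paper's proof is in fact terser than yours --- it simply cites these results and notes three added details (Lipschitz continuity from \cite{HP}, lower-semicontinuity of $T_\infty$ from Proposition~1.5 of \cite{hairer_solving_2013}, and the $\eta=0$ case from Theorem~3.10 of \cite{HP}) --- so your expanded outline of the exponent checks and the renormalization computation is a faithful unpacking of what those citations contain.
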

\begin{proof}
As already pointed out, the statement and its proof were already given in Corollary 9.3 of~\cite{Hai}. The only details we added are the local Lipschitz continuity of the solution map, which is implied by Proposition 3.11 in~\cite{HP}, the lower-semicontinuity of the existence time $T_\infty$, whose proof coincides \textit{mutatis-mutandis} with the one given by Hairer in Proposition 1.5 of~\cite{hairer_solving_2013} and the fact that we can take the initial condition to be in $\CC^0 \equiv L^\infty$, which comes from the proof of Theorem 3.10 in~\cite{HP}. 
\end{proof}

As a consequence of the previous and Theorem 10.19 in~\cite{Hai}, Theorem 1.11 in~\cite{Hai} follows at once. Below, we recall this latter statement. 

\begin{theorem}\label{thm:solgPAM}
In the same setting as above, let furthermore $\xi$ be a spatial white noise, $\xi_\eps=\varrho_\eps\ast\xi$ its mollification and $Z_\eps$ the canonical model associated to it. Let $M_\eps$ be the sequence of renormalization maps determined in Theorem 10.19 of~\cite{Hai}, i.e. such that $M_\eps Z_\eps$ converges in probability to $\hat{Z}\in\M(\TS_g)$. 
Then, $\tilde u_\eps =\RS^{M_\eps} \Sol(u_0,M_\eps Z_\eps)$ converges locally uniformly, i.e. on compacts in $\R^+ \times \T^2$,  to a limit $u=\RS\Sol(u_0,\hat{Z})$, in probability.  
\end{theorem}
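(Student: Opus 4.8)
The plan is to combine the convergence of the renormalized model, established in Theorem 10.19 of~\cite{Hai}, with the continuity statements assembled in the previous subsections. First I would recall that by Theorem 10.19 of~\cite{Hai} there is a (canonical) sequence of renormalization constants $C_\eps$, hence maps $M_\eps = M(C_\eps) \in \Re_0$, such that the renormalized models $\hat Z_\eps := M_\eps Z_\eps$ converge in probability, in the topology of $\M(\TS_g)$, to a limiting model $\hat Z$; passing to a subsequence we may assume convergence almost surely, on a set $\Omega' \subset \Omega$ of full measure. On $\Omega'$ we now work pathwise with fixed models.

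Next I would invoke the solution theory of Proposition~\ref{prop:solgPAM}. For the fixed initial datum $u_0 \in \CC^\eta$, the solution map $\Sol : (u_0, Z) \mapsto U = \Sol(u_0,Z) \in \D_\U^{\gamma}$ (on $(0,T]$, $T$ small) is jointly locally Lipschitz continuous in its arguments, and the existence time $T_\infty(u_0,Z)$ is lower-semicontinuous. Since $\hat Z_\eps \to \hat Z$ in $\M(\TS_g)$, lower-semicontinuity gives, for each compact time interval $[0,t] \subset [0,T_\infty(u_0,\hat Z))$, that $T_\infty(u_0,\hat Z_\eps) > t$ for all $\eps$ small enough (along the subsequence); on such an interval the local Lipschitz continuity of $\Sol$ then yields $\Sol(u_0,\hat Z_\eps) \to \Sol(u_0,\hat Z)$ in $\D_\U^{\gamma,\eta}$. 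Composing with the reconstruction operator, and using that $(Z,U) \mapsto \RS U$ is jointly locally Lipschitz continuous by Theorem 3.10 of~\cite{Hai} (here $\RS^{M_\eps}$ is the reconstruction operator for the model $\hat Z_\eps$), we get that $\tilde u_\eps = \RS^{M_\eps}\Sol(u_0, M_\eps Z_\eps)$ converges, uniformly on compacts in $\R^+ \times \T^2$, to $u := \RS\Sol(u_0,\hat Z)$. Since the argument is carried out for an arbitrary subsequence along which the models converge a.s., and the limit $u$ does not depend on the choice of subsequence, the convergence $\tilde u_\eps \to u$ holds in probability (and locally uniformly), which is the claimed statement. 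Finally, the identification of $\tilde u_\eps$ as the solution of the renormalized PDE~\eqref{eq:rengPAM} with constant $C_\eps$ is exactly the content of the second half of Proposition~\ref{prop:solgPAM} applied to the smooth model $Z_\eps$ and $M = M(C_\eps)$.

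The only genuinely delicate point — and the one I would spell out with care — is the interplay between the (random) explosion time and the limiting procedure: a priori $T_\infty(u_0,\hat Z_\eps)$ could be much smaller than $T_\infty(u_0,\hat Z)$ for some $\eps$, and one must ensure the solutions are all defined on a common time interval before invoking continuity of $\Sol$. This is handled precisely by the lower-semicontinuity of $T_\infty$ from Proposition~\ref{prop:solgPAM} (itself a restatement of Proposition 1.5 of~\cite{hairer_solving_2013}): fixing $\omega \in \Omega'$ and $t < T_\infty(u_0, \hat Z(\omega))$, the a.s. convergence $\hat Z_\eps(\omega) \to \hat Z(\omega)$ forces $\liminf_\eps T_\infty(u_0,\hat Z_\eps(\omega)) \ge T_\infty(u_0,\hat Z(\omega)) > t$, so that for $\eps$ small (depending on $\omega$ and $t$) all solutions exist on $(0,t]$ and the Lipschitz estimate applies there. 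Everything else is a routine concatenation of the continuity results already proved; no new estimates are needed. I would also remark that this statement is in fact \emph{not} the final theorem of the paper — the density statement of the introductory theorem still requires the Malliavin-calculus machinery of Sections~\ref{sec:MallDiff} and~\ref{sec:Dens} — but Theorem~\ref{thm:solgPAM} here is precisely the recollection of Theorem 1.11 of~\cite{Hai} adapted to our (slightly more general) initial data and equipped with the continuity of the solution map that later sections will rely on.
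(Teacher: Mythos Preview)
Your proposal is correct and follows exactly the approach the paper indicates: the paper does not give a separate proof of this theorem but simply notes that it ``follows at once'' from Proposition~\ref{prop:solgPAM} combined with Theorem~10.19 of~\cite{Hai}, which is precisely the combination of model convergence, local Lipschitz continuity of $\Sol$ and of the reconstruction map, and lower-semicontinuity of $T_\infty$ that you spell out. Your added care about the explosion time and the subsequence argument is a welcome elaboration of what the paper leaves implicit.
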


Since the homogeneities of the symbols ${\color{blue}H}$ and ${\color{blue}\Xi}$ are the same by construction, Proposition~\ref{prop:solgPAM} and Theorem~\ref{thm:solgPAM} hold for~\eqref{eq:gPAMhabs} as well. Nevertheless, in this case, we will not be interested in \textit{general} admissible models on $\TS_g^H$ but on those coming from an element in $\M(\TS_g)$ and consequently mapped to $\M(\TS_g^H)$ through $E_h$ defined in Proposition~\ref{prop:ContShiftModel}. The purpose of the following statement is indeed to clarify what is the relation between~\eqref{eq:gPAMabs} and~\eqref{eq:gPAMhabs}, and to understand how the solution map is affected by the operations of translation and extension. 

\begin{proposition}\label{prop:solgPAMh}
In the same setting as Proposition~\ref{prop:solgPAM}, let $\Sol$ be the map that assignes to $(u_0,Z)\in\CC^\eta\times\M(\TS_g)$ the solution $U=\Sol(u_0,Z)$ to~\eqref{eq:gPAMabs}, and $\Sol^H$ be the one that assigns to $(u_0,Z^H)\in\CC^\eta\times\M(\TS_g^H)$ the solution $U^H\in\D^{\gamma,\eta}_{\U^H}(\Gamma^H)$ to~\eqref{eq:gPAMhabs}. For $h\in L^2(\T^2)$ and $Z\in\M(\TS_g)$, let $\Sol^H_{\text{Ex}}(u_0,h,Z):=\Sol^H(u_0,E_h Z)$ and $\Sol_{\text{Tr}}(u_0,h,Z):=\Sol(u_0,T_h Z)$. Then $\Sol^H_{\text{Ex}}$ and $\Sol_{\text{Tr}}$ are jointly locally Lipschitz continuous and $\Sol^H_{\text{Ex}}(u_0,h,Z)=\tau_H(\Sol_{\text{Tr}}(u_0,h,Z))$. 

Furthermore, let $Z_\eps$ be the canonical model on $\TS_g$ associated to a smooth function $\xi_\eps$, and take also $h_\eps$ smooth (and hence $L^2$) on the $\T^2$. Then $u^{h_\eps}_\eps=\RS \Sol_{\text{Tr}}(u_0,h_\eps, Z_\eps)=\RS \Sol^H_{\text{Ex}}(u_0,h_\eps, Z_\eps)$ solves 
\[
\partial_t {u}^{h_\eps}_\eps=\Delta{u}^{h_\eps}_\eps+g({u}^{h_\eps}_\eps)(\xi_\eps+h_\eps),\qquad u_\eps(0,\cdot)=u_0(\cdot).
\]
On the other hand, for $M=M(C)\in\Re_0$, $\tilde{u}^{h_\eps}_\eps=\RS^M \Sol_{\text{Tr}}(u_0,h_\eps, MZ_\eps)=\RS^{M^H} \Sol^H_{\text{Ex}}(u_0,h_\eps, MZ_\eps)$ solves
\begin{equation}\label{eq:rengPAMh}
\partial_t \tilde{u}^{h_\eps}_\eps=\Delta\tilde{u}^{h_\eps}_\eps+g(\tilde{u}^{h_\eps}_\eps)(\xi_\eps+h_\eps-Cg'(\tilde{u}^{h_\eps}_\eps)),\qquad \tilde u_\eps(0,\cdot)=u_0(\cdot).
\end{equation}
\end{proposition}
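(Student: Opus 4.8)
The plan is to deduce the statement entirely from results already at our disposal: the solution theory of Proposition~\ref{prop:solgPAM} (which, as noted above, applies verbatim to~\eqref{eq:gPAMhabs} on $\TS_g^H$) together with the consistency and commutation properties of extension, translation and renormalization recorded in Propositions~\ref{prop:ContShiftModel}, \ref{prop:TransModel}, \ref{prop:RenGrouph}, Corollary~\ref{cor:RenGroupTh}, Lemma~\ref{lemma:consistency} and Remark~\ref{rem:TransCanMod}; no new estimate is required. The local Lipschitz continuity is then immediate: $\Sol^H_{\text{Ex}}(u_0,h,Z)=\Sol^H(u_0,E_hZ)$ and $\Sol_{\text{Tr}}(u_0,h,Z)=\Sol(u_0,T_hZ)$ are compositions of jointly locally Lipschitz maps, since $(u_0,h,Z)\mapsto(u_0,E_hZ)$ and $(u_0,h,Z)\mapsto(u_0,T_hZ)$ are jointly locally Lipschitz and bounded on bounded sets by Propositions~\ref{prop:ContShiftModel} and~\ref{prop:TransModel}, while $\Sol$ and $\Sol^H$ are jointly locally Lipschitz by Proposition~\ref{prop:solgPAM}; lower-semicontinuity of the associated existence times follows accordingly.

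For the identity $\Sol^H_{\text{Ex}}=\tau_H\circ\Sol_{\text{Tr}}$, I would fix $u_0\in\CC^\eta$, $h\in L^2(\T^2)$, $Z\in\M(\TS_g)$, set $U:=\Sol_{\text{Tr}}(u_0,h,Z)=\Sol(u_0,T_hZ)$ and write its fixed point equation (with $\PK^{h}$, $\PK^{e_h}$ the abstract convolution operators of $T_hZ$ and $E_hZ$)
\[
U=\PK^{h}\big(G_\gamma(U){\color{blue}\Xi}\big)+\mathscr{T}_\gamma\bar{K}u_0 .
\]
Applying $\tau_H$ and invoking Lemma~\ref{lemma:consistency}, one has $\tau_H(U)\in\D_{\U^H}^{\gamma,\eta}(\Gamma^{e_h})$, $\tau_H$ fixes the polynomial part, it commutes with $G_\gamma$ and with products and sends ${\color{blue}\Xi}$ to ${\color{blue}\Xi}+{\color{blue}H}$, and $\tau_H\circ\PK^{h}=\PK^{e_h}\circ\tau_H$; hence
\[
\tau_H(U)=\PK^{e_h}\big(G_\gamma(\tau_H U)({\color{blue}\Xi}+{\color{blue}H})\big)+\mathscr{T}_\gamma\bar{K}u_0 ,
\]
so $\tau_H(U)$ is a fixed point of~\eqref{eq:gPAMhabs} for the model $E_hZ$. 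By uniqueness in Proposition~\ref{prop:solgPAM} (on $\TS_g^H$), $\tau_H(U)=\Sol^H(u_0,E_hZ)=\Sol^H_{\text{Ex}}(u_0,h,Z)$ wherever $U$ is defined, which also gives $T_\infty(u_0,T_hZ)\le T_\infty^H(u_0,E_hZ)$; equality of the two maximal existence times then follows because $\RS^{e_h}\tau_H(U)=\RS^h U$ (Lemma~\ref{lemma:consistency}(2)) and the blow-up criterion of Proposition~\ref{prop:solgPAM}, on $\TS_g$ and on $\TS_g^H$ alike, is phrased in terms of the $\CC^\eta$-norm of this same reconstructed function.

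For the concrete PDEs I would take $\xi_\eps,h_\eps$ smooth on $\T^2$ and let $Z_\eps\in\M(\TS_g)$ be the canonical model of $\xi_\eps$. By Remark~\ref{rem:TransCanMod}, $T_{h_\eps}Z_\eps$ is the canonical model of $\xi_\eps+h_\eps$, so the second part of Proposition~\ref{prop:solgPAM} applied with $\xi_\eps+h_\eps$ in place of $\xi_\eps$ shows that $\RS\Sol_{\text{Tr}}(u_0,h_\eps,Z_\eps)$ solves $\partial_t u=\Delta u+g(u)(\xi_\eps+h_\eps)$ with $u(0,\cdot)=u_0$, and that, using $T_{h_\eps}(MZ_\eps)=M(T_{h_\eps}Z_\eps)$ from Corollary~\ref{cor:RenGroupTh}, $\RS^M\Sol_{\text{Tr}}(u_0,h_\eps,MZ_\eps)$ solves~\eqref{eq:rengPAMh} for $M=M(C)\in\Re_0$. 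The remaining equalities, with the extension side, follow from the identity just established (applied at $Z_\eps$ and at $MZ_\eps$) together with $\RS^{e_h}\tau_H=\RS^h$ of Lemma~\ref{lemma:consistency}(2) and $E_{h_\eps}(MZ_\eps)=M^H(E_{h_\eps}Z_\eps)$ of Proposition~\ref{prop:RenGrouph}, which yield
\[
\RS^{M^H}\Sol^H_{\text{Ex}}(u_0,h_\eps,MZ_\eps)=\RS^{M^H}\tau_H\big(\Sol_{\text{Tr}}(u_0,h_\eps,MZ_\eps)\big)=\RS^M\Sol_{\text{Tr}}(u_0,h_\eps,MZ_\eps),
\]
and the same computation without $M$ gives the remaining identity.

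The hard part has already been done: the genuine analytic content sits in the preparatory results, above all in Lemma~\ref{lemma:consistency}, which asserts precisely that the purely symbolic identity $\tau_H(G_\gamma(U){\color{blue}\Xi})=G_\gamma(\tau_H U)({\color{blue}\Xi}+{\color{blue}H})$ survives truncation to modelled distributions and is compatible with $\PK$ and $\RS$. What remains to watch is the bookkeeping — keeping track of which model and which reconstruction operator sits under each object once renormalization, extension and translation act simultaneously — and the one slightly delicate point of showing, via the reconstruction map and the blow-up criterion, that the solutions built on $\TS_g$ and on $\TS_g^H$ really share the same lifetime.
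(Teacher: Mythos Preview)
Your proof is correct and follows essentially the same route as the paper: deduce Lipschitz continuity by composition, push the fixed-point equation for $U^h$ through $\tau_H$ via Lemma~\ref{lemma:consistency}, and conclude by uniqueness. Your treatment is in fact slightly more explicit than the paper's in two respects: you spell out the equality of maximal existence times via the blow-up criterion and $\RS^{e_h}\tau_H=\RS^h$, and for the concrete PDEs you reduce to Proposition~\ref{prop:solgPAM} via Remark~\ref{rem:TransCanMod} and Corollary~\ref{cor:RenGroupTh} rather than simply pointing to the argument of Proposition~9.4 in~\cite{Hai}.
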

\begin{proof}
The local Lipschitz continuity of $\Sol^H_{\text{Ex}}$ and $\Sol_{\text{Tr}}$ is a direct consequence of the local Lipschitz continuity of the extension map $E$, the translation map $T$ as well as the one of $\Sol$ and $\Sol^H$. 

Fix $u_0\in\CC^\eta$, $Z\in\M(\TS_g)$ and $h\in L^2(\T^2)$. In order to prove that $\Sol^H_{\text{Ex}}(u_0,h,Z)=\tau_H(\Sol_{\text{Tr}}(u_0,h,Z))$, name the left-hand side  $U^H$ and the right-hand side $\tau_H(U^h)$, where $U^h=\Sol_{\text{Tr}}(u_0,h,Z)$. At this point, thanks to Lemma~\ref{lemma:consistency}, on one side we know that $\tau_H(U^h)\in\D^{\gamma,\eta}_{\U^H}(\Gamma^{e_h})$, while on the other hand
\[
\PK^{e_h} (G_\gamma(\tau_H(U^h))({\color{blue}\Xi}+{\color{blue}H}))=\PK^{e_h} (\tau_H(G_\gamma(U^h))\tau_H({\color{blue}\Xi}))=\PK^{e_h} (\tau_H(G_\gamma(U^h){\color{blue}\Xi}))=\tau_H(\PK^{e_h}(G_\gamma(U^h){\color{blue}\Xi}))
\]
and since, by assumption, $U^h$ solves~\eqref{eq:gPAMabs} with respect to $Z^h$, we have
\[
\PK^{e_h} (G_\gamma(\tau_H(U^h))({\color{blue}\Xi}+{\color{blue}H}))+\mathscr{T}_\gamma\bar{K}u_0=\tau_H(\PK^{e_h}(G_\gamma(U^h){\color{blue}\Xi})+\mathscr{T}_\gamma\bar{K}u_0)=\tau_H(U^h)
\]
in other words $\tau_H(U^h)\in\D^{\gamma,\eta}_{\U^H}(\Gamma^{e_h})$ solves~\eqref{eq:gPAMhabs} and by uniqueness it coincides with $U^H$.

As a consequence of Lemma~\ref{lemma:consistency} and following the same argument as in the proof of Proposition 9.4 of~\cite{Hai}, the last part of the statement can be shown. 
\end{proof}

The following theorem is now straightforward.

\begin{theorem}\label{thm:solgPAMh}
In the same setting as Proposition~\ref{prop:solgPAMh} and Theorem~\ref{thm:solgPAM}, in particular with $h\in L^2(\T^2)$, set $h_\eps:=h\ast\varrho_\eps$. Then $\tilde u_\eps^{h_\eps}=\RS^{M_\eps} \Sol_{\text{Tr}}(u_0,h_\eps, M_\eps Z_\eps)=\RS^{M^H_\eps} \Sol^H_{\text{Ex}}(u_0,h_\eps, M_\eps Z_\eps)$ converges locally uniformly to a limit $u^h=\RS\Sol_{\text{Tr}}(u_0,h,\hat{Z})=\RS \Sol^H_{\text{Ex}}(u_0,h, \hat{Z})$, in probability. 
\end{theorem}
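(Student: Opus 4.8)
The plan is to obtain Theorem~\ref{thm:solgPAMh} as a routine concatenation of the continuity and commutation results established above, in exactly the way Theorem~\ref{thm:solgPAM} was derived from Proposition~\ref{prop:solgPAM} and Theorem~10.19 of~\cite{Hai}. First I would record the two elementary inputs: $h_\eps:=h\ast\varrho_\eps\to h$ in $L^2(\T^2)$ by standard properties of mollifiers, and $\hat Z_\eps:=M_\eps Z_\eps\to\hat Z$ in $\M(\TS_g)$ in probability by Theorem~\ref{thm:solgPAM}. Since convergence in probability is characterised by almost sure convergence of subsequences, I would pass to a subsequence along which both convergences hold almost surely and work pathwise on the corresponding full-measure set; the deterministic nature of the eventual limit then upgrades the subsequential almost sure convergence back to convergence in probability of the whole sequence.

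Next, by the joint local Lipschitz continuity of the translation map $T$ (Proposition~\ref{prop:TransModel}) and of the extension map $E$ (Proposition~\ref{prop:ContShiftModel}), it follows that $T_{h_\eps}\hat Z_\eps\to T_h\hat Z$ in $\M(\TS_g)$ and $E_{h_\eps}\hat Z_\eps\to E_h\hat Z$ in $\M(\TS_g^H)$, pathwise along the chosen subsequence. Moreover, Corollary~\ref{cor:RenGroupTh} and Proposition~\ref{prop:RenGrouph} give $T_{h_\eps}\hat Z_\eps=M_\eps\big(T_{h_\eps}Z_\eps\big)$ and $E_{h_\eps}\hat Z_\eps=M_\eps^H\big(E_{h_\eps}Z_\eps\big)$, so these limiting objects are genuinely the renormalised translated/extended canonical models, and Proposition~\ref{prop:solgPAMh} identifies the reconstructions of the associated abstract fixed points with the renormalised PDE solutions $\tilde u_\eps^{h_\eps}$ in the statement. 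It then remains to push the convergence of models through the solution map and the reconstruction operator: composing the joint local Lipschitz continuity of $\Sol$, resp.\ $\Sol^H$ (Proposition~\ref{prop:solgPAM} and its $H$-analogue, valid since ${\color{blue}H}$ and ${\color{blue}\Xi}$ share the same homogeneity), with the joint local Lipschitz continuity of the reconstruction operator in model and modelled distribution (Theorem~3.10 of~\cite{Hai}), yields that $\RS\Sol(u_0,T_{h_\eps}\hat Z_\eps)$ converges locally uniformly to $\RS\Sol(u_0,T_h\hat Z)=:u^h$, and similarly for the extended problem on $\TS_g^H$. The equality of the two representations, both for $\eps>0$ and in the limit, is then inherited from $\Sol^H_{\text{Ex}}(u_0,h,Z)=\tau_H\big(\Sol_{\text{Tr}}(u_0,h,Z)\big)$ (Proposition~\ref{prop:solgPAMh}) together with $\RS^{e_h}\circ\tau_H=\RS^{h}$ on modelled distributions (Lemma~\ref{lemma:consistency}(2)), applied with the bare reconstructions replaced by the renormalised ones $\RS^{M_\eps}$, $\RS^{M_\eps^H}$ at the $\eps$-level.

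The one genuinely delicate point is the time localisation hidden in the previous step: the abstract solutions exist only up to a random explosion time $T_\infty$ which is merely \emph{lower} semicontinuous in $(u_0,Z)$, so one cannot directly plug convergent models into $\Sol$. I expect this to be the main obstacle, but it is handled exactly as in the proof of Theorem~\ref{thm:solgPAM} (and in the lower-semicontinuity statement of Proposition~\ref{prop:solgPAM}, itself modelled on Proposition~1.5 of~\cite{hairer_solving_2013}): fix $\omega$ in the almost sure set and a horizon $T<T_\infty(u_0,T_h\hat Z(\omega))$; since $T_{h_\eps}\hat Z_\eps(\omega)\to T_h\hat Z(\omega)$, lower semicontinuity forces $T_\infty(u_0,T_{h_\eps}\hat Z_\eps(\omega))>T$ for all $\eps$ small enough, after which the local Lipschitz bound for $\Sol$ on a neighbourhood of $(u_0,T_h\hat Z(\omega))$ inside $\{T_\infty>T\}$ delivers the claimed uniform convergence on $(0,T]\times\T^2$; letting $T\uparrow T_\infty(u_0,T_h\hat Z(\omega))$ then gives local uniform convergence on the full domain of existence. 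Everything else is a mechanical assembly of the local Lipschitz estimates already in hand, so I would keep the written proof to a few lines referring back to Theorem~\ref{thm:solgPAM}.
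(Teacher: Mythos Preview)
Your proposal is correct and follows essentially the same approach as the paper's own proof, which is a one-line argument invoking Proposition~\ref{prop:solgPAMh}, the local Lipschitz continuity of the extension and translation operators (with $h_\eps\to h$ in $L^2$), and their commutation with the renormalization maps. Your write-up is considerably more detailed---in particular your discussion of the explosion-time localisation goes beyond what the paper spells out---but the underlying strategy is identical.
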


\begin{proof}
The result is a straightforward application of the previous proposition and the fact that both the extension and translation operators are locally Lipschitz continuous, note $h_\eps \to h$ in $L^2(\T^2)$, and commute with the renormalization maps.  
\end{proof}

To conclude this section we want to show that we can solve the afore mentioned equations up to the same time, uniformly in $h$ belonging to a small ball, which by now is a simple corollary of Propositions~\ref{prop:solgPAM} and~\ref{prop:solgPAMh}.

\begin{corollary}\label{cor:time}
In the same setting as Lemma~\ref{lemma:consistency} and Proposition~\ref{prop:solgPAM}, let $U\in\D^{\gamma,\eta}_{\U}(\Gamma)$ be the unique solution to~\eqref{eq:gPAMabs} and $U^H\in\D^{\gamma,\eta}_{\U^H}(\Gamma^{e_h})$ be the unique solution to~\eqref{eq:gPAMhabs}. Then, for every $T<T_\infty(u_0,Z)$ there exists $\delta>0$ such that $U$ and $U^H$ exist up to $T$, uniformly over $h\in L^2(\T^2)$ with $\|h\|_{L^2}<\delta$.   
\end{corollary}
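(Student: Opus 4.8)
The plan is to reduce the statement to the solution theory for the \emph{untilded} equation \eqref{eq:gPAMabs} on $\TS_g$, where lower semicontinuity of the explosion time is already recorded in Proposition~\ref{prop:solgPAM}. First I would observe that there is nothing to prove for $U$ itself: it equals $\Sol(u_0,Z)$, which does not depend on $h$, and the hypothesis $T<T_\infty(u_0,Z)$ is precisely the statement that it exists up to $T$. So the whole content of the corollary concerns $U^H$.

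Next I would invoke Proposition~\ref{prop:solgPAMh} to rewrite $U^H=\Sol^H_{\text{Ex}}(u_0,h,Z)=\tau_H\big(\Sol_{\text{Tr}}(u_0,h,Z)\big)=\tau_H\big(\Sol(u_0,T_hZ)\big)$, together with Lemma~\ref{lemma:consistency}(1), which gives that $\tau_H$ maps $\D^{\gamma,\eta}_{\U}(\Gamma^h)$ into $\D^{\gamma,\eta}_{\U^H}(\Gamma^{e_h})$. These two facts show that $U^H$ exists on $(0,T]$ as soon as the translated-model solution $\Sol(u_0,T_hZ)$ does. Hence it suffices to produce $\delta>0$ such that $T_\infty(u_0,T_hZ)>T$ for every $h\in L^2(\T^2)$ with $\|h\|_{L^2}<\delta$.

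This last point I would settle by a lower-semicontinuity argument. One first checks that $T_0Z=Z$: for $h=0$ the extension realizes the symbol $H$ by the zero function, so that $\Pi^{e_0}_x H\equiv 0$, $f^{e_0}_x(\J_l(H))=0$ and $\Pi^{e_0}_x\I(H)\equiv 0$, whence $\Pi^{e_0}_x\tau_H(\tau)=\Pi_x\tau$ for all $\tau\in T_g$ (alternatively one may invoke Remark~\ref{rem:TransCanMod}). By Proposition~\ref{prop:TransModel} the map $h\mapsto T_hZ$ is locally Lipschitz, hence continuous, from $L^2(\T^2)$ into $\M(\TS_g)$, so $T_hZ\to Z$ in $\M(\TS_g)$ as $\|h\|_{L^2}\to 0$; and by Proposition~\ref{prop:solgPAM} the explosion time $T_\infty$ is lower semicontinuous on $\CC^\eta\times\M(\TS_g)$. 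Composing these facts, $h\mapsto T_\infty(u_0,T_hZ)$ is lower semicontinuous at $h=0$ with value $T_\infty(u_0,Z)$, so for $T<T_\infty(u_0,Z)$ the required $\delta$ exists; by the reduction above this would establish the corollary.

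The main obstacle here is conceptual rather than computational: all the real work has been carried out in setting up the extension and translation operators, in showing that they are locally Lipschitz in $h$ and commute with renormalization (Propositions~\ref{prop:ContShiftModel},~\ref{prop:TransModel},~\ref{prop:RenGrouph}, Corollary~\ref{cor:RenGroupTh}), in the identity $\Sol^H_{\text{Ex}}=\tau_H\circ\Sol_{\text{Tr}}$, and in the lower semicontinuity of $T_\infty$. Once these are in hand the corollary is a short composition of continuity statements. The only genuinely necessary verification is that $T_0Z=Z$, so that the limiting explosion time is indeed $T_\infty(u_0,Z)$ and not that of some other model; this is immediate from the construction of the extension.
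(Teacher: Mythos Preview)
Your proof is correct and follows essentially the same approach as the paper: reduce the existence of $U^H$ to that of $\Sol(u_0,T_hZ)$ via the identity $U^H=\tau_H(\Sol(u_0,T_hZ))$ from Proposition~\ref{prop:solgPAMh}, then combine the continuity of $h\mapsto T_hZ$ at $h=0$ (with $T_0Z=Z$) with the lower semicontinuity of $T_\infty$ from Proposition~\ref{prop:solgPAM}. Your write-up is slightly more explicit about why $T_0Z=Z$ and about the fact that nothing needs to be shown for $U$, but the argument is the same.
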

\begin{proof}
Let $Z=(\Pi,\Gamma)\in\M(\TS_g)$, $U\in\D_\U^{\gamma,\eta}(\Gamma)$ be the unique maximal solution to~\eqref{eq:gPAMabs} and $T_\infty(u_0,Z)$ its explosion time. Let $h\in L^2(\T^2)$,  $T_hZ=Z^h\in\M(\TS_g)$ be the translation of $Z$ in the $h$-direction, $U^h\in\D_\U^{\gamma,\eta}(\Gamma^h)$ be the unique maximal solution to~\eqref{eq:gPAMabs} and $T_\infty(u_0,Z^h)$ its explosion time. Notice that, trivially, $Z=T_0 Z$, hence, by the local Lipschitz continuity of the map $T$ in $h$ we know that we can control the norm of the difference between $Z$ and $Z^h$ in terms of the $L^2$-norm of $h$. Since $T_\infty$ is lower-semicontinuous, by definition we have that for every $\eps>0$ there exists $\delta>0$ such that $T_\infty(u_0,\tilde{Z})>T_\infty(u_0,Z)-\eps$ for every $\tilde{Z}\in\M(\TS_g)$ such that $\VERT \tilde{Z}-Z\VERT<\delta$. Hence, upon choosing a smaller $\delta$, for every $h$ with $\|h\|_{L^2}<\delta$, $U^h$ and $U$ live at least up to $T_\infty(u_0,Z)-\eps$.
But now, thanks to Proposition~\ref{prop:solgPAMh} we know that $\tau_H(U^h)=U^H$ and the proof is concluded.
\end{proof}

\subsection{Weak maximum principles and gPAM}


\subsubsection{Global existence for a class of non-linear $g$}\label{subsec:GEgPAM}

As is well-known and summarized in Propositions~\ref{prop:solgPAM} and~\ref{thm:solgPAM}, one has uniqueness and \textit{local} existence for (renormalized) solutions to gPAM. (Throughout $g$ is assumed to sufficiently smooth in order to be in the framework \cite{Hai}.) When $g=g(u)$ is (affine) linear, then global existence holds. For a generic non-linearity $g$, however, global existence may fail, especially if no further growth assumptions on $g$ are 
made.\footnote{Think of the well-studied blowup of semilinear equations such as $\left( \partial _{t}-\Delta \right) u=u^{p}$ for $p>1$.} Essentially, this is due to the fact that the left of $g$, that is $U \mapsto G_\gamma(U)$, as defined in 
(\ref{equ:G}), is locally but {\it not} globally Lipschitz. As a consequence, there have been no global existence results for non-linear gPAM in the literature, even in the example of compactly supported $g$ or $g(\cdot)=\sin(\cdot)$. 

We observe in this section that a weak maximum principle, in the form of a {\it comparison argument}, provides uniform bounds which guarantee global existence. (This comes at the price of a structural assumption on $g$, satisfied in the afore-mentioned examples.)

\begin{proposition}\label{prop:GEgPAM}
\bigskip For fixed initial data $u_{0}^{\varepsilon }=u_{0} \in \CC^\eta$, with $\eta \ge0$, assume
\[
\exists\,\, C\geq \left\Vert u_{0}\right\Vert _{\infty }:g\left( C\right) =g\left( -C\right)=0.
\]
Then solutions $\tilde u_\eps$ to (\ref{eq:rengPAM}) are uniformly bounded on $\R^+\times \T^2$. As a consequence, the (renormalized) gPAM solution exists at all positive times.
\end{proposition}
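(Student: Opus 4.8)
The plan is to establish a deterministic, $\eps$-uniform $L^\infty$ bound on the renormalized solutions $\tilde u_\eps$ by a comparison argument, and then to deduce global existence from the blow-up alternative in Proposition~\ref{prop:solgPAM}. Since $\tilde u_\eps$ is a genuine classical (smooth) solution of the semilinear parabolic PDE~\eqref{eq:rengPAM} on $\R^+\times\T^2$ for each fixed $\eps>0$, we are on firm PDE ground here and may use the ordinary (weak) maximum principle for second-order parabolic operators; no regularity-structures machinery is needed at this stage.

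\medskip

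\textbf{Step 1: comparison with the constant $M$.} Observe that the constant function $w\equiv M$ solves $\partial_t w=\Delta w + g(w)(\xi_\eps - C g'(w))$ trivially, because $g(M)=0$ makes the entire right-hand side vanish. Since $\|u_0\|_\infty\le M$, we have $\tilde u_\eps(0,\cdot)\le M = w(0,\cdot)$. The difference $z_\eps:=\tilde u_\eps - M$ satisfies a linear parabolic equation of the form $\partial_t z_\eps = \Delta z_\eps + b_\eps(t,x)\, z_\eps$ with $z_\eps(0,\cdot)\le 0$, where $b_\eps$ is the (bounded, smooth, but $\eps$-dependent) coefficient obtained by writing $g(\tilde u_\eps)(\xi_\eps - Cg'(\tilde u_\eps)) - g(M)(\xi_\eps - C g'(M)) = b_\eps\,(\tilde u_\eps - M)$ via the fundamental theorem of calculus applied to $v\mapsto g(v)(\xi_\eps - Cg'(v))$, which is smooth in $v$ and in $(t,x)$. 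The weak maximum principle for the operator $\partial_t - \Delta - b_\eps$ on $\T^2$ (compact, no boundary) then gives $z_\eps\le 0$, i.e. $\tilde u_\eps\le M$ on $\R^+\times\T^2$. Symmetrically, using $g(-M)=0$ and $\|u_0\|_\infty\le M$, the constant $-M$ is a subsolution and the same argument yields $\tilde u_\eps\ge -M$. Hence $\|\tilde u_\eps(t,\cdot)\|_\infty\le M$ for all $t\ge0$ and all $\eps>0$, with $M$ independent of $\eps$.

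\medskip

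\textbf{Step 2: passing to the limit and invoking the blow-up alternative.} By Theorem~\ref{thm:solgPAM}, $\tilde u_\eps = \RS^{M_\eps}\Sol(u_0, M_\eps Z_\eps)$ converges, locally uniformly on $\R^+\times\T^2$ and in probability, to $u=\RS\Sol(u_0,\hat Z)$ on its interval of existence $(0,T_\infty(u_0,\hat Z))$. The uniform bound from Step~1 therefore transfers: $\|u(t,\cdot)\|_\infty\le M$ for every $t<T_\infty(u_0,\hat Z)$, almost surely (along the convergent subsequence; the bound being deterministic, it holds for the limit regardless). Now Proposition~\ref{prop:solgPAM} asserts the dichotomy $T_\infty=\infty$ or $\lim_{t\to T_\infty}\|\RS U(t,\cdot)\|_\eta=\infty$; since in gPAM (the case $\eta=0$, as emphasised in the introduction, following \cite{gip,HP}) explosion can occur only in $L^\infty$, the second alternative would force $\|u(t,\cdot)\|_\infty\to\infty$ as $t\to T_\infty$, contradicting the uniform bound $M$. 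Hence $T_\infty(u_0,\hat Z)=\infty$ almost surely, i.e. the renormalized gPAM solution exists at all positive times.

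\medskip

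\textbf{Main obstacle.} The comparison step itself is routine once one is careful that $\xi_\eps$ is a bona fide smooth (hence locally bounded) function, so that the linearized coefficient $b_\eps$ is a legitimate bounded potential and the classical maximum principle applies verbatim; the only subtlety is that $b_\eps$ and the bound on $\|b_\eps\|_\infty$ may blow up as $\eps\to0$ — but this does not matter, since we apply the maximum principle at fixed $\eps$ and the resulting bound $M$ is $\eps$-free. The genuinely delicate point is Step~2: one must be sure that the explosion in Proposition~\ref{prop:solgPAM} is measured in the $L^\infty$ norm (i.e. that the choice $\eta=0$ is admissible in that proposition, which is exactly the content of the remark attributing this to \cite{gip} and the refinement in \cite{HP}), so that an $L^\infty$ a priori bound genuinely rules out blow-up. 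Granting that, the argument closes. One should also note that the comparison is with spatially \emph{constant} super/subsolutions, which is what makes $g(\pm M)=0$ the precise hypothesis needed — a non-constant barrier would require controlling $\Delta$ of the barrier, which we avoid entirely.
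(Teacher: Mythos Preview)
Your proof is correct and follows essentially the same approach as the paper: both argue that the constant $\pm M$ are exact solutions of~\eqref{eq:rengPAM} (since $g(\pm M)=0$ kills the entire nonlinearity), hence serve as super/subsolutions, and then invoke the classical comparison principle at fixed $\eps$ to obtain the $\eps$-free bound $|\tilde u_\eps|\le M$. You are somewhat more explicit than the paper in linearizing the difference and in spelling out Step~2 (the passage to the limit and the $\eta=0$ blow-up alternative), but the core idea is identical.
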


\begin{proof}
We focus on 
\[
\sup_{\varepsilon \in (0,1]}\sup_{[0,T]\times \mathbb{T}^{2}}u^{\varepsilon
}\left( t,x\right) <\infty ,
\]
leaving a similar lower bound to the reader. Throughout $\varepsilon \in
(0,1]$ is fixed. Our assumption implies that $v\left( t,x\right) :=C$ is (trivially) a solution to the equation given in , that is
\[
\partial_t \tilde{u}_\eps=\Delta \tilde{u}_\eps+g(\tilde{u}_\eps)(\xi_\eps-Cg'(\tilde{u}_\eps)),
\]
and in fact a super-solution to the Cauchy problem with initial data $u_0$, since $u_{0}\leq C\equiv v\left( 0,\cdot \right)$. Hence, by comparison,\footnote{Knowing that we have classical solutions to the (\ref{eq:rengPAM}), this is in fact a simple consequence of calculus ...}
$\tilde{u}_\eps \left( t,x\right) \leq C$,
which is the desired uniform estimate for $C$ does not depend on $\varepsilon $.
\end{proof} 

%

\subsubsection{Weak maximum principle for the renormalized tangent equation}\label{subsec:WMP}

Formally differentiating the noise of  \eqref{eq:rengPAM} in $h_\eps$ direction, one is lead to a linear, inhomogenous (``renormalized tangent equation" ; cf. also ~\eqref{eq:renDergPAM} below) of the form
\begin{equation}\label{eq:renDergPAM2}
\partial_t \tilde{v}_\eps^{h_\eps}=\Delta \tilde{v}_\eps^{h_\eps}+g(\tilde u_\eps)h_\eps+\tilde{v}_\eps^{h_\eps}\Big(g'(\tilde u_\eps)\xi_\eps-C\Big( (g'(\tilde u_\eps))^2+g''(\tilde u_\eps)g(\tilde u_\eps)\Big)\Big)
\end{equation}
(This, as well as the convergence of the renormalized tangent equation is discussed in the following section.) By Duhamel's principle, it is usually enough to study the homogenous problem, that is,
\begin{equation}\label{eq:renDergPAMhom2}
\partial_t \tilde{v}_\eps^{\hom}=\Delta \tilde{v}_\eps^{\hom}+\tilde{v}_\eps^{\hom}\Big(g'(\tilde u_\eps)\xi_\eps-C\Big( (g'(\tilde u_\eps))^2+g''(\tilde u_\eps)g(\tilde u_\eps)\Big)\Big)
\end{equation}
with given initial data $\tilde{v}_\eps^{\hom} (0, \cdot) \equiv v_0 ^{\hom}  \in \CC^\eta$, $\eta \ge 0$. A uniform (in $\eps$) weak maximum principles holds.

\begin{proposition}\label{prop:WMA}
Assume $ v_0 ^{\hom} \ge 0$. Then $ \tilde{v}_\eps^{\hom} \ge 0$ on $\R^+ \times \T^2$, for all $\eps \in (0,1]$. 
\end{proposition}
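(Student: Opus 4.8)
\emph{Proof strategy.} For fixed $\eps\in(0,1]$ the noise $\xi_\eps$ is smooth, hence on any time interval $[0,T]$ on which the classical solution $\tilde u_\eps$ of \eqref{eq:rengPAM} exists it is itself smooth, and therefore the potential
\[
V_\eps(t,x):=g'(\tilde u_\eps)\xi_\eps-C\big( (g'(\tilde u_\eps))^2+g''(\tilde u_\eps)g(\tilde u_\eps)\big)
\]
is continuous, in particular bounded, on $[0,T]\times\T^2$. The plan is thus to view $\tilde v_\eps^{\hom}$ as the classical solution of the \emph{linear} parabolic Cauchy problem $\partial_t v=\Delta v+V_\eps v$ on $[0,T]\times\T^2$ with data $v_0^{\hom}\ge 0$, and to invoke the classical weak maximum principle, exactly in the spirit of the comparison argument used in the proof of Proposition~\ref{prop:GEgPAM}: since $v\equiv 0$ is a (trivial) solution of the homogeneous equation \eqref{eq:renDergPAMhom2} and is a sub-solution of the Cauchy problem because $0\le v_0^{\hom}$, comparison gives $\tilde v_\eps^{\hom}\ge 0$ on $[0,T]\times\T^2$. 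As $T$ is arbitrary below the explosion time of $\tilde u_\eps$ (which the reader may take to be $+\infty$), this yields $\tilde v_\eps^{\hom}\ge 0$ on $\R^+\times\T^2$. Note that the assertion is made for each $\eps\in(0,1]$ separately, so no uniformity in $\eps$ of the constants appearing below is required (indeed $\|V_\eps\|_\infty$ blows up as $\eps\to0$).

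The one point needing care is that $V_\eps$ has no definite sign, so the comparison principle cannot be quoted in its most naive form; this is handled by the standard rescaling trick. With $K:=\sup_{[0,T]\times\T^2}|V_\eps|<\infty$, the function $w:=e^{-Kt}\,\tilde v_\eps^{\hom}$ solves $\partial_t w=\Delta w+(V_\eps-K)w$ with now non-positive zeroth-order coefficient, to which the usual argument applies verbatim: at a putative negative interior minimum of $w$ (after the customary perturbation $w\mapsto w+\delta(1+t)$ to rule out degenerate cases) one has $\partial_t w\le 0$, $\Delta w\ge 0$ and $(V_\eps-K)w\ge 0$, contradicting the equation; since $w(0,\cdot)=v_0^{\hom}\ge 0$ one concludes $w\ge 0$, hence $\tilde v_\eps^{\hom}\ge 0$. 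Alternatively one may argue by an energy estimate: testing \eqref{eq:renDergPAMhom2} against $v_-:=\min(\tilde v_\eps^{\hom},0)$ and integrating by parts over $\T^2$ gives $\tfrac12\tfrac{d}{dt}\|v_-\|_{L^2}^2=-\|\nabla v_-\|_{L^2}^2+\int_{\T^2}V_\eps v_-^2\le K\|v_-\|_{L^2}^2$, so that Gr\"onwall together with $\|v_-(0,\cdot)\|_{L^2}=0$ forces $v_-\equiv 0$.

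I do not expect a genuine obstacle here: once the problem is reduced to a linear parabolic PDE with bounded coefficients and a classical solution --- which is immediate for mollified noise --- the statement is textbook. The substantive maximum-principle input of the paper is the \emph{strong} version, Theorem~\ref{thm:Positive}, carried out directly at the level of modelled distributions; the present weak statement only records the positivity surviving in the renormalized approximations, which is needed as an input to (and a consistency check for) that later analysis.
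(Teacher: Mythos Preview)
Your argument is correct and matches the paper's alternative approach verbatim: the paper notes that comparison holds for \eqref{eq:renDergPAMhom2} and that $0$ is a subsolution, which is exactly what you do (with the additional care of spelling out the rescaling $w=e^{-Kt}\tilde v_\eps^{\hom}$ to handle the sign-indefinite potential). The paper's primary one-line proof instead invokes the Feynman--Kac representation of the classical solution to \eqref{eq:renDergPAMhom2}, from which non-negativity is immediate; but both routes are explicitly offered there, so your proposal is essentially the same.
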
 
\begin{proof}  There is little to say. The unique (classical) solution to \eqref{eq:renDergPAMhom2} is given by the Feynman-Kac formula, which trivially implies non-negativity. Alternatively, 
use the fact that comparison holds for \eqref{eq:renDergPAMhom2}, which allows to conclude since $0$ is a subsolution.
\end{proof}

\section{Differentiating the solution map}\label{sec:MallDiff}

The purpose of this section is to show that we are allowed to differentiate the solution map $\Sol_{\text{Ex}}^H$ (defined in Proposition \ref{prop:solgPAMh}) in the direction of $h\in L^2(\T^2)$. Heuristically, for a smooth $\xi_\eps$ and $\delta>0$, let $u_\eps$ be the solution to 
\begin{equation*} 
(\partial_t - \Delta) u_\eps = g(u_\eps) \xi_\eps,\qquad u_\eps(0,\cdot)=u_0(\cdot)
\end{equation*}
and $u_\eps^\delta$ the solution of the same equation with $\xi_\eps$ replaced by $\xi_\eps + \delta h$. 
Then, assuming $\lim_{\delta\to 0} \delta^{-1}(u^\delta_\eps-u_\eps)$ exists and is given by a function $v^h_\eps$, one can guess that the latter should satisfy
\begin{equation}\label{eq:DergPAM}
(\partial_t - \Delta) v^h_\eps=  g'(u_\eps)v^h_\eps\xi_\eps + g(u_\eps)h,\qquad v_0(\cdot)=0.
\end{equation}
We will refer to the previous as the \textit{tangent equation}. Not only is this computation formal, but we know that neither $u_\eps$ nor $v^h_\eps$ can be expected to converge as $\eps \to 0$. The remedy is to work with $\tilde v^h_\eps$, a {\it renormalization} of $v^h_\eps$ so
that 
$$
   \tilde v^h_\eps \to v^h
$$
similar to convergence of $\tilde u_\eps \to u$ previously given in Theorem \ref{thm:solgPAM}. 
We will begin by giving a consistent solution theory for~\eqref{eq:DergPAM}, prove its linearity and continuity with respect to $h$, and conclude by showing that it is indeed the Malliavin derivative of the solution to~\eqref{eq:gPAM}.

\subsection{The Malliavin Derivative}

Let $h\in L^2(\T^2)$, $Z\in\M(\TS_g)$ and $E_hZ=(\Pi^{e_h},\Gamma^{e_h})\in\M(\TS_g^H)$ be the extension of $Z$ in the $h$ direction as defined in Proposition~\ref{prop:ContShiftModel}. Thanks to the results in the previous section, we know how to lift~\eqref{eq:DergPAM} to the space of modelled distributions based at $E_hZ$, and we have
\begin{equation}\label{eq:DergPAMabs}
V^h = \PK^{e_h} (G_\gamma(U) {\color{blue} H} + G_\gamma'(U)  {\color{blue}\Xi}  V^h)
\end{equation}
where $U\in\D_\U^{\gamma,\eta}(\Gamma)\subset\D_\U^{\gamma,\eta}(\Gamma^{e_h})$ is the unique solution to~\eqref{eq:gPAMabs} according to Proposition~\ref{prop:solgPAM}, which we will refer to as the \textit{abstract tangent equation}.
(Here $G_\gamma'(U)$ is defined as in (\ref{equ:G}) but with $g'$ instead of $g$.) 

\begin{proposition}\label{prop:solDergPAM}
Let $\alpha_{\min}\in(-\frac{4}{3},-1)$, $\gamma\in(|\alpha_{\min}|,\frac{4}{3})$ and $\eta\in[0,\alpha_{\min}+2)$. Let $Z=(\Pi,\Gamma)\in\M(\TS_g)$ and $U\in\D_\U^{\gamma,\eta}(\Gamma)$ be the unique maximal solution to~\eqref{eq:gPAMabs} on $(0,T_\infty)$. Let $h\in L^2(\T^2)$ and $E_hZ=(\Pi^{e_h},\Gamma^{e_h})\in\M(\TS_g^H)$, then equation~\eqref{eq:DergPAMabs} admits a unique solution $V^h\in\D^{\gamma,\eta}_{\U^H}(\Gamma^{e_h})$ on $(0,T_\infty)$. Moreover, the map $\Sol^D$ that assigns to $(h,Z)\in L^2(\T^2)\times \M(\TS_g)$ the solution $V^h=\Sol^D(h,Z)$ is jointly locally Lipschitz continuous.

Furthermore, let $V^{h_\eps}=\Sol^D(h_\eps,Z_\eps)$ be the solution of~\eqref{eq:DergPAMabs} with respect to the canonical model,$Z_\eps\in\M(\TS_g)$, associated to a smooth function $\xi_\eps$, 
and take also $h_\eps$ smooth (and hence $L^2$) on the $\T^2$, 
then $v^{h_\eps}_\eps=\RS V^{h_\eps}$ solves~\eqref{eq:DergPAM}. On the other hand, for $M=M(C)\in\Re_0$, 
$\tilde{v}_\eps^{h_\eps}=\RS^{M^H} \Sol^D(h_\eps,MZ_\eps)$ solves
\begin{equation}\label{eq:renDergPAM}
\partial_t \tilde{v}_\eps^{h_\eps}=\Delta \tilde{v}_\eps^{h_\eps}+g(\tilde u_\eps)h_\eps+\tilde{v}_\eps^{h_\eps}\Big(g'(\tilde u_\eps)\xi_\eps-C\Big( (g'(\tilde u_\eps))^2+g''(\tilde u_\eps)g(\tilde u_\eps)\Big)\Big),\qquad \tilde{v}_\eps^{h_\eps}(0,\cdot)=0
\end{equation}
where $\tilde u_\eps=\RS^M\Sol(u_0, MZ_\eps)$ is given according to Proposition~\ref{prop:solgPAM}. We will refer to this latter equation as the renormalized tangent equation. 
\end{proposition}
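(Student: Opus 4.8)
The plan is to treat \eqref{eq:DergPAMabs} as an affine fixed point problem for $V^h$ with \emph{coefficients built from the already-solved} $U=\Sol(u_0,Z)$, in complete analogy with the solution theory for \eqref{eq:gPAMabs} recalled in Proposition~\ref{prop:solgPAM}. First I would fix $h\in L^2(\T^2)$ and $Z\in\M(\TS_g)$, pass to the extended model $E_hZ\in\M(\TS_g^H)$ via Proposition~\ref{prop:ContShiftModel}, and note that $U\in\D^{\gamma,\eta}_{\U}(\Gamma)\subset\D^{\gamma,\eta}_{\U^H}(\Gamma^{e_h})$ by condition 1 of that proposition together with $\TS_g\subset\TS_g^H$. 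One then checks the right-hand side of \eqref{eq:DergPAMabs} defines a map $\D^{\gamma,\eta}_{\U^H}(\Gamma^{e_h})\to\D^{\gamma,\eta}_{\U^H}(\Gamma^{e_h})$: the term $G_\gamma(U){\color{blue}H}$ lies in a fixed space $\D^{\bar\gamma,\bar\eta}_{\W^H}$ independent of $V^h$ (here one uses that $|{\color{blue}H}|=\alpha_{\min}$ exactly matches $|{\color{blue}\Xi}|$, so the multiplication and Schauder bounds of \cite{Hai} apply verbatim with the \emph{same} exponents as in Proposition~\ref{prop:solgPAM}), and the term $G_\gamma'(U){\color{blue}\Xi}V^h$ is \emph{linear} in $V^h$; composing with $\PK^{e_h}$ (Theorem 5.12, Prop.~6.16, Thm.~7.1 of \cite{Hai}) gains the small-time factor $T^\theta$ of \eqref{bound:Schauder}. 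Hence for $T$ small the map is a contraction, giving a unique local solution $V^h\in\D^{\gamma,\eta}_{\U^H}(\Gamma^{e_h})$.

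Next I would upgrade local to maximal existence on $(0,T_\infty)$. Because the equation is \emph{affine} in $V^h$ (not genuinely nonlinear), the only possible obstruction to global-in-time existence is the explosion of the coefficients, i.e.\ of $U$; since $U$ itself is defined up to $T_\infty=T_\infty(u_0,Z)$, a standard continuation/patching argument (as in Corollary~9.3 and Prop.~9.4 of \cite{Hai}, or the localization used for gPAM) extends $V^h$ to all of $(0,T_\infty)$ with no new blow-up time. Joint local Lipschitz continuity of $\Sol^D$ in $(h,Z)$ then follows by combining: local Lipschitz continuity of $E_h$ in $(h,Z)$ (Proposition~\ref{prop:ContShiftModel}), of $\Sol$ hence of $U$ in $Z$ (Proposition~\ref{prop:solgPAM}), of $G_\gamma$ and $G_\gamma'$ with respect to both argument and model (Prop.~6.13 of \cite{Hai}, Prop.~3.11 of \cite{HP}), of the product of modelled distributions, and of $\PK$ in the model; the usual fixed-point perturbation estimate, valid because the contraction constants are uniform on bounded sets, then yields the claim on any time interval strictly inside the (lower-semicontinuous) existence time.

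For the concrete identification, suppose first $\xi_\eps$ smooth and $h_\eps$ smooth. Then $E_{h_\eps}Z_\eps$ is a smooth admissible model (its realization of ${\color{blue}H}$ is $h_\eps$), so $\RS^{e_h}$ is given by the explicit pointwise formula $\RS^{e_h}V^{h_\eps}(z)=(\Pi^{e_h}_zV^{h_\eps}(z))(z)$; writing $v^{h_\eps}_\eps:=\RS V^{h_\eps}$, applying $\RS^{e_h}$ to \eqref{eq:DergPAMabs}, using $\RS^{e_h}\PK^{e_h}=\bar K\,\RS^{e_h}$ (property 1 of $\PK$), $\RS^{e_h}(G_\gamma(U){\color{blue}H})=g(u_\eps)h_\eps$ and $\RS^{e_h}(G_\gamma'(U){\color{blue}\Xi}V^{h_\eps})=g'(u_\eps)\,\xi_\eps\,v^{h_\eps}_\eps$ (the latter two exactly as in the derivation of \eqref{eq:gPAMmild} from \eqref{eq:gPAMabs} in Proposition~\ref{prop:solgPAM}), one reads off the mild form of \eqref{eq:DergPAM}. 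For the renormalized version one repeats this with the model $MZ_\eps$ and reconstruction $\RS^{M^H}$, using that $M^H$ acts on $T_0^H=T_{\W^H}\oplus\langle{\color{blue}1}\rangle$ by subtracting $C\,{\color{blue}1}$ from ${\color{blue}\I(\Xi)\Xi}$ only; tracking this counterterm through the two occurrences of $U$ inside $G_\gamma(U){\color{blue}H}$ and $G_\gamma'(U){\color{blue}\Xi}V^{h_\eps}$ produces precisely the extra term $-C\big((g'(\tilde u_\eps))^2+g''(\tilde u_\eps)g(\tilde u_\eps)\big)\tilde v^{h_\eps}_\eps$ in \eqref{eq:renDergPAM}, by the same bookkeeping computation that turns \eqref{eq:gPAM} into \eqref{eq:rengPAM}.

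\textbf{Main obstacle.} The delicate point is \emph{not} the affine fixed point (which is formally easier than the nonlinear one) but the appearance of ${\color{blue}H}$ and its descendants ${\color{blue}\I(\Xi)H}$, ${\color{blue}X_iH}$, whose realizations involve the genuinely rough product $(K\xi)\cdot h$ with $h\in L^2\subset\CC^{-1}$, sitting outside the H\"older scale. To make the multiplication bound $G_\gamma(U){\color{blue}H}\in\D^{\bar\gamma,\bar\eta}_{\W^H}$ and the Schauder estimate \eqref{bound:Schauder} hold with exponents matching Proposition~\ref{prop:solgPAM}, one must know that the extended model $E_hZ$ genuinely satisfies the analytic bounds \eqref{eq:ModelAna} on $\TS_g^H$ with a constant controlled by $\VERT Z\VERT$ and $\|h\|_{L^2}$ --- this is exactly the content of Lemma~\ref{lem:newlemApp} invoked in the proof of Proposition~\ref{prop:ContShiftModel}, and it is the (wavelet-analytic) heart of the matter; given it, everything above is a routine transcription of the standard machinery.
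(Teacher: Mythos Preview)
Your plan is correct and follows essentially the same route as the paper: set up \eqref{eq:DergPAMabs} as an affine fixed point in $\D^{\gamma,\eta}_{\U^H}(\Gamma^{e_h})$ with coefficients built from $U$, invoke the abstract multiplication/Schauder machinery (Propositions~6.12, 6.13, 6.16, Theorems~5.12, 7.1, 7.8 of \cite{Hai}) to get a local solution, use linearity to push the existence time all the way to $T_\infty$, and assemble joint local Lipschitz continuity from that of $E$, $\Sol$, $G_\gamma$, $G_\gamma'$, the product, and $\PK$. Your identification of Lemma~\ref{lem:newlemApp} as the non-trivial analytic input is exactly right.

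One small imprecision worth flagging: in the renormalized computation, $M^H$ leaves $G_\gamma(\tilde U_\eps){\color{blue}H}$ \emph{untouched} (it contains no ${\color{blue}\I(\Xi)\Xi}$), so the entire counterterm $-C\big((g')^2+g''g\big)\tilde v_\eps^{h_\eps}$ comes from the single symbol ${\color{blue}\I(\Xi)\Xi}$ appearing in the expansion of $G_\gamma'(\tilde U_\eps)\tilde V^{h_\eps}{\color{blue}\Xi}$; the two summands $(g')^2$ and $g''g$ arise from the two ways of producing ${\color{blue}\I(\Xi)}$ in the product $G_\gamma'(\tilde U_\eps)\tilde V^{h_\eps}$ (once from $G_\gamma'$, once from $\tilde V^{h_\eps}$), not from ``two occurrences of $U$'' across both terms. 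The paper carries this out by first writing the explicit expansion of $\tilde V^{h_\eps}$ (via Remark~7.10 of \cite{Hai}) and then computing the two products symbol by symbol before applying $M^H$.
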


\begin{remark}
In contrast to the equation for $U$, the equation for $V^h$ is a {\it linear}, inhomogenous equation which in fact allows to solve it in the space $\D^{\gamma, \eta'}$ with $\eta'\in(-\alpha_{\min}-2,\alpha_{\min}+2)$, which, for $\eta'\leq 0$, is contained in the space $\D^{\gamma,\eta}$. The reason why we stick with the latter space is that, on the other hand, we cannot solve~\eqref{eq:gPAMabs} in $\D^{\gamma, \eta'}$, for $\eta'<0$, so we decided to formulate the previous proposition in this fashion in order to streamline the presentation. 
\end{remark}

\begin{remark}
As a sanity check, we point out that the renormalized equation~\eqref{eq:renDergPAM} is the same as the one obtained by directly differentiating~\eqref{eq:rengPAMh} in the $h_\eps$ direction. 
\end{remark}

\begin{proof}
For the first part of the statement, we need to check that the assumptions of Theorem 7.8 in~\cite{Hai} are matched, i.e. we have to prove that the map $F_\gamma$ acting on $\D^{\gamma,\eta}_{\U^H}(\Gamma^{e_h})$ and given by
\[
F_\gamma(V^h)=G_\gamma(U) {\color{blue} H} + G_\gamma'(U)  {\color{blue}\Xi}  V^h
\]
is strongly locally Lipschitz in the terminology of Section 7.3 in~\cite{Hai}. Now, the functions $x\mapsto{\color{blue}H}$ and $x\mapsto{\color{blue}\Xi}$ can be viewed as modelled distributions in $\D^{\tilde{\gamma},\tilde{\gamma}}$, for every $\tilde{\gamma}>0$,  taking values in a sector of regularity $\alpha_{\min}$. Moreover, by Proposition 6.13 in~\cite{Hai}, since $U\in\D_\U^{\gamma,\eta}(\Gamma)\subset\D^{\gamma,\eta}_{\U^H}(\Gamma^{e_h})$, then $G_\gamma(U)$ and $G_\gamma'(U)$ live in the same space. Hence, thanks to Proposition 6.12 in~\cite{Hai} both $G_\gamma(U) {\color{blue} H}$ and $G_\gamma'(U)  {\color{blue}\Xi}  V^h$ belong to $\D^{\gamma+\alpha_{\min},\eta+\alpha_{\min}}_{\U^H}(\Gamma^{e_h})$. We can therefore conclude that $F_\gamma$ maps $\D^{\gamma,\eta}_{\U^H}(\Gamma^{e_h})$ into $\D^{\gamma+\alpha_{\min},\eta+\alpha_{\min}}_{\U^H}(\Gamma^{e_h})$ and its strong local Lipschitz continuity follows by the fact that this holds for both the operations of composition with smooth functions and product according to the bounds in Proposition 3.11 in~\cite{HP} and 6.12 in~\cite{Hai} respectively. 

At this point, thanks to Theorem 7.8 in~\cite{Hai}, we know that there exists a small $T>0$ such that~\eqref{eq:DergPAMabs} admits a unique solution $V^h\in\D^{\gamma,\eta}_{\U^H}(\Gamma^{e_h})$ on $(0,T)$ and by Proposition 7.11 in~\cite{Hai} we can build a maximal solution by patching together local solutions. But, since the equation is linear in $V^h$, we immediately see that the time $T$ determined above does not depend on the size of $V^h$ itself, but only on the one of $U$, hence we can iterate the procedure until we reach the explosion time of the latter, i.e. $T_\infty$. Finally, the joint local Lipschitz continuity of the solution map $\Sol^D$ with respect to $h\in L^2(\T^2)$ and $Z\in\M(\TS_g)$ follows by the one of the map $E$ (see Proposition~\ref{prop:ContShiftModel}) and Corollary 7.12 in~\cite{Hai}. 

For the last part of the statement, let $T<T_\infty$, $Z_\eps$ the canonical model, $V^{h_\eps}$ the solution to~\eqref{eq:DergPAMabs} in $\D^{\gamma,\eta}$ with respect to $E_{h_\eps}Z_\eps$ and $v_\eps^{h_\eps}:=\RS V^{h_\eps}$. The fact that $v^{h_\eps}$ solves~\eqref{eq:DergPAM} is straightforward and follows by the properties of the reconstruction map and the abstract integration kernel. 

We will then try to understand what is the equation solved by $\tilde{v}^{h_\eps}_\eps:=\RS\tilde{V}^{h_\eps}$ where $\tilde{V}^{h_\eps}= \Sol^D(h_\eps,MZ_\eps)$, essentially following the proof of Proposition 9.4 in~\cite{Hai}. As a consequence of Remark 7.10 of~\cite{Hai}, which gives a way to understand the structure of the solution to a general equation, one obtains the following representation for $\tilde{V}^{h_\eps}$ up to order $\gamma$ (i.e. up to order $1$ here)
\[
\tilde{V}^{h_\eps}(z)=\tilde{v}^{h_\eps}_\eps(z){\color{blue}1}+g'(\tilde u^\eps(z))\tilde{v}^{h_\eps}_\eps(z){\color{blue}\I(\Xi)}+g(\tilde u^\eps(z)){\color{blue}\I(H)}+\sum_{i=1}^2 \tilde{v}^{h_\eps}_{\eps,X_i}(z){\color{blue}X_i}
\]
where $\tilde u^\eps=\RS(\tilde U^{\eps})$ ($\tilde U^{\eps}$ is the solution to~\eqref{eq:gPAMabs} with respect to $MZ_\eps$), $\tilde{v}^{h_\eps}_{\eps,X_i}$ suitable coefficients and $\tilde{v}^{h_\eps}_{\eps}$ is the coefficient of ${\color{blue}1}$ thanks to Proposition 3.28 in~\cite{Hai}. At this point notice that, 
\[
G_\gamma(\tilde U^{\eps}){\color{blue}H}(z)=g(\tilde u^\eps(z)){\color{blue}H}+g'(\tilde u^\eps(z))g(\tilde u^\eps(z)){\color{blue}\I(\Xi)H}+g'(\tilde u^\eps(z)) \left(\sum_{i=1}^2 \tilde u^\eps_{X_i}(z){\color{blue}X_i H}\right)
\]
and
\begin{align*}
G'_\gamma(\tilde U^{\eps})\tilde{V}^{h_\eps}{\color{blue}\Xi}(z)=& g'(\tilde u^\eps(z))\tilde{v}^{h_\eps}_\eps(z){\color{blue}\Xi}+\left(g'(\tilde u^\eps(z))^2\tilde{v}^{h_\eps}_\eps(z)+g''(\tilde u^\eps(z))g( \tilde u^\eps(z)) \tilde{v}^{h_\eps}_\eps(z)\right){\color{blue}\I(\Xi)\Xi}\\
&+ g'(\tilde u^\eps(z))g(\tilde u^\eps(z)){\color{blue}\I(H)\Xi}+\sum_{i=1}^2 \left( g'(\tilde u^\eps(z))\tilde{v}^{h_\eps}_{\eps,X_i}(z)+g''(\tilde u^\eps(z))\tilde{v}^{h_\eps}_\eps(z) \tilde u^\eps_{X_i}(z)\right){\color{blue}X_i\Xi}
\end{align*}
where in both cases we stopped our expansion up to 0 homogeneity. Now, we want to apply $M^H = M^H(C)$ to both sides of the two previous equalities. Its definition immediately gives 
\begin{gather*}
M^H(G_\gamma(\tilde U^{\eps}){\color{blue}H}(z))=G_\gamma(\tilde U^{\eps}){\color{blue}H}(z),\\ M^H(G'_\gamma(\tilde U^{\eps})\tilde{V}^{h_\eps}{\color{blue}\Xi}(z))=G'_\gamma(\tilde U^{\eps})\tilde{V}^{h_\eps}{\color{blue}\Xi}(z)-C\left(g'(\tilde u^\eps(z))^2\tilde{v}^{h_\eps}_\eps(z)+g''(\tilde u^\eps(z))g(\tilde u^\eps(z)) \tilde{v}^{h_\eps}_\eps(z)\right){\color{blue}1}
\end{gather*}
By Proposition~\ref{prop:RenGrouph}, $E_{h_\eps}MZ_\eps=M^HE_{h_\eps}Z_\eps$, and in~\eqref{eq:RenModelh} we have set 
\[
\Pi_x^{e_{h_\eps},M^H}\tau=\Pi_x^{e_{h_\eps}} M^H \tau
\]
where $\Pi^{e_{h_\eps}}$ is the canonical model on $\TS_g^H$ (see Remark~\ref{rem:TransCanMod}). Hence, since $\RS F(x)=\Pi_x^{e_{h_\eps}}F(x)(x)$ for any modelled distribution $F$ relative to 
the canonical model, we obtain 
\[
\RS\left(G_\gamma(\tilde U^{\eps}){\color{blue}H}+G'_\gamma(\tilde U^{\eps})\tilde{V}^{h_\eps}{\color{blue}\Xi}\right)(z)=g(\tilde u_\eps(z))h_\eps(z)+\tilde{v}_\eps^h(z)\Big(g'(\tilde u_\eps)\xi_\eps-C\Big( (g'(\tilde u_\eps))^2+g''(\tilde u_\eps)g(\tilde u_\eps)\Big)\Big)(z)
\]
which, by the first property of the abstract integration kernel concludes the proof. 

\end{proof}

We are now ready to state and prove the following theorem, in which, on the one hand we establish the convergence of the sequence of solutions to the renormalized tangent equation to a well-defined object and, on the other, we determine the main properties of the latter.

\begin{theorem}\label{thm:solDergPAM}
In the same setting as Theorems~\ref{thm:solgPAM},~\ref{thm:solgPAMh}  and Proposition~\ref{prop:solDergPAM},  $\tilde{v}_\eps^{h_\eps}=\RS^\eps \Sol^D(h_\eps, M_\eps Z_\eps)$ converges locally uniformly to a limit $v^h=\RS\Sol^D(h, \hat{Z})$, in probability. 

Let $(t,x)\in(0,+\infty)\times\T^2$ and $T_\infty=T_\infty(u_0,\hat{Z}(\omega))$ be the explosion time for the solution to~\eqref{eq:gPAMabs} introduced in Proposition~\ref{prop:solgPAM}, then, for almost all $\omega \in \Omega$, for all $t<T_\infty(\omega)$, the map $h\mapsto v^h(t,x;\omega)$ is linear and continuous. 
\end{theorem}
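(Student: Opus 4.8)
The plan is to establish the three assertions in turn: convergence of $\tilde v_\eps^{h_\eps}$, continuity of $h\mapsto v^h(t,x;\omega)$, and finally its linearity.

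\emph{Convergence.} I would argue exactly as in the proof of Theorem~\ref{thm:solgPAMh}. Since $h_\eps\to h$ in $L^2(\T^2)$ and $M_\eps Z_\eps\to\hat Z$ in probability in $\M(\TS_g)$ (Theorem~\ref{thm:solgPAM}), the joint local Lipschitz continuity of the extension map $E$ (Proposition~\ref{prop:ContShiftModel}) together with the identity $M_\eps^H E_{h_\eps}Z_\eps=E_{h_\eps}M_\eps Z_\eps$ (Proposition~\ref{prop:RenGrouph}) gives $M_\eps^H E_{h_\eps}Z_\eps\to E_h\hat Z$ in probability in $\M(\TS_g^H)$. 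For fixed $\omega$ and $T<T_\infty(u_0,\hat Z(\omega))$, lower-semicontinuity of $T_\infty$ (Proposition~\ref{prop:solgPAM}) yields $T<T_\infty(u_0,M_\eps Z_\eps(\omega))$ for $\eps$ small, on a set of probability tending to one, so $\tilde v_\eps^{h_\eps}$ is defined on $[0,T]$; then the joint local Lipschitz continuity of $\Sol^D$ (Proposition~\ref{prop:solDergPAM}) and of the reconstruction map $(Z,U)\mapsto\RS^ZU$ (Theorem 3.10 in~\cite{Hai}) shows that $\tilde v_\eps^{h_\eps}=\RS^\eps\Sol^D(h_\eps,M_\eps Z_\eps)$ converges uniformly on $[0,T]\times\T^2$, in probability, to $\RS\Sol^D(h,\hat Z)=v^h$. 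As $T<T_\infty$ is arbitrary, this is the claimed local uniform convergence in probability.

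\emph{Continuity in $h$.} Fix $\omega$ with $t<T_\infty(u_0,\hat Z(\omega))$ and $T\in(t,T_\infty(u_0,\hat Z(\omega)))$. By Proposition~\ref{prop:solDergPAM} the solution $V^h=\Sol^D(h,\hat Z(\omega))$ of the linear equation~\eqref{eq:DergPAMabs} exists on $(0,T_\infty)$ for \emph{every} $h\in L^2(\T^2)$, the existence time being that of $U$ and hence independent of $h$, and $h\mapsto V^h\in\D^{\gamma,\eta}_{\U^H}$ on $(0,T]\times\T^2$ is locally Lipschitz. Composing with the locally Lipschitz reconstruction $(Z,U)\mapsto\RS^ZU$ and noting that $\RS^{e_h}V^h$ is the ${\color{blue}1}$-component of $V^h$, hence a continuous function in $\CC^{\alpha_{\min}+2,\eta}$ (Proposition 3.28 in~\cite{Hai}), shows that $h\mapsto v^h(t,x;\omega)=(\RS^{e_h}V^h)(t,x)$ is continuous (indeed locally Lipschitz) for every such $\omega$; no exceptional null set is needed here.

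\emph{Linearity in $h$.} I would descend to the smooth approximations. For fixed $\eps$, $\tilde v_\eps^{h_\eps}=\RS^{M_\eps^H}\Sol^D(h_\eps,M_\eps Z_\eps)$ solves the renormalized tangent equation~\eqref{eq:renDergPAM}, a \emph{linear inhomogeneous} parabolic Cauchy problem for $\tilde v_\eps^{h_\eps}$ whose coefficient $g'(\tilde u_\eps)\xi_\eps-C\big((g'(\tilde u_\eps))^2+g''(\tilde u_\eps)g(\tilde u_\eps)\big)$ depends only on $\tilde u_\eps,\xi_\eps$ and whose forcing $g(\tilde u_\eps)h_\eps$ is linear in $h_\eps$; since $h\mapsto h_\eps$ is linear, classical uniqueness for this linear PDE forces $h\mapsto\tilde v_\eps^{h_\eps}$ to be linear on the interval of existence of $\tilde u_\eps$. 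Now fix $h_1,h_2\in L^2(\T^2)$, $a,b\in\R$ and set $h_3=ah_1+bh_2$. On $\{t<T_\infty(u_0,\hat Z)\}$, for $\eps$ small one has the exact identity $\tilde v_\eps^{h_{3,\eps}}(t,x)=a\,\tilde v_\eps^{h_{1,\eps}}(t,x)+b\,\tilde v_\eps^{h_{2,\eps}}(t,x)$; letting $\eps\to0$ along a subsequence where the convergence of the first step is almost sure, and using uniqueness of limits, yields $v^{h_3}(t,x)=a\,v^{h_1}(t,x)+b\,v^{h_2}(t,x)$ almost surely on $\{t<T_\infty\}$. Running this over a countable dense family $\{h_i\}\subset L^2(\T^2)$ and rational $a,b$ produces a single null set off which the identity holds for all such combinations, and the continuity proved above extends it to all $h\in L^2(\T^2)$ and all $a,b\in\R$; combined with continuity this gives the statement.

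The main obstacle is the linearity step. It cannot be read off directly from~\eqref{eq:DergPAMabs}, because $V^{h_1}$ and $V^{h_2}$ are modelled distributions over \emph{different} models $\Gamma^{e_{h_1}}$ and $\Gamma^{e_{h_2}}$ and cannot be added at the abstract level — indeed $E_hZ$ depends nonlinearly on $h$, e.g.\ through the realization of ${\color{blue}\I(H)H}$. Passing to the smooth approximations, where all objects are genuine functions solving honest linear parabolic equations, removes this difficulty, at the cost of the bookkeeping needed to pass a countable dense family of identities to the limit on one null set and then invoke continuity; one must also keep track of the fact that $v^{h_1},v^{h_2},v^{h_3}$ are simultaneously defined at $(t,x)$ precisely on $\{t<T_\infty(u_0,\hat Z)\}$, which is exactly the set on which the claim is asserted.
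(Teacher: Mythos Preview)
Your proposal is correct and follows essentially the same strategy as the paper: convergence via the joint local Lipschitz continuity of $E$, $\Sol^D$ and $\RS$; continuity in $h$ via the same Lipschitz properties; linearity by descending to the smooth approximations, using classical uniqueness for the linear renormalized tangent PDE, and passing to the limit.

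The one place where you work harder than necessary is the null-set bookkeeping in the linearity step. You pass to a subsequence along which $\tilde v_\eps^{h_\eps}\to v^h$ almost surely, obtain the identity for each fixed triple $(h_1,h_2,a,b)$ off an $h$-dependent null set, and then run a countable-density-plus-continuity argument to consolidate into one null set. The paper instead observes that the \emph{only} stochastic input is the model convergence $M_\eps Z_\eps\to\hat Z$: one passes once to a subsequence along which this holds almost surely, calls the exceptional set $N$, and then for every $\omega\in N^c\cap\{t<T_\infty\}$ the convergence $\tilde v_\eps^{h_\eps}(\omega)\to v^h(\omega)$ follows \emph{deterministically} for all $h$ simultaneously from the joint continuity of $(h,Z)\mapsto\RS\Sol^D(h,Z)$. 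This gives linearity for all $h_1,h_2\in L^2$ and all $a,b\in\R$ directly on $N^c\cap\{t<T_\infty\}$, with no density argument needed. Your route is valid, just longer; the paper's shortcut is worth internalising since the same trick (fixing the null set at the level of the model, then exploiting deterministic continuity in the remaining variables) recurs throughout the subject.
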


\begin{proof}
The proof of the first part of the statement is analogous to the proof of Theorem~\ref{thm:solgPAMh} therefore we will focus only on the second. Thanks to Theorem 10.19 in~\cite{Hai}, we know that $M_\eps Z_\eps$ converges to $\hat{Z}$ in probability, hence, taking at most a subsequence, almost surely. Let $N$ be the null set in which such convergence fails and $(t,x)\in(0,+\infty)\times\T^2$. We will prove that for every $\omega\in N^c\cap\{t<T_\infty\}$ the map $h\mapsto v^h(x,t)$ is linear and bounded in $h\in L^2(\T^2)$. 

Take $a_1$, $a_2\in\R$ and $h_1$, $h_2\in L^2(\T^2)$. The previous proposition guarantees that $\tilde{v}_\eps^{a_1h_1^\eps+a_2h_2^\eps}=$ $\RS^{M_\eps^H} \Sol^D(a_1h_1^\eps+a_2h_2^\eps, M_\eps Z_\eps(\omega))$, $\tilde{v}_\eps^{h_1^\eps}=\RS^{M_\eps^H} \Sol^D(h^\eps_1, M_\eps Z_\eps(\omega))$ and $\tilde{v}_\eps^{h_2^\eps}=\RS^{M_\eps^H} \Sol^D(h_2^\eps, M_\eps Z_\eps(\omega))$ solve~\eqref{eq:renDergPAM} with $h_\eps$ substituted by $a_1h_1^\eps+a_2h_2^\eps$, $h_1^\eps$ and $h^\eps_2$ respectively. Since, for this latter equation in which all the noise terms are smooth, existence and uniqueness of solutions hold, it is immediate to verify that
\[
\tilde{v}_\eps^{a_1h_1^\eps+a_2h_2^\eps}=a_1\tilde{v}_\eps^{h_1^\eps}+ a_2\tilde{v}_\eps^{h_2^\eps}
\]
Moreover, thanks to the first part of the statement, we know that all of $\tilde{v}_\eps^{a_1h_1^\eps+a_2h_2^\eps}$, $\tilde{v}_\eps^{h_1^\eps}$ and $\tilde{v}_\eps^{h_2^\eps}$ converge as $\eps$ tends to $0$ to well-defined objects, hence the previous equality is preserved in the limit. 

At this point, let $v^h(t,x)=\RS\Sol^D(h,\hat{Z})$ and recall that the reconstruction operator is locally Lipschitz continuous with respect to both the model and the modelled distribution, and, by Proposition~\ref{prop:solDergPAM} $\Sol^D$ is jointly local Lipschitz continuous with respect to the model and $h$. Hence, the map $L^2(\T^2)\ni h\mapsto v^h(t,x)$ is linear and locally Lipschitz continuous, which trivially guarantees its (even global Lipschitz) continuity.  
\end{proof}

\begin{remark}
It is even possible to get an explicit bound for $v^h$ in terms of $h\in L^2$ and the solution to an auxiliary equation. Indeed, $v^h$ is the solution to a linear equation, hence, for $T<T_\infty$, upon looking at $\cev{v}_\eps^{h_\eps}(\tau,y):=\tilde v_\eps^{h_\eps}(T-\tau,y)$\footnote{In this remark we will always indicate the time reversal of a function by an inverted arrow}, we can apply Feynman-Kac formula so that
\[
\cev{v}_\eps^{h_\eps}(\tau,y)=\E_y\left[\int_\tau^T \exp\left(-\int_\tau^r g'(\cev{u}_\eps(s,B_s))\xi_\eps(B_s)-C_\eps(gg')'(\cev{u}_\eps(s,B_s))\dd s\right) g(\cev{u}_\eps(r,B_r))h_\eps(B_r)\dd r\right]
\]
and Cauchy-Schwarz inequality and, again, Feynman-Kac imply
\[
|\tilde v_\eps^{h_\eps}(t,x)|\lesssim \log(T/(T-t)) |\tilde{w}_\eps(t,x)|^{\frac{1}{2}} \|h_\eps\|_{L^2}
\]
where $\tilde{w}_\eps$ is the solution to the auxiliary equation
\[
\partial_t\tilde{w}_\eps=\Delta \tilde{w}_\eps +g(\cev{u}_\eps)^2+2\tilde{w}_\eps\Big(g'(\cev{u}_\eps)\xi_\eps-C\big(gg'\big)'(\cev{u}_\eps) \Big)\Big),\qquad \tilde{w}_\eps(0,\cdot)=0
\]
which is independent of $h$. Following the same line of reasoning of Proposition~\ref{prop:solDergPAM} and Theorem~\ref{thm:solDergPAM} it is possible to show that $\tilde{w}_\eps$ converges to a well-defined object in the limit as $\eps$ tends to $0$ and so does $\tilde{v}_\eps^{h_\eps}$, which allows to conclude
\[
|\tilde v^{h}(t,x)|\lesssim \log(T/(T-t)) |\tilde{w}(t,x)|^{\frac{1}{2}} \|h\|_{L^2}
\]
where the logarithm is due to the fact that we are solving the equation on the two-dimensional torus. 
\end{remark}

\subsection{Malliavin Differentiability}

Let $\hat{Z}$ be the admissible model defined by Theorem 10.19 in~\cite{Hai}, $u_0\in\CC^\eta$ an initial condition and $T_\infty=T_\infty(u_0,\hat{Z})$ the explosion time for $u=\RS(\Sol(u_0,\hat{Z}))$ introduced in Proposition~\ref{prop:solgPAM}. Given $(t,x)\in(0,+\infty)\times\R^2$, the aim of this section is to show that, for almost every $\omega$ and $t<T_\infty(\omega)$, the random variable 
$$u(t,x;\omega)=\RS\Sol(u_0,\hat{Z}(\omega))(t,x)
$$ is Malliavin differentiable in the precise sense of Definition~\ref{def:MallDiff} and that its Malliavin derivative evaluated at $h\in L^2(\T^2)$ is the function $v^h(t,x;\omega)$ defined in Theorem~\ref{thm:solDergPAM}. 
To do so, we set 
\begin{equation}\label{eq:FullSet}
\Upsilon_t :=  \{t<T_\infty\} \cap \{\omega:\hat{Z}(\omega+h)=T_h\hat{Z}(\omega)\,\text{for all}\,h\in L^2(\T^2)\}  
\end{equation}
and note that, thanks to Lemma~\ref{lemma:TransModel}, the second event has full measure, while the first has 
 positive probability at least for $t$ small enough
 as a consequence of $T_\infty > 0$ a.s.
From now on, we fix an $\omega\in\Upsilon_t$ and, before proceeding, we (recall and) introduce some notations. 

\begin{notation*}
For an admissible model $Z=(\Pi,\Gamma)\in\M(\TS_g)$ and an $L^2$ function $h$, we defined in Proposition~\ref{prop:ContShiftModel} the extended model $E_hZ=(\Pi^{e_h},\Gamma^{e_h})\in\M(\TS_g^H)$ and in Proposition~\ref{prop:TransModel} the translated model $T_hZ=(\Pi^h,\Gamma^h)\in\M(\TS_g)$. We will indicate by $\RS$, $\RS^{e_h}$ and $\RS^h$ the reconstruction operators associated to $Z$, $E_hZ$ and $T_hZ$ respectively, and denote by $\Sol$, $\Sol^H$ and $\Sol^D$ the solution maps for~\eqref{eq:gPAMabs},~\eqref{eq:gPAMhabs} and~\eqref{eq:DergPAMabs} as in the corresponding Propositions~\ref{prop:solgPAM},~\ref{prop:solgPAMh} and~\ref{prop:solDergPAM}. 
\end{notation*}

What we have to show is that the map $L^2(\T^2)\ni h\mapsto u(t,x;\omega+h)\in\R$ is Fr\'echet differentiable, which amounts to verify that it is Gateaux differentiable and that the Gateaux differential is continuous. The main technical difficulty one has to overcome is the Gateaux differentiability at $h=0$, and this will be our first focus. 

Let $\delta>0$ and $h\in L^2(\T^2)$, then $|u(t,x;\omega+\delta h)-u(t,x;\omega)-\delta v^h(t,x;\omega)|=o(\delta)$ will follow by a stronger statement on which we will concentrate, namely 
\begin{equation}\label{eq:differentiability}
\|\RS^{\delta h}(U^{\delta h}(\omega))-\RS(U(\omega))-\delta\RS^{e_h}(V^h(\omega))\|_{\CC^{\alpha_{\min}+2,\eta}}=o(\delta)
\end{equation}
where, to simplify the notations, we have set $\Sol(u_0,T_{\delta h}\hat{Z}(\omega))=:U^{\delta h}(\omega)\in\D^{\gamma,\eta}(\Gamma^{\delta h})$, $\Sol(u_0,\hat{Z}(\omega))=:U(\omega)\in\D^{\gamma,\eta}(\Gamma)$ and $\Sol^D(h,\hat{Z}(\omega))=:V^h(\omega)\in\D^{\gamma,\eta}(\Gamma^{e_h})$.

Since we are aiming at reformulating our problem in the abstract space of modelled distributions, the first problem one has to tackle is that, in~\eqref{eq:differentiability}, inside the norm we have three different reconstruction operators. While, thanks to Lemma~\ref{lemma:consistency}, $\RS(\Sol(u_0,\hat{Z}(\omega)))=\RS^{e_h}(\Sol(u_0,\hat{Z}(\omega)))$ since $\Sol(u_0,\hat{Z}(\omega))$ can be viewed as an element of $\D^{\gamma,\eta}(\Gamma^{e_h})$, for the first summand, the following lemma provides the information we need.

\begin{lemma}\label{lemma:consistency2}
Let $f$, $h\in L^2(\T^2)$, $T_f\hat{Z}=(\hat{\Pi}^f,\hat{\Gamma}^f)$ the translation of $\hat{Z}$ in the $f$-direction and $\RS_f$ the reconstruction operator associated to it. Then there exists a null set out of which we have
\begin{equation}\label{eq:consistency2}
\RS_f^{\delta h}(U^{\delta h})=\RS_f^{e_h}(U^H_\delta),\qquad\text{for all}\,\,h\in L^2(\T^2)
\end{equation}
where $U^{\delta h}$ is the solution to~\eqref{eq:gPAMabs} in $\D^{\gamma,\eta}((\hat{\Gamma}^f)^{\delta h})$ and $U^H_\delta$ the one of~\eqref{eq:gPAMhabs} with ${\color{blue}H}$ substituted by $\delta{\color{blue}H}$, in $\D^{\gamma,\eta}((\hat{\Gamma}^f)^{e_h})$.  
\end{lemma}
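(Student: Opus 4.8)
\emph{Reduction to a deterministic statement.} The identity \eqref{eq:consistency2} is deterministic in the model, so fix $\omega$ outside the null set on which $M_\eps Z_\eps$ fails to converge (Theorem 10.19 in~\cite{Hai}); then $\hat Z=\hat Z(\omega)$, and hence its translate $W:=T_f\hat Z(\omega)=(\hat\Pi^f,\hat\Gamma^f)\in\M(\TS_g)$, is a genuine admissible model on $\TS_g$, and \eqref{eq:consistency2} is to be read, on the common interval of existence of the solutions involved, as an equality of continuous functions. I would prove it for this $W$ and \emph{every} $h\in L^2(\T^2)$ simultaneously: the quantifier over $h$ is harmless, since each step below is valid for an arbitrary admissible model on $\TS_g$ and an arbitrary direction. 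Abbreviate $U^{\delta h}:=\Sol(u_0,T_{\delta h}W)\in\D^{\gamma,\eta}(\Gamma^{\delta h})$, the solution of \eqref{eq:gPAMabs} against the translated model, so that $\RS_f^{\delta h}(U^{\delta h})$ is the left side of \eqref{eq:consistency2}; keep the notation $U^H_\delta\in\D^{\gamma,\eta}(\Gamma^{e_h})$ of the statement for the solution of the $\delta{\color{blue}H}$-version of \eqref{eq:gPAMhabs} against $E_hW$.

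\emph{Treating the left side.} Applying Proposition~\ref{prop:solgPAMh} to $W$ in the direction $\delta h$ gives $\Sol^H(u_0,E_{\delta h}W)=\tau_H(\Sol(u_0,T_{\delta h}W))=\tau_H(U^{\delta h})$; write $V:=\tau_H(U^{\delta h})\in\D^{\gamma,\eta}(\Gamma^{e_{\delta h}})$ for this object, $\Gamma^{e_{\delta h}}$ being the structure-group map of $E_{\delta h}W$. Lemma~\ref{lemma:consistency}(2), again with $\delta h$ in place of $h$, yields $\RS_f^{e_{\delta h}}(V)=\RS_f^{\delta h}(U^{\delta h})$. Thus \eqref{eq:consistency2} reduces to the single identity $\RS_f^{e_{\delta h}}(V)=\RS_f^{e_h}(U^H_\delta)$, which compares the \emph{same} abstract equation \eqref{eq:gPAMhabs} solved against $E_{\delta h}W$ (realizing ${\color{blue}H}$ as $\delta h$) with the $\delta{\color{blue}H}$-rescaled equation solved against $E_hW$ (realizing ${\color{blue}H}$ as $h$).

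\emph{Absorbing the rescaling.} To close this gap I would introduce the abstract dilation $D_\delta\colon T_g^H\to T_g^H$, acting as the identity on the polynomials and on ${\color{blue}\Xi}$, multiplicatively, commuting with $\I$, and with $D_\delta({\color{blue}H})=\delta{\color{blue}H}$, together with its companion $D_\delta^+$ on $(T_g^H)^+$ ($D_\delta^+\J_l(\tau)=\J_l(D_\delta\tau)$). Three routine facts, each a short induction along $\F^H$ using only the defining relations \eqref{eq:Amodel} and entirely parallel to Remark~\ref{rem:Invariance}, Lemma~\ref{lemma:RelCon} and the proof of Proposition~\ref{prop:ContShiftModel}, have to be recorded: (i) $\Pi^{e_{\delta h}}_x=\Pi^{e_h}_x\circ D_\delta$ and $f^{e_{\delta h}}_x=f^{e_h}_x\circ D_\delta^+$ (because ${\color{blue}\I(\cdot)}$ and $f_x(\J_l(\cdot))$ are linear in their argument while $D_\delta$ only rescales the ${\color{blue}H}$-generator, products being handled by \eqref{eq:defTildePi}); (ii) $(D_\delta\otimes D_\delta^+)\Delta^H=\Delta^H D_\delta$, whence $D_\delta\,\Gamma^{e_{\delta h}}_{z\bar z}=\Gamma^{e_h}_{z\bar z}\,D_\delta$; and (iii) $D_\delta$ commutes with $G_\gamma$ on $\D^{\gamma,\eta}_{\U^H}$ (the $g'$-factors in \eqref{equ:G} distribute through $D_\delta$) and with the abstract convolution, $D_\delta\PK^{e_{\delta h}}=\PK^{e_h}D_\delta$ (using (i) and $\RS_f^{e_{\delta h}}=\RS_f^{e_h}\circ D_\delta$, which follows from (i) and uniqueness in the reconstruction theorem). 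Granting these, apply $D_\delta$ to the fixed-point equation $V=\PK^{e_{\delta h}}(G_\gamma(V)({\color{blue}\Xi}+{\color{blue}H}))+\mathscr{T}_\gamma\bar Ku_0$: using (iii) and $D_\delta({\color{blue}\Xi}+{\color{blue}H})={\color{blue}\Xi}+\delta{\color{blue}H}$ one finds that $D_\delta V$ solves the $\delta{\color{blue}H}$-version of \eqref{eq:gPAMhabs} against $E_hW$, hence $U^H_\delta=D_\delta V$ by uniqueness (Theorem 7.8 in~\cite{Hai}). Finally, by (i),
\[
\RS_f^{e_h}(U^H_\delta)(z)=\bigl(\Pi^{e_h}_z D_\delta V(z)\bigr)(z)=\bigl(\Pi^{e_{\delta h}}_z V(z)\bigr)(z)=\RS_f^{e_{\delta h}}(V)(z),
\]
which is the remaining identity; combined with the previous step this gives \eqref{eq:consistency2}. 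The whole argument is bookkeeping parallel to Sections~\ref{subsection:Amodel}--\ref{sec:FPA}, and I expect no genuine obstacle: the only slightly delicate point is keeping track of where the factor $\delta$ lands in (i)--(iii), after which no new analysis is required.
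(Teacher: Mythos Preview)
Your argument is correct and takes a genuinely different route from the paper. The paper proceeds by approximation: it goes back to the canonical models $M_\eps Z_\eps$, observes that at the $\eps$-level both $\RS_{f_\eps}^{\delta h,\eps}U^{\delta h}$ and $\RS_{f_\eps}^{e_h,\eps}U^H_\delta$ solve the \emph{same} classical PDE $\partial_t w=\Delta w+g(w)(\xi_\eps+f_\eps+\delta h_\eps-C_\eps g'(w))$, invokes uniqueness for that equation, and then passes to the limit using the joint local Lipschitz continuity of $E$, $T$, the solution maps and the reconstruction operator. The null set is the one on which $M_\eps Z_\eps$ fails to converge.

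You instead give a purely deterministic, structural argument: you introduce an abstract dilation $D_\delta$ on $T_g^H$ (the exact analogue of $\tau_H$ but rescaling ${\color{blue}H}$ rather than replacing ${\color{blue}\Xi}$), check that it intertwines the extended models $E_{\delta h}W$ and $E_hW$ and commutes with $G_\gamma$ and $\PK$, and conclude $U^H_\delta=D_\delta V$ by uniqueness of the abstract fixed point. This is cleaner and more in the spirit of Section~\ref{section:RSh}: once (i)--(iii) are recorded, no approximation or classical PDE uniqueness is needed, and the identity holds for every admissible $W\in\M(\TS_g)$, not just those arising as limits of renormalized canonical models. The paper's approach, on the other hand, requires no new algebraic machinery and reuses the convergence infrastructure already in place. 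One minor remark: in your final display the pointwise formula $(\Pi^{e_h}_zD_\delta V(z))(z)$ is justified here because $U^H_\delta$ takes values in the function-like sector $T_{\U^H}$, but the cleaner way to phrase the conclusion is simply $\RS_f^{e_h}(U^H_\delta)=\RS_f^{e_h}(D_\delta V)=\RS_f^{e_{\delta h}}(V)$, using the reconstruction identity you already noted in (iii).
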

\begin{proof}
Let $Z_\eps$ be the canonical model associated to $\xi_\eps=\xi\ast\varrho_\eps$ and $M_\eps=M(C_\eps)$ be the sequence of renormalization maps such that $M_\eps Z_\eps$ converges to $\hat{Z}$ almost surely, and we will call $N$  the set in which such a convergence fails. The joint local Lipschitz continuity of $E$ and $T$ proved in Propositions~\ref{prop:ContShiftModel} and~\ref{prop:TransModel}, then guarantee that, for every $h\in L^2(\T^2)$, also $E_{h_\eps}M_\eps Z_\eps$ and $T_{h_\eps}M_\eps Z_\eps$ converge to $E_h\hat{Z}$ and $T_h\hat{Z}$ on $N^c$, where $h_\eps=h\ast\varrho_\eps$ converges to $h$ in $L^2(\T^2)$. Let $\RS^{e_h,\eps}$ and $\RS^{\delta h, \eps}$ be the reconstruction operators associated to $E_{h_\eps}M_\eps Z_\eps$ and $T_{h_\eps}M_\eps Z_\eps$ respectively. 

Thanks to Proposition~\ref{prop:solgPAMh}, we know that both $u^{\delta h,\eps}:=\RS_{f_\eps}^{\delta h,\eps}U^{\delta h}$ and $u^{H,\eps}_\delta:=\RS_{f_\eps}^{e_h,\eps}U^{H}_\delta$ solve
\[
\partial_t w=\Delta w+g(w)(\xi_\eps+f_\eps+\delta h_\eps-C_\eps g'(w))
\]
where $f_\eps=f\ast\varrho_\eps$ converges to $f$ in $L^2(\T^2)$. By the uniqueness of solutions for the previous, it follows that $u^{\delta h,\eps}=u^{H,\eps}_\delta$ and, since both sides converge to a well-defined object,~\eqref{eq:consistency2} holds on $N^c$, which, we stress once more, is independent of $h$. 

\end{proof}

We have now all the tools and the notations in place to state and prove the following Proposition.

\begin{proposition}\label{prop:differentiabilityabs}
For $h\in L^2(\T^2)$ and $Z\in\M(\TS_g)$, let $E_hZ$ be the extension in $\M(\TS_g^H)$. Let $U$, $U^H_\delta$ and $V^h\in\D^{\gamma,\eta}(\Gamma^{e_h})$ be the solutions to~\eqref{eq:gPAMabs},~\eqref{eq:gPAMhabs}, with $\delta {\color{blue}H}$ substituting ${\color{blue}H}$, and~\eqref{eq:DergPAMabs} respectively. Then, uniformly in $h\in L^2(\T^2)$ such that $\|h\|_{L^2}=1$
\begin{equation}\label{eq:GatDiff}
\VERT U^H_\delta-U-\delta V^h\VERT_{\gamma,\eta}=o(\delta)
\end{equation}
As a consequence, given $(t,x)\in(0,+\infty)\times\R^2$, for every $\omega\in \Upsilon_t$, the map $L^2(\T^2)\ni h\mapsto u(t,x;\omega+h)\in\R$ is Gateaux differentiable at $h=0$ and its Gateaux derivative evaluated at $h$ is given by $v^h(t,x;\omega)$. 
\end{proposition}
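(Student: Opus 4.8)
The plan is to reduce the estimate \eqref{eq:GatDiff} to a \emph{linear} fixed point equation for the remainder $R_\delta := U^H_\delta - U - \delta V^h$ and to show that its inhomogeneity is of order $\delta^2$. Observe first that $U\in\D^{\gamma,\eta}_{\U}(\Gamma)$, $U^H_\delta$ and $V^h$ may all be viewed as elements of $\D^{\gamma,\eta}(\Gamma^{e_h})$ built over the single model $E_hZ$: $U$ solves \eqref{eq:gPAMabs} (i.e.\ \eqref{eq:gPAMhabs} with $H$ realised as $0$), $U^H_\delta$ solves \eqref{eq:gPAMhabs} with $\delta H$ in place of $H$ (equivalently, realising $\delta H$ through the model $E_{\delta h}Z$), and $V^h$ solves \eqref{eq:DergPAMabs}; in particular $R_\delta$ has vanishing initial datum. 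I would start by recording the a priori bound $\VERT U^H_\delta-U\VERT_{\gamma,\eta}=O(\delta)$, uniformly over $\|h\|_{L^2}=1$: this follows from the local Lipschitz continuity of $\Sol^H$ and the estimate $\VERT E_{\delta h}Z-E_0Z\VERT\lesssim\delta$ of Proposition~\ref{prop:ContShiftModel}, Corollary~\ref{cor:time} ensuring that all the objects exist on a common $[0,T]$, $T<T_\infty$, with $h$-uniform $\D^{\gamma,\eta}$-bounds.

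Next I would subtract the three fixed point equations. Writing $W:=U^H_\delta-U=\delta V^h+R_\delta$ and expanding the $\PK^{e_h}$-argument $G_\gamma(U^H_\delta)(\Xi+\delta H)-G_\gamma(U)\Xi-\delta G_\gamma(U)H-\delta G_\gamma'(U)\Xi V^h$, one uses that the Fréchet derivative of the composition operator is $DG_\gamma(U)[\,\cdot\,]=G_\gamma'(U)\,(\,\cdot\,)$ — the product of modelled distributions, which one checks directly on the components of the decomposition \eqref{eq:dec}. Regrouping, the terms proportional to $W$ reorganise (via $W-\delta V^h=R_\delta$) into $G_\gamma'(U)\,\Xi\,R_\delta$ plus the second-order Taylor remainder, giving
\[
R_\delta=\PK^{e_h}\big(G_\gamma'(U)\,\Xi\,R_\delta\big)+\PK^{e_h}(F_\delta),\qquad
F_\delta:=\mathcal{Q}_\delta\,\Xi+\delta\big(G_\gamma(U^H_\delta)-G_\gamma(U)\big)H,
\]
where $\mathcal{Q}_\delta:=G_\gamma(U^H_\delta)-G_\gamma(U)-G_\gamma'(U)W=\int_0^1\big(G_\gamma'(U+sW)-G_\gamma'(U)\big)W\,ds$ is the quadratic Taylor remainder of $G_\gamma$ at $U$.

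The heart of the matter is the bound $\VERT F_\delta\VERT_{\gamma+\alpha_{\min},\eta+\alpha_{\min}}=O(\delta^2)$, uniform over unit $h$. For the second summand, the composition estimate (Proposition~6.13 in \cite{Hai}, together with Proposition~3.11 in \cite{HP} for the model dependence) gives $\VERT G_\gamma(U^H_\delta)-G_\gamma(U)\VERT\lesssim\VERT W\VERT=O(\delta)$; multiplying by the fixed modelled distribution $H$ (of homogeneity $\alpha_{\min}$) via Proposition~6.12 in \cite{Hai} and by the scalar $\delta$ yields $O(\delta^2)$. For $\mathcal{Q}_\delta\Xi$, applying the composition estimate to $g'$ — this is where the regularity $g\in\CC^\chi$, $\chi\ge 10/3$, is used — gives $\VERT G_\gamma'(U+sW)-G_\gamma'(U)\VERT\lesssim s\VERT W\VERT$, hence $\VERT\mathcal{Q}_\delta\VERT\lesssim\VERT W\VERT^2=O(\delta^2)$, and one further product with $\Xi$ leaves it $O(\delta^2)$. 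All implicit constants depend only on $\VERT U\VERT_{\gamma,\eta;[0,T]}$ and $\VERT E_hZ\VERT$, bounded uniformly for $\|h\|_{L^2}\le 1$ by Proposition~\ref{prop:ContShiftModel} — this is precisely what the earlier $L^2$-estimates for the extended model are for, and it is the source of the uniformity in $h$. Finally, exactly as in the solvability of the tangent equation in Proposition~\ref{prop:solDergPAM}, the linear operator $R\mapsto R-\PK^{e_h}(G_\gamma'(U)\Xi R)$ is invertible on $\D^{\gamma,\eta}(\Gamma^{e_h})$ over $[0,T]$ (a contraction on short subintervals by the $T^\theta$-gain \eqref{bound:Schauder}, patched up by linearity) with inverse bounded uniformly in $h$; applying it yields $\VERT R_\delta\VERT_{\gamma,\eta}\lesssim\VERT F_\delta\VERT=O(\delta^2)=o(\delta)$, which is \eqref{eq:GatDiff}.

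For the concrete statement I would apply the reconstruction operator $\RS^{e_h}\colon\D^{\gamma,\eta}(\Gamma^{e_h})\to\CC^{\alpha_{\min}+2,\eta}$, bounded uniformly for $h$ in a bounded set, obtaining $\|\RS^{e_h}(U^H_\delta)-\RS^{e_h}(U)-\delta\RS^{e_h}(V^h)\|_{\CC^{\alpha_{\min}+2,\eta}}=o(\delta)$; then Lemma~\ref{lemma:consistency2} (with $f=0$) identifies $\RS^{e_h}(U^H_\delta)=\RS^{\delta h}(U^{\delta h})$ where $U^{\delta h}=\Sol(u_0,T_{\delta h}\hat Z)$, Lemma~\ref{lemma:consistency} gives $\RS^{e_h}(U)=\RS(U)$, and, for $\omega\in\Upsilon_t$, Lemma~\ref{lemma:TransModel} yields $\hat Z(\omega+\delta h)=T_{\delta h}\hat Z(\omega)$, so that evaluating the three terms at $(t,x)$ produces $u(t,x;\omega+\delta h)$, $u(t,x;\omega)$ and $v^h(t,x;\omega)$ respectively; Gateaux differentiability of $h\mapsto u(t,x;\omega+h)$ at $h=0$, with derivative $v^h(t,x;\omega)$, follows. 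I expect the main obstacle to be the quadratic remainder bound $\VERT\mathcal{Q}_\delta\VERT=O(\delta^2)$: one must make the second-order expansion of the composition operator rigorous on the scale of modelled distributions — in particular confirm that its linearisation really is the product $G_\gamma'(U)\,(\,\cdot\,)$ — keep precise track of which sectors and $\D^{\gamma,\eta}$-spaces the various terms inhabit, and verify that every constant is governed by the uniformly bounded data $E_hZ$ and $U$ rather than by $h$ itself.
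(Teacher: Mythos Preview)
Your argument is correct and reaches the same conclusion, but by a genuinely different route from the paper. The paper packages the computation into the Implicit Function Theorem: it introduces the map $F_\gamma(\delta,Y)=Y-\PK^{e_h}(G_\gamma(Y)(\Xi+\delta H))-\mathscr{T}_\gamma\bar K u_0$, proves Fr\'echet differentiability of $F_\gamma$ (with the same quadratic Taylor remainder you use), checks that $D_2F_\gamma(0,U)=\mathrm{Id}-\PK^{e_h}(G_\gamma'(U)\Xi\,\cdot\,)$ is an isomorphism, and then invokes the IFT to obtain a differentiable curve $\vartheta(\delta)$ with $\vartheta(0)=U$, $\vartheta(\delta)=U^H_\delta$ and $\vartheta'(0)=V^h$. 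You instead subtract the three fixed point equations directly, derive the linear equation $R_\delta=\PK^{e_h}(G_\gamma'(U)\Xi R_\delta)+\PK^{e_h}(F_\delta)$ for the remainder, bound the inhomogeneity by $O(\delta^2)$, and invert the linear operator by hand. The core analytic ingredients --- the second-order composition estimate for $G_\gamma$ and the invertibility of the linearised operator --- are identical; your approach is more elementary and in fact yields the sharper $O(\delta^2)$ rather than the $o(\delta)$ coming from abstract differentiability, while the paper's IFT formulation is more conceptual and more readily iterated for higher-order $\Hi$-differentiability (as the paper remarks).

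One small slip worth cleaning up: your justification of the a priori bound $\VERT U^H_\delta-U\VERT_{\gamma,\eta}=O(\delta)$ via ``$\VERT E_{\delta h}Z-E_0Z\VERT\lesssim\delta$'' is not quite the right viewpoint, since by the statement's own setup all three modelled distributions live in the \emph{single} space $\D^{\gamma,\eta}(\Gamma^{e_h})$ and the model is fixed; what varies is the scalar parameter $\delta$ multiplying $H$ in the equation. The $O(\delta)$ bound then follows directly from Lipschitz dependence of the fixed point on this parameter (the difference of the right-hand sides at $U$ for $\delta$ and $0$ is $\delta\,\PK^{e_h}(G_\gamma(U)H)$), which is a straightforward consequence of the short-time contraction argument. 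This does not affect the rest of your proof.
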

\begin{proof}
Let $T<T_\infty(u_0,Z)$. Then, as a consequence of Corollary~\ref{cor:time}, we know that there exists $\bar{\delta}>0$ such that $U^H_\delta$ and $U$ exist up to time $T$, for all $\delta\in(-\bar{\delta},\bar{\delta})$. For the rest of the proof we will consider the space $\D^{\gamma,\eta}(\Gamma^{e_h})$ where the norm is taken over $[0,T]\times\R^2$, which, since the model is fixed and our distributions are periodic, is a Banach space. 

Let $M>0$ and $B_{\gamma,\eta}(U,M)$ an open ball of radius $M$ in $\D_{\U^H}^{\gamma,\eta}(\Gamma^{e_h})$ centered at $U$. We now introduce the product space $\X:=(-\bar{\delta},\bar{\delta})\times \B_{\gamma,\eta}(U,M)$, endowed with the euclidian norm $\|(\delta,Y)\|_\X:=(\delta^2+\VERT Y\VERT_{\gamma,\eta}^2)^{\frac{1}{2}}$, and define the map $F_\gamma:\X\to\D_{\U^H}^{\gamma,\eta}(\Gamma^{e_h})$ as 
\[
F_\gamma(\delta, Y)(z)=Y(z)-\PK^{e_h}(G_\gamma(Y)({\color{blue}\Xi}+\delta{\color{blue}H}))(z)-\mathscr{T}_\gamma\bar{K}u_0
\]
The point to prove here is that $F_\gamma$ satisfies the assumptions of the Implicit Function theorem as stated in Theorem 19.28 of~\cite{BD}, around the point $(0,U)$, since, thanks to Proposition~\ref{prop:solgPAM}, $F_\gamma(0,U)=0$. At first we will show the Fr\'echet differentiability of $F_\gamma$. Since $F_\gamma$ is linear in $\delta$, it suffices to verify it at $(0,Y)$ for $Y\in\B_{\gamma,\eta}(U,M)$. Then, let $\delta\in(-\bar{\delta},\bar{\delta})$, $\tilde{Y}\in\B_{\gamma,\eta}(U,M)$ and notice that 
\begin{align}
F_\gamma(\delta,Y+\tilde{Y})-F_\gamma(0,Y)&=\tilde{Y}-\PK^{e_h}\Big(\big(G_\gamma(Y+\tilde{Y})-G_\gamma(Y)\big){\color{blue}\Xi}+\delta G_\gamma(Y+\tilde{Y}){\color{blue}H}  \Big)\notag\\
&=\tilde{Y}-\PK^{e_h}\Big(G_\gamma'(Y)\tilde{Y}{\color{blue}\Xi}\Big)-\delta \PK^{e_h}\Big( G_\gamma(Y){\color{blue}H}\Big)-R_\gamma(\delta,\tilde{Y})\label{eq:FgammaDiff}
\end{align}
where the remainder $R_\gamma$ is given by
\begin{equation}\label{eq:RemDiff}
R_\gamma(\delta, \tilde{Y})=\PK^{e_h}\Big(\big(G_\gamma(Y+\tilde{Y})-G_\gamma(Y)-G_\gamma'(Y)\tilde{Y}\big){\color{blue}\Xi}\Big)+\delta\PK^{e_h}\Big( \big(G_\gamma(Y+\tilde{Y})-G_\gamma(Y)\big){\color{blue}H}  \Big)
\end{equation}
At this point, the proof boils down to show that $R_\gamma(\delta,\tilde{Y})=o(\|(\delta,\tilde{Y})\|_\X)$. To do so, we will treat the two summands separately. Let us begin with the first. Notice that, 
\[
G_\gamma(Y+\tilde{Y})(z)-G_\gamma(Y)(z)-G_\gamma'(Y)\tilde{Y}(z)=\int_0^1(1-\nu)G_\gamma''(Y+\nu \tilde{Y})(z)\tilde{Y}^2(z)\dd \nu
\]
where the equality follows by applying to each of the coefficients of the modelled distribution on the left-hand side the usual Taylor's formula. By Propositions 6.13 and 6.12 in~\cite{Hai}, we know that, on one side, $G_\gamma''(Y+\nu \tilde{Y})$ is a modelled distribution in $\D_{\U^H}^{\gamma,\eta}(\Gamma^{e_h})$ for every $\nu\in[0,1]$, and, on the other, that also $G_\gamma''(Y+\nu \tilde{Y})\tilde{Y}^2\in\D_{\U^H}^{\gamma,\eta}(\Gamma^{e_h})$. Moreover, we get
\[
\VERT G_\gamma''(Y+\nu \tilde{Y})\tilde{Y}^2\VERT_{\gamma,\eta}\lesssim \VERT \tilde{Y}\VERT_{\gamma,\eta}^2
\]
where the proportionality constant depends on the norm of $\Gamma^{e_h}$, the one of $g$ and its derivatives up to the third order, the one of $U$ and $M$, but it is uniform over $\nu\in[0,1]$. Now, as in the proof of Proposition~\ref{prop:solDergPAM}, we point out that the map $x\mapsto{\color{blue}\Xi}$ can be viewed as an element of $\D^{\gamma,\gamma}(\Gamma^{e_h})$ but taking values in a sector of regularity $\alpha_{\min}$. Hence, again by Proposition 6.12 in~\cite{Hai}, it follows that $G_\gamma''(Y+\nu \tilde{Y})\tilde{Y}^2{\color{blue}\Xi}\in\D^{\gamma+\alpha_{\min},\eta+\alpha_{\min}}(\Gamma^{e_h})$. Finally, Proposition 6.16 guarantees that the first summand in~\eqref{eq:RemDiff} is $O(\VERT \tilde{Y}\VERT_{\gamma,\eta}^2)$. 

For the second summand the procedure is identical since ${\color{blue}H}$ has the same homogeneity as ${\color{blue}\Xi}$ and, therefore, can be analogously regarded. Following the same steps as before, one deduces that the second summand is $O(\delta\VERT\tilde{Y}\VERT_{\gamma,\eta})$. Hence, $R_\gamma(\delta, \tilde{Y})$ is $O(\|(\delta,\tilde{Y})\|_\X^2)$, i.e. $o(\|(\delta,\tilde{Y})\|_\X)$, which in turn implies the differentiability of $F_\gamma$. As a byproduct, we can read off equation~\eqref{eq:FgammaDiff} the exact expressions for $D_1F_\gamma(0,Y)$ and $D_2F_\gamma(0,Y)$, where $D_i$ is the directional derivative of $F_\gamma$ in the $i$-th direction, i.e.
\begin{align}
D_1 F_\gamma(0,Y)(\delta)&=- \delta \PK^{e_h}\Big(G_\gamma(Y)) {\color{blue}H}\Big)\label{eq:der1implicit}\\
D_2 F_\gamma(0,Y)(\tilde{Y})(z)&=\tilde{Y}(z)-\PK^{e_h}\Big(G_\gamma'(Y)\tilde{Y} {\color{blue}\Xi}\Big)(z)\label{eq:der2implicit}
\end{align}
where $D_iF_\gamma(\cdot,\cdot)$ are two linear functionals from $\R$ and $\D_{\U^H}^{\gamma,\eta}(\Gamma^{e_h})$, respectively, to $\D_{\U^H}^{\gamma,\eta}(\Gamma^{e_h})$. In order to be able to apply the Implicit Function theorem, the last ingredient we miss is to prove that $D_2 F_\gamma(0,U)$ is a linear and bounded isomorphism. Linearity is obvious and so is boundedness, indeed, thanks to Propositions 6.16, 6.12 and 6.13 in~\cite{Hai}, we have
\[
\VERT D_2 F_\gamma(0,U)(Y)\VERT_{\gamma,\eta}\lesssim \VERT Y\VERT_{\gamma,\eta}+\VERT G_\gamma'(U)Y {\color{blue}\Xi}\VERT_{\gamma-\alpha_{\min},\eta-\alpha_{\min}}\lesssim\VERT Y\VERT_{\gamma,\eta}
\] 
where the neglected constants depend on the same parameters as before. Concerning invertibility, it suffices to show that for every $W\in\D_{\U^H}^{\gamma,\eta}(\Gamma^{e_h})$ there exists a unique $Y\in\D_{\U^H}^{\gamma,\eta}(\Gamma^{e_h})$ such that
\[
Y=W+\PK^{e_h}\Big(G_\gamma'(U)Y {\color{blue}\Xi}\Big)
\]
and this can be achieved by a fixed point argument in the spirit of Proposition~\ref{prop:solDergPAM}. 

At this point, all the assumptions of Theorem 19.28 in~\cite{BD} are matched and we conclude that there exist $\tilde{\delta}<\bar{\delta}$ and a differentiable function $\vartheta:(-\tilde{\delta},\tilde{\delta})\to\D_{\U^H}^{\gamma,\eta}(\Gamma^{e_h})$ such that $\vartheta(0)=U$, $(\delta,\vartheta(\delta))\in\X$ and $F_\gamma(\delta,\vartheta(\delta))=0$ for all $\delta\in(-\tilde{\delta},\tilde{\delta})$. Moreover, we have
\begin{equation}\label{eq:Dertheta}
\vartheta'(\delta)=-\Big(D_2F_\gamma(\delta,\vartheta(\delta))\Big)^{-1}D_1F_\gamma(\delta,\vartheta(\delta)),\qquad \text{for all}\,\,\delta\in(-\tilde{\delta},\tilde{\delta})
\end{equation}
But now, notice that since $F_\gamma(\delta,\vartheta(\delta))=0$, by definition of $F_\gamma$ it follows that $\vartheta(\delta)$ is the, necessarily unique, solution to~\eqref{eq:gPAMhabs}, with $\delta{\color{blue}H}$ substituting ${\color{blue}H}$, in $\D^{\gamma,\eta}(\Gamma^{e_h})$, i.e. $U^H_\delta$. Moreover, thanks to~\eqref{eq:Dertheta}, we can also conclude that $\vartheta'(0)\in\D_{\U^H}^{\gamma,\eta}(\Gamma^{e_h})$ solves
\[
\PK^{e_h}\Big(G_\gamma(U){\color{blue}H}\Big)=\vartheta'(0)-\PK^{e_h}\Big(G_\gamma'(U)\vartheta'(0) {\color{blue}\Xi}\Big)
\]
and, by the uniqueness part of Proposition~\ref{prop:solDergPAM}, it must coincide with $V^h$. Collecting the observations carried out so far, we finally obtain
\[
\VERT U^H_\delta-U-\delta V^h\VERT_{\gamma,\eta}=\VERT\vartheta(\delta)-\vartheta(0)-\delta\vartheta'(0)\VERT_{\gamma,\eta}=o(\delta)
\]
At this point, in the notations introduced before, since $\omega\in\Upsilon_t$, we have 
\begin{align*}
|u(t,x;\omega+\delta h)-&u(t,x;\omega)-\delta v^h(t,x;\omega)|\\
&=|\RS(\Sol(u_0,\hat{Z}(\omega+\delta h)))(t,x)-\RS(\Sol(u_0,\hat{Z}(\omega)))(t,x)-\delta\RS^{e_h}(\Sol^D(h,\hat{Z}(\omega)))(t,x)|\\
&=|\RS^{\delta h}(\Sol(u_0,T_{\delta h}\hat{Z}(\omega)))(t,x)-\RS^{e_h}(\Sol(u_0,\hat{Z}(\omega)))(t,x)-\delta\RS^{e_h}(\Sol^D(h,\hat{Z}(\omega)))(t,x)|\\
&=|\RS^{e_h}U^H_\delta(\omega)(t,x)-\RS^{e_h}U(\omega)(t,x)-\delta\RS^{e_h}V^h(\omega)(t,x)|
\end{align*}
where the third equality follows by Lemma~\ref{lemma:consistency2} choosing $f=0$. Now we can bound the right-hand side of the previous by its $\CC^{\alpha_{\min}+2}$ norm, which, thanks to Propositions 3.28 and 6.9 in~\cite{Hai} satisfies
\[
\|\RS^{\delta h}(U^{\delta h}(\omega))-\RS(U(\omega))-\delta\RS^{e_h}(V^h(\omega))\|_{\CC^{\alpha_{\min}+2,\eta}}\lesssim \VERT U^H_\delta-U-\delta V^h\VERT_{\gamma,\eta}
\]
Thanks to~\eqref{eq:GatDiff} and Theorem~\ref{thm:solDergPAM}, which guarantees the linearity and continuity of $v^h(t,x;\omega)$ in $h$, the conclusion immediately follows. 

\end{proof}

\begin{remark}
The idea of using the Implicit Function theorem in order to prove the differentiability of the solution map is not new. In the context of SDEs, see~\cite{NS}, while for SPDEs, and in particular for the fractional heat equation driven by a fractional Brownian motion with Hurst parameter $H>\frac{1}{2}$, see~\cite{DT}. 
\end{remark}

We are now ready to state the main result of this section and complete the proof of the Malliavin differentiability of the solution map.

\begin{theorem}\label{thm:MallDiff}
Let $\hat{Z}\in\M(\TS_g)$ be the admissible model obtained in Theorem 10.19 of~\cite{Hai}. Then, for fixed $(t,x)\in(0,+\infty)\times\R^2$, the random variable
\[
\omega\mapsto u(t,x;\omega)=\RS\big(\Sol(u_0,\hat{Z}(\omega)\big)(t,x)
\]
is locally $\Hi$-differentiable according to Definition~\ref{def:MallDiff}, on $\Upsilon_t$ and its derivative is given by $v^h=\langle Du,h\rangle_\Hi$. 
\end{theorem}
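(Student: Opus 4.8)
The plan is to verify the two requirements of Definition~\ref{def:MallDiff}: exhibit an a.s.\ positive random variable $q$ for which $h\mapsto u(t,x;\omega+h)$ is $\CC^1$ on $\{\|h\|_\Hi<q(\omega)\}$ whenever $\omega\in\Upsilon_t$, and then identify its derivative at $h=0$ with $v^h(t,x;\omega)$. The starting point is to trade the random Cameron--Martin shift for a deterministic translation of the model: by Lemma~\ref{lemma:TransModel}, for $\omega\in\Upsilon_t$ one has $\hat Z(\omega+h)=T_h\hat Z(\omega)$ for \emph{every} $h\in L^2(\T^2)$, hence
\[
u(t,x;\omega+h)=\RS^{h}\big(\Sol(u_0,T_h\hat Z(\omega))\big)(t,x).
\]
Using that the abstract translations compose, $T_{h_1}T_{h_2}=T_{h_1+h_2}$ (immediate for canonical models by Remark~\ref{rem:TransCanMod} and extended by the continuity of $T$ from Proposition~\ref{prop:TransModel}), the same representation holds around an arbitrary base point: $u(t,x;\omega+h_0+k)=\RS^{k}\big(\Sol(u_0,T_k(T_{h_0}\hat Z(\omega)))\big)(t,x)$, and $T_{h_0}\hat Z(\omega)\in\M(\TS_g)$ is a genuine admissible model.

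First I would fix the neighbourhood. Since $\omega\in\{t<T_\infty(u_0,\hat Z(\omega))\}$, $T_\infty$ is lower semicontinuous (Proposition~\ref{prop:solgPAM}) and $h_0\mapsto T_{h_0}\hat Z(\omega)$ is locally Lipschitz (Proposition~\ref{prop:TransModel}), there is an a.s.\ positive $q(\omega)$ with $t<T_\infty(u_0,T_{h_0}\hat Z(\omega))$ for all $\|h_0\|_{L^2}<q(\omega)$; Corollary~\ref{cor:time} then provides, for each such $h_0$, a common existence time $T>t$ for the abstract equations on a further ball around $h_0$. I would then apply Proposition~\ref{prop:differentiabilityabs} with the model $T_{h_0}\hat Z(\omega)$ in place of $\hat Z(\omega)$: this yields Gateaux differentiability at $k=0$ of $k\mapsto u(t,x;(\omega+h_0)+k)$, i.e.\ Gateaux differentiability at $h_0$ of $h\mapsto u(t,x;\omega+h)$, with Gateaux derivative in the direction $h$ equal to
\[
v^{h}_{h_0}(t,x):=\RS^{e_h}\big(\Sol^D(h,T_{h_0}\hat Z(\omega))\big)(t,x).
\]
By Theorem~\ref{thm:solDergPAM}, applied to the model $T_{h_0}\hat Z(\omega)$, the map $h\mapsto v^h_{h_0}(t,x)$ is linear and continuous, hence a bounded linear functional on $L^2(\T^2)$.

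It remains to upgrade Gateaux to $\CC^1$, for which it suffices to show that $h_0\mapsto\big(h\mapsto v^h_{h_0}(t,x)\big)$ is norm-continuous on $\{\|h_0\|_{L^2}<q(\omega)\}$. This I would obtain by chaining locally Lipschitz maps: $h_0\mapsto T_{h_0}\hat Z(\omega)$ (Proposition~\ref{prop:TransModel}), $(h,Z)\mapsto\Sol^D(h,Z)$ (Proposition~\ref{prop:solDergPAM}), and $(h,Z)\mapsto\RS^{e_h}(\cdot)(t,x)$ built from the extension map (Proposition~\ref{prop:ContShiftModel}) together with the local Lipschitz continuity of the reconstruction operator; so $(h_0,h)\mapsto v^h_{h_0}(t,x)$ is jointly locally Lipschitz, and using linearity in $h$ one gets $\sup_{\|h\|_{L^2}=1}\big|v^h_{h_0}(t,x)-v^h_{h_0'}(t,x)\big|\lesssim\|h_0-h_0'\|_{L^2}$ uniformly over $\|h_0\|_{L^2},\|h_0'\|_{L^2}<q(\omega)$. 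The standard Banach-space fact that Gateaux differentiability on an open set plus norm-continuity of the Gateaux derivative implies Fr\'echet differentiability and $\CC^1$-regularity then gives $h\mapsto u(t,x;\omega+h)\in\CC^1\big(\{\|h\|_\Hi<q(\omega)\}\big)$, i.e.\ $u(t,x;\cdot)\in\CC^1_{\Hi-\loc}$ with set of differentiability $\Upsilon_t$.

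Finally, Proposition~\ref{prop:locH-MallDiff} gives $u(t,x;\cdot)\in\DD^{1,2}_{\loc}$ (the localization there being carried out, as throughout, on the positive-probability event $\{t<T_\infty\}$), and, as for any locally $\Hi$-differentiable Wiener functional, the Malliavin derivative coincides with the $\Hi$-gradient, so that $\langle Du(t,x;\cdot),h\rangle_\Hi$ equals the directional derivative computed above, namely $v^h(t,x;\cdot)$ (Proposition~\ref{prop:differentiabilityabs} at $h_0=0$, with $v^h$ as in Theorem~\ref{thm:solDergPAM}). The step I expect to be the main obstacle is the norm-continuity of the Gateaux derivative in the base point: one must check carefully that the translation operators compose (so the base-point shift is legitimate), and that the three locally Lipschitz maps can be chained with bounds uniform over a ball in $h_0$, thereby turning the pointwise-in-$h$ Lipschitz estimates into operator-norm continuity; the remaining ingredients (Gateaux differentiability at each base point, linearity, the $\CC^1$-upgrade) are then routine given the results already established.
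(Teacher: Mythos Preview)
Your proposal is correct and follows essentially the same route as the paper: fix the neighbourhood via lower semicontinuity of $T_\infty$, obtain Gateaux differentiability at each base point $h_0$ by rerunning Proposition~\ref{prop:differentiabilityabs} for the shifted model $T_{h_0}\hat Z(\omega)$, and upgrade to $\CC^1$ by chaining the local Lipschitz continuity of $T$, $E$, $\Sol^D$ and the reconstruction map to get operator-norm continuity of $h_0\mapsto v^\cdot_{h_0}(t,x)$.

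Two minor remarks. First, where you invoke the composition $T_{h_1}T_{h_2}=T_{h_1+h_2}$ to rewrite $u(t,x;\omega+h_0+k)$ in terms of $T_{h_0}\hat Z(\omega)$, the paper instead appeals to Lemma~\ref{lemma:consistency2} (with $f=h_0$) to pass from the translated to the extended representation; both devices serve the same purpose and your composition identity is indeed a direct consequence of the definition of $T_h$ via $\tau_H$. Second, the final step invoking Proposition~\ref{prop:locH-MallDiff} to land in $\DD^{1,2}_{\loc}$ is not part of the theorem as stated (which only asserts local $\Hi$-differentiability); it is harmless but superfluous here, being used later in the density argument.
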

\begin{proof} 
We have already proved that for any $\omega\in\Upsilon_t$, on one side by Proposition~\ref{prop:differentiabilityabs}, the map $h\mapsto u(t,x;\omega+h)$ is Gateaux-differentiable at $h=0$, on the other thanks to the lower-semicontinuity of $T_\infty$, there exists $q(\omega)>0$ such that for every $f\in B_{2}(0,q(\omega))$, $\omega+ f\in\Upsilon_t$, where $B_2(0,q(\omega))$ is the ball centered at $0$ of radius $q(\omega)$ in $L^2(\T^2)$. Let us fix $\omega\in\Upsilon_t$ and the corresponding $B_{2}(0,q(\omega))$. We will now show that $f\mapsto u(t,x;\omega+f)$ is Gateaux differentiable on $B_{2}(0,q(\omega))$ and that the Gateaux differential is continuous. 

Let us begin with the first. Consider $f\in B_{2}(0,q(\omega))$, set $z=(t,x)$ and notice that, for $\delta$ small enough, 
\begin{align*}
u(z;\omega+f+&\delta h)-u(z;\omega+f)-\delta v^h(z;\omega+f)\\
&=\RS(\Sol(u_0,\hat{Z}(\omega+f+\delta h)))(z)-\RS(\Sol(u_0,\hat{Z}(\omega+f)))(z)-\delta\RS^{e_h}(\Sol^D(h,\hat{Z}(\omega+f)))(z)\\
&=\RS_f(\Sol(u_0,T_f\hat{Z}(\omega+\delta h)))(z)-\RS_f(\Sol(u_0,T_f\hat{Z}(\omega)))(z)-\delta\RS_f^{e_h}(\Sol^D(h,T_f\hat{Z}(\omega)))(z)\\
&=\RS_f^{e_h}\big(U_\delta^H(\omega)\big)(z)-\RS_f^{e_h}\big(U(\omega)\big)(z)-\delta\RS_f^{e_h}\big(V^h(\omega)\big)(z)
\end{align*}
where the previous passages are justified by the facts that both $\omega$ and $\omega+f\in\Upsilon_t$, and Lemma~\ref{lemma:consistency2}. At this point we can argue as in Proposition~\ref{prop:differentiabilityabs}, i.e. applying Propositions 3.28 and 6.9 in~\cite{Hai} and conclude via Proposition~\ref{prop:differentiabilityabs}. Indeed,~\eqref{eq:GatDiff} holds for \textit{any} admissible model on $\TS_g$ and, by Proposition~\ref{prop:TransModel}, $T_f\hat{Z}$ is indeed one. 

For the second part, notice that the Gateaux differential is given by 
\[
B_{2}(0,q(\omega))\ni f\longmapsto v^\cdot(t,x;\omega+f)=\langle Du,\cdot\rangle_\Hi(t,x;\omega+f)\in\mathcal{L}(\Hi,\R)
\]
where $\mathcal{L}(\Hi,\R)$ is the set of linear bounded operator from $\Hi$ to $\R$, that, thanks to Riesz representation theorem, can be identified by $\Hi$ itself. Let $f\in B_2(0,q(\omega))$ and $f_n\in B_2(0,q(\omega))$ be a sequence  converging to $f$ in $\Hi$, then, thanks to Theorem~\ref{thm:solDergPAM} and Proposition~\ref{prop:ContShiftModel},
\begin{multline*}
\big\|Du(t,x;\omega+f_n)-Du(t,x;\omega+f)\big\|_{\Hi}=\sup_{\|h\|_{L^2}=1}\big|v^h(t,x;\omega+f_n)-v^h(t,x;\omega+f)\big|\\
\lesssim\sup_{h:\|h\|_{L^2}=1}\left\|\RS^{e_h}\Sol^D(h,\hat{Z}(\omega+f_n))-\RS^{e_h}\Sol^D(h,\hat{Z}(\omega+f))\right\|_{\CC^{\alpha_{\min}+2,\eta}}\\
=\sup_{h:\|h\|_{L^2}=1}\left\|\RS_{f_n}^{e_h}\Sol^D(h,T_{f_n}\hat{Z}(\omega))-\RS_f^{e_h}\Sol^D(h,T_f\hat{Z}(\omega))\right\|_{\CC^{\alpha_{\min}+2,\eta}}
\end{multline*}
and now, thanks to the local Lipschitz continuity of the reconstruction, $\RS$, the extension and translation operators, $E$ and $T$, and the solution map for the abstract tangent equation, $\Sol^D$, we can conclude that the last term converges to $0$ as $n$ tends to $\infty$ uniformly over $\|h\|_{L^2}\leq 1$, which in turn completes the proof.

\end{proof}

\begin{remark}
With the help of the bounds obtained in Proposition \ref{prop:differentiabilityabs}, one could actually obtain a stronger statement, namely that $u$ is (locally) $\mathcal{H}$-differentiable on $\Upsilon_t$ as a $\CC^{2\alpha_{\min} +2, \eta}([0,t]\times \mathbb{T}^2)$-valued random variable.
 \end{remark}

\begin{remark}
We point out that through the arguments in the present section it is in principle possible to obtain higher order (local) $\Hi$-differentiability of the solution map. 
\end{remark}

\section{Existence of density for the value at a fixed point}\label{sec:Dens}

The results of the previous section guarantee that, for $(t,x)\in\R^+\times\T^2$, the solution $u(t,x;\omega)$ of gPAM determined in Theorem~\ref{thm:solgPAM}, is Malliavin differentiable at least on those points in which it does not explode, namely when $\omega\in\{t<T_\infty\}$. We now want to show that, as a random variable, conditioned on the previous set, it admits a density with respect to the Lebesgue measure. To this purpose we aim at exploiting the Bouleau and Hirsch's criterion whose application has though to be carefully handled. Indeed, if on one side one has to prove non-degeneracy of the Malliavin derivative, which is \textit{per se} everything but obvious, on the other we have an extra difficulty, coming from the fact that $u(t,x;\omega)$ is only locally $\Hi$-differentiable on $\{t<T_\infty\}$ and the latter does not have \textit{a priori} full measure. We will deal with these two issues separately. For the first, we will derive a strong maximum principle for a rather general class of linear parabolic PDEs, which will prove to be extremely useful in our context but whose interest goes way beyond it. For the second, we will suitably approximate (in two different ways) our solution with $\CC^1_{\Hi}$ random variables matching the assumptions of Theorem~\ref{thm:BH}. 

\subsection{A Mueller-type strong maximum principle}\label{subsection:Pos}

As a motivation for the following proposition, consider the homogenous version of the renormalized tangent equation~\eqref{eq:renDergPAM}, that is 
\begin{equation}\label{eq:renDergPAMhom}
\partial_t \tilde{v}_\eps^{\hom}=\Delta \tilde{v}_\eps^{\hom}+\tilde{v}_\eps^{\hom}\Big(g'(\tilde u_\eps)\xi_\eps-C\Big( (g'(\tilde u_\eps))^2+g''(\tilde u_\eps)g(\tilde u_\eps)\Big)\Big),\quad \tilde{v}_\eps^{\hom}(0,\cdot)=v_0^{\hom}(\cdot)
\end{equation}
Remark that, given fixed initial data $v_0^{\hom}$, using the same techniques as in Proposition~\ref{prop:solDergPAM} and Theorem~\ref{thm:solDergPAM}, it is possible to show that $ \tilde{v}_\eps^{\hom}$ converges (locally uniformly) in probability to some limit $v^{\hom}$,  given as reconstruction of the abstract solution, with respect to the model $\hat{Z}$ (see Theorem~\ref{thm:solgPAM}), to 
\begin{equation}\label{eq:linear}
V^{\hom} = \PK\left(\tilde{\Xi}V^{\hom} \right) + \mathscr{T}_\gamma \bar{K} v^{\hom}_0
\end{equation}
where $\tilde{\Xi}$ is a suitable modelled distribution (in the previous case, $G_\gamma'(U){\color{blue}\Xi}$). As a consequence of the (weak) maximum principle for the approximate equations, cf. Section~\ref{subsec:WMP}, it clearly holds that $v^{\hom} =  \mathcal{R}V^{\hom} \ge 0$ for initial data $v^{\hom}_0 \ge 0$. In the next proposition, we show that this latter property is all we need in order to guarantee that the reconstruction of the solution to an equation of the form~\eqref{eq:linear}, satisfies a strong maximum principle. 

\begin{theorem} \label{thm:Positive}
Let $\alpha_{\min}\in(-\frac{4}{3},-1)$, $\gamma\in(|\alpha_{\min}|,\frac{4}{3})$, $\eta\in[0,\alpha_{\min}+2)$, $Z=(\Pi,\Gamma)$ be an admissible model on $\TS_g$ and $T>0$. Given $\tilde{\Xi} \in \mathcal{D}^{\gamma,\eta}(\Gamma)$ on $[0,T]\times\T^2$, and $v^{\hom}_0 \in C^\eta$, consider the abstract fixed point equation~\eqref{eq:linear} and let $V^{\hom}\in\D^{\gamma,\eta}(\Gamma)$ be its solution. Assume a weak maximum principle of the form $v^{\hom} := \RS V^{\hom} \ge 0$ on $[0,T]\times\T^2$ whenever $v^{\hom}_0 \ge 0$. Then a strong maximum principle holds,
in the sense that if $v^{\hom}_0 \ge 0$ but not identically equal to $0$, then $v^{\hom}$ is strictly positive at times $t \in(0,T]$.
\end{theorem}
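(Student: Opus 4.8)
The plan is to push the statement down to the reconstruction $v^{\hom}=\RS V^{\hom}$, turn the hypothesis into a comparison principle, and then run a scale‑invariant propagation‑of‑positivity argument driven by strict positivity of the heat kernel on $\T^2$. First I would note it costs nothing to take $\eta=0$: since $\D^{\gamma,\eta}\subset\D^{\gamma,0}$ and $\CC^\eta\subset\CC^0$ for $\eta\ge0$, the $\D^{\gamma,\eta}$‑solution of \eqref{eq:linear} is its unique $\D^{\gamma,0}$‑solution, and $v^{\hom}=\RS V^{\hom}\in\CC^{\alpha_{\min}+2,0}$ is in particular continuous on $(0,T]\times\T^2$. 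Applying $\RS$ to \eqref{eq:linear}, using $\RS\PK(\cdot)=\bar{K}\ast\RS(\cdot)$ and $\RS\mathscr{T}_\gamma\bar{K}v^{\hom}_0=\bar{K}v^{\hom}_0$, one obtains the mild identity
\[
v^{\hom}(t,\cdot)=e^{t\Delta}v^{\hom}_0+\int_0^t e^{(t-s)\Delta}f(s,\cdot)\,\dd s,\qquad f:=\RS\bigl(\tilde\Xi\, V^{\hom}\bigr).
\]
Because \eqref{eq:linear} is linear in $V^{\hom}$ (with $\tilde\Xi$, $Z$ fixed), iterating the Schauder bound \eqref{bound:Schauder} over $[0,T]$ gives $\VERT V^{\hom}\VERT_{\gamma,0}\lesssim\|v^{\hom}_0\|_{\CC^0}$, hence, by the product estimate and the reconstruction theorem, $\|f\|_{\CC^{\alpha_{\min}}}\lesssim\|v^{\hom}_0\|_{\CC^0}$, and finally
\[
\Bigl\|\int_{s}^{t}e^{(t-\sigma)\Delta}f(\sigma,\cdot)\,\dd\sigma\Bigr\|_{L^\infty(\T^2)}\le C_\star(t-s)^\theta\|v^{\hom}_0\|_{\CC^0},\qquad 0\le s<t\le T,
\]
with $\theta>0$ and $C_\star$ depending only on $\tilde\Xi$, $Z$, $T$, \emph{not} on $v^{\hom}_0$. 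Linearity together with the assumed weak maximum principle yields a comparison principle — $v^{\hom}_0\le\hat v_0$ (a.e.) implies $v^{\hom}\le\hat v^{\hom}$ — which, as in the applications, holds as well for \eqref{eq:linear} restarted at any time $s_0\in[0,T)$.

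The core is a single scale‑invariant move. \textbf{Ignition:} since $v^{\hom}_0\ge0$, $v^{\hom}_0\not\equiv0$, pick $x_1\in\T^2$, $a>0$, $r>0$ with $v^{\hom}_0\ge a\,\mathbbm{1}_{B(x_1,r)}$ a.e.; comparison with the solution started from $a\,\mathbbm{1}_{B(x_1,r)}$, the parabolic‑scaling bound $e^{r^2\Delta}(a\,\mathbbm{1}_{B(x_1,r)})(x)\ge a\kappa_0$ for $x\in B(x_1,2r)$ with a universal $\kappa_0>0$, and the error estimate above, give — after shrinking $r$ so that $C_\star r^{2\theta}\le\kappa_0/4$ (legitimate, as $\|a\,\mathbbm{1}_{B(x_1,r)}\|_{\CC^0}=a$ is $r$‑independent) — that $v^{\hom}(r^2,\cdot)\ge c_1:=a\kappa_0/2$ on $\overline{B(x_1,\rho_1)}$, $\rho_1:=2r$. \textbf{Relocation:} if $v^{\hom}(s_0,\cdot)\ge c$ on $\overline{B(y_0,\rho_1)}$ (so, by the weak maximum principle, $v^{\hom}(s_0,\cdot)\ge c\,\mathbbm{1}_{B(y_0,\rho_1)}$), then for every $y_1$ with $|y_1-y_0|\le\rho_1$ and $t=s_0+\rho_1^2\le T$, comparison with the solution \emph{restarted} at $s_0$ from the rescaled datum $c\,\mathbbm{1}_{B(y_0,\rho_1)}$ — whose source has $\CC^{\alpha_{\min}}$‑norm $\lesssim c$ with the \emph{same} constant, by linearity — gives, for $x\in\overline{B(y_1,\rho_1)}$,
\[
v^{\hom}(t,x)\ \ge\ c\kappa_0-C_\star\rho_1^{2\theta}\,c\ \ge\ \tfrac12\kappa_0\,c\ >\ 0,
\]
provided $\rho_1$ was fixed once so that $C_\star\rho_1^{2\theta}\le\kappa_0/2$. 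The decisive feature is that the radius $\rho_1$ and the time step $\rho_1^2$ are \emph{fixed}, independent of $c$ and $s_0$.

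Assembling: cover the connected set $\T^2$ by finitely many balls of radius $\rho_1$ forming a connected $\rho_1$‑net of graph diameter $N<\infty$; starting from the ignition ball and applying relocation along shortest paths, positivity is routed to every ball in at most $N$ moves, and one extra relocation per ball (the case $y_1=y_0$, which also shows the bound persists for a further time $\rho_1^2$) brings all of them to the common time $t_\dagger:=r^2+N\rho_1^2$ with $v^{\hom}(t_\dagger,\cdot)\ge c_1(\kappa_0/2)^N=:c_\dagger>0$ on $\T^2$. Since $N\lesssim\mathrm{diam}(\T^2)/\rho_1$, taking $r$ (hence $\rho_1=2r$) small makes $t_\dagger$ arbitrarily small, so for any prescribed $t_0\in(0,T]$ we may assume $t_\dagger<t_0$. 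Finally, once $v^{\hom}(s,\cdot)\ge c>0$ on all of $\T^2$, comparison with the solution restarted at $s$ from the constant datum $c$ (invariant under $e^{\cdot\Delta}$) gives $v^{\hom}(t,\cdot)\ge c\bigl(1-C_\star(t-s)^\theta\bigr)\ge c/2$ for $t-s\le\sigma_\star$, a fixed positive time, again by scale invariance; iterating $\lceil(T-t_\dagger)/\sigma_\star\rceil$ times yields $v^{\hom}>0$ on $[t_\dagger,T]\times\T^2$, in particular at $t_0$. As $t_0\in(0,T]$ was arbitrary, the strong maximum principle follows.

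The main obstacle is this interplay rather than any single estimate. A priori the source $f=\RS(\tilde\Xi V^{\hom})$ is only a distribution of negative regularity, of no definite sign and of size comparable to $\|v^{\hom}_0\|_{\CC^0}$, so the naive comparison ``heat‑kernel gain versus error'' fails: at small space‑time scales the heat‑kernel lower bound degrades faster than the $O((t-s)^\theta)$ error, and at large scales the error is simply too big. The one genuinely new ingredient, beyond the \cite{Hai} machinery, is to re‑initialise at every step — through the comparison principle — from a \emph{rescaled} indicator (or constant) datum whose $\CC^0$‑norm is exactly proportional to the current lower bound $c$; linearity then renders the admissible radius and time step independent of $c$, so that finitely many moves suffice both to cover $\T^2$ and to reach $t_0$, while the per‑move geometric loss in $c$ remains harmless.
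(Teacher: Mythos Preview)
Your proof is correct and shares with the paper the essential mechanism: decompose $v^{\hom}=\bar K v_0^{\hom}+w$ with $|w(t,\cdot)|\le C_\star t^\theta\|v_0^{\hom}\|$, use linearity together with the weak maximum principle as a comparison principle so that at each restart one may replace the initial datum by $c\,\mathbbm{1}_B$ (making the error proportional to the current lower bound $c$), and thereby obtain a time step that is \emph{independent of $c$}. You are also careful about two points the paper leaves implicit: the reduction to $\eta=0$ so that indicator data are admissible, and the need for the weak maximum principle to persist under restarts.

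The one genuine methodological difference is in the spreading step. The paper proves a ``ballistic'' heat-kernel estimate: for any speed $\rho>0$ there is $t_\rho>0$ such that $\bar K\mathbbm{1}_{B(x,\delta)}(t,\cdot)\ge\tfrac14$ on $B(x,\delta+t\rho)$ for all $t\le t_\rho$ (the point being that in $P(Z\in B(y/\sqrt t,\delta/\sqrt t))$ the radius $\delta/\sqrt t\to\infty$ while the closest point to the origin stays at distance $\le\sqrt t\,\rho\to 0$, so the ball captures a half-space). One then iterates the single implication ``$\,\ge 1$ on $B(x,\delta)\Rightarrow\,\ge\tfrac18$ on $B(x,\delta+h\rho)$'' exactly $1/h$ times to reach $B(x,\delta+\rho)$ at time $1$; taking $\rho$ large covers $\T^2$ in one sweep. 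Your route uses instead the parabolic doubling $B(r)\to B(2r)$ in time $r^2$, then an explicit finite covering of $\T^2$ by $\rho_1$-balls plus a separate forward-in-time propagation from constant data. Both arguments are valid; the paper's avoids the covering/synchronisation bookkeeping (and the separate ``propagate to $T$'' phase) at the price of the slightly subtler heat-kernel lemma, while yours is more elementary and modular. One minor slip: ``one extra relocation per ball'' should read ``$N-k_j$ extra stay-in-place relocations'' to synchronise times, but your final bound $c_1(\kappa_0/2)^N$ is unaffected.
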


\begin{notation*}
We now introduce a notation that will be exploited \textit{only} in the following proof. For $\gamma$, $\eta\in\R$ and $t>0$, we will write $\VERT\cdot\VERT_{\gamma,\eta;t}$ for the usual norm on the space of symmetric modelled distributions (see Remark~\ref{remark:periodicMD}), $\D^{\gamma,\eta}$, but where the supremum in~\eqref{eq:ModDistNorm} is taken over $(0,t]\times\T^2$. 
\end{notation*}

\begin{proof}
W.l.o.g. we will take $T=1$. As pointed out before, by the very same arguments exploited in the proof of Proposition \ref{prop:solDergPAM}, we know that \eqref{eq:linear} admits a unique solution in $V^{\hom}\in\D^{\gamma,\eta}$, which, by linearity, satisfies 
\[
\VERT V^{\hom}\VERT_{\gamma,\eta;1}\leq C \|v^{\hom}_0\|_\eta
\]
where $C$ is a constant depending continuously on $\VERT Z\VERT_\gamma$ and $\VERT\tilde{\Xi}\VERT_{\gamma,\eta;1}$. We now set $W = \PK(\tilde{\Xi}V^{\hom} )$ and $w=\RS W$. Then, by Proposition 6.12 in~\cite{Hai}, $\tilde{\Xi}V^{\hom}\in\D^{\gamma+\alpha_{\min},\eta+\alpha_{\min}}$, hence thanks to~\eqref{bound:Schauder}, there exists $\theta>0$ and a constant $C_w>0$ such that
\[
\VERT W\VERT_{\gamma,\eta;t} \leq C_w  t^\theta,\qquad\text{for all}\,\,t\in(0,1],
\]
where, this time, $C_w$ depends continuously on the norms $\VERT Z\VERT$, $\VERT\tilde{\Xi}\VERT_{\gamma,\eta;1}$ and $\|v^{\hom}_0\|_\eta$. Since, by definition and Proposition 3.28 in~\cite{Hai}, $W(t,x) = w(t,x){\color{blue}1} +...$, omitting terms of strictly positive homogeneity, it is clear from~\eqref{eq:ModDistNorm}, 
that a bound analogous to the previous holds for $w$, namely
\[
\left|w(t,x)\right|\leq C_w t^{\theta},\qquad\text{for all}\,\,t\in(0,1]\,\,\text{and}\,\,x\in\T^2.
\]
Now fix $\delta >0$, and assume $u_0$ non negative and $v^{\hom}_0\geq 1$ on $B(x,\delta)\subset\R^2$, the ball of radius $\delta$ centered at $x$.
We first claim that by properties of the heat kernel, (see proof below) for each $\rho>0$, there exists $t_\rho>0$ s.t.
\begin{equation} \label{eq:1}
v^{\hom}_0 \geq 1\quad \mbox{ on }B(x,\delta) \;\Longrightarrow\; (\bar{K}v^{\hom}_0)(t,\cdot) \geq \frac{1}{4} \quad \mbox{ on }\,\,B(x,\delta+t\rho), \qquad\text{for all}\,\, t \leq t_\rho.
\end{equation}
Upon taking $h \leq t_\rho$ small enough so that $C_w h^{\theta} \leq \frac{1}{8}$, one has
$$v^{\hom}_0 \geq 1 \mbox{ on }B(x,\delta) \;\Longrightarrow\; v(h,\cdot) \geq \frac{1}{8} \quad\mbox{ on }\,B(x,\delta+h\rho).$$
One can then propagate the bound using linearity of the equation (and consequently of $w$ itself) to obtain that $v(1,\cdot) \geq (\frac{1}{8})^{1/h} >0$ on $B(x,\rho)$. Since $\rho$ was arbitrary this proves the claim. Also note that here we strongly use the fact that, by construction, we can always take the same value of $C_w$ when we iterate the argument over different time-steps.

We now turn to the proof of the claim~\eqref{eq:1}. W.l.o.g. take $x=0$, and consider a generic point $y \in B(0,\delta+t\rho)$, written as $y = (\delta + t \rho) u$, where $|u| \le1$. Then for $Z$ standard ($d$-dimensional) Gaussian
\begin{eqnarray*}
\bar{K}(1_{B(0,\delta)})(t,y) = P(|\sqrt{t}Z +y|\leq \delta) = P(Z \in B(\frac{y}{\sqrt{t}},\frac{\delta}{\sqrt{t}})) 
\end{eqnarray*}
Then note that $B(\frac{y}{\sqrt{t}},\frac{\delta}{\sqrt{t}})$ is a ball with radius going to $\infty$ as $t \to 0$ and containing the point $\sqrt{t} u$ (as its closest point from origin). In particular when $t\to 0$, it eventually contains all points in a half-space, so that one gets $\frac{1}{2}$ in the limit, hence the proof of the claim (and consequently of the proposition) is concluded. 
\end{proof}

\begin{remark}
Even if the previous proposition was formulated in the specific context under study, its proof has very little to do with the specifics of our regularity structure, hence the same argument can be straightforwardly applied to directly get a strong maximum principle (or equivalently, strict positivity of solutions) for any linear heat equation for which the theory applies. In particular it holds for the linear multiplicative stochastic heat equation in dimension $d=1$ (cf. \cite{HP}) where we recover Mueller's work, \cite{Mueller}, and to the linear PAM equation in dimensions $d=2,3$ for which the result appears to be new.
\end{remark}

\subsection{Density for value at a fixed point}

Let us fix $t>0$, $x \in \T^2$, and consider the random variable $F = u(t,x) \1_{\{t<T_\infty\}}$. We will show that its restriction to $\{t<T_\infty\}$ admits a density w.r.t. the Lebesgue measure, but first we need a technical lemma to approximate $\1_{\{t<T_\infty\}}$ by a sequence of $\mathcal{H}$-differentiable random variables.

\begin{lemma} \label{lem:Xn}
Fix $t\geq 0$. Then there exists a sequence $(X_n)_{n\geq 0}$ with $X_n F \in C^1_{\mathcal{H}-loc}$ such that $$X_n \leq \1_{\{t<T_\infty\}} \mbox{ and } \cup_{n \geq 0} \left\{X_n = 1, DX_n = 0\right\} = \left\{t<T_\infty\right\} \mbox{(up to a $\mathbb{P}$-null set)}.$$
\end{lemma}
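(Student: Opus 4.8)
The strategy is to build $X_n$ out of the explosion time $T_\infty = T_\infty(u_0,\hat{Z}(\cdot))$, which by Proposition~\ref{prop:solgPAM} is lower-semicontinuous in the model and, via Lemma~\ref{lemma:TransModel}, transforms nicely under translation: $T_\infty(u_0,\hat{Z}(\omega+h)) = T_\infty(u_0,T_h\hat{Z}(\omega))$ on the full-measure set where $\hat{Z}(\omega+h)=T_h\hat{Z}(\omega)$. The natural candidate is to take a smooth, non-increasing cutoff $\psi_n:\R\to[0,1]$ with $\psi_n(s)=1$ for $s\le t$ and $\psi_n(s)=0$ for $s\ge t+1/n$, and set, morally, $X_n := \psi_n\big(\sup_{s\le t}\|\RS U(s,\cdot)\|_\eta \wedge (\text{something})\big)$ — but since $\|\RS U(s,\cdot)\|_\eta$ blows up precisely at $T_\infty$, a cleaner device is to let $X_n$ depend on $\VERT U\VERT_{\gamma,\eta;[0,t]\times\T^2}$ evaluated for the solution with its norm truncated, or equivalently to use a real-valued ``proxy'' $\Theta_n(\omega)$ for $\1_{\{t<T_\infty\}}$ that is continuous and locally $\Hi$-differentiable. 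Concretely I would define, for $Z\in\M(\TS_g)$, the quantity $\sigma_K(Z):=$ the exit time of $\VERT \Sol(u_0,Z)\VERT$ from the ball of radius $K$, note $\sigma_K \nearrow T_\infty$ as $K\to\infty$ and each $\sigma_K$ is lower-semicontinuous (indeed locally Lipschitz below its value), and take $X_n$ a smooth decreasing function of $(t-\sigma_{K_n})_+$ for a suitable sequence $K_n\to\infty$, chosen so that $X_n\equiv 1$ on $\{\sigma_{K_n}>t\}$ and $X_n\equiv 0$ on $\{\sigma_{K_n}<t-\eps_n\}$.

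Granting such a construction, the three required properties are checked as follows. First, $X_n\le \1_{\{t<T_\infty\}}$: since $\sigma_{K_n}\le T_\infty$, on $\{t\ge T_\infty\}$ we have $\sigma_{K_n}\le t$, so $X_n$ vanishes there (choosing the cutoff to be supported in $\{\sigma\ge t\}$ strictly, i.e. $X_n=0$ unless $\sigma_{K_n}>t$). Second, $\cup_n\{X_n=1,DX_n=0\}=\{t<T_\infty\}$ up to null sets: on $\{t<T_\infty\}$ one has $\sigma_{K_n}>t+\delta$ for some $\delta>0$ and all $n$ large (since $\sigma_{K_n}\nearrow T_\infty>t$), and on the open region $\{\sigma_{K_n}>t+\delta\}$ the smooth cutoff is locally constant equal to $1$, hence its $\Hi$-derivative vanishes; conversely $X_n\le\1_{\{t<T_\infty\}}$ forces the union to sit inside $\{t<T_\infty\}$. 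Third, $X_nF\in\CC^1_{\Hi-\loc}$: on $\{t<T_\infty\}$ both $F=u(t,x)$ (by Theorem~\ref{thm:MallDiff}) and $X_n$ (as a smooth function of $\sigma_{K_n}$, which is locally $\Hi$-differentiable where it exceeds $t$, by lower-semicontinuity plus the Lipschitz continuity of $\Sol$ and the translation map $T$ in Propositions~\ref{prop:solgPAM}, \ref{prop:TransModel}) are locally $\Hi$-differentiable with a common radius of differentiability; off a neighbourhood of $\{t<T_\infty\}$, $X_n$ vanishes identically, so the product is trivially smooth there. Patching the two regions via a set-of-differentiability argument as in Definition~\ref{def:MallDiff} gives $X_nF\in\CC^1_{\Hi-\loc}$.

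The main obstacle is pinning down the $\Hi$-differentiability of the cutoff $X_n$, i.e. of $\sigma_{K_n}$ as a function of $\omega$, with a uniform-in-direction radius. The subtlety is that $\sigma_{K_n}$ is only lower-semicontinuous in general — it may jump up — but this is harmless: we only need it to be locally $\Hi$-differentiable (in fact locally \emph{constant} suffices for our purposes) on the region $\{\sigma_{K_n}>t\}$ where we actually use it, and there the solution $U=\Sol(u_0,T_h\hat{Z}(\omega))$ depends continuously (locally Lipschitz) on $h$ in a neighbourhood of $0$ by Corollary~\ref{cor:time} and Proposition~\ref{prop:solgPAMh}, so $\VERT U\VERT$ stays below $K_n$ for $h$ small, whence $X_n(\omega+h)=1=X_n(\omega)$ there and $DX_n=0$ automatically. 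Thus the honest content is: choose $K_n\to\infty$ so that $\{t<T_\infty\}=\cup_n\{\sigma_{K_n}>t\}$ up to null sets — which holds since $\sigma_{K_n}\nearrow T_\infty$ — and verify that on each $\{\sigma_{K_n}>t\}$ the window of $h$'s for which the bound persists has a strictly positive, measurable lower bound $q_n(\omega)$, which follows directly from the joint local Lipschitz continuity already established. The remaining work is the routine localization/patching to combine $X_n$ with $F$, carried out exactly as in Proposition~\ref{prop:locH-MallDiff}.
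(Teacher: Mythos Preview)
Your approach via exit times $\sigma_{K_n}$ has a genuine gap: you must show $X_n F\in\CC^1_{\Hi-\loc}$ on a set of full measure, not only where $X_n$ is locally constant equal to~$1$.

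First, the construction is internally inconsistent: you write both ``$X_n\equiv 1$ on $\{\sigma_{K_n}>t\}$, $X_n\equiv 0$ on $\{\sigma_{K_n}<t-\eps_n\}$'' and ``$X_n=0$ unless $\sigma_{K_n}>t$''. Whichever you pick, there is a region where $X_n$ depends nontrivially on $\sigma_{K_n}$, and there you would need $\sigma_{K_n}$ itself to be $\Hi$-differentiable. Your justification (``lower-semicontinuity plus Lipschitz continuity of $\Sol$'') does not give this: exit times are generically only semicontinuous and fail to be differentiable when the trajectory grazes the level set. Second, the claim that ``off a neighbourhood of $\{t<T_\infty\}$, $X_n$ vanishes identically'' does not help: lower-semicontinuity of $T_\infty$ goes the wrong way, so for $\omega$ with $T_\infty(\omega)\le t$ there may exist arbitrarily small $h$ with $T_\infty(\omega+h)>t$, $\sigma_{K_n}(\omega+h)>t$, and hence $X_n(\omega+h)F(\omega+h)\ne 0$. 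Thus $X_nF$ is not locally constant $0$ there, and you have no argument for its differentiability.

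The paper avoids both issues by building $X_n$ at the level of the \emph{model} rather than the solution. It introduces a distance $\llbracket\hat\Pi(\omega)-\Pi^n\rrbracket$ to fixed centres $\Pi^n$ dense in (the open, model-space set) $\{t<T_\infty\}$, composed with a smooth cutoff. Two ingredients make this work. (i) The metric $\llbracket\cdot\rrbracket$ is a countable supremum of (squares of) wavelet coefficients of $\Pi_x\tau$, each of which lies in a fixed Wiener chaos and is explicitly $\Hi$-differentiable; since distinct nonconstant chaos variables are a.s.\ distinct and the coefficients tend to~$0$, the supremum is a.s.\ attained at a unique index, which makes the sup itself Fr\'echet (hence $\Hi$-) differentiable. (ii) A buffer between the radii ($3\delta_n$ versus $2\delta_n$) guarantees that if $\omega\notin\{t<T_\infty\}$ then $X_n(\omega+h)=0$ for all small $h$, because $h\mapsto\llbracket\hat\Pi(\omega+h)-\Pi^n\rrbracket$ is controlled \emph{a priori} by the model-level estimates of Proposition~\ref{prop:ContShiftModel}, regardless of whether the solution at time $t$ exists. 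Your $\sigma_{K_n}$, being tied to the solution, enjoys no such control on the blow-up set.
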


\begin{proof} See Appendix~\ref{app:comp}.
\end{proof}

\begin{theorem}\label{thm:density}
In the setting of Theorem~\ref{thm:solgPAM}, let $u$ be the limit of the solutions $u_\eps$ to the renormalized gPAM equation. Assume furthermore that $g\geq 0$, and $g(u_0)$ is not identically $0$. Then for any $t>0$ and $x \in \T^2$, the law of $u(t,x)$ conditionally on $\{t<T_\infty\}$ is absolutely continuous with respect to Lebesgue measure.
\end{theorem}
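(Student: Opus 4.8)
The plan is to combine the Bouleau--Hirsch criterion (Theorem~\ref{thm:BH}), the localization of Lemma~\ref{lem:Xn}, the identification of the Malliavin derivative of $u(t,x)$ with $v^{\cdot}(t,x)$ (Theorem~\ref{thm:MallDiff}), and a non-degeneracy statement obtained from the strong maximum principle of Theorem~\ref{thm:Positive}.

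Fix $t>0$ and $x\in\T^2$ (we may assume $\PR(t<T_\infty)>0$, otherwise there is nothing to prove) and set $F:=u(t,x)\,\1_{\{t<T_\infty\}}$. Let $(X_n)_{n\ge0}$ be the sequence furnished by Lemma~\ref{lem:Xn}, so that $X_nF\in\CC^1_{\Hi-\loc}$, hence $X_nF\in\DD^{1,2}_{\loc}$ by Proposition~\ref{prop:locH-MallDiff}, with $X_n\le\1_{\{t<T_\infty\}}$ and $G_n:=\{X_n=1,\ DX_n=0\}$ satisfying $\bigcup_n G_n=\{t<T_\infty\}$ modulo a $\PR$-null set. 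On $G_n$ one has $X_nF=F=u(t,x)$ (since $X_n=1$ forces $t<T_\infty$) and, by the Leibniz/chain rule for the $\Hi$-derivative, $D(X_nF)=Du(t,x)$, where $Du(t,x)$ is the local $\Hi$-derivative of $u(t,x)$ from Theorem~\ref{thm:MallDiff}, which on the full-measure set $\Upsilon_t\subseteq\{t<T_\infty\}$ is represented by $h\mapsto v^h(t,x)=\langle Du(t,x),h\rangle_\Hi$.

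Applying Theorem~\ref{thm:BH} to $X_nF\in\DD^{1,2}_{\loc}$, the law of $X_nF$ conditioned on $\{\|D(X_nF)\|_\Hi>0\}$ is absolutely continuous; restricting to $G_n$, where $X_nF=u(t,x)$ and $\|D(X_nF)\|_\Hi=\|Du(t,x)\|_\Hi$, and then taking the union over $n$, we obtain that the law of $u(t,x)$ conditioned on $\{t<T_\infty\}\cap\{\|Du(t,x)\|_\Hi>0\}$ is absolutely continuous. It therefore remains to prove the non-degeneracy $\PR\big(\{t<T_\infty\}\cap\{\|Du(t,x)\|_\Hi=0\}\big)=0$.

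Since $\|Du(t,x;\omega)\|_\Hi=0$ if and only if $v^h(t,x;\omega)=0$ for every $h\in L^2(\T^2)$, it is enough to exhibit, for a.e. $\omega\in\{t<T_\infty\}$, one direction $h$ with $v^h(t,x;\omega)>0$. Take $h\equiv1\ge0$. The renormalized tangent equation~\eqref{eq:renDergPAM} (abstractly, \eqref{eq:DergPAMabs}) is linear in $v^h$, has vanishing initial datum, and its source $g(u)h$ is nonnegative because $g\ge0$. By Duhamel's principle and passing to the $\eps\to0$ limit,
\[
v^h(t,\cdot)=\int_0^t \Psi(t,r)\big[g(u(r,\cdot))h\big]\,\dd r,
\]
where $\Psi(t,r)$ sends a datum at time $r$ to the time-$t$ value of the reconstructed solution of the homogeneous renormalized tangent equation~\eqref{eq:renDergPAMhom} on $[r,t]$; each integrand is $\ge0$ by the weak maximum principle of Proposition~\ref{prop:WMA} (carried to the limit). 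For $r$ small the datum $g(u(r,\cdot))h$ is nonnegative and, since $g(u(r,\cdot))\to g(u_0)$ and $g(u_0)$ is not identically $0$, itself not identically $0$; Theorem~\ref{thm:Positive} --- whose hypothesis (the weak maximum principle) is precisely what we just verified --- then gives $\Psi(t,r)[g(u(r,\cdot))h](x)>0$. Integrating over such $r$ yields $v^h(t,x;\omega)>0$, which is the required non-degeneracy and completes the proof.

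The main obstacle is this last non-degeneracy step. It requires carrying the weak maximum principle for the mollified/renormalized tangent equation (Proposition~\ref{prop:WMA}, available only at the level of smooth noise) through the limit $\eps\to0$ to the reconstructed object $v^h$, matching the Duhamel decomposition of $v^h$ to the fixed-point form~\eqref{eq:linear} to which Theorem~\ref{thm:Positive} applies, and checking that the source $g(u(r,\cdot))h$ is genuinely non-trivial for small $r>0$ even in the low-regularity regime $\eta=0$. The localization bookkeeping of the first paragraphs --- the Leibniz rule in $\DD^{1,2}_{\loc}$, measurability of the sets $G_n$, and the fact that $\bigcup_n G_n$ agrees with $\{t<T_\infty\}$ up to a null set --- is routine once Lemma~\ref{lem:Xn} is in hand.
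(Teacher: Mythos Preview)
Your proposal is correct and follows essentially the same route as the paper: localize via the sequence $(X_n)$ of Lemma~\ref{lem:Xn}, apply Bouleau--Hirsch to each $X_nF$, and establish non-degeneracy of $Du(t,x)$ on $\{t<T_\infty\}$ by writing $v^h$ via Duhamel against the homogeneous propagator and invoking the strong maximum principle of Theorem~\ref{thm:Positive} with $h\equiv 1$. The only cosmetic differences are that the paper shows non-degeneracy first and the localization chain second, and that it phrases the Duhamel representation directly at the level of the abstract two-parameter semigroup $P_{s,t}$ (built from the fixed point~\eqref{eq:linear} with $\tilde\Xi=G_\gamma'(U_s){\color{blue}\Xi}$) rather than via an $\eps\to0$ limit; your self-identified ``obstacles'' are precisely the points the paper treats as immediate.
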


\begin{proof}
First note that on $\{t<T_\infty\}$, one has $\|DF\|\neq 0$. Indeed, according to Section \ref{sec:Mall} it suffices to find one $h\in\Hi$ such that $v^h(t,x) \neq 0$, where $v^h(t,x)$ is the derivative of $u(t,x)$ with respect to the noise determined in Theorems~\ref{thm:solDergPAM} and~\ref{thm:MallDiff}. Hence, we restrict $h\in\Hi\cap\CC^{\eta}$, for $\eta\geq 0$. 

In the setting of Theorem~\ref{thm:Positive}, we define the 2-parameter semigroup $P_{0,t}$ as
\[
P_{0,t}[f]:=\RS V^{\hom}(t,\cdot)
\]
where $V^{\hom}$ is the solution to~\eqref{eq:linear}, with time-$0$ initial condition $f\in\CC^\eta$, with respect to the model $\hat{Z}$ (see Theorem~\ref{thm:solgPAM}) and similarly $P_{s,t}$ when starting at times $s\leq t$. Upon choosing $\tilde{\Xi}=G_\gamma'(U_s){\color{blue}\Xi}$, where $U_s(r,x)=U(s+r,x)$ and $U$ is the solution to the abstract counterpart of gPAM, i.e. equation~\eqref{eq:gPAMabs}, with respect to $\hat{Z}$, one can show that
\[
v^h(t,\cdot)=\int_0^t P_{s,t}[g(u_s)h]\dd s+ P_{0,t} [v^h (0, \cdot)]  = \int_0^t P_{s,t}[g(u_s)h]\dd s
\] 
with $u_s:=\RS U_s$ and noting that $v^h (0, \cdot) \equiv 0$. Indeed, the previous representation is obvious for $\tilde{v}^h_\eps$ (defined as in Theorem~\ref{thm:solgPAM}) and one can pass to the limit since the convergence of the left hand side is guaranteed by Theorem~\ref{thm:solDergPAM} while the one of the right hand side follows by the continuity of the 2-parameter semigroup in its argument, uniformly over $s\leq t$. But now we can choose $h$ such that $g(u) h$ is nonnegative and not everywhere $0$ (actually $h=1$ suffices). Since, by Proposition~\ref{prop:WMA} we already know that the homogeneous equation satisfies a weak maximum principle, Theorem \ref{thm:Positive} implies that $P_{s,t}[g(u_s)h]>0$ for $s$ in a set of positive measure.

Now, let $X_n$ be the sequence of random variables defined in Lemma \ref{lem:Xn}, then, for every set $E\subset \R$ of Lebesgue measure $0$,
\begin{align*}
 \mathbb{P}\left( F \in E,t< T_\infty \right) \leq \sum_{n=0}^{\infty}  \mathbb{P}\left( F X_n  \in E, X_n=1, DX_n=0  \right) \leq \sum_{n \geq 0}  \mathbb{P}\left( FX_n \in E, \|D(FX_n)\| \neq 0 \right) = 0.
\end{align*}
where the last equality follows by Theorem~\ref{thm:BH}, and the proof is concluded.
\end{proof}

\begin{remark}
In exactly the same way, one can show that for all $t>0$ and measure $\mu$ supported in $(0,t] \times \T^2$, the law of $\int u d\mu$ conditionally on $\{ t < T_\infty\}$ admits a density.
\end{remark}

\begin{remark} The reader should note that the proof of Theorem~\ref{thm:density}, relying on Lemma~\ref{lem:Xn}, uses indeed few specific properties of gPAM. In particular, it should be possible to adapt the argument
here to other singular PDEs, for which only local existence results are available, which in turn underlies the importance of Lemma~\ref{lem:Xn}. That said, in Section ~\ref{subsec:GEgPAM}, we gave a global existence condition specific to the structure of gPAM,  which allows for the  following alternative (but specific to gPAM) argument.
Take $g_n$ be (sufficiently) smooth, compactly supported in $[-n-1,n+1]$ and such that $g_n\equiv g$ on $[-n,n]$, where $g$ satisfies the assumptions of Theorem~\ref{thm:density}. As a consequence of Proposition~\ref{prop:GEgPAM}, we know that the solution $u^n$ to (g$_n$PAM) is globally well-posed in time, hence the results in Section~\ref{sec:MallDiff} directly imply that, for every $(t,x)\in(0,+\infty)\times\T^2$, $u^n(t,x)\in\CC^1_\Hi$ according to Definition~\ref{def:MallDiff} (see Remark~\ref{rmk:MallDiff}), which in turn guarantees that $u^n(t,x)\in\DD^{1,2}_\loc$ (see Proposition 4.1.3 in~\cite{Nua}). Therefore, the first part of the proof of Theorem~\ref{thm:density} implies that the assumptions of the Bouleau and Hirsch's criterion, Theorem~\ref{thm:BH}, are satisfied and $u^n(t,x)$ has a density with respect to the Lebesgue measure.

Now, let $u$ the solution to (gPAM), $T_\infty$ its explosion time as defined in Proposition~\ref{prop:solgPAM} and $F=u(t,x)\1_{\{t<T_\infty\}}$. Then, it is immediate to verify (e.g. looking at the approximating equations) that $u^n\equiv u$ on $\{|u|< n \}$ and consequently $\{t<T_\infty\}\subset \bigcup_n \{|u(t,x)|\leq n\}$. Therefore, for every $E\subset \R$ of Lebesgue measure $0$ we have
\[
\PR\left( F \in E,t< T_\infty \right) \leq \sum_{n=0}^{\infty}  \PR\left( u(t,x)\in E, |u(t,x)|\leq n\right) \leq \sum_{n \geq 0}  \PR\left( u^n(t,x)\in E \right) = 0.
\]
which concludes the argument.
\newline
\end{remark}

\appendix

\section{Wavelets and Translation}\label{app:comp}

Let us introduce a few notations. We always work on $\R^d$ (in fact, we will only need $d=2$). For any $n \in \N$ we let $\Lambda_n = \left\{(2^{-n}k_1,\ldots, 2^{- n}k_d), \;k=(k_1,\ldots,k_d)\in \Z^d\right\}$.  Given a function $\varphi$ and $x \in \Lambda_n$, we denote $$\varphi^n_x = 2^{n \frac{d}{2}} \varphi\left(2^{-n} \left(\cdot-x\right)\right)$$
(the rescaling is such that $L^2$-norm is preserved. We also fix a real number $r>0$ (which we will take large enough later). Wavelet analysis \cite{Dau,Meyer} then provides us with a function $\varphi$ and a finite set $\Psi = \left\{ \psi \in \Psi\right\}$ such that :
\begin{itemize}
\item $\varphi$, and all $\psi$ $\in$ $\Psi$ are in $\mathcal{C}^r$ and have compact support,
\item all $\psi$ $\in$ $\Psi$ have vanishing moments up of order $\left\lfloor r\right\rfloor$,
\item For each $n \geq 0$, the family
$$ \left\{\varphi^n_x, \; x \in \Lambda_n\right\} \cup \left\{ \psi^m_y, \; m \geq n, y \in \Lambda_m, \psi \in \Psi\right\}$$
is an ortonormal basis of $L^2(\mathbb{R}^d)$.
\end{itemize}
Let us remark that to save space we will often omit the summation over $\Psi$ and write $\psi$ for any element of $\Psi$, so that for instance we will write $\sum_{x \in \Lambda_m} F(\psi^m_x)$ for $\sum_{\psi\in\Psi} \sum_{x \in \Lambda_m}F(\psi^m_x)$

Let us also recall that for $\beta$ $\in (-r,r)$, it is well-known (e.g. \cite{Meyer}) that one can define the usual fractional Sobolev spaces $H^\beta$ via a norm on the wavelet coefficients. Since we will work only with functions on $\T^d$ (identified with $1$-periodic functions on $\R^d$), these norms can be written as
$$\left\|f\right\|_{H^\beta(\T^d)}^2 := \sum_{x \in \Lambda_0 \cap D} \left\langle f, \varphi^0_x\right\rangle^2 + \sum_{m\geq 0} \sum_{y \in \Lambda_m\cap D} \left\langle f, \psi^m_y\right\rangle^2 2^{2m\beta},$$
where $D$ is a large enough compact subset of $\R^d$.

\begin{lemma} \label{lem:WavDecorr}
For all $n\leq m \leq p$ and $x,y,z$ $\in$ $\Lambda_n \times \Lambda_m \times \Lambda_p$, one has
\begin{equation}
\left|\left\langle \psi^n_x \psi^m_y , \psi^p_z \right\rangle \right|:= \left| \int_{\R^d} \psi^n_x  \psi^m_y  \psi^p_z \right| \; \lesssim \; 2^{\frac{n d}{2}} 2^{-r'(p-m)},
\label{eq:lemWav}
\end{equation}
where $r'=\lfloor \frac{r}{2}\rfloor + 1 + \frac{d}{2}$.
The same inequality holds if $\psi^n_x$ or $\psi^m_y$ are replaced by $\varphi^n_x$, $\varphi^m_y$.
\end{lemma}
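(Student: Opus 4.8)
The plan is to estimate the triple wavelet integral $\int_{\R^d} \psi^n_x \psi^m_y \psi^p_z$ by exploiting the orderings $n \le m \le p$ and the two structural features of wavelets: compact support and vanishing moments. The coarsest function $\psi^n_x$ varies slowly, the intermediate one $\psi^m_y$ at an intermediate scale, and the finest $\psi^p_z$ has vanishing moments of order $\lfloor r\rfloor$ and is sharply localized at scale $2^{-p}$. The rough idea is that integrating $\psi^p_z$ against a function that is smooth on scale $2^{-m}$ (which is roughly what $\psi^n_x \psi^m_y$ looks like near the support of $\psi^p_z$) gains a factor $2^{-(p-m)\times(\text{order})}$ from the vanishing moments, after a Taylor expansion of the product $\psi^n_x\psi^m_y$ around the centre $z$.

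First I would set up the computation: fix the point $z$ and Taylor-expand $f := \psi^n_x \psi^m_y$ (restricted to the support of $\psi^p_z$, a ball of radius $\lesssim 2^{-p}$ around $z$) to order $k := \lfloor r\rfloor$. Since $\psi^p_z$ annihilates polynomials of degree $\le k$, only the remainder term survives: $\langle f,\psi^p_z\rangle = \langle f - P_z^k f, \psi^p_z\rangle$, where $P_z^k f$ is the degree-$k$ Taylor polynomial of $f$ at $z$. The remainder is bounded by $\sup_{B(z,c2^{-p})} |D^{k+1} f|$ times $2^{-p(k+1)}$, and then by Cauchy–Schwarz (or directly, using $\|\psi^p_z\|_{L^1} \lesssim 2^{-pd/2}$) one picks up $\|\psi^p_z\|_{L^1}$. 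Next, I would bound $\|D^{k+1} f\|_{L^\infty}$ on that ball by Leibniz: $D^{k+1}(\psi^n_x\psi^m_y) = \sum_{j} \binom{k+1}{j} D^j\psi^n_x \, D^{k+1-j}\psi^m_y$. Using the scaling $\|D^j \psi^n_x\|_\infty \lesssim 2^{n d/2} 2^{nj}$ (wait — with the $L^2$ normalization $\psi^n_x = 2^{nd/2}\psi(2^n(\cdot - x))$, one has $\|D^j\psi^n_x\|_\infty \lesssim 2^{nd/2} 2^{nj}$, and likewise $\|D^i\psi^m_y\|_\infty \lesssim 2^{md/2}2^{mi}$), the dominant term in the sum is governed by the coarsest-derivative allocation; but one must be a little careful, since putting all derivatives on $\psi^m_y$ gives $2^{nd/2}2^{md/2}2^{m(k+1)}$, whereas putting them on $\psi^n_x$ gives the same with $2^{n(k+1)}$, which is smaller. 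So $\|D^{k+1}f\|_\infty \lesssim 2^{nd/2} 2^{md/2} 2^{m(k+1)}$ on the relevant ball (after also noting this ball is nonempty only if the supports overlap, i.e. $|z-y| \lesssim 2^{-m}$ etc., but the bound above does not need that refinement).

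Assembling the pieces gives $|\langle\psi^n_x\psi^m_y,\psi^p_z\rangle| \lesssim 2^{nd/2} 2^{md/2} 2^{m(k+1)} \cdot 2^{-p(k+1)} \cdot 2^{-pd/2}$. Now I rewrite $2^{md/2}2^{-pd/2} = 2^{-(p-m)d/2}$ and $2^{m(k+1)}2^{-p(k+1)} = 2^{-(p-m)(k+1)}$, so the whole thing is $\lesssim 2^{nd/2} 2^{-(p-m)(k+1+d/2)}$, and with $k = \lfloor r\rfloor$ one has $k+1+d/2 \ge \lfloor r/2\rfloor + 1 + d/2 = r'$ — actually one gets a possibly larger exponent, but since $p - m \ge 0$ we may freely weaken it to $2^{-(p-m)r'}$, which is exactly \eqref{eq:lemWav}. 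Finally, I would observe that $\varphi$ has compact support and $\mathcal{C}^r$ regularity but no vanishing moments; however, in the argument above the vanishing moments are used \emph{only} for the finest function $\psi^p_z$, and since $p$ is the largest index, by the ordering the finest function among the three is always a genuine wavelet $\psi$ (the father wavelet $\varphi$ only appears at scale $n$ or $m$), where only regularity and support are used — so the "same inequality holds" claim for $\varphi^n_x$ or $\varphi^m_y$ in place of $\psi^n_x$ or $\psi^m_y$ follows with no change. The main obstacle is bookkeeping: carefully tracking the $L^2$-normalization factors $2^{(\cdot)d/2}$ through the Taylor remainder and the Leibniz expansion, and verifying that allocating all derivatives to the intermediate-scale function (not the coarsest) is what produces the claimed exponent; there is no conceptual difficulty beyond that.
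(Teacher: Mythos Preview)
Your approach is correct in spirit but contains a small regularity slip: with $k = \lfloor r\rfloor$ you invoke $D^{k+1}(\psi^n_x\psi^m_y)$, i.e.\ $\lfloor r\rfloor + 1$ classical derivatives of functions that are only assumed to lie in $\mathcal{C}^r$. This is easily repaired by taking instead $k = \lfloor r\rfloor - 1$ (the vanishing moments still annihilate the Taylor polynomial, and the Leibniz expansion now only needs $\lfloor r\rfloor$ derivatives), yielding the exponent $\lfloor r\rfloor + \tfrac{d}{2}$, which still dominates the stated $r' = \lfloor r/2\rfloor + 1 + \tfrac{d}{2}$ for all $r \ge 1$.

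The paper's argument takes a genuinely different route: rather than Taylor-expanding the \emph{product} $\psi^n_x\psi^m_y$ to high order, it expands each factor \emph{separately} to order $\lfloor r/2\rfloor$ at $z$, writing (after scaling to $n=0$)
\[
\langle \psi^0_x\psi^m_y,\psi^p_z\rangle = \langle(\psi^0_x - P)\psi^m_y,\psi^p_z\rangle + \langle P(\psi^m_y - Q),\psi^p_z\rangle + \langle PQ,\psi^p_z\rangle,
\]
where $P,Q$ are the degree-$\lfloor r/2\rfloor$ Taylor polynomials; then $PQ$ has degree at most $r$ so the last term vanishes by the moment conditions, and the remaining two are estimated directly via $\|\psi^m_y\|_{C^{k+1}}$ with $k=\lfloor r/2\rfloor$. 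This split only requires $\lfloor r/2\rfloor + 1$ derivatives---comfortably available---and produces exactly the stated $r'$, at the price of a weaker exponent than your (repaired) single-expansion argument. Your route is cleaner and sharper; the paper's is more frugal with regularity.
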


\begin{proof} By scaling it is enough to consider the case $n=0$. 
Then for any polynomials $P,Q$ with $PQ$ of degree less than $r$,
\begin{eqnarray*}
\left\langle \psi^0_x  \psi^m_y , \psi^p_z \right\rangle &=& \left\langle (\psi^0_x - P ) \psi^m_y , \psi^p_z \right\rangle  + \left\langle P  (\psi^m_y - Q), \psi^p_z \right\rangle + \left\langle P Q, \psi^p_z \right\rangle.
\end{eqnarray*}
The last term is equal to $0$ by the properties of $\psi$. Now taking for $P$ (resp. $Q$) the Taylor expansion of order $k=\lfloor \frac{r}{2}\rfloor$ for $\psi^0_x$ (resp. $\psi^m_y$) at $z$ , we have (denoting $I_p$ the support of $\psi^p_z$)
\begin{eqnarray*}
\left\langle P ( \psi^m_y - Q) , \psi^p_z \right\rangle &\leq& \left\|P 1_{I_p}\right\|_\infty \left\|(\psi^m_y - Q)1_{I_p}\right\|_2 \left\|\psi^p_z\right\|_2 \\
&\lesssim& \left\|(\psi^m_y - Q)1_{I_p}\right\|_\infty \left\|1_{I_p}\right\|_2 \;\lesssim\; \|\psi^m_y\|_{C^{k+1}} {\rm diam}(I_p)^{k+1} 2^{- \frac{pd}{2}} \\
&\lesssim& 2^{(k+1+\frac{d}{2}) (m-p)},
\end{eqnarray*}
and similarly $\left\langle (\psi^0_x - P) \psi^m_y , \psi^p_z \right\rangle$ $\lesssim$ $2^{ \frac{md}{2}}2^{-(k+1+\frac{d}{2})p}$.
\end{proof}

\begin{lemma} \label{lem:newlemApp}
Let $Z=(\Pi,\Gamma)\in\M(\TS_g)$ and $Z^{e_h}=(\Pi^{e_h},\Gamma^{e_h})$ be the extension of $Z$ on $\TS_g^H$ defined in proof of Proposition~\ref{prop:ContShiftModel}. Then, for every $\tau\in \W^H$
\begin{equation}\label{bound:App}
 \left\langle \Pi_x^{e_h} \tau, \varphi^\lambda_x \right\rangle \lesssim \lambda^{|\tau|}
\end{equation}
locally uniformly over $x\in\R^3$ and uniformly over $\varphi\in\B_1^2$. 
\end{lemma}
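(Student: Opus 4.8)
The plan is to prove the bound \eqref{bound:App} by induction on the structure of the symbol $\tau \in \W^H$, following the recursive construction of $\Pi^{e_h}$ given in the proof of Proposition \ref{prop:ContShiftModel}. The symbols in $\W^H$ of negative homogeneity that are genuinely new (i.e. in $\F^H \setminus \F$) are ${\color{blue}\I(\Xi)H}$, ${\color{blue}\I(H)\Xi}$, ${\color{blue}\I(H)H}$, ${\color{blue}X_i H}$ and ${\color{blue}H}$ itself, while for $\tau \in \W$ the bound holds because $\Pi^{e_h}$ agrees with the admissible model $\Pi$ there. The base case $\tau = {\color{blue}H}$ is immediate: $\langle \Pi^{e_h}_x {\color{blue}H}, \varphi_x^\lambda\rangle = \langle h, \varphi_x^\lambda\rangle$, and since $h \in L^2$ one has $|\langle h, \varphi_x^\lambda\rangle| \le \|h\|_{L^2} \|\varphi_x^\lambda\|_{L^2} \lesssim \lambda^{-d/2} = \lambda^{|{\color{blue}H}|}$ once one notes $\alpha_{\min} = -1-\kappa$ and $d=2$, so $|{\color{blue}H}| = -1-\kappa \le -1 = -d/2$ — wait, here the relevant point is that we only need $\lesssim \lambda^{|{\color{blue}H}|}$ with $|{\color{blue}H}| = \alpha_{\min} < -d/2$, and $\|\varphi_x^\lambda\|_{L^2} \sim \lambda^{-d/2} \ge \lambda^{\alpha_{\min}}$ for $\lambda \le 1$, so the $L^2$ bound suffices. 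The symbol ${\color{blue}X_iH}$ reduces to ${\color{blue}H}$ by \eqref{eq:AmodelPol}, localizing the test function.

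The main work is the products ${\color{blue}\I(\Xi)H}$, ${\color{blue}\I(H)\Xi}$, ${\color{blue}\I(H)H}$, which by \eqref{eq:defTildePi} are pointwise products of a $\CC^{\alpha+2}$-type factor ($\Pi^{e_h}_x{\color{blue}\I(\Xi)}$ or $\Pi^{e_h}_x{\color{blue}\I(H)}$, both of positive H\"older regularity close to $1$) with an $H^0 = L^2$ or $\CC^{\alpha}$ factor. This is precisely the situation of the Besov product \eqref{equ:BesovProduct}, which cannot be handled on the H\"older scale. Here I would expand everything in the wavelet basis $\{\varphi^n_x\} \cup \{\psi^m_y\}$ introduced in this appendix: writing both factors in terms of their wavelet coefficients, the pairing $\langle (\Pi^{e_h}_x \tau_1)(\Pi^{e_h}_x\tau_2), \varphi_x^\lambda\rangle$ becomes a triple sum over scales $n \le m \le p$ (and symmetric terms) of products of wavelet coefficients against the triple wavelet integrals estimated in Lemma \ref{lem:WavDecorr}. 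For the factor that is a reconstruction of a modelled distribution (or lies in $\CC^{\alpha+2}$), its wavelet coefficients decay like $2^{-m(\alpha+2+d/2)}$; for the $L^2$ factor $h$, Bessel's inequality controls $\sum_y \langle h, \psi^m_y\rangle^2 \le \|h\|_{L^2}^2$ uniformly in $m$; for a $\CC^\alpha$ factor the coefficients decay like $2^{-m(\alpha + d/2)}$. Plugging the decorrelation bound $|\langle \psi^n_x\psi^m_y, \psi^p_z\rangle| \lesssim 2^{nd/2}2^{-r'(p-m)}$ into the triple sum and summing geometrically in $p$ first (using $r'$ large, guaranteed by choosing $r$ large), then in $m$ and $n$ against the scale $\lambda \sim 2^{-n}$, yields the desired $\lambda^{|\tau_1|+|\tau_2|} = \lambda^{|\tau|}$; crucially one checks that the exponents combine exactly as in \eqref{equ:BesovProduct} with $\beta = 0$, i.e. $|\tau| = \min\{|\tau_2| - d/2, \ |\tau_1| + |\tau_2| - d/2\}$ when $\tau_2 = {\color{blue}H}$, which is $\le |\tau_1| + |\tau_2|$ so the claimed bound follows, and the positivity condition $|\tau_1| + |\tau_2| + \text{(something)} > 0$ is met because $|\I(\Xi)| + \alpha_{\min} = \alpha_{\min} + 2 + \alpha_{\min} > 0$ precisely in the range $\alpha_{\min} \in (-4/3, -1)$.

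To organize this cleanly I would first record a single \emph{wavelet product estimate}: if $f$ has wavelet coefficients bounded by $2^{-m(a+d/2)}$ (with $a$ possibly negative) and $g$ has coefficients with $\ell^2$-mass over each scale bounded by $2^{-m(b+d/2)}$ (taking $b=0$ for $g\in L^2$, and using that $a+b>0$), then $|\langle fg, \varphi_x^\lambda\rangle| \lesssim \lambda^{\min(b, a+b) - d/2}$, with all implicit constants depending only on the stated wavelet norms. This is the abstract core and the hard part — getting the bookkeeping of the three nested scales and the Cauchy--Schwarz steps right so that the $L^2$ norm of $h$ (and not an $L^\infty$-type norm) enters linearly, which is exactly what makes the continuity estimate $\|h - \tilde h\|_{L^2}$ in Proposition \ref{prop:ContShiftModel} work. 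Once this estimate is in place, the inductive step is routine: feed in the known H\"older/Besov regularity of $\Pi_x{\color{blue}\I(\Xi)}$, $\Pi_x^{e_h}{\color{blue}\I(H)}$ (the latter obtained from $h \in L^2$ via the Schauder-type gain of the kernel $K$, raising regularity by $2$ so that $\Pi_x^{e_h}{\color{blue}\I(H)} \in \CC^{1 - d/2 -\kappa}$-ish, which is what \eqref{eq:AmodelInt1} produces) and $\Pi_x{\color{blue}\Xi} \in \CC^{\alpha_{\min}}$, and match exponents. The only genuine subtlety beyond bookkeeping is ensuring the base-point subtractions in \eqref{eq:AmodelInt1} (the polynomial correction terms $f_x(\J_l(\tau))$) are harmless — they contribute terms of strictly higher homogeneity, handled by the trivial polynomial bounds — so I would dispatch that with one line invoking that $f_x^{e_h}$ satisfies the usual model bounds, which for $\J_l(H)$ again follows from $h\in L^2$ and \eqref{eq:AmodelInt}.
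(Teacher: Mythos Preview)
Your overall strategy---wavelet expansion of both factors, control of the triple integrals via Lemma~\ref{lem:WavDecorr}, and Cauchy--Schwarz to bring in $\|h\|_{L^2}$---is exactly the approach of the paper. The gap is in your proposed abstraction and in the sentence where you call the base-point subtractions ``harmless'' and plan to ``dispatch that with one line''.

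Your abstract wavelet product estimate is essentially the Besov multiplication \eqref{equ:BesovProduct}: for $f\in\CC^{a}$ and $g\in H^{b}$ with $a+b>0$ it yields $fg\in\CC^{\min(b,\,a+b)-d/2}$. Applied to ${\color{blue}\I(\Xi)H}$ with $a=\alpha_{\min}+2$, $b=0$ and $d=2$, this gives only $\CC^{-1}$, whereas the required homogeneity is $|{\color{blue}\I(\Xi)H}|=2\alpha_{\min}+2\in(-\tfrac23,0)$, strictly larger than $-1$. The same shortfall occurs for ${\color{blue}\I(H)\Xi}$ and ${\color{blue}\I(H)H}$. So the abstraction, as stated, cannot close.

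What you are missing is that $\Pi_x{\color{blue}\I(\Xi)}$ (and likewise $\Pi^{e_h}_x{\color{blue}\I(H)}$) is not merely some function in $\CC^{\alpha_{\min}+2}$: it \emph{vanishes at the base point} $x$, and the test function $\varphi_x^\lambda$ is localized at that same point. This is encoded in the model bounds the paper actually uses,
\[
\langle \Pi_x{\color{blue}\I(\Xi)},\psi^p_y\rangle \lesssim 2^{-pd/2}\sum_{\beta<\alpha_{\min}+2}|x-y|^{\alpha_{\min}+2-\beta}2^{-p\beta},
\]
and its two-step variant obtained by inserting an intermediate point $z$. When the supports force $|x-y|\lesssim 2^{-n}$ (with $2^{-n}\sim\lambda$), the factor $|x-y|^{\alpha_{\min}+2-\beta}$ supplies precisely the missing $\lambda^{\alpha_{\min}+2-\beta}$ that upgrades the generic $-d/2$ exponent to the required $|\tau|$. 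The polynomial correction $f_x(\J(\Xi))$ is not a nuisance term to be bounded away; it is what \emph{creates} this vanishing, and its effect must be carried through the wavelet sums. The paper therefore does not abstract but treats each symbol by an explicit four-term decomposition $S_1+S_2+S_3+S_4$ (low/low, low/high, high/low, high/high in the two factors), tracking the $|x-y|$ dependence throughout; for ${\color{blue}\I(H)\Xi}$ it additionally uses the Sobolev gain $\|N\ast h\|_{H^2}\lesssim\|h\|_{L^2}$ to get $\ell^2$-type wavelet control with decay $2^{-2m}$ on the $\I(H)$ factor. If you want an abstraction, it must be a \emph{pointed} product estimate that takes as input the full model bound (with its $|x-y|$ powers), not just a single H\"older exponent.
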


\begin{proof} 
We begin with two important observations. Since $\xi$ and $h$ only depend on the space coordinate $x$, we can simply remove the time coordinate. Indeed, it is immediate to check that for each symbol $\tau \in T_g^H$, $\Pi^{e_h}_{(t,x)} \tau (s,y)= \Pi^{e_h}_x \tau(y)$ does not depend neither on $t$ nor on $s$ (see also Section 10.4 in~\cite{Hai}). Hence, we will take $\varphi\in\B_1^2$ taking values in $\R^2$ and rescaled as $\varphi^\lambda_x(y) = \lambda^{-2} \varphi(\lambda^{-1}(y-x))$. Moreover, we are in the setting of Remark~\ref{remark:periodicM}, in other words, we are considering only models adapted to the action of translation, which means that the ``locally uniformly in $x$" appearing in the statement can be replaced by ``for all $x\in\T^2$". 

Now, for those elements $\tau\in\W^H\setminus\W$ we have nothing to prove since, by construction, $\Pi^{e_h}_x\tau=\Pi_x\tau$, and we know that $\Pi$ satisfies the correct analytical bounds ($Z\in\M(\TS_g)$). On the other hand, to ensure that~\eqref{bound:App} holds for all $\tau \in \left\{{\color{blue}H}, {\color{blue}\mathcal{I}(\Xi)H},  {\color{blue}\mathcal{I} (H)\Xi},  {\color{blue}\mathcal{I}(H)H}\right\}$, it suffices to check it on the wavelet basis functions $\varphi^0_x$, $\psi^n_y$, i.e.
\begin{equation} \label{eq:BndPih}
\left\langle \Pi^{e_h}_x \tau, \varphi^0_x \right\rangle \lesssim 1,  \;\;\;\left\langle \Pi_y^{e_h} \tau, \psi^n_y \right\rangle \lesssim 2^{-n(|\tau| + \frac{d}{2})}, \;\;\; x \in \Lambda_0,  \psi \in \Psi, n \geq 0, y \in \Lambda_n.
\end{equation}
The bound \eqref{eq:BndPih} for $\tau={\color{blue}H}$ is immediate by Sobolev embedding, indeed, $h \in L^2 \subset C^{\alpha}$ since $\alpha \leq - \frac{d}{2}$.

We now focus on the symbol ${\color{blue}\mathcal{I}(\Xi)H}$. Let us write 
\begin{eqnarray*}
\left\langle {\Pi}^{e_h}_x{\color{blue}\mathcal{I}(\Xi)H}, \psi^n_x \right \rangle &=& \sum_{y,z \in \Lambda_n} \left\langle \Pi_x(\mathcal{I}(\Xi)), \varphi^n_y \right \rangle \left \langle h, \varphi^n_z \right \rangle  \left\langle \varphi^n_y \varphi^n_z, \psi^n_x \right \rangle \\
&&+ \sum_{y \in \Lambda_n} \sum_{m \geq n, z \in \Lambda_m}  \left\langle \Pi_x{\color{blue}\mathcal{I}(\Xi)}, \varphi^n_y \right \rangle \left \langle  h, \psi^m_z \right \rangle \left \langle  \varphi^n_y \psi^m_z, \psi^n_x \right \rangle \\
&& + \sum_{z \in \Lambda_n} \sum_{p \geq n, y \in \Lambda_p}  \left\langle h, \varphi^n_z \right \rangle \left \langle \Pi_x{\color{blue}\mathcal{I}(\Xi)}, \psi^p_y \right \rangle \left \langle  \varphi^n_z \psi^p_y, \psi^n_x \right \rangle \\
&& + \sum_{p \geq n, y \in \Lambda_p} \sum_{m \geq n, z \in \Lambda_m}  \left\langle \Pi_x{\color{blue}\mathcal{I}(\Xi)}, \psi^p_y \right \rangle \left \langle h, \psi^m_z \right \rangle \left \langle \psi^p_y \psi^m_z, \psi^n_x \right \rangle \\
&=&: S_1 + S_2 + S_3 + S_4.
\end{eqnarray*}
Before focusing on each of the terms above, recall that, since $\Pi$ is a model, $\left\langle \Pi_x{\color{blue}\mathcal{I}(\Xi)}, \psi^m_y \right \rangle$ admits the two following bounds
\begin{align}
&\left\langle \Pi_x{\color{blue}\mathcal{I}(\Xi)}, \psi^p_y \right \rangle = \left\langle \Pi_y \Gamma_{yx}{\color{blue}\mathcal{I}(\Xi)}, \psi^p_y \right \rangle \lesssim 2^{-\frac{pd}{2}} \sum_{\beta<2+\alpha, \beta \in A} |x-y|^{2+\alpha-\beta} 2^{-p\beta}, \label{eq:BndAlpha}\\
&\left\langle \Pi_x{\color{blue}\mathcal{I}(\Xi)}, \psi^p_y \right \rangle =\left\langle \Pi_y \Gamma_{yz}\Gamma_{zx}{\color{blue}\mathcal{I}(\Xi)}, \psi^p_y \right \rangle \lesssim 2^{-\frac{pd}{2}} \sum_{\gamma< \beta< \alpha+2}2^{-p\beta}|y-z|^{2+\alpha-\beta} |z-x|^{\beta-\gamma}\label{eq:BndAlpha1}
\end{align}
Moreover, since the wavelet basis functions form an orthonormal basis of $L^2$ and $h\in L^2(\T^2)$ one has
\begin{equation}
\sum_{y \in \Lambda_n \cap D} \left\langle h, \varphi^n_y \right \rangle^2 + \sum_{m \geq n, z \in \Lambda_m \cap D} \left\langle h, \psi^m_z \right \rangle^2 \lesssim \|h\|^2,
\label{eq:Bnd-h}
\end{equation}
uniformly over all compact sets $D$ of diameter less than a fixed constant.

We are now ready to show that $S_1$, $S_2$, $S_3$ and $S_4$ satisfy the correct bounds. 
\[
S_1 =\sum_{y,z \in \Lambda_n} \left\langle \Pi_x{\color{blue}\mathcal{I}(\Xi)}, \varphi^n_y \right \rangle \left \langle h, \varphi^n_z \right \rangle  \left\langle \varphi^n_y \varphi^n_z, \psi^n_x \right \rangle 
\lesssim \sum_\beta  2^{-n(2+\alpha-\beta)} 2^{n(-\frac{d}{2} -\beta)}2^{n \frac{d}{2}} \\
\lesssim 2^{-n(2+\alpha)}
\]
where the first inequality follows by the bounds \eqref{eq:BndAlpha}, \eqref{eq:Bnd-h}, Lemma \ref{lem:WavDecorr} and the fact that the sum over $y,z$ can be removed since for each $x$ there are $O(1)$ $y,z$ in $\Lambda_n$ such that $\psi^n_x, \varphi^m_y, \varphi^p_z$ have overlapping support and, in this case, $|x-y| \lesssim 2^{-n}$. Analogous arguments and Cauchy-Schwarz inequality imply
\begin{eqnarray*}
S_2&=& \sum_{y \in \Lambda_n} \sum_{m \geq n, z \in \Lambda_m}  \left\langle \Pi_x{\color{blue}\mathcal{I}(\Xi)}, \varphi^n_y \right \rangle \left \langle  h, \psi^m_z \right \rangle \left \langle  \varphi^n_y \psi^m_z, \psi^n_x \right \rangle \\
&\lesssim& \sum_\beta 2^{-n(2+\alpha-\beta)} 2^{n(-\frac{d}{2} -\beta)} \sum_{m \geq n, z \in \Lambda_m} \left \langle  h, \psi^m_z \right \rangle 2^{n \frac{d}{2}} 2^{-r'(m-n)} \1_{\{|x-z|\lesssim 2^{-n}\}} \\
&\lesssim& 2^{-n(2+\alpha)} \left(\sum_{m \geq n, z \in \Lambda_m,|x-z|\lesssim 2^{-n}} \left \langle  h, \psi^m_z \right \rangle^2 \right)^{1/2} \left( \sum_{m \geq n} 2^{(d-2r')(m-n)}\right)^{1/2} \lesssim2^{-n(2+\alpha)},
\end{eqnarray*}
Now let us treat $S_3$. 
\begin{align*}
S_3&:= \sum_{z \in \Lambda_n} \sum_{p \geq n, y \in \Lambda_p}  \left\langle h, \varphi^n_z \right \rangle \left \langle \Pi_x{\color{blue}\mathcal{I}(\Xi)}, \psi^p_y \right \rangle \left \langle  \varphi^n_z \psi^p_y, \psi^n_x \right \rangle \lesssim \sum_{p\geq n} 2^{-\frac{pd}{2}} 2^{d(p-n)} \sum_{\beta<\alpha+2} 2^{-n(\alpha+2-\beta)} 2^{\frac{nd}{2}} 2^{-r'(p-n)}\\
&= 2^{-n(\alpha+2)} \sum_{\beta<\alpha+2,p\geq n} 2^{-(r'+\beta-\frac{d}{2})(p-n)}\lesssim 2^{-n(\alpha+2)} \sum_{p\geq n} 2^{-(r'+\alpha-\frac{d}{2})(p-n)}\lesssim 2^{-n(\alpha+2)}
\end{align*}
where the first inequality follows by the bound \eqref{eq:BndAlpha}, Lemma~\ref{lem:WavDecorr}, the fact that for a given $x\in \Lambda_n$ there exist $O(2^{d(p-n)})$ $y\in\Lambda_p$ such that $\psi^n_x$ and $\psi^p_y$ have overlapping support and, for those $y$, $|x-y|\lesssim 2^{-n}$. For the last two bounds we recall that there exist a finite number of $\beta<\alpha+2$ and $r'>-\alpha+\frac{d}{2}$.

In order to deal with $S_4$, we distinguish now two cases: $m\geq p$ and $p\geq m$. At first, we point out that since $\psi^n_x$ and $\psi^m_z$ have overlapping support only if $|z-x|\lesssim 2^{-n}$, we have
\[
\sum_{\gamma<\beta}|z-x|^{\beta-\gamma}\lesssim \sum_{\gamma<\beta} 2^{-n(\beta-\gamma)}\lesssim 1
\]
therefore~\eqref{eq:BndAlpha1} can be bounded by
\begin{equation}\label{eq:BndAlpha2}
2^{-\frac{pd}{2}} \sum_{\beta< \alpha+2}2^{-p\beta}|y-z|^{2+\alpha-\beta} \lesssim 
\begin{cases}
2^{-\frac{pd}{2}-p(\alpha+2)}, &\text{if $m\geq p$}\\
2^{-\frac{pd}{2}} \sum_{\beta< \alpha+2}2^{-p\beta}2^{-m(2+\alpha-\beta)}, &\text{if $p\geq m$}
\end{cases}
\end{equation}
indeed $\psi^p_y$ and $\psi^m_z$ have overlapping support, when $m\geq p$, only if $|z-y|\lesssim 2^{-p}$ while, when $p\geq m$, only if $|z-y|\lesssim 2^{-m}$. 

Case 1: $n\leq p\leq m$. Now, by~\eqref{eq:BndAlpha2}, Lemma~\ref{lem:WavDecorr} and the fact that given $z\in\Lambda_m$ there are $O(2^{d(m-p)})$ $y\in\Lambda_p$, we have
\begin{align*}
\sum_{m\geq n,z\in\Lambda_m}&\left\langle h, \psi^m_z \right \rangle\sum_{\substack{m\geq p\geq n\\y\in\Lambda_p}} \left \langle \Pi_x{\color{blue}\mathcal{I}(\Xi)}, \psi^p_y \right \rangle \left \langle  \psi^m_z \psi^p_y, \psi^n_x \right \rangle\\
&\lesssim 2^{\frac{nd}{2}}\sum_{m\geq n,z\in\Lambda_m} \left\langle h, \psi^m_z \right \rangle \1_{\{|z-x|\lesssim 2^{-n}\}} 2^{-m(r'-d)}\sum_{m\geq p\geq n} 2^{p(r'-(\alpha+2)-\frac{3}{2}d)}\\
&\lesssim 2^{\frac{nd}{2}}\sum_{m\geq n,z\in\Lambda_m} \left\langle h, \psi^m_z \right \rangle \1_{\{|z-x|\lesssim 2^{-n}\}} 2^{-m(r'-d)} 2^{m(r'-(\alpha+2)-\frac{3}{2}d)}\\
&\lesssim 2^{\frac{nd}{2}}\left(\sum_{m\geq n,z\in\Lambda_m,|x-z|\lesssim 2^{-n}} |\left\langle h, \psi^m_z \right \rangle|^2\right)^{\frac{1}{2}}\left(\sum_{m\geq n,z\in\Lambda_m} \1_{\{|z-x|\lesssim 2^{-n}\}} 2^{-2m(\alpha+2+\frac{d}{2})}\right)^{\frac{1}{2}}\\
&\lesssim 2^{\frac{nd}{2}} \left(\sum_{m\geq n} 2^{d(m-n)}2^{-2m(\alpha+2+\frac{d}{2})}\right)^{\frac{1}{2}}\lesssim 2^{-n(\alpha+2)}
\end{align*}
where the second inequality comes from the fact that we can take $r'>\alpha+2+\frac{3}{2}d$ and the third is a direct consequence of Cauchy-Schwarz.

Case 2 : $n\leq m \leq p$. As before, thanks to~\eqref{eq:BndAlpha2}, we have
\begin{align*}
\sum_{m\geq n,z\in\Lambda_m}&\left\langle h, \psi^m_z \right \rangle\sum_{p\geq m, y\in\Lambda_p} \left \langle \Pi_x{\color{blue}\mathcal{I}(\Xi)}, \psi^p_y \right \rangle \left \langle  \psi^m_z \psi^p_y, \psi^n_x \right \rangle\\
&\lesssim \sum_{m\geq n,z\in\Lambda_m}\left\langle h, \psi^m_z \right \rangle\1_{\{|z-x|\lesssim 2^{-n}\}} \sum_{p\geq m} \sum_{\beta<\alpha+2} 2^{-\frac{pd}{2}} 2^{-m(\alpha+2-\beta)} 2^{-p\beta} 2^{\frac{nd}{2}} 2^{-r'(p-m)}\\
&\lesssim 2^{\frac{nd}{2}}\sum_{m\geq n,z\in\Lambda_m}\left\langle h, \psi^m_z \right \rangle\1_{\{|z-x|\lesssim 2^{-n}\}} 2^{-m(\alpha+2)} 2^{-\frac{md}{2}} \sum_{p\geq m} 2^{-(r'+\alpha-\frac{d}{2})(p-m)}\\
&\lesssim 2^{\frac{nd}{2}}\left(\sum_{m\geq n,z\in\Lambda_m,|x-z|\lesssim 2^{-n}} |\left\langle h, \psi^m_z \right \rangle|^2\right)^{\frac{1}{2}}\left(\sum_{m\geq n,z\in\Lambda_m} \1_{\{|z-x|\lesssim 2^{-n}\}} 2^{-2m(\alpha+2+\frac{d}{2})}\right)^{\frac{1}{2}}\lesssim 2^{-n(\alpha+2)}
\end{align*}
where the sum in the second inequality converges since $r'+\alpha-\frac{d}{2}>0$ and the latter is obtained as in Case 1. 
\newline

\noindent In the end,  we have
\[
\left\langle {\Pi}^{e_h}_x{\color{blue}\mathcal{I}(\Xi)H}, \psi^n_x \right \rangle\lesssim 2^{-n(\alpha+2)}\|h\|_{L^2}
\]
which (since $\alpha + 2 \geq 2\alpha +2 + \frac{d}{2}$) concludes the proof of \eqref{eq:BndPih} for $\tau={\color{blue}\mathcal{I}(\Xi)H}$.
\newline

Let us now focus on ${\color{blue}\I(H)\Xi}$. Note that $${\Pi}^{e_h}_x{\color{blue}\mathcal{I} H}(y) = (N \ast h)(y) -  (N \ast h)(x),$$
where $N(z) = \int_0^\infty K(t,z) dt$ (recall that $K$ is taken of compact support so that this integral is finite). Since $N = \bar{N} - R$ with $R$ smooth and $\bar{N}$ the usual Green function of the Laplacian, by classical estimates, 
\begin{equation} \label{eq:SobolevEst}
\left\| N \ast h \right\|_{H^{2}(\T^d)} \lesssim \left\|  h \right\|_{L^2(\T^d)}. 
\end{equation}
Therefore (recalling that all $\psi \in \Psi$ have zero average), for each compact $D$,
\begin{equation} \label{eq:PiIH1}
\sum_{m\geq 0, z\in\Lambda_m \cap D}2^{4m}\left|\left\langle{\Pi}^{e_h}_z{\color{blue}\I(H)},\psi_z^m\right\rangle\right|^2 \;\lesssim \;  \;\|h\|_{L^2}^2
\end{equation}
and for each $\delta$ $\in$ $(0,1)$, 
\begin{equation}  \label{eq:PiIH2}
\sup_{n \geq 0, x \in \Lambda_n} 2^{n(\delta + \frac{d}{2})} \left \langle{\Pi}^{e_h}_x{\color{blue}\I(H)}, \varphi^n_x \right \rangle \lesssim  \left\|N \ast h\right\|_{C^\delta} \lesssim \left\|N \ast h\right\|_{H^2} \lesssim \left\|h\right\|_{L^2} 
\end{equation}
by Sobolev embedding. Moreover, since ${\color{blue}\Xi}\in T_\alpha$ and $\alpha$ is the lowest homogeneity of our regularity structure, by definition of model we have
\[
\left\langle \Pi_x{\color{blue}\Xi}, \psi^p_y \right \rangle\lesssim 2^{-\frac{pd}{2}}2^{-n\alpha} 
\]
At this point we have all the elements we need in order to proceed with the actual bound. We can write
\[
\left\langle{\Pi}^{e_h}_x{\color{blue}\mathcal{I}(H)\Xi},\psi_x^n\right\rangle:=S_1+S_2+S_3+S_4
\] 
where $S_i$'s are the same sums as before. Let us now deal with each of the terms separately.
\begin{align*}
S_1&=\sum_{y,z \in \Lambda_n} \left\langle \Pi_x{\color{blue}\Xi}, \varphi^n_y \right \rangle \left \langle{\Pi}^{e_h}_x{\color{blue}\I(H)}, \varphi^n_z \right \rangle  \left\langle \varphi^n_y \varphi^n_z, \psi^n_x \right \rangle \lesssim \sum_{z\in\Lambda_n}2^{-n\alpha-\frac{nd}{2}}\left \langle{\Pi}^{e_h}_x{\color{blue}\I(H)}, \varphi^n_z \right \rangle 2^{\frac{nd}{2}} \1_{\{|z-x|\lesssim 2^{-n}\}}\end{align*}
using the fact that for a given $x\in\Lambda_n$ there exist $O(1)$ $y,z\in\Lambda_n$ such that $\psi^n_x, \varphi^m_y, \varphi^p_z$ have overlapping support. Now note that for $|x-z|\lesssim 2^{-n}$,
\begin{align*}\left \langle{\Pi}^{e_h}_x{\color{blue}\I(H)}, \varphi^n_z \right \rangle - \left \langle{\Pi}^{e_h}_z{\color{blue}\I(H)}, \varphi^n_z \right \rangle &= \left \langle (N\ast h)(z)-(N\ast h)(x), \varphi^n_z \right \rangle\\ &\lesssim 2^{-n \delta} \|N\ast h\|_{C^\delta}  \left\|\varphi^n_z\right\|_{L^1} \lesssim 2^{-n(\delta + \frac{d}{2})} \|h\|_{L^2}\end{align*}
Hence using also \eqref{eq:PiIH2} and choosing $\delta > 2+\alpha$ we obtain
\begin{align*}
S_1 \lesssim 2^{nd} 2^{-n \alpha} 2^{-n(\delta+d/2)}  \leq 2^{-n(2+2\alpha+ d/2)}. 
\end{align*}
Then
\begin{align*}
S_2&= \sum_{y \in \Lambda_n} \sum_{m \geq n, z \in \Lambda_m}  \left\langle \Pi_x{\color{blue}\Xi}, \varphi^n_y \right \rangle \left \langle   N\ast h, \psi^m_z \right \rangle \left \langle  \varphi^n_y \psi^m_z, \psi^n_x \right \rangle \lesssim 2^{-n\alpha} \sum_{\substack{m\geq n, z\in\Lambda_m\\|z-x|\lesssim 2^{-n}}}\left \langle N\ast h, \psi^m_z \right \rangle 2^{-r(m-n)} \\
&\lesssim 2^{-n\alpha}\left(\sum_{\substack{m\geq n, z\in\Lambda_m\\|z-x|\lesssim 2^{-n}}} 2^{4m}\left|\left \langle N\ast h, \psi^m_z \right\rangle\right|^2\right)^{\frac{1}{2}}\left(\sum_{m\geq n} 2^{-4m} 2^{-(2r-d)(m-n)}\right)^{\frac{1}{2}}\lesssim 2^{-n(\alpha+2)}
\end{align*}
where the last line is justified by Cauchy-Schwarz, the fact that for a given $x\in\Lambda_n$ there exist $O(2^{d(m-n)})$ $z\in\Lambda_m$ such that $\varphi^m_y$ and $\varphi^p_z$ have overlapping support and the fact that we can take $2r>d$.
\begin{align*}
S_3&:= \sum_{z \in \Lambda_n} \sum_{p \geq n, y \in \Lambda_p}  \left\langle N\ast h, \varphi^n_z \right \rangle \left \langle \Pi_x{\color{blue}\Xi}, \psi^p_y \right \rangle \left \langle  \varphi^n_z \psi^p_y, \psi^n_x \right \rangle\lesssim \sum_{\substack{z \in \Lambda_n\\|z-x|\lesssim 2^{-n}}} \left\langle N\ast h, \varphi^n_z \right \rangle 2^{n(r-\frac{d}{2})} \sum_{p\geq n} 2^{-p(r+\alpha-\frac{d}{2})}\\
&\lesssim\sum_{\substack{z \in \Lambda_n\\|z-x|\lesssim 2^{-n}}} \left\langle N\ast h, \varphi^n_z \right \rangle 2^{-n\alpha}\lesssim 2^{-n(\alpha+2)}\left(\sum_{z\in\Lambda_n,|z-x|\lesssim 2^{-n}}2^{4n}\left|\left \langle N\ast h, \varphi^n_z \right \rangle\right|^2\right)^{\frac{1}{2}}\lesssim 2^{-n(\alpha+2)}
\end{align*}
For $S_4$, which is given by
\[
S_4=\sum_{p \geq n, y \in \Lambda_p} \sum_{m \geq n, z \in \Lambda_m}  \left\langle \Pi_x{\color{blue}\Xi}, \psi^p_y \right \rangle \left \langle N\ast h, \psi^m_z \right \rangle \left \langle \psi^p_y \psi^m_z, \psi^n_x \right \rangle,
\]
we will split the sum considering first the case $m\geq p$ and the case $p\geq m$ then. 

Case 1: $n\leq p\leq m$.
\begin{align*}
\sum_{m \geq n, z \in \Lambda_m}&\left \langle N\ast h, \psi^m_z \right \rangle\sum_{m\geq p\geq n,y\in\Lambda_p}\left\langle \Pi_x{\color{blue}\Xi}, \psi^p_y \right \rangle\left \langle \psi^p_y \psi^m_z, \psi^n_x \right \rangle\\
&\lesssim 2^{\frac{nd}{2}}\sum_{\substack{m \geq n, z \in \Lambda_m\\|z-x|\lesssim 2^{-n}}}\left \langle N\ast h, \psi^m_z \right \rangle\sum_{m\geq p\geq n} 2^{d(m-p)}2^{-p\alpha-\frac{pd}{2}-r(m-p)}\\
&=2^{\frac{nd}{2}}\sum_{\substack{m \geq n, z \in \Lambda_m\\|z-x|\lesssim 2^{-n}}}\left \langle N\ast h, \psi^m_z \right \rangle 2^{-m(r-d)}\sum_{m\geq p\geq n} 2^{p(r-\alpha-\frac{3}{2}d)}\lesssim 2^{\frac{nd}{2}}\sum_{\substack{m \geq n, z \in \Lambda_m\\|z-x|\lesssim 2^{-n}}}\left \langle N\ast h, \psi^m_z \right \rangle 2^{-m(\alpha+\frac{d}{2})}\\
&\lesssim 2^{\frac{nd}{2}}\left(\sum_{\substack{m \geq n, z \in \Lambda_m\\|z-x|\lesssim 2^{-n}}} 2^{4m}\left|\left \langle N\ast h, \psi^m_z \right\rangle\right|^2\right)^{\frac{1}{2}}\left(\sum_{m\geq n} 2^{d(m-n)}2^{-2m(\alpha+2)} 2^{-md}\right)^{\frac{1}{2}}\lesssim 2^{-n(\alpha+2)}
\end{align*}
Case 2: $n\leq m\leq p$.
\begin{align*}
\sum_{m \geq n, z \in \Lambda_m}&\left \langle N\ast h, \psi^m_z \right \rangle\sum_{p\geq m,y\in\Lambda_p}\left\langle \Pi_x{\color{blue}\Xi}, \psi^p_y \right \rangle\left \langle \psi^p_y \psi^m_z, \psi^n_x \right \rangle\\
&\lesssim 2^{\frac{nd}{2}}\sum_{\substack{m \geq n, z \in \Lambda_m\\|z-x|\lesssim 2^{-n}}}\left \langle N\ast h, \psi^m_z \right \rangle 2^{m(r-d)}\sum_{p\geq m} 2^{-p(r+\alpha-\frac{d}{2})}\lesssim 2^{\frac{nd}{2}}\sum_{\substack{m \geq n, z \in \Lambda_m\\|z-x|\lesssim 2^{-n}}}\left \langle N\ast h, \psi^m_z \right \rangle 2^{-m(\alpha+\frac{d}{2})}\\
&\lesssim 2^{\frac{nd}{2}}\left(\sum_{\substack{m \geq n, z \in \Lambda_m\\|z-x|\lesssim 2^{-n}}} 2^{4m}\left|\left \langle N\ast h, \psi^m_z \right\rangle\right|^2\right)^{\frac{1}{2}}\left(\sum_{m\geq n} 2^{d(m-n)}2^{-2m(\alpha+2)} 2^{-md}\right)^{\frac{1}{2}}\lesssim 2^{-n(\alpha+2)}
\end{align*}
where all the passages can be justified exploiting the same arguments we carried out above. Hence, we can conclude that
\[
\left\langle{\Pi}^{e_h}_x{\color{blue}\mathcal{I}(H)\Xi},\psi_x^n\right\rangle\lesssim 2^{-n(2 \alpha+2-\frac{d}{2})}.
\]
The proof of \eqref{eq:BndPih} for $\tau={\color{blue}\mathcal{I}(H)H}$ follows from the exact same argument as for ${\color{blue}\mathcal{I}(H)\Xi}$, using that $h$ $\in$ $C^\alpha$.

\end{proof}

\begin{proof}[Proof of Lemma~\ref{lem:Xn}] Given two models $\Pi$ and $\Pi'\in\M(\TS_g)$  define
$$\left\llbracket \Pi - \Pi'\right\rrbracket^2 := \sup_{(\tau,n,x) \in I} \left\langle \Pi_x \tau - \Pi'_x \tau, \varphi_x^n\right\rangle^2 2^{2n (|\tau| + \frac{d}{2})},$$
where $I = \left\{{\color{blue}\Xi}, {\color{blue}\mathcal{I} (\Xi) \Xi}\right\} \times \left\{ (n,x), \; n \geq 0, x \in \Lambda_n\right\}$. Then $\llbracket \cdot\rrbracket$ induces on $\mathcal{M}$ the same topology as $\VERT\cdot\VERT$.

For a fixed $\Pi^0$, we first claim that $\omega \mapsto \left\llbracket \Pi(\omega) - \Pi^0\right\rrbracket^2$ is $C^1_{\mathcal{H}}$, $\mathbb{P}$-a.e.. Indeed, first note that for a fixed $(\tau,n,x)$,  $\omega \mapsto \left\langle \Pi_x(\omega) \tau - \Pi^0_x \tau, \varphi_x^n\right\rangle$ is in $C^1_{\mathcal{H}}$. This is easy to see, for instance  one can check directly with the help of the computations in the proof of Lemma \ref{lem:newlemApp} that it has a $\mathcal{H}$-derivative, given by $\left\langle \Pi_x(\omega) \tau^H, \varphi_x^n\right\rangle$, where $ \tau^H={\color{blue}H}$ if $\tau = {\color{blue}\Xi}$, and $ \tau^H={\color{blue}\mathcal{I} (\Xi)  H} +{\color{blue}\mathcal{I} (H)  \Xi}$ if $\tau = {\color{blue}\I(\Xi)\Xi}$. 
Now taking a smaller $I$ if necessary, one can assume that the $\left\langle \Pi_x \tau - \Pi^0_x \tau, \varphi_x^n\right\rangle^2 2^{2n (|\tau| + \frac{d}{2})}$ are pairwise distinct random variables. Since they are elements of a fixed chaos, this actually implies that 
$$(\tau,n,x) \neq (\tau',n',x') \;\Rightarrow \; \mathbb{P} \left( \left\langle \Pi_x \tau - \Pi^0_x \tau, \varphi_x^n\right\rangle^2 2^{2n (|\tau| + \frac{d}{2})} = \left\langle \Pi_{x'} \tau' - \Pi^0_{x'} \tau', \varphi_{x'}^{n'}\right\rangle^2  2^{2n' (|\tau'| + \frac{d}{2})}\right) = 0.$$
In addition $ \left\langle \Pi_x \tau - \Pi^0_x \tau, \varphi_x^n\right\rangle 2^{n (|\tau| + \frac{d}{2})}$ goes to $0$ as $n\to \infty$ for models in the closure of smooth functions, so that the supremum in the definition of $\left\llbracket \Pi(\omega) - \Pi^0\right\rrbracket$ is $\mathbb{P}$-a.e. attained at a single $(\tau,x,n)$. Recalling that if $\ell^\infty_0$ is the set of sequences going to $0$ as $n \to \infty$, the map $(u_n)_{n\geq 0} \in \ell^\infty_0 \mapsto (\sup_{n \geq 0} u_n)$ is Fr\'echet differentiable at each sequence $(u_n)$ attaining its supremum at a single point, this proves the claim.

Now fix a smooth function $\psi$ on $[0,+\infty)$ such that $\psi \equiv 1$ on $[0,1]$ and $\psi\equiv 0$ on $[2,+\infty)$. Then, for fixed $\Pi^0\in\M(\TS_g)$, $\delta^0>0$, define a function $F_{\Pi^0,\delta^0}$ on $\M(\TS_g)$ by 
\[
F_{\Pi^0,\delta^0}(\Pi) = \psi\left(\frac{1}{\delta^0}\llbracket \Pi- \Pi^0\rrbracket \right).
\]
Then take a sequence $(\Pi^n)$ which is dense in $\{t<T_\infty\}$. Since $\{t<T_\infty\}$ is open, there exists $\delta^n>0$ such that $\left\{ \Pi\in\M(\TS_g) : \;  \left\llbracket \Pi - \Pi^n\right\rrbracket \leq 3 \delta_n \right\} \subset \{t<T_\infty\}$. Then $X_n(\omega) = F_{\Pi^n,\delta^n}(\hat{\Pi}(\omega))$ satisfies the required conditions. Indeed, it is clear by Theorem \ref{thm:MallDiff} that $FX_n$ $\in$ $C^1_{\mathcal{H}-loc}$ on $\{t<T_\infty\}$, with $D(FX_n) = F DX_n + X_n DF$. On the other hand, from the properties of $\psi$ it is clear that if $\omega \notin \{t<T_\infty\}$, then $F_{\Pi^n,\delta^n}(\Pi') = 0$ for $\llbracket\Pi'-\Pi^n\rrbracket \leq \delta_n$. In particular, for $\|h\|_{\Hi}$ small enough, $X_n(\omega+h) =0$, so that $FX_n$ is also $\mathcal{H}$-differentiable at $\omega$, with $D(FX_n)=0$. \hfill
\end{proof}

\section{Admissible Models and Consistency}\label{remark}

We collect in this Appendix the proof of Remark~\ref{remark:1} and Lemma~\ref{lemma:consistency}.

\begin{proof}[Proof of Remark~\ref{remark:1}]
We want to show that the second analytical bound in~\eqref{eq:ModelAna} automatically hold, given that the pair $(\Pi,\Gamma)$ satisfies the first as well as the equalities~\eqref{eq:Amodel} and~\eqref{eq:AmodelAlg1}. Notice that in the specific context of $\TS_g$ we have an explicit expression for $\Gamma_{xy}\tau$ for all $\tau\in T_g$. While for $\tau={\color{blue}\Xi},{\color{blue}1},{\color{blue}X_i}$ it is oblvious, for the others we see that
\begin{align*}
\Gamma_{xy}{\color{blue}\I(\Xi)\Xi}&={\color{blue}\I(\Xi)\Xi}+(f_y(\J(\Xi))-f_x(\J(\Xi))){\color{blue}\Xi}\\
\Gamma_{xy}{\color{blue}X_i\Xi}&={\color{blue}X\Xi}+(f_y(X_i)-f_x(X_i)){\color{blue}\Xi}\\
\Gamma_{xy}{\color{blue}\I(\Xi)}&={\color{blue}\I(\Xi)}+(f_y(\J(\Xi))-f_x(\J(\Xi))){\color{blue}1}
\end{align*}
Now, $\|\Gamma_{xy}{\color{blue}X_i\Xi}\|_\alpha\lesssim \|x-y\|$ is an obvious consequence of~\eqref{eq:AmodelPol}, while, for the others, showing that $\|\Gamma_{xy}\tau\|_\beta\lesssim \|x-y\|^{|\tau|-\beta}$, boils down to prove 
\[
|f_y(\J(\Xi))-f_x(\J(\Xi))|=\left|\langle\Pi_x{\color{blue}\Xi},K(x-\cdot)\rangle- \langle\Pi_y{\color{blue}\Xi},K(y-\cdot)\rangle\right|\lesssim \|x-y\|^{\alpha+2}
\]
But this bound is a bound on the map $\Pi$ itself and can be easily obtained upon using the decomposition of the kernel, splitting the cases $\|x-y\|\lesssim 2^{-1}$ and $\|x-y\|>2^{-1}$ and applying Proposition A.1 in~\cite{Hai} and the first analytical bound in~\eqref{eq:ModelAna}.
\end{proof}

In the proof of lemma~\ref{lemma:consistency}, we will make use of the actual definition of the reconstruction map (see Theorem 3.10 in~\cite{Hai} and, in particular, the bound (3.3)) and of the abstract heat kernel (see equations (5.11), (5.15), (5.16) and (7.7) in~\cite{Hai}), but since this is the only point in which we will actually need them, we refrain in this context from thoroughly explaining their structure and address the interested reader to the quoted reference. 

\begin{proof}[Proof of Lemma~\ref{lemma:consistency}]
The first point in the previous is a direct consequence of the properties of $\tau_H$. More precisely, it is due to the fact that according to Remark~\ref{rem:Invariance}, $\tau_H$ leaves the homogeneity invariant, and that, by construction, is linear and multiplicative. 

For the second, the image of $\TS_g$ through the canonical immersion map is a sector of $\TS_g^H$. Since, the extended model coincides with the original one on $\TS_g$, the bound (3.3) in the Reconstruction theorem in~\cite{Hai} guarantees that also the two reconstruction maps coincide on the elements of $\D^\gamma$ for any $\gamma>0$, and consequently, thanks to Proposition 6.9 in~\cite{Hai}, on the ones of $\D^{\gamma,\eta}$ for $\eta\in(-2,\gamma]$. Analogously, one can prove that $\RS^{e_h} \tau_H(U^h)=\RS^h U^h$, for any $U^h\in\D^{\gamma,\eta}(\Gamma^h)$. Indeed, by definition $\Pi^h=\Pi^{e_h}\tau_H$ and, once again, the bound (3.3) in~\cite{Hai} leads to the conclusion. 

The last point is essentially a consequence of the previous. On the one side, by construction, $\tau_H$ and $\I$ commute, on the other $\tau_H$ leaves the polynomials invariant, hence the only thing to check is that the coefficients of the polynomials in $\tilde{\PK}^H\tau_H(U^h)$ and the ones of $\tilde{\PK}^h(U^h)$ coincide (for the first equality exactly the same argument applies). As can be seen from equations (5.11), (5.15), (5.16) and (7.7) in~\cite{Hai}, for the first they are of the following form
\[
\langle \Pi_x^{e_h}\tau_H(\tau),D^{(k)}K(x-\cdot)\rangle,\,\,\langle \RS^{e_h}\tau_H(U^h)- \Pi_x^{e_h}\tau_H(U^h)(x),D^{(k)}K(x-\cdot)\rangle,\,\,\langle \RS^{e_h}\tau_H(U^h),D^{(k)}R(x-\cdot)\rangle
\]
and, since $\Pi_x^{e_h}\tau_H=\Pi_x^h$ and $\RS^{e_h} \tau_H(U^h)=\RS^h U^h$, they must coincide with the ones for $\tilde{\PK}^h(U^h)$, thus concluding the proof. 
\end{proof}

\begin{bibdiv}
 \begin{biblist}

\bib{BCD}{book}{
   author={Bahouri, H.},
   author={Chemin, J.-Y.},
   author={Danchin, R.},
   title={Fourier Analysis and nonlinear partial differential equations},
   volume = {343},
   series={Comprehensive Studies in Mathematics},
   publisher={Springer},
   date={2011}
}

\bib{BH}{article}{
	author={Bouleau, N.},
	author={Hirsch, F.},
	title={Propri\'et\'es d’absolue continuit\'e dans les espaces de Dirichlet et applications aux \'equations diff\'erentielles stochastiques},
	volume = {1204},
	pages = {131-161},
	journal={Lecture Notes in Math.},
	date={1986}
}

\bib{CF10}{article}{
    AUTHOR = {Cass, T.}, 
    AUTHOR = {Friz, P.},
    TITLE = {Densities for rough differential equations under
              {H}\"ormander's condition},
   JOURNAL = {Ann. of Math. (2)},
  FJOURNAL = {Annals of Mathematics. Second Series},
    VOLUME = {171},
      YEAR = {2010},
    NUMBER = {3},
     PAGES = {2115--2141},
      ISSN = {0003-486X},
     CODEN = {ANMAAH},
   MRCLASS = {60H10 (35R60 60G15 60G22 60H05 60H07)},
  MRNUMBER = {2680405 (2011f:60109)},
MRREVIEWER = {David Applebaum},
   }

\bib{CF11}{article}{
    AUTHOR = {Cass, T.}, 
    AUTHOR = {Friz, P.},
     TITLE = {Malliavin calculus and rough paths},
   JOURNAL = {Bull. Sci. Math.},
  FJOURNAL = {Bulletin des Sciences Math\'ematiques},
    VOLUME = {135},
      YEAR = {2011},
    NUMBER = {6-7},
     PAGES = {542--556},
}


\bib{CFV09}{article}{
    AUTHOR = {Cass, T.}, 
    AUTHOR = {Friz, P.},
   AUTHOR = {Victoir, N.},
     TITLE = {Non-degeneracy of {W}iener functionals arising from rough
              differential equations},
   JOURNAL = {Trans. Amer. Math. Soc.},
  FJOURNAL = {Transactions of the American Mathematical Society},
    VOLUME = {361},
      YEAR = {2009},
    NUMBER = {6},
     PAGES = {3359--3371},
      ISSN = {0002-9947},
     CODEN = {TAMTAM},
   MRCLASS = {60G15 (60H07 60H10)},
  MRNUMBER = {2485431 (2010g:60084)},
MRREVIEWER = {Laurent Decreusefond},
}

\bib{CHLT15}{article}{
AUTHOR = {Cass, T.}, 
    AUTHOR = {Hairer, M.},
   AUTHOR = {Litterer, C.},
   AUTHOR = {Tindel, S.},
    title = {Smoothness of the density for solutions to Gaussian rough differential equations},
 journal = {Ann. Probab.}
month = {02}
number = {1},
pages = {188--239},
volume = {43},
year = {2015}
}		
		
\bib{FGGR}{article}{
    AUTHOR = {Friz, P.},
   AUTHOR = {Gess, B.},
   AUTHOR = {Gulisashvili, A.},
   AUTHOR = {Riedel, S.},
		Title = {The {J}ain-{M}onrad criterion for rough paths and applications
to random {F}ourier series and non-{M}arkovian {H}\"{o}rmander theory},
	 journal = {Ann. Probab.}
month = {01}
number = {1},
pages = {684--738},
volume = {44},
year = {2016}
}

\bib{Dau}{book}{
   author={Daubechies, I.},
   title={Ten lectures on wavelets},
   volume = {61},
   series={CBMS-NSF Regional Conference Series in Applied Mathematics},
   publisher={Society for Industrial and Applied Mathematics (SIAM)},
   place={Philadelphia, (PA)},
   date={1992}
}

\bib{DT}{article}{
	author={Deya, A.},
	author={Tindel, S.},
	title={Malliavin Calculus for Fractional Heat Equation},
	journal={Malliavin calculus and stochastic analysis}
	series={Springer Proc. Math. Stat.},
	volume = {34},
	pages = {361-384},
	publisher = {Springer},
	year = {2013}
}

\bib{BD}{book}{
	AUTHOR = {Driver, B. K. },
	TITLE = {Analysis Tools with Applications},
	PUBLISHER = {Draft},
     YEAR = {2003},
     PAGES = {790},
}

\bib{FH14}{book}{
    AUTHOR = {Friz, P. K.},
    AUTHOR = {Hairer, M.},
     TITLE = {A Course on Rough Paths: With an Introduction to Regularity Structures},
    SERIES = {Springer Universitext},
     PUBLISHER = {Springer},
     YEAR = {2014},
     PAGES = {252},
      ISBN = {978-3-319-08331-5},
}

\bib{FV06}{article}{
   author={Friz, P. K.},
   author={Victoir, N. B.},
     TITLE = {A variation embedding theorem and applications},
   JOURNAL = {J. Funct. Anal.},
  FJOURNAL = {Journal of Functional Analysis},
    VOLUME = {239},
      YEAR = {2006},
    NUMBER = {2},
     PAGES = {631--637},
      ISSN = {0022-1236},
}

\bib{FrizVictoir}{book}{
   author={Friz, P. K.},
   author={Victoir, N. B.},
   title={Multidimensional stochastic processes as rough paths},
   series={Cambridge Studies in Advanced Mathematics},
   volume={120},
   note={Theory and applications},
   publisher={Cambridge University Press},
   place={Cambridge},
   date={2010},
}

\bib{gip}{article}{
	title = {Paracontrolled distributions and singular PDEs,},
	author = {Gubinelli, M.},
	author={Imkeller, P.},
	author={Perkowski, N.},
	journal = {Forum Math. Pi},
	volume = {3},
	number={6},
	year = {2015},
}

\bib{Hai}{article}{	
author={Hairer,M.}
title={A theory of regularity structures},
journal={Inventiones mathematicae},
volume = {198},
number={2},
pages = {1-236},
year = {2014},
}

\bib{hairer_solving_2013}{article}{
author={Hairer,M.}
title = {Solving the {KPZ} equation},
Journal= {Annals of Mathematics},
volume = {178},
number={2},
pages = {559-664}
year = {2013},
}

\bib{HL1}{article}{
title={A simple construction of the continuum
parabolic Anderson model on $\R^2$},
author = {Hairer, M.},
author = {Labb\'e, C.},
year={2015}
journal={Electron. Commun. Probab.
Volume 20 (2015), paper no. 43, 11 pp.}
}

\bib{HL2}{article}{
title={Multiplicative stochastic heat equations on the whole space},
author = {Hairer, M.},
author = {Labb\'e, C.},
year={2018}
journal={J. Eur. Math. Soc., Vol 20 (4), pp. 1005--1054}
}

\bib{HP}{article}{
title={A Wong-Zakai theorem for stochastic PDEs},
author = {Hairer, M.},
author = {Pardoux, E.},
year={2015}
journal={J. Math. Soc. Japan
Vol. 67 (4) , 1551-1604.}
}

\bib{HW}{article}{
title={Large deviations for white-noise driven, nonlinear stochastic PDEs in two and three dimensions},
author = {Hairer, M.},
author = {Weber, H.},
year={2015}
journal={Annales de la Facult\'e des Sciences de Toulouse}
}

\bib{Ku93}{article}{
    AUTHOR = {Kusuoka, K.},
     TITLE = {On the regularity of solutions to {SDE}},
 BOOKTITLE = {Asymptotic problems in probability theory: {W}iener
              functionals and asymptotics ({S}anda/{K}yoto, 1990)},
    SERIES = {Pitman Res. Notes Math. Ser.},
    VOLUME = {284},
     PAGES = {90--103},
 PUBLISHER = {Longman Sci. Tech., Harlow},
      YEAR = {1993},
   MRCLASS = {60H10 (60H07)},
  MRNUMBER = {1354163 (97a:60078)},
MRREVIEWER = {Kun Soo Chang},
}

\bib{L98}{article}{
    AUTHOR = {Lyons, Terry J.},
     TITLE = {Differential equations driven by rough signals},
   JOURNAL = {Rev. Mat. Iberoamericana},
  FJOURNAL = {Revista Matem\'atica Iberoamericana},
    VOLUME = {14},
      YEAR = {1998},
    NUMBER = {2},
     PAGES = {215--310},
}

\bib{Ma97}{book}{
    AUTHOR = {Malliavin, Paul},
     TITLE = {Stochastic analysis},
    SERIES = {Grundlehren der Mathematischen Wissenschaften [Fundamental
              Principles of Mathematical Sciences]},
    VOLUME = {313},
 PUBLISHER = {Springer-Verlag, Berlin},
      YEAR = {1997},
     PAGES = {xii+343},
      ISBN = {3-540-57024-1},
   MRCLASS = {60H07 (31C15 58G32 60H30)},
  MRNUMBER = {1450093 (99b:60073)},
MRREVIEWER = {R{\'e}mi L{\'e}andre},
       DOI = {10.1007/978-3-642-15074-6},
       URL = {http://dx.doi.org/10.1007/978-3-642-15074-6},
}

\bib{Meyer}{book}{
   author={Meyer, Y.},
   title={Wavelets and Operators},
   volume = {37},
   series={Cambridge Studies in Advanced Mathematics},
   publisher={Cambridge University Press},
   place={Cambridge},
   date={1992}
}

\bib{Mueller}{article}{
title={On the support of solutions to the heat equation with noise},
author = {Mueller,C.},
journal={Stochastics and Stochastic Reports},
volume = {37},
number={4},
pages = {225-245},
year={1991}
}

\bib{Nua}{book}{
   author={Nualart, D.},
   title={The Malliavin Calculus and Related Topics},
   series={Probability and Its Applications (New York)},
   publisher={Springer-Verlag},
   place={Berlin},
   date={2006}
}

\bib{NS}{article}{
title={Malliavin Calculus for stochastic differential equations driven by fractional Brownian motion},
author = {Nualart, D.},
author = {Saussereau, B.},
journal={Stochastic Process. Appl.}
volume = {119 (2)},
pages = {391-409},
year={2009}
}

\bib{SaSo}{book}{
    AUTHOR = {Sanz-Sol{\'e}, Marta},
     TITLE = {Malliavin calculus},
    SERIES = {Fundamental Sciences},
      NOTE = {With applications to stochastic partial differential
              equations},
 PUBLISHER = {EPFL Press, Lausanne; distributed by CRC Press, Boca Raton,
              FL},
      YEAR = {2005},
     PAGES = {viii+162},
      ISBN = {2-940222-06-1; 0-8493-4030-6},
   MRCLASS = {60-02 (60H07 60H15)},
  MRNUMBER = {2167213 (2006h:60005)},
MRREVIEWER = {Samy Tindel},
}

\bib{Str}{book}{
   author={Stroock, D.W.},
   title={Probability Theory, An Analytic View (2nd Edition)},
   publisher={Cambridge University Press},
   place={Cambridge},
   date={2010}
}

\bib{UZ}{book}{
   author={Uestuenel, A. S.},
   author={Zakai, M.},
   title={Transformation of neasure on Wiener space},
   publisher={Springer-Verlag},
   place={Berlin},
   date={2000}
}

\bib{ZZ}{article}{
title={Three-dimensional Navier-Stokes equations driven by space-time white noise},
author = {Zhu, R. },
author = {Zhu, X.},
year={2015},
journal={Journal of Differential Equations
Volume 259, Issue 9, Pages 4443--4508}
}

\end{biblist}
\end{bibdiv}

\end{document}